\newcommand{\RNum}[1]{\uppercase\expandafter{\romannumeral #1\relax}}
\DeclarePairedDelimiter\floor{\lfloor}{\rfloor}
\newcommand{\Sm}{\mathrm{Sm}}
\newcommand{\Kir}{\mathrm{Kir}}
\DeclareMathOperator{\Span}{span}
\newcommand{\CGLC}{\mathcal{GLC}}
\DeclareMathOperator{\GU}{GU}
\newcommand{\naive}{\mathrm{naive}}
\newcommand{\Nilp}{\mathrm{Nilp}}
\newcommand{\sG}{\mathscr{G}}
\newcommand{\sH}{\mathscr{H}}
\newcommand{\sL}{\mathscr{L}}
\newcommand{\matx}{{\begin{pmatrix}
	X\\ I_n
\end{pmatrix} }}
\newcommand{\smatx}{{\begin{psmallmatrix}
	X\\ I_n
\end{psmallmatrix} }}
\newcommand{\rfl}{\mathrm{fl}}
\newcommand{\sFl}{\mathscr{F}\!\ell}
\newcommand{\bigzero}{\mbox{\normalfont\Large\bfseries 0}}
\newcommand{\rvline}{\hspace*{-\arraycolsep}\vline\hspace*{-\arraycolsep}}
\newcommand{\BA}{{\mathbb {A}}}
\newcommand{\BG}{{\mathbb {G}}} 
\newcommand{\BI}{{\mathbb {I}}} 
\newcommand{\BM}{{\mathbb {M}}} 
 \newcommand{\BP}{{\mathbb {P}}}
\newcommand{\BQ}{{\mathbb {Q}}} \newcommand{\BR}{{\mathbb {R}}}
 \newcommand{\BZ}{{\mathbb {Z}}}
\newcommand{\CA}{{\mathcal {A}}} \newcommand{\CB}{{\mathcal {B}}}
\newcommand{\CC}{{\mathcal {C}}} 
 \newcommand{\CF}{{\mathcal {F}}}
\newcommand{\CG}{{\mathcal {G}}} 
\newcommand{\CI}{{\mathcal {I}}} 
 \newcommand{\CN}{{\mathcal {N}}}
\newcommand{\CO}{{\mathcal {O}}} 
 \newcommand{\CR}{{\mathcal {R}}}
\newcommand{\CS}{{\mathcal {S}}}
\newcommand{\RM}{{\operatorname {M}}} 
\newcommand{\RU}{{\operatorname {U}}}
\newcommand{\fm}{{\mathfrak{m}}}
\newcommand{\ad}{{\operatorname{ad}}}
\newcommand{\Char}{{\operatorname{char}}}
\newcommand{\dR}{{\operatorname{dR}}}
\newcommand{\disc}{{\operatorname{disc}}}
\newcommand{\Gal}{{\operatorname{Gal}}} \newcommand{\GL}{{\operatorname{GL}}}
\newcommand{\Hom}{{\operatorname{Hom}}}
\renewcommand{\Im}{{\operatorname{Im}}}
\newcommand{\loc}{{\operatorname{loc}}}
\newcommand{\op}{{\operatorname{op}}}
\newcommand{\ord}{{\operatorname{ord}}}
 \newcommand{\Sch}{{\operatorname{Sch}}}
\newcommand{\Sets}{{\operatorname{Sets}}}
\newcommand{\Sim}{{\operatorname{Sim}}}
\newcommand{\Spec}{\operatorname{Spec}}
\newcommand{\sgn}{{\operatorname{sgn}}}
\newcommand{\tr}{\operatorname{tr}}
\newcommand{\wt}[1]{{\widetilde {#1}}}
\newcommand{\wh}[1]{{\widehat {#1}}}
\newcommand{\pair}[1]{\langle {#1} \rangle}
\newcommand{\ol}[1]{{\overline{#1}}}
\newcommand{\lra}{\longrightarrow}
\newcommand{\ra}{\rightarrow}
\newcommand{\ud}[1]{{\underline{#1}}}
\newcommand{\rbra}[1]{\left({#1}\right)}
\newcommand{\cbra}[1]{\left\{{#1}\right\}}
\newcommand{\tcbra}[1]{\{{#1}\}} %text curly brackets
\newcommand{\trbra}[1]{({#1})} %text curly brackets
\def\varW@#1#2{%
\vtop{\m@th\ialign{##\cr
\hfil$#1 \operatorname{colim} $\hfil\cr
\noalign{\nointerlineskip\kern1.5\ex@}#2\cr
\noalign{\nointerlineskip\kern-\ex@}\cr}}
}
\def\colim{%
\mathop{\mathpalette\varW@{}}\nmlimits@
}\makeatother
\theoremstyle{plain}
\newtheorem{thm}{Theorem}[section] 
\newtheorem{corollary}[thm]{Corollary}
\newtheorem{lem}[thm]{Lemma}  \newtheorem{prop}[thm]{Proposition}
\newtheorem{lemma}[thm]{Lemma}
\theoremstyle{remark} \newtheorem{remark}[thm]{Remark}
\theoremstyle{definition}  
\theoremstyle{definition} \newtheorem{example}[thm]{Example}  
\newtheorem{defn}[thm]{Definition}
\newcommand{\etale}{\'{e}tale\xspace}
\renewcommand{\theequation}{\arabic{equation}}
\numberwithin{equation}{section}
\newcommand{\inverse}{^{-1}}
\newcommand{\sset}{\subset}
\newcommand{\cross}{^\times}
\newcommand{\half}{ \frac{1}{2}}
\newcommand{\Tr}{\operatorname{Tr}}
\newcommand{\resp}{resp.\xspace}
\newcommand{\Spd}{\operatorname{Spd}}
\newcommand{\simto}{\overset{\sim}{\lra}}
\newcommand{\Gr}{\operatorname{Gr}}
\newcommand{\dfn}[1]{\emph{#1}}
\begin{document}
\title{Wildly ramified unitary local models for special parahorics. The odd dimensional case}
\author{Jie Yang}
\address{Yau Mathematical Sciences Center, Tsinghua University, Haidian District, Beijing 100084, China}
\email{jie-yang@mail.tsinghua.edu.cn}

\begin{abstract}
	We construct local models for wildly ramified unitary similitude groups of odd dimension $n\geq 3$ with special parahoric level structure and signature $(n-1,1)$. We first give a lattice-theoretic description for parahoric subgroups using Bruhat-Tits theory in residue characteristic two, and apply them to define local models following the lead of Rapoport-Zink \cite{rapoport1996period} and Pappas-Rapoport \cite{pappas2009local}. In our case, there are two conjugacy classes of special parahoric subgroups. We show that the local models are smooth for the one class and normal, Cohen-Macaulay for the other class. We also prove that they represent the v-sheaf local models of Scholze-Weinstein. Under some additional assumptions, we obtain an explicit moduli interpretation of the local models. 
\end{abstract}

\maketitle

\setcounter{tocdepth}{1}
\tableofcontents

\section{Introduction}
Let $p$ be a prime number. For the study of arithmetic properties of Shimura varieties, it is desirable to extend them to reasonable integral models over the $p$-adic integers. We have so far many results on the integral models of Shimura varieties of abelian type with parahoric level structure, see \cite{rapoport1996period, kisin2018integral, kisin2024independenceellfrobeniusconjugacy, pappas2021p}. Local models are certain flat projective schemes over the $p$-adic integers which are expected to model the singularities of these integral models. 

Rapoport and Zink initiated a systematic study of local models for Shimura varieties of PEL type  with parahoric level structure at $p$ in \cite{rapoport1996period}. Their local models were later called \dfn{naive local models}, since they are not always flat if the corresponding reductive group is ramified at $p$ as pointed out in \cite[\S 4]{pappas2000arithmetic}. The construction of the naive local models relies on the lattice-theoretic description of parahoric subgroups, which is significantly more involved if $p=2$ and the group is ramified. A more general approach is given in \cite{pappas2013local} (see also a variant in \cite{hpr20}) which constructs (flat) local models attached to purely group-theoretic data $(G,\sG,\cbra{\mu})$, where $G$ is a tamely ramified connected reductive group over a $p$-adic field $L$, $\sG$ is a parahoric group scheme over $\CO_L$ with generic fiber $G$, and $\cbra{\mu}$ is a geometric conjugacy class of cocharacters of $G$ with reflex field $E$. Subsequent works \cite{lev16, lourencco2023grassmanniennes, fakhruddin2022singularities} allow us to define local models for all triples $(G,\sG,\cbra{\mu})$ excluding the case that $p=2$ and $G^\ad$ contains, as an $\breve L$-factor, a wildly ramified unitary group of odd dimension. Here $\breve L$ denotes the completion of the maximal unramified extension of $L$. The constructions a priori depend on certain auxiliary choices. One can show that these local models are flat, projective, normal and Cohen-Macaulay schemes over $\CO_E$. Furthermore, the geometric special fibers are reduced and their irreducible components are normal, Cohen-Macaulay, with rational singularities, and compatibly Frobenius split. A key point in the proof is the identification of the special fibers of local models with a union of (semi-normalizations of) Schubert varieties in affine flag varieties. 

Another construction of local models is proposed in \cite{scholze2020berkeley} using v-sheaves. The advantage is that this approach is canonical (without any auxiliary choices) and applies to arbitrary triples $(G,\sG,\cbra{\mu})$, even for wildly ramified groups $G$ and $p=2$. It has been proven in \cite{anschutz2022p, gleason2022tubular} that when $\cbra{\mu}$ is minuscule, the v-sheaf local models are representable by flat normal projective schemes over $\CO_E$ with reduced special fibers. Excluding the case that $p=2$ and $G^\ad$ contains, as an $\breve L$-factor, a wildly ramified unitary group of odd dimension, one can show that the corresponding scheme local models are Cohen-Macaulay with Frobenius split special fibers. We refer the readers to \cite[Remark 2.2]{fakhruddin2022singularities} for some explanation on this exceptional case. 

In this paper, we focus on this exceptional case and study local models for unitary similitude groups of odd dimension $n\geq 3$ with parahoric level structure when $p=2$. We aim to construct unitary local models following the lead of Rapoport-Zink \cite{rapoport1996period} and Pappas-Rapoport \cite{pappas2009local}, specifically for cases where the parahoric level structure is special and the signature is $(n-1,1)$. Furthermore, we will prove that these local models have good geometric properties. We emphasize that many of these geometric properties do not seem to easily follow from the works mentioned above.

Let $F_0/\BQ_2$ be a finite extension and $F$ be a (wildly) ramified quadratic extension of $F_0$. For any $x\in F$, we write $\ol{x}$ for the Galois conjugate of $x$ in $F$. We can pick uniformizers $\pi\in F$ and $\pi_0\in F_0$ such that $F/F_0$ falls into one of the following two distinct cases (see \S \ref{subsec-prel}):   

\begin{enumerate}
	\item [(R-U)] $F=F_0(\sqrt{\theta})$, where $\theta$ is a unit in $\CO_{F_0}$. The uniformizer $\pi$ satisfies an Eisenstein equation  $$\pi^2-t\pi+\pi_0=0,$$ where $t=\pi+\ol{\pi}\in\CO_{F_0}$ satisfies $\pi_0|t$ and $t|2$. We have $\sqrt{\theta}=1-{2\pi}/{t}$ and $\theta=1-{4\pi_0}/{t^2}$. 
	\item [(R-P)] $F=F_0(\sqrt{\pi_0})$, where $\pi^2+\pi_0=0$. 
\end{enumerate}

Let $(V,h)$ be a hermitian space, where $V$ is an $F$-vector space of dimension $n=2m+1 \geq 3$ and $h: V\times V\ra F$ is a non-degenerate hermitian form. We assume that $h$ is split, i.e., there exists an $F$-basis $(e_i)_{1\leq i\leq n}$ of $V$ such that $h(e_i,e_j)=\delta_{i,n+1-j}$ for $1\leq i,j\leq n$. This condition can be satisfied after an unramified field extension of $F_0$. Let $G\coloneqq \GU(V,h)$ denote the unitary similitude group over $F_0$ attached to $(V,h)$. Our first result is the lattice-theoretic description of parahoric subgroups of $G(F_0)$. 

\begin{thm}[Proposition \ref{prop-para}] \label{thm1.1}
	Let $I$ be a non-empty subset of $\cbra{0,1,\ldots,m}$. Define \begin{flalign*}
	   \Lambda_i &\coloneqq \CO_F\pair{\pi\inverse e_1,\ldots,\pi\inverse e_i,e_{i+1},\ldots,e_{m+1},\lambda e_{m+2},\ldots,\lambda e_n }, \text{\ for\ $0\leq i\leq m$}, 
\end{flalign*} where $\lambda={\ol{\pi}}/{t}$ in the (R-U) case and $\lambda={1}/{2}$ in the (R-P) case. Then the subgroup \begin{flalign*}
	    P_I &\coloneqq \cbra{g\in G(F_0)\ |\ g\Lambda_i=\Lambda_i, \text{\ for\ }i\in I }
\end{flalign*} is a parahoric subgroup of ${G}(F_0)$. Furthermore, any parahoric subgroup of ${G}(F_0)$ is conjugate to $P_I$ for a unique $I\subset\cbra{0,1,\ldots,m}$. The conjugacy classes of special parahoric subgroups correspond to the sets $I=\cbra{0}$ and $\cbra{m}$. 
\end{thm}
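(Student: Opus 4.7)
The approach is to present the lattices $\Lambda_i$ as a self-dual periodic $\CO_F$-lattice chain with respect to the hermitian form $h$, and then identify their joint stabilizers with parahoric subgroups via Bruhat--Tits theory. First I would compute the hermitian dual $\Lambda_i^\vee \coloneqq \{v \in V : h(v,\Lambda_i) \subset \CO_F\}$ using the explicit value of $\lambda$. In case (R-U), the Eisenstein relation $\pi^2 - t\pi + \pi_0 = 0$ gives $\ol\pi = t - \pi$, so $\lambda + \ol\lambda = 1$ and $\lambda\ol\lambda = \pi_0/t^2$; in case (R-P) one has $\pi^2 = -\pi_0$ and $\lambda = \ol\lambda = 1/2$. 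Direct computation then shows that $\Lambda_0$ and $\Lambda_m$ are self-dual up to a fixed power of $\pi$, while for intermediate $i$ the dual $\Lambda_i^\vee$ lies strictly between $\Lambda_i$ and $\pi^{-1}\Lambda_i$. Extending by the rule $\Lambda_{i+n} \coloneqq \pi^{-1}\Lambda_i$ produces a self-dual periodic lattice chain of period $n$ in the sense of \cite{rapoport1996period}.

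Next I would identify the local Dynkin diagram of $\SU(V,h)$. For a wildly ramified quasi-split odd-dimensional unitary group this diagram is of twisted affine type $\widetilde{BC}_m$, with $m+1$ vertices $v_0, \ldots, v_m$ of which exactly the two endpoints $v_0$ and $v_m$ are special. The diagonal torus relative to $(e_i)$ contains a maximal $F_0$-split subtorus whose apartment is spanned by these vertices, and $v_i$ corresponds to the lattice $\Lambda_i$. It follows that, for non-empty $I \subseteq \{0,\ldots,m\}$, the facet $\fa_I$ is the one fixing the partial chain $(\Lambda_i)_{i \in I}$, so $P_I$ is contained in the full stabilizer of $\fa_I$. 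To upgrade this to the parahoric I would check that every element of $P_I$ lies in the kernel of the Kottwitz homomorphism, using that for an element of $P_I$ the similitude factor lies in $\CO_{F_0}^\times$ and the determinant in $\CO_F^\times$, together with the explicit description of $\pi_1(G)$ for the odd unitary similitude group.

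Finally, the classification of conjugacy classes of parahorics reduces via Bruhat--Tits theory to that of facets in the apartment modulo the extended affine Weyl group. The diagram $\widetilde{BC}_m$ admits no non-trivial automorphism compatible with the Galois action, so distinct subsets $I$ give mutually non-conjugate parahorics; the special ones are exactly $\{0\}$ and $\{m\}$ by inspection of the diagram. \emph{The hard part} will be the exact duality computation in case (R-U): the valuation of $t$ varies in the range $v(\pi_0) \le v(t) \le v(2)$, and the degeneration of $h$ modulo $\pi$ in residue characteristic two makes the bookkeeping delicate. A secondary difficulty is showing that $P_I$ equals, rather than merely contains, the parahoric; this likely requires identifying $P_I$ with the $\CO_{F_0}$-points of the smooth affine group scheme stabilizing the polarized chain $(\Lambda_i)_{i \in I}$ in the sense of \cite{rapoport1996period}, and checking connectedness of its special fiber.
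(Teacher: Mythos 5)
The central step of your plan fails at the outset: the lattices $\Lambda_i$ are \emph{not} self-dual (even up to a power of $\pi$, or any scalar) with respect to $h$, and they cannot be completed to a self-dual periodic chain in the sense of Rapoport--Zink. The scaling factor satisfies $\omega(\lambda)=\delta=\sup\cbra{\omega(x)\mid x+\ol{x}=1}$, which is strictly negative in residue characteristic two ($\delta=\tfrac12-\omega(t)\leq -\tfrac12$ in case (R-U), $\delta=-\omega(2)$ in case (R-P)). A direct dual computation then gives, e.g., $\Lambda_0^h=\CO_F\pair{\ol{\lambda}^{-1}e_1,\ldots,\ol{\lambda}^{-1}e_m,e_{m+1},\ldots,e_n}$, and comparing valuations coordinatewise ($0$ on the first $m+1$ generators of $\Lambda_0$ and $\delta$ on the last $m$, versus $-\delta>0$ and $0$ for $\Lambda_0^h$) shows no scalar multiple of $\Lambda_0^h$ equals $\Lambda_0$; the same happens for $\Lambda_m$, and for intermediate $i$ the claimed sandwich $\Lambda_i\subsetneq\Lambda_i^\vee\subsetneq\pi^{-1}\Lambda_i$ also fails (one only has strict inclusions of the type $\Lambda_I^s\hookrightarrow\Lambda_I\hookrightarrow\alpha\Lambda_I^s$ with $\alpha\notin\CO_F^\times$). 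So the bookkeeping you flagged as "the hard part" does not merely get delicate — it cannot produce the self-dual chain your identification of vertices with the $\Lambda_i$ depends on.

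There is also a more structural obstruction to the route through self-dual chains: when $p=2$ and $F/F_0$ is ramified, stabilizers of self-dual lattices (equivalently, self-dual norms) need not come from points of $\CB(G,F_0)$ at all, because $\CB(G,F_0)$ is strictly smaller than the fixed-point set of the twisted involution on $\CB(\GL_F(V),F)$; there exist self-dual lattices whose norm is fixed by the involution but is not maxi-minorant (the paper exhibits one, built from a self-dual plane with norm ideal $\CO_{F_0}$ rather than $2\CO_{F_0}$). This is exactly why the correct parametrization uses Bruhat--Tits's maxi-minorant norms: the building is the set of norms $\alpha_\pm\oplus\alpha_0$ with $\alpha_\pm$ self-dual and split by a Witt basis on $V_-\oplus V_+$ and $\alpha_0(f_{m+1})=\tfrac12\delta$ on the anisotropic line, and translating these into graded lattice chains and computing the affine roots and a chamber is what produces the vertices $v_i$ and the $\lambda$-rescaled lattices $\Lambda_i$ in the first place. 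The back end of your argument (triviality of the Kottwitz homomorphism on $P_I$, transitivity of $G(F_0)$ on chambers giving uniqueness of $I$, and reading off the two special vertices) is sound and close to what the paper does, but it only becomes available after the vertex--lattice dictionary is established by the norm-theoretic route, not by the self-duality computation you propose.
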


The proof of Theorem \ref{thm1.1} is based on Bruhat-Tits theory in (residue) characteristic two. Note that in our case, parahoric subgroups of $G(F_0)$ no longer correspond to self-dual lattice chains, which causes difficulties in the study of local models. 

Given a special parahoric subgroup of $G(F_0)$ corresponding to $I=\cbra{0}$ or $\cbra{m}$, we define in \S \ref{sec-main} the \dfn{naive local model} $\mathrm{M}^\naive_I$ of signature $(n-1,1)$, which is an analogue of the naive unitary local model considered in \cite{rapoport1996period}. To explain the construction, we start with a crucial but simple observation on the structure of the lattices $\Lambda_i$ in Theorem \ref{thm1.1}. Set \begin{equation}
       \begin{gathered}
       	   \varepsilon\coloneqq \begin{cases}
	    	 t\quad &\text{in the (R-U) case},\\ 2 &\text{in the (R-P) case}.
	    \end{cases}
       \end{gathered} \label{epsilon}
\end{equation} 
The hermitian form $h$ defines a symmetric $F_0$-bilinear form $s(-,-): V\times V\ra F_0$ and a quadratic form $q: V\ra F_0$ via \begin{flalign}
	    s(x,y) \coloneqq {\varepsilon}\inverse \Tr_{F/F_0} h(x,y) \text{\ and\ }q(x) \coloneqq  \half s(x,x), \text{\ for $x,y\in V$}.  \label{symform}
\end{flalign}  
Set $\sL\coloneqq {\varepsilon}\inverse \CO_{F_0}$, which is an invertible $\CO_{F_0}$-module. Then for $0\leq i\leq m$, the forms in \eqref{symform} induce the $\sL$-valued forms \begin{flalign}
	     s : \Lambda_i\times \Lambda_i\lra \sL \text{\ and\ } q : \Lambda_i\lra \sL. \label{sqLambda} 
\end{flalign} The triple $(\Lambda_i,q,\sL)$ is an $\sL$-valued hermitian quadratic module over $\CO_{F_0}$ in the sense of Definition \ref{defn-hermqm}, which roughly means that the quadratic form $q$ is compatible with the $\CO_F$-action. 

For $I=\cbra{0}$ or $\cbra{m}$, denote $\Lambda_I\coloneqq \Lambda_0$ or $\Lambda_m$ respectively. Let $\Lambda_I^s\coloneqq \cbra{x\in V\ |\ s(x,\Lambda_I)\sset \CO_{F_0}}$ be the dual lattice of $\Lambda_I$ with respect to the pairing $s$ in \eqref{symform}. Then we have a perfect $\CO_{F_0}$-bilinear pairing \begin{flalign}
	   \Lambda_I \times \Lambda_I^s \lra \CO_{F_0}  \label{perfpairing}
\end{flalign}  induced by the symmetric pairing in \eqref{symform}, and an inclusion of lattices \begin{flalign*}
	     \Lambda_I\hookrightarrow \alpha\Lambda_I^s,  \ \text{\ where\ } \alpha \coloneqq \begin{cases}
	     	{\ol{\pi}}/{\varepsilon}\quad &\text{if $I=\cbra{0}$},\\  {1}/{\varepsilon} &\text{if $I=\cbra{m}$}.
	     \end{cases}
\end{flalign*}
We define the naive unitary local model $\RM_I^\naive$ to be the functor $$\RM_I^\naive: (\Sch/\CO_F)^\op \lra \Sets$$ which sends an $\CO_F$-scheme $S$ to the set of $\CO_S$-modules $\CF$ such that 
	\begin{enumerate}
		\item ($\pi$-stability condition) $\CF$ is an $\CO_F\otimes_{\CO_{F_0}}\CO_S$-submodule of $\Lambda_I\otimes_{\CO_{F_0}}\CO_S$ and as an $\CO_S$-module, it is a locally direct summand of rank $n$.
		\item (Kottwitz condition) The action of $\pi\otimes 1\in\CO_F\otimes_{\CO_{F_0}}\CO_S$ on $\CF$ has characteristic polynomial $$\det(T-\pi\otimes 1\ |\ \CF)=(T-\pi)(T-\ol{\pi})^{n-1}.$$
		\item Let $\CF^\perp$ be the orthogonal complement of $\CF$ in $\Lambda_I^s\otimes_{\CO_{F_0}}\CO_S$ with respect to the perfect pairing $$(\Lambda_I\otimes_{\CO_{F_0}}\CO_S) \times (\Lambda_I^s\otimes_{\CO_{F_0}}\CO_S) \ra \CO_S$$ induced by the perfect pairing in \eqref{perfpairing}. We require the map $\Lambda_I\otimes_{\CO_{F_0}}\CO_S \ra \alpha \Lambda_I^s\otimes_{\CO_{F_0}}\CO_S$ induced by $\Lambda_I\hookrightarrow \alpha\Lambda_I^s$ sends $\CF$ to $\alpha\CF^\perp,$ where $\alpha\CF^\perp$ denotes the image of $\CF^\perp$ under the isomorphism $\alpha:\Lambda_I^s\otimes_{\CO_{F_0}}\CO_S \simto \alpha\Lambda_I^s\otimes_{\CO_{F_0}}\CO_S$.
        \item $\CF$ is totally isotropic with respect to the pairing \begin{flalign*}
        	   s: (\Lambda_I\otimes_{\CO_{F_0}}\CO_S) \times (\Lambda_I\otimes_{\CO_{F_0}}\CO_S) \ra \sL\otimes_{\CO_{F_0}}\CO_S
        \end{flalign*} induced by $s$ in \eqref{sqLambda}, i.e., $s(\CF,\CF)=0$ in $\sL\otimes_{\CO_{F_0}}\CO_S$. 
	\end{enumerate}
The functor $\RM^\naive_I$ is representable by a closed $\CO_F$-subscheme of the Grassmannian $\Gr(n,\Lambda_I)_{\CO_F}$. 
It turns out that $\RM^\naive_I$ is not flat over $\CO_F$. We define, as in \cite{pappas2009local}, the \dfn{local model} $\RM_I^\loc$ to be the flat closure of the generic fiber in $\RM^\naive_I$. By construction, we have closed immersions \begin{flalign*}
	     \RM^\loc_I\hookrightarrow \RM^\naive_I
\end{flalign*} of projective schemes over $\CO_F$ whose generic fibers are isomorphic to the $(n-1)$-dimensional projective space over $F$. We have the following results on further geometric properties of $\RM^\loc_I$. 

\begin{thm} \label{thm-intromain}
    \begin{enumerate}
		\item If $I=\cbra{0}$, then $\RM^\loc_{\cbra{0}}$ is flat projective of relative dimension $n-1$ over $\CO_F$, normal and Cohen-Macaulay with geometrically integral special fiber. Moreover, $\RM^\loc_{\cbra{0}}$ is smooth over $\CO_F$ on the complement of a single closed point in the special fiber. 
		\item If $I=\cbra{m}$, then $\RM_{\cbra{m}}^\loc$ is smooth projective of relative dimension $n-1$ over $\CO_F$ with geometrically integral special fiber.
	\end{enumerate}
\end{thm}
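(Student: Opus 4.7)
The plan is to analyze $\RM^{\loc}_I$ via explicit affine charts on the Grassmannian $\Gr(n,\Lambda_I)_{\CO_F}$, following the strategy of Pappas-Rapoport \cite{pappas2009local} but adapted to the residue characteristic two setting. Fix an $\CO_{F_0}$-basis of $\Lambda_I$ adapted to the $\pm\pi$-eigenspace decomposition on the generic fiber, and on the standard affine chart represent $\CF$ as the column span of a block matrix $\smatx$. The four defining conditions of $\RM^{\naive}_I$ then become explicit matrix equations: the $\pi$-stability condition relates $X$ to the matrix of $\pi\otimes 1$; the Kottwitz condition of signature $(n-1,1)$ becomes a rank/determinantal condition; the duality condition (3) forces $X$ to be (skew-)symmetric with respect to an explicit bilinear form coming from $s$; and the isotropy condition (4) contributes an additional quadratic equation that is genuinely new in residue characteristic two. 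The first task is to write these out carefully in both cases (R-U) and (R-P), tracking the occurrences of $\varepsilon$, $\lambda$ and $\alpha$.

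For $I=\cbra{m}$, the lattice $\Lambda_m$ is essentially self-dual for $s$ up to the factor $\alpha=1/\varepsilon$, and the induced quadratic form $q$ is sufficiently non-degenerate modulo $\pi_0$ that the above system already cuts out a smooth $\CO_F$-scheme of relative dimension $n-1$. This should be verified by a direct Jacobian computation on each standard affine chart: after using the Kottwitz condition to eliminate one entry of $X$ in favor of the others, the remaining equations form a regular sequence with surjective differential over $\CO_F$. It follows that $\RM^{\naive}_{\cbra{m}}$ is already $\CO_F$-flat and smooth, hence equals $\RM^{\loc}_{\cbra{m}}$, and geometric integrality of the special fiber follows from smoothness together with connectedness of the fixed-point set of the natural torus action.

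For $I=\cbra{0}$, $\RM^{\naive}_{\cbra{0}}$ fails to be flat: its special fiber acquires an extra component supported at the worst point $\CF_{0}=\pi\Lambda_{0}\otimes_{\CO_{F_0}} k$, exactly as in the classical wildly ramified unitary setting. The approach is to introduce a Pappas-style wedge refinement $\RM^{\wedge}_{\cbra{0}}\sset \RM^{\naive}_{\cbra{0}}$, defined by the vanishing of $\Exterior^{2}$ of the operator $(\pi\otimes 1-\ol{\pi}\otimes 1)|_{\CF}$ (the formulation must be adjusted so as to remain nontrivial when $p=2$). One then shows chart by chart that $\RM^{\wedge}_{\cbra{0}}$ is $\CO_F$-flat of relative dimension $n-1$ with geometrically integral special fiber, the latter being identified (near $\CF_0$) with an explicit normal, Cohen-Macaulay variety cut out by the quadratic equation coming from condition (4). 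A dimension comparison on the generic fiber then forces $\RM^{\loc}_{\cbra{0}}=\RM^{\wedge}_{\cbra{0}}$. Normality and Cohen-Macaulayness of $\RM^{\loc}_{\cbra{0}}$ descend from those of its special fiber via flatness and Serre's $R_1+S_2$ criterion, while smoothness away from the single point corresponding to $\CF_{0}$ is a routine Jacobian computation on the remaining charts, unobstructed by the wedge condition.

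The principal obstacle lies in the explicit computation at the worst point $\CF_{0}$ in case (R-U), where the parameter $t$ (with $\pi_0\mid t\mid 2$) couples the isotropy condition (4) to the duality condition (3) in a way with no tame analogue. Specifically, proving that the wedge condition together with condition (4) cuts out \emph{precisely} the flat closure, rather than a strictly larger $\CO_F$-flat subscheme of $\RM^{\naive}_{\cbra{0}}$, requires showing that no spurious components arise in the special fiber. This will demand a careful analysis of $q\bmod \pi_0$ and its interaction with the rank-one structure imposed by the signature-$(n-1,1)$ Kottwitz condition, together with a verification that the resulting ideal in the coordinate ring of the affine chart around $\CF_{0}$ is radical.
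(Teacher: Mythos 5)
Your proposal breaks down at several specific points. First, for $I=\cbra{m}$ the claim that $\RM^\naive_{\cbra{m}}$ is already $\CO_F$-flat and smooth, hence equal to $\RM^\loc_{\cbra{m}}$, is false: the paper shows (by the dimension argument of Pappas on the special fiber) that $\RM^\naive_I$ is not flat for either choice of $I$. The reason is intrinsic to residue characteristic two: condition (4) of the naive functor only imposes $s(\CF,\CF)=0$, and since $s(x,x)=2q(x)$ this says almost nothing on the special fiber; one must impose the hyperbolicity condition $q(\CF)=0$ itself, together with the wedge condition and, in two of the four cases ($I=\cbra{0}$ (R-U) and $I=\cbra{m}$ (R-P)), a strengthened spin condition, before the chart equations become tractable. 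Even this refined functor is not flat: on the chart around the worst point it is cut out by an ideal $\wt{\CI}$ that in general differs from the ideal $\wt{\CI}^\rfl$ of the flat closure (they agree only as topological spaces, and as schemes only when $\omega(t)=\omega(\pi_0)$). So for $I=\cbra{0}$ your wedge-only refinement is too coarse — without hyperbolicity and the spin condition you cannot derive the symmetry relations that reduce the chart to the symmetric-matrix presentation $(Y\,|\,X)$ — and your plan to prove that the refined ideal is radical and equals the flat closure would be proving something that is false in general. Relatedly, your route to Cohen-Macaulayness ("descend from the special fiber via flatness and Serre's criterion") presupposes exactly what is hardest: at that stage you have neither flatness of the flat closure nor control of its special fiber. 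The paper's actual mechanism is a finite-module argument: the coordinate ring of $\RU^\rfl_{\cbra{0}}$ is finite over the polynomial ring in the diagonal entries $y_{ii}$, an explicit $2^{2m}$-element monomial basis gives equality of fiber dimensions, and converse miracle flatness then yields Cohen-Macaulayness, from which normality and reducedness of the special fiber follow.

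Second, your globalization is missing. The paper does not verify the conditions "chart by chart" over the whole Grassmannian; it establishes everything on the single chart at the worst point and then uses the smooth group scheme $\sH_I$ of similitude automorphisms of the hermitian quadratic module $(\Lambda_I,q,\sL)$ (constructed in the appendix, where it is identified with the parahoric group scheme) together with the fact that the geometric special fiber of the refined model has exactly two $\sH_I$-orbits, one of which is the worst point. The translates of the chart then cover $\RM^\loc_I$, and smoothness away from a single closed point, as well as geometric integrality of the special fiber, follow. Your substitute — connectedness of a torus fixed-point locus — does not yield integrality of the special fiber, and without some analogue of the orbit lemma your local computations never propagate to the whole local model.
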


Let us explain the strategy of the proof of Theorem \ref{thm-intromain} in greater detail. For $I=\cbra{0}$ or $\cbra{m}$, let $\sH_I$ denote the group scheme\footnote{In Appendix \ref{appB}, we prove that $\sH_I$ is smooth over $\CO_{F_0}$ and isomorphic to the parahoric group scheme attached to $\Lambda_I$. } of similitude automorphisms of the hermitian quadratic module $(\Lambda_m,q,\sL)$ (resp. $(\Lambda_0,q,\sL,\phi)$), see Definition \ref{defn-simisom} and \ref{defn-hermphi}. Then $\sH_I$ acts naturally on $\RM^\naive_I$, and hence on $\RM^\loc_I$. Let $\ol{k}$ denote the algebraic closure of the residue field of $F$. Using the results in Appendix \ref{appB}, we can show that the (geometric) special fiber $\RM_I^\loc\otimes_{\CO_F}\ol{k}$ has two orbits under the action of $\sH_I\otimes_{\CO_{F_0}}\ol{k}$. One of the orbits consists of just one closed point. We call it the \dfn{worst point} of the local model. Using this, we are reduced to proving that there exists an open affine subscheme of $\RM^\loc_I$ containing the worst point and satisfying the geometric properties (normality, Cohen-Macaulayness, etc) as stated in Theorem \ref{thm-intromain}. When $I=\tcbra{0}$, the worst point turns out to be the unique singular point of $\RM_I^\loc$.

To get the desired open affine subscheme of $\RM^\loc_I$, we introduce a refinement $\RM_I$, as a closed subfunctor, of the moduli functor $\RM^\naive_I$ such that \begin{flalign*}
	     \RM^\loc_I\sset \RM_I\sset \RM^\naive_I.
\end{flalign*} It turns out that the underlying topological space of $\RM_I$ is equal to that of $\RM^\loc_I$. For a matrix $A$, we will write $\CO_F[A]$ for the polynomial ring over $\CO_F$ whose variables are entries of the matrix $A$. Viewing $\RM_I$ as a closed subscheme of the Grassmannian $\Gr(n,\Lambda_I)_{\CO_F}$, we can find an open affine subscheme $\RU_I$ of $\RM_I$ which contains the worst point and which is isomorphic to a closed subscheme of $\Spec \CO_F[Z]$, where $Z$ is an $n\times n$ matrix, such that the worst point is defined by $Z=0$ and $\pi=0$. Then we explicitly write down the affine coordinate ring of $\RU_I$ defined by matrix identities. From this, we obtain the affine coordinate ring of $\RU_I\cap \RM^\loc_I$ by calculating the flat closure of $\RU_I$.

\begin{thm} \label{thm16}
	Let $Y$ (\resp $X$) be a $2m\times 2m$ (resp. $2m\times 1$) matrix with variables as entries. Let $H_{2m}$ denote the $2m\times 2m$ anti-diagonal unit matrix. There is an open affine subscheme $\RU_I^\loc$ of $\RM^\loc_I$ which contains the worst point and satisfies the following properties. 
	\begin{enumerate}
		\item If $I=\cbra{0}$, then $\RU^\loc_{\cbra{0}}$ is isomorphic to \begin{flalign*}			       
			         	&\Spec \frac{\CO_F[Y| X]}{\rbra{\wedge^2(Y|X), Y-Y^t, (\frac{\pi}{\ol{\pi}} \frac{\tr({H}_{2m}Y)}{2}+\pi\sqrt{\theta})Y+XX^t   }}, \quad \text{in the (R-U) case},\\ 
	                    &\Spec \frac{\CO_F[Y|X]}{\rbra{\wedge^2(Y|X), Y-Y^t, (\frac{\tr(H_{2m}Y)}{2}-\pi)Y+XX^t   }},  \quad \text{in the (R-P) case}.			               
		       \end{flalign*}
		      (We remark that under the relation $Y-Y^t=0$, the polynomial $\tr(H_{2m}Y)$, which is the sum of the anti-diagonal entries of $Y$, is indeed divisible by $2$ in $\CO_F[Y]$.)  
	  \item If $I=\cbra{m}$, then $\RU^\loc_{\cbra{m}}$ is isomorphic to \begin{flalign*}			       
			         	&\Spec \frac{\CO_F[Y|X ]}{\rbra{\wedge^2(Y|X), Y-Y^t, (\frac{\tr({H}_{2m}Y)}{t}+\sqrt{\theta})Y+XX^t   }}, \quad \text{in the (R-U) case},\\ 
	                    &\Spec \CO_F[X],  \quad \text{in the (R-P) case}.			               
		       \end{flalign*}
	\end{enumerate} 
\end{thm}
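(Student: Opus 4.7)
The plan is to construct an explicit affine open neighborhood $\RU_I$ of the worst point inside $\RM_I$ as a closed subscheme of an affine chart of the Grassmannian, and then to extract $\RU_I^\loc = \RU_I \cap \RM_I^\loc$ by computing the flat closure.

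First, I would pin down the worst point $x_0 \in \RM_I^\loc \otimes \ol k$ explicitly. On the special fiber $\pi$ is nilpotent with $\pi^2 = 0$ on $\Lambda_I \otimes \ol k$, so the Kottwitz condition forces any rank-$n$ submodule $\CF$ to lie in $\ker(\pi) = \pi(\Lambda_I \otimes \ol k)$; equality of ranks picks out the unique $\sH_I \otimes \ol k$-fixed submodule $\CF_0 = \pi(\Lambda_I \otimes \ol k)$, which is the worst point. I would then take the standard affine chart of $\Gr(n, \Lambda_I)_{\CO_F}$ around $x_0$: after a suitable ordering of the $\CO_F$-basis of $\Lambda_I$ that separates the middle basis vector $e_{m+1}$ from the $2m$ paired vectors, the chart is isomorphic to $\Spec \CO_F[Z]$ with $Z$ an $n \times n$ matrix of indeterminates, $\CF$ being the column span of $\bigl(\begin{smallmatrix} Z \\ \pi I_n \end{smallmatrix}\bigr)$ inside $\Lambda_I \otimes \CO_S$, and the worst point corresponding to $Z = 0,\ \pi = 0$. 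Decomposing $Z$ as $(Y \mid X)$ with $Y$ of size $2m \times 2m$ and $X$ a column of length $2m$ reflects this same separation.

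Next, I would translate the three defining conditions of $\RM_I$ into matrix equations in $Z = (Y \mid X)$. The refined Kottwitz/rank condition built into $\RM_I$ (on top of $\RM_I^\naive$) becomes the wedge identity $\wedge^2(Y \mid X) = 0$. The isotropy condition $s(\CF, \CF) = 0$, once the Gram matrix of $s$ in our basis is identified with the anti-diagonal $H_{2m}$ scaled by $\varepsilon^{-1}$, translates to $Y = Y^t$. The duality condition, which compares $\CF$ to $\alpha \CF^\perp$ via the pairing \eqref{perfpairing} and the inclusion $\Lambda_I \hookrightarrow \alpha \Lambda_I^s$, collapses after using the first two relations into the \emph{coupling} relation involving $\tr(H_{2m} Y)$ displayed in the theorem. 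The precise coefficient depends on the case through the values of $\alpha \in \cbra{\ol{\pi}/\varepsilon,\ 1/\varepsilon}$ and $\varepsilon \in \cbra{t,\ 2}$; the parenthetical divisibility remark in the theorem is exactly what allows this relation to be written with integral coefficients.

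Finally, I must show that the displayed quotient ring already equals the coordinate ring of the flat closure $\RU_I^\loc$. Since the generic fiber of $\RM_I$ is $\BP^{n-1}_F$ and our chart realizes an open affine cell, the generic fiber of the displayed ring is irreducible of the correct dimension, so the remaining task is to verify that the ideal is $\pi$-torsion free. For $I = \cbra{m}$ in case (R-P) this is especially clean: the coupling relation, combined with the wedge and symmetry relations, forces $Y \equiv 0$, leaving $\Spec \CO_F[X]$, which is manifestly flat and smooth; this is precisely what feeds the smoothness assertion in Theorem \ref{thm-intromain}(2). In the remaining three cases one verifies $\pi$-torsion-freeness directly, using the rank-one relations to reduce multiplicative obstructions and using the coupling relation to trade $\pi$- (or $\sqrt{\theta}$-) divisible terms against the quadratic piece $XX^t$. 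The main obstacle I expect is exactly this flatness check, where the coupling relation mixes the uniformizer with a quadratic expression, preventing a separation of variables; one must argue directly in the matrix algebra, keeping the four sub-cases (R-U)/(R-P) $\times\ I = \cbra{0}/\cbra{m}$ aligned while tracking the distinct values of $\alpha$ and $\varepsilon$.
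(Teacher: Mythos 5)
Your overall strategy (explicit chart at the worst point inside a refined moduli functor, translation into matrix equations, then flat closure) is the paper's, but the translation step is misidentified in ways that break the argument. The refinement $\RM_I$ is not cut out by the three conditions you list: in residue characteristic two it also imposes the hyperbolicity condition $q(\CF)=0$, which is strictly stronger than $s(\CF,\CF)=0$ (since $s(x,x)=2q(x)$), and, in the cases ($I=\cbra{0}$, (R-U)) and ($I=\cbra{m}$, (R-P)), the strengthened spin condition. These are exactly the conditions doing the work: the ``half of the diagonal'' relations, which are the only integral source of the coupling relation, come from hyperbolicity, and the symmetry $Y=Y^t$ together with the elimination of the extra row and column of the $n\times n$ matrix (the blocks $E,F$ in terms of $G,H$) comes from the spin condition via the worst-term analysis of a basis of $W(\Lambda_0)^{n-1,1}_{-1}$ --- not from the isotropy condition, which in characteristic two only yields the long lists of LM3/LM4-type equations. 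Two further slips: the standard chart around the worst point consists of spans of $\left(\begin{smallmatrix} X\\ I_n\end{smallmatrix}\right)$ with the worst point at $X=0$, $\pi=0$; the columns of $\left(\begin{smallmatrix} Z\\ \pi I_n\end{smallmatrix}\right)$ do not span a rank-$n$ direct summand there. And in the ($I=\cbra{m}$, (R-P)) case the relations do not force $Y\equiv 0$; they express $Y$ as (a reflection of) $-XX^t$, and $\Spec\CO_F[X]$ arises by eliminating $Y$, not by its vanishing.

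The more serious gap is the identification of the displayed ring with $\RU_I^\loc$. The moduli conditions do \emph{not} cut out the displayed ideal: they give only the half-diagonal relations together with $\frac{t}{\pi_0}$-multiples of the full matrix relation (the paper's $\wt{\CI}$), and this agrees with the displayed ideal $\wt{\CI}^\rfl$ only when $\omega(t)=\omega(\pi_0)$ (Lemma \ref{lem-moduli}). So one must prove both that $\Spec\CO_F[\wt{X}]/\wt{\CI}^\rfl$ is $\CO_F$-flat and that it has the same generic fiber (indeed the same underlying space, Lemma \ref{prop-topflat}) as $\RU_I$; only then is it the flat closure. The flatness is the crux, and ``verify $\pi$-torsion-freeness directly'' is not an argument: in the $I=\cbra{0}$ cases the paper proves it by exhibiting the coordinate ring as a finite module over $\CO_F[y_{11},\ldots,y_{2m,2m}]$, showing flatness over that polynomial ring by a fiber-dimension count (both fibers have dimension $2^{2m}$, Lemma \ref{lembas}) combined with miracle flatness, deducing Cohen--Macaulayness, and then applying miracle flatness again over $\CO_F$ (Lemma \ref{lem-spfiberCM}). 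Some mechanism of this kind is required precisely because, as you note, the coupling relation mixes $\pi$ with the quadratic piece $XX^t$; your proposal names the obstacle but supplies no method to overcome it.
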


Using the above result, we reduce the proof of Theorem \ref{thm-intromain} to a purely commutative algebra problem. We need to show that the affine coordinate rings in Theorem \ref{thm16} satisfy the geometric properties as stated in Theorem \ref{thm-intromain}. The hardest part is to show the Cohen-Macaulayness when $I=\cbra{0}$, where we use a converse version of the miracle flatness theorem. We refer to Lemma \ref{lem-spfiberCM} for more details.

We can also relate $\RM_I^\loc$ to the v-sheaf local models considered in \cite[\S 21.4]{scholze2020berkeley}. By results in \cite{anschutz2022p, fakhruddin2022singularities, gleason2022tubular}, we already know that the v-sheaf local models in our case are representable by normal projective flat $\CO_F$-schemes $\BM_I$ (denoted as $\BM_{\sG,\cbra{\mu}}$ in \S \ref{sec-compa}). 

\begin{thm}[Theorem \ref{prop-M=Mv}] \label{thm1.3}
	The local model $\RM^\loc_I$ is isomorphic to $\BM_I$. 
\end{thm}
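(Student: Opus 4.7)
The plan is to construct a natural morphism $f : \RM_I^\loc \to \BM_I$ translating the lattice-theoretic moduli problem of $\RM_I^\naive$ into a v-sheaf moduli problem of $\sH_I$-torsor modifications bounded by $\mu = (n-1,1)$, and then to promote $f$ to an isomorphism using that both sides are flat, projective, normal, and integral over $\CO_F$ with the same generic fiber.

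For the construction, the key input is Appendix \ref{appB}, which identifies the group scheme $\sH_I$ with the parahoric group scheme of $G$ attached to $\Lambda_I$. Combined with the parahoric dictionary in the spirit of \cite{pappas2013local}, a point $\CF \in \RM_I^\naive(S)$ is reinterpreted as a modification of the trivial $\sH_I$-torsor on $\Spec \CO_S$: the Kottwitz condition for signature $(n-1,1)$ cuts out the $\mu$-bounded locus, and the duality and isotropy conditions encode compatibility with the parahoric structure. This yields a morphism of v-sheaves $(\RM_I^\naive)^\diamond \to \BM_{\sG,\{\mu\}}^v$. By the representability results of \cite{anschutz2022p, gleason2022tubular}, this promotes to a scheme morphism $\RM_I^\naive \to \BM_I$, which factors uniquely through the flat closure $\RM_I^\loc \to \BM_I$, giving $f$.

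On generic fibers, both $\RM_I^\loc \otimes_{\CO_F} F$ and $\BM_I \otimes_{\CO_F} F$ are identified with $\BP^{n-1}_F$, the variety of signature $(n-1,1)$ filtrations on the split hermitian space, and $f \otimes F$ is the identity. Thus $f$ is a proper birational morphism between integral projective flat $\CO_F$-schemes, with $\BM_I$ normal. By Zariski's Main Theorem, it suffices to show that $f$ is quasi-finite in order to conclude that it is an isomorphism. For this we use that $f$ is $\sH_I$-equivariant and that the geometric special fibers of both $\RM_I^\loc$ and $\BM_I$ decompose into exactly two $\sH_I \otimes_{\CO_{F_0}} \bar k$-orbits, namely a dense smooth orbit and the single worst point (for the source this is recorded in the discussion preceding Theorem \ref{thm-intromain}, and for the target it follows from the interpretation of $\BM_I \otimes \bar k$ as the closure of a minuscule affine Schubert cell for $\sH_I$). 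Since $f$ must restrict to an isomorphism on the open orbit by birationality and must send the worst point to the worst point by equivariance, it is bijective on the underlying topological spaces of geometric special fibers, hence quasi-finite, hence finite, and finally an isomorphism as a finite birational morphism to a normal scheme.

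The main obstacle is the first step. The classical dictionary between parahoric subgroups of a quasi-split unitary group and self-dual lattice chains used in \cite{pappas2013local} breaks down in the wildly ramified setting at $p=2$, as highlighted by Theorem \ref{thm1.1}, so the translation from the moduli problem defining $\RM_I^\naive$ into the v-sheaf moduli problem defining $\BM_{\sG,\{\mu\}}^v$ cannot be imported verbatim. One must verify directly that the conditions (1)--(4) defining $\RM_I^\naive$, together with the choice of the scalar $\alpha$, describe a $\mu$-bounded $\sH_I$-modification, leveraging the hermitian quadratic module formalism of \S \ref{sec-main} and the explicit structure of $\sH_I$ computed in Appendix \ref{appB}.
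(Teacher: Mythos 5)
Your argument leaves its central step as an acknowledged but unfilled hole, and that step is exactly the hard point in this wildly ramified $p=2$ setting. To get the morphism $(\RM^\naive_I)^\Diamond\to \RM^v_{\sG,\cbra{\mu}}$ you must translate the lattice-theoretic conditions (1)--(4) of $\RM^\naive_I$ into a $\mu$-bounded modification of $\sG$-torsors; the parahoric dictionary of \cite{pappas2013local} is unavailable here (parahorics are no longer stabilizers of self-dual lattice chains, which is the whole point of Theorem \ref{thm1.1}), and Appendix \ref{appB} identifies $\sH_I$ with the parahoric group scheme but provides no such torsor interpretation of the moduli conditions. You also need to descend a map of v-sheaves to a map of schemes out of $\RM^\naive_I$, which is \emph{not} flat, so the representability/full-faithfulness results of \cite{anschutz2022p, gleason2022tubular} do not give you this promotion for free. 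Finally, your isomorphism step rests on the claim that $\BM_I\otimes_{\CO_F}\ol{k}$ has exactly two $\sH_I\otimes_{\CO_{F_0}}\ol{k}$-orbits because its special fiber is the closure of a minuscule affine Schubert cell; but in this exceptional case (wild odd unitary, $p=2$) the identification of the special fiber of the v-sheaf local model with Schubert varieties in an affine flag variety is precisely what is missing from the literature (cf. the discussion around \cite[Remark 2.2]{fakhruddin2022singularities} quoted in the introduction), so this input is unsupported and borders on circular.

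The paper avoids all of this by never constructing a morphism between $\RM^\loc_I$ and $\BM_I$ at all. Since $\RM^\loc_I$ is already known to be flat, projective and normal over $\CO_F$ (Theorem \ref{thm-intromain}), the uniqueness clause of Theorem \ref{thm-vLM} reduces the statement to the equality of v-sheaves $\RM^{\loc,\Diamond}=\RM^v_{\sG,\cbra{\mu}}$. This is proved by embedding $\sG\hookrightarrow \GL(\Lambda_I)\simeq \GL_{2n}$ via Theorem \ref{thm-smooth}; Theorem \ref{thm-Gr}(2) then realizes $\RM^v_{\sG,\cbra{\mu}}$ as the v-closure of $\sFl_{G,\cbra{\mu}}^\Diamond$ inside $\Gr(n,2n)^\Diamond_{\CO_F}$, while $\RM^\loc_I$ is by construction the Zariski closure of $\sFl_{G,\cbra{\mu}}$ inside $\Gr(n,2n)_{\CO_F}$, and applying the diamond functor shows the two closures coincide. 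If you want to salvage your strategy, you would have to actually carry out the torsor-theoretic reinterpretation of the moduli problem and independently establish the orbit structure of $\BM_I\otimes\ol{k}$; as written, both are assumed rather than proved, whereas the group $\GL_{2n}$-embedding route requires neither.
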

As a corollary, our result gives a very explicit construction of $\BM_I$ and a more elementary proof of the representability of the v-sheaf local models in our setting. 
\begin{remark}
   Recently, Cass and Louren{\c{c}}o \cite[Remark 4.17, Corollary 1.5]{cass2025mod} proved that $\BM_I$ is Cohen-Macaulay and has Frobenius-split special fiber, using a more general and uniform approach. 
    Our method, on the other hand, provides explicit local affine coordinate rings and the existence of a closed immersion $\BM_\BI\hookrightarrow\Gr(n,2n)_{\CO_F}$. These results have applications to the study of integral models of Shimura varieties, see \cite{yang20232}.  
\end{remark}

It should be pointed out that it could be useful to provide an explicit moduli interpretation of $\RM^\loc_I$. Let us first briefly review some previous results. We start with reviewing results for odd primes $p$. Let $F/F_0$ be a ramified quadratic extension of $p$-adic local fields. Then we can find a uniformizer $\pi$ (\resp $\pi_0$) of $F$ (\resp $F_0$) such that $\pi^2=\pi_0$, which is possible since $p>2$. Let $(V,h)$ be a split non-degenerate $F/F_0$-hermitian space of \dfn{any} dimension $n\geq 3$. We fix an $F$-basis $(e_i)_{1\leq i\leq n}$ of $V$ such that $h(e_i, e_j)=\delta_{i,n+1-j}$ for $1\leq i,j\leq n$. Set \begin{flalign*}
	    \Lambda_i\coloneqq \CO_F\pair{\pi\inverse e_1,\ldots, \pi\inverse e_i, e_{i+1},\ldots, e_n}, \text{\ for $0\leq i\leq n-1$}. 
\end{flalign*}   
The hermitian form on $V$ defines a perfect alternating $F_0$-bilinear form $\pair{-,-}$ on $V$: \begin{flalign*}
%	   (-,-)\coloneqq \half\Tr_{F/F_0}(h(-,-)),\ 
	   \pair{x,y}\coloneqq \half\Tr_{F/F_0}(\pi\inverse h(x,y)), \text{\ for $x,y\in V$}.
\end{flalign*}
The form satisfies the property that $\pair{x,\pi y}=-\pair{\pi x,y}$ for $x,y\in V$. For lattices $\Lambda_i$ as above, we have the relation $\wh{\Lambda}_i=\pi\Lambda_{n-i}$, where $\wh{\Lambda}_i$ denotes the dual lattice of $\Lambda_i$ with respect to the pairing $\pair{-,-}$.
Let $G\coloneqq \GU(V,h)$ denote the unitary similitude group over $F_0$ attached to $(V,h)$.  Set $m\coloneqq \floor{n/2}$. For a nonempty subset $I\sset \cbra{0,\ldots,m}$ with the property (P) that if $n$ is even and $m-1\in I$ then also $m\in I$, we can form a self-dual lattice chain $\Lambda_I\coloneqq \cbra{\Lambda_j}_{j\in  n\BZ\pm I}$ by setting $\Lambda_{kn-i}\coloneqq \pi^{-k}\wh{\Lambda}_i$ and $\Lambda_{kn+i}\coloneqq \pi^{-k}\Lambda_i$ for $i\in I$ and $k\in\BZ$. We have $\wh{\Lambda}_j=\Lambda_{-j}$ for all $j\in  n\BZ\pm I$. Then the stabilizer in $G(F_0)$ of the lattice chain $\Lambda_I$ contains a parahoric subgroup with index at most $2$. In fact, this provides a one-to-one correspondence between non-empty subsets $I$ of $\cbra{0,\ldots,m}$ with the property (P) and conjugacy classes of parahoric subgroups in $G(F_0)$. Building on works of \cite{rapoport1996period,pappas2000arithmetic,pappas2009local}, Smithling formulated in \cite{smithling2015moduli} the ``strengthened spin condition" and used it to define an explicit moduli functor $\RM_I^\Sm(r,s)$ refining the naive unitary local model of \dfn{any} signature $(r,s)$ with \dfn{any} parahoric level structure corresponding to the subset $I$ . We refer to \cite{smithling2015moduli} for the detailed definition of this moduli functor.  
Smithling conjectured that $\RM^\Sm_I(r,s)$ represents the unitary local model of signature $(r,s)$ with parahoric level structure corresponding to $I$, see  \cite[Conjecture 1.3]{smithling2015moduli}. When the signature is $(n-1,1)$, the conjecture was recently proved by \cite{luo} generalizing the results in \cite{smithling2015moduli,yu2019moduli}.  

We now discuss $p=2$. Let $F/F_0$ be a ramified quadratic extension of $2$-adic local fields. Let $G$ denote the unitary similitude group attached to a split non-degenerate $F/F_0$-hermitian space $(V,h)$ of \dfn{even} dimension $n=2m$. In his unpublished manuscript \cite{kirchunitary}, Kirch obtained a description of parahoirc subgroups of $G(F_0)$ in terms of self-dual lattice chains of hyperbolic lattices. We quickly recall the description here. The hermitian form $h$ induces a perfect symmetric $F_0$-bilinear pairing and a quadratic form on $V$: $$s(x,y)\coloneqq \varepsilon\inverse\Tr_{F/F_0}h(x,y),\ q(x)\coloneqq \half s(x,x),\ \text{for $x,y\in V$}, $$ where $\varepsilon$ is defined as in \eqref{epsilon}. We say a lattice $\Lambda$ in $V$ is a \dfn{hyperbolic} lattice if $\Lambda$ is an orthogonal sum (with respect to the symmetric pairing $s$) of hyperbolic planes in the sense of \cite[\S 2]{kirch2017construction}. See also discussion in Remark \ref{rmk-hyp}. For $0\leq i\leq m$, Kirch found standard hyperbolic lattices $\Lambda_i$ with explicit generators. These lattices $\Lambda_i$ satisfy the property that for $x\in\Lambda_i$, we have $q(x)\in \CO_{F_0}$. For a non-empty subset $I$ of $\cbra{0,\ldots,m}$ with the property (P) that if $m-1\in I$ then $m\in I$, one can obtain a self-dual lattice chain $\Lambda_I\coloneqq (\Lambda_j)_{j\in n\BZ\pm I}$ of hyperbolic lattices by setting $\Lambda_{kn-i}\coloneqq \pi^{-k}{\Lambda}^s_i$ and $\Lambda_{kn+i}\coloneqq \pi^{-k}\Lambda_i$ for $i\in I$ and $k\in\BZ$. Here $\Lambda_i^s$ denotes the dual lattice of $\Lambda_i$ with respect to the symmetric pairing $s$. Then Kirch showed that the stabilizer of $\Lambda_I$ contains a parahoric subgroup of $G(F_0)$ with index at most $2$, and this gives a one-to-one correspondence between subsets $I$ with the property (P) and conjugacy classes of parahoric subgroups of $G(F_0)$. His proof follows the analysis of \cite[\S 4]{pappas2008twisted}, which is different from our method in the proof of Theorem \ref{thm1.1}. For a subset $I\sset\cbra{0,\ldots,m}$ containing $m$, Kirch defined a moduli functor $$\RM_I^\Kir(n-1,1): (\Sch/\CO_F)^\op\ra \Sets$$ which sends an $\CO_F$-scheme $S$ to the set of $\CO_S$-modules $(\CF_j)_{j\in n\BZ\pm I}$ such that 
    \begin{enumerate}
	  	\item ($\pi$-stability condition) For all $j\in  n\BZ\pm I$, $\CF_j$ is an $\CO_F\otimes_{\CO_{F_0}}\CO_S$-submodule of $\Lambda_j\otimes_{\CO_{F_0}}\CO_S$ and as an $\CO_S$-submodule, it is a locally direct summand of rank $n$.
	  	\item (Kottwitz condition) For all $j\in  n\BZ\pm I$, the action of $\pi\otimes 1\in \CO_F\otimes_{\CO_{F_0}}\CO_S$ on $\CF_j$ has characteristic polynomial $$\det(T-\pi\otimes 1\ |\ \CF_j)= (T-\pi)(T-\ol{\pi})^{n-1}.$$
	  	\item For all $j<j'$ in $ n\BZ\pm I$, the map $\Lambda_j\otimes_{\CO_{F_0}}\CO_S\ra \Lambda_{j'}\otimes_{\CO_{F_0}}\CO_S$ induced by the inclusion $\Lambda_j\hookrightarrow \Lambda_{j'}$ sends $\CF_j$ to $\CF_{j'}$. Furthermore, for each $j$, the isomorphism $\pi: \Lambda_j\ra \Lambda_{j-n}$ induces an isomorphism between $\CF_j$ and $\CF_{j-n}$. 
	  	\item For all $j\in n\BZ\pm I$, we have $\CF_{n-j}=\CF_j^\perp$, where $\CF^\perp_j$ denotes the orthogonal complement of $\CF_j$ under the perfect pairing \begin{flalign*}
	  		     (\Lambda_j\otimes_{\CO_{F_0}}\CO_S)\times (\Lambda_{n-j}\otimes_{\CO_{F_0}}\CO_S)\ra \CO_S
	  	      \end{flalign*} induced by the perfect symmetric paring $s(-,-)$ on $V$.
	  	\item (Hyperbolicity condition) For all $j\in n\BZ\pm I$, the quadratic form $q: \Lambda_j\otimes_{\CO_{F_0}}\CO_S\ra \CO_S$ induced by $q: \Lambda_j\ra \CO_S$ satisfies $q(\CF_j)=0$. 
	  	\item (Wedge condition) For all $j\in  n\BZ\pm I$, the action of $\pi\otimes 1-1\otimes\ol{\pi}\in \CO_F\otimes_{\CO_{F_0}}\CO_S$ on $\CF_j$ satisfies $$\wedge^2(\pi\otimes 1-1\otimes\ol{\pi}\ |\ \CF_j)=0.$$
	  	\item (Spin condition) The action of $\pi\otimes 1-1\otimes \ol{\pi} \in \CO_F\otimes_{\CO_{F_0}}\CO_S$ on $\CF_m$ is nowhere zero.
	  	      (See more discussion of this condition in \cite[Remark 9.9]{rapoport2018regular}.)
     \end{enumerate}
In \cite{kirchunitary}, Kirch claimed (without proof) that $\RM^\Kir_I(n-1,1)$ is representable by a Cohen-Macaulay and flat projective $\CO_F$-scheme. When $n=2$, the scheme $\RM^\Kir_I(n-1,1)$ descends to a scheme over $\CO_{F_0}$. One should observe that our construction of $\RM_I$ in the present paper is inspired by Smithling and Kirch.  

However, for odd unitary unitary groups and $p=2$, it seems hard to give a moduli interpretation. As a by-product of our analysis of $\RU^\loc_I$, we can obtain such a description in a special case.

\begin{thm} \label{intro-thm1.6}
	Suppose $F/F_0$ is of type (R-U) and assume that the valuations of $t$ and $\pi_0$ are equal\footnote{This holds if $F_0$ is unramified over $\BQ_2$, see some more discussion in Remark \ref{rmk-t=pi0}.   }. Then $\RM_{\cbra{0}}^\loc$ represents the functor $$(\Sch/\CO_{F})^\op\lra \Sets$$ which sends an $\CO_F$-scheme $S$ to the set of $\CO_S$-modules $\CF$ such that \footnote{As in \cite[Lemma 5.2, Remark 5.4]{smithling2015moduli}, the conditions \textbf{LM2} and \textbf{LM5} are in fact implied by \textbf{LM6}. }
	\begin{enumerate}[label=\textbf{LM\arabic*}]
		\item ($\pi$-stability condition) $\CF$ is an $\CO_F\otimes_{\CO_{F_0}}\CO_S$-submodule of $\Lambda_0\otimes_{\CO_{F_0}}\CO_S$ and as an $\CO_S$-module, it is a locally direct summand of rank $n$.
		\item (Kottwitz condition) The action of $\pi\otimes 1\in\CO_F\otimes_{\CO_{F_0}}\CO_S$ on $\CF$ has characteristic polynomial $$\det(T-\pi\otimes 1\ |\ \CF)=(T-\pi)(T-\ol{\pi})^{n-1}.$$
		\item Let $\CF^\perp$ be the orthogonal complement in $\Lambda_0^s\otimes_{\CO_{F_0}}\CO_S$ of $\CF$ with respect to the perfect pairing $$  (\Lambda_0\otimes_{\CO_{F_0}}\CO_S) \times (\Lambda_0^s\otimes_{\CO_{F_0}}\CO_S) \ra \CO_S$$ induced by the perfect pairing in \eqref{perfpairing}. We require that the map $\Lambda_0\otimes_{\CO_{F_0}}\CO_S \ra \frac{\ol{\pi}}{t} \Lambda_0^s\otimes_{\CO_{F_0}}\CO_S$ induced by $\Lambda_0\hookrightarrow \frac{\ol{\pi}}{t}\Lambda_I^s$ sends $\CF$ to $\frac{\ol{\pi}}{t}\CF^\perp,$ where $\frac{\ol{\pi}}{t} \CF^\perp$ denotes the image of $\CF^\perp$ under the isomorphism $\frac{\ol{\pi}}{t}:\Lambda_0^s\otimes_{\CO_{F_0}}\CO_S \simto \frac{\ol{\pi}}{t}\Lambda_0^s\otimes_{\CO_{F_0}}\CO_S$.		
		\item (Hyperbolicity condition) The quadratic form $q: \Lambda_0\otimes_{\CO_{F_0}}\CO_S\ra \sL\otimes_{\CO_{F_0}}\CO_S$ induced by $q:\Lambda_0\ra \sL$ satisfies $q(\CF)=0$. 
		\item (Wedge condition) The action of $\pi\otimes 1-1\otimes\ol{\pi}\in \CO_F\otimes_{\CO_{F_0}}\CO_S$ on $\CF$ satisfies $$\wedge^2(\pi\otimes 1-1\otimes \ol{\pi}\ |\ \CF )=0.$$
		\item (Strengthened spin condition) The line $\wedge^n\CF\sset W(\Lambda_0)\otimes_{\CO_F}\CO_S$ is contained in $$\Im\rbra{W(\Lambda_0)^{n-1,1}_{- 1}\otimes_{\CO_F}\CO_S\ra W(\Lambda_0)\otimes_{\CO_F}\CO_S }.$$	 
		     (See \S \ref{subsubsec-spin} for the explanation of the notation in this condition.)
	\end{enumerate}
\end{thm}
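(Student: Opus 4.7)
The plan is to show that the moduli functor defined by LM1--LM6, which I denote $\CN$, is represented by $\RM^\loc_{\cbra{0}}$. The overall strategy is to establish the chain of closed immersions $\RM^\loc_{\cbra{0}}\hookrightarrow \CN\hookrightarrow \RM^\naive_{\cbra{0}}$ and then use the explicit local presentation of $\RU^\loc_{\cbra{0}}$ from Theorem \ref{thm16} to conclude the first arrow is an equality in an affine chart near the worst point.

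First I would observe that LM1, LM2, LM3 coincide with conditions (1), (2), (3) of $\RM^\naive_{\cbra{0}}$, and that LM4 (hyperbolicity $q(\CF)=0$) implies condition (4) (total isotropy $s(\CF,\CF)=0$) via the polarization identity $s(x,y)=q(x+y)-q(x)-q(y)$. Thus $\CN$ is representable by a closed subscheme of $\RM^\naive_{\cbra{0}}$. For the inclusion $\RM^\loc_{\cbra{0}}\hookrightarrow \CN$, I would verify LM4, LM5, LM6 on $\RM^\loc_{\cbra{0}}$. Since these cut out closed subfunctors and $\RM^\loc_{\cbra{0}}$ is the flat closure of the common generic fiber in $\RM^\naive_{\cbra{0}}$, it suffices to verify them generically; over $F$, the splitting of $\CO_F\otimes_{\CO_{F_0}}F$ together with the Kottwitz condition LM2 pins $\CF$ down as a direct summand on which the hyperbolicity, wedge and strengthened spin conditions all become automatic.

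For the reverse inclusion $\CN\hookrightarrow \RM^\loc_{\cbra{0}}$, I would exploit $\sH_{\cbra{0}}$-equivariance: both schemes are $\sH_{\cbra{0}}$-stable closed subschemes of $\RM^\naive_{\cbra{0}}$, and by the orbit analysis recalled before Theorem \ref{thm16}, the geometric special fiber of $\RM^\loc_{\cbra{0}}$ has exactly two $\sH_{\cbra{0}}$-orbits, with the open orbit consisting of smooth points. Away from the worst point $\RM^\loc_{\cbra{0}}$ is smooth, so the closed immersion $\RM^\loc_{\cbra{0}}\hookrightarrow \CN$ is an equality there for dimension reasons (both are flat over $\CO_F$ and agree on the generic fiber $\BP^{n-1}_F$). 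It therefore suffices to match $\CN$ and $\RM^\loc_{\cbra{0}}$ scheme-theoretically in the standard affine chart of $\Gr(n,\Lambda_0)_{\CO_F}$ where $\CF$ is parametrized by $(Y|X)$ as in the proof of Theorem \ref{thm16}, translating each of LM1--LM6 into an equation on the entries of $(Y|X)$ and comparing with the presentation of $\RU^\loc_{\cbra{0}}$ in Theorem \ref{thm16}(1).

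The main technical obstacle is translating LM6 into the cubic relation from Theorem \ref{thm16}(1). Conditions LM1 and LM5 produce the minor equations $\wedge^2(Y|X)=0$ essentially tautologically, and LM4 yields $Y=Y^t$ by writing $q$ in the hyperbolic basis of $\Lambda_0$; what remains is to extract the cubic relation $(\tfrac{\pi}{\ol{\pi}}\tfrac{\tr(H_{2m}Y)}{2}+\pi\sqrt{\theta})Y+XX^t=0$ from LM6 together with LM2. This requires unwinding the description of the line subspace $W(\Lambda_0)^{n-1,1}_{-1}\sset W(\Lambda_0)$ in the explicit $\CO_F$-basis of $\Lambda_0$ from Theorem \ref{thm1.1}. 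The hypothesis $\val(t)=\val(\pi_0)$ enters decisively here: under this assumption, the normalization factor $\alpha=\ol{\pi}/t$ entering the second perfect pairing has the same $\pi$-adic valuation as $1/\pi$, ensuring that the integral spin line is generated by an expression whose leading coefficient matches the factor $\tfrac{\pi}{\ol{\pi}}$ in Theorem \ref{thm16}(1); without this assumption, a lower power of $\pi$ would appear and the spin equation would fail to coincide with the equation cutting out $\RU^\loc_{\cbra{0}}$. Once the chart matches $\RU^\loc_{\cbra{0}}$, combining with smoothness away from the worst point gives $\CN=\RM^\loc_{\cbra{0}}$.
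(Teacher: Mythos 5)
Your skeleton is the paper's own: the functor you call $\CN$ is exactly $\RM_{\cbra{0}}$ of Definition \ref{defn-M}, the inclusion $\RM^\loc_{\cbra{0}}\subset\CN$ follows as you say because the flat closure is the smallest closed subscheme of $\RM^\naive_{\tcbra{0}}$ containing the generic fiber, and the decisive work is the matrix translation of \textbf{LM1--6} on the chart at the worst point plus the $\sH_{\cbra{0}}$-action. However, two of your steps do not work as written. The claim that away from the worst point the closed immersion $\RM^\loc_{\cbra{0}}\hookrightarrow\CN$ is an equality ``for dimension reasons (both are flat over $\CO_F$)'' is circular: flatness of $\CN$ is precisely what the theorem asserts, and a closed immersion of a smooth scheme into a scheme with the same underlying space and the same generic fiber need not be an isomorphism (the larger scheme can carry $\pi$-torsion or nilpotents along the special fiber). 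The paper globalizes differently: Lemma \ref{lem-Gorbit} is applied to the special fiber of $\RM_{\cbra{0}}$ itself (two $\sH_{\cbra{0}}$-orbits, with the worst point in the closure of the open one), so the $\sH_{\cbra{0}}$-translates of the open chart $\RU_{\cbra{0}}$ cover $\RM_{\cbra{0}}$; once $\RU_{\cbra{0}}=\RU^\rfl_{\cbra{0}}$ is known to be flat, $\RM_{\cbra{0}}$ is flat and hence equals $\RM^\loc_{\cbra{0}}$. Your reduction to the chart should be repaired along these lines rather than via the smooth locus.

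The second gap is in where the hypothesis $\omega(t)=\omega(\pi_0)$ enters and which condition produces which equation. In the paper's computation (\S\ref{sec-0ru}), the strengthened spin condition \textbf{LM6} yields only the linear symmetry relations \eqref{eq-AD}, \eqref{eq-BCEF}, \eqref{eq-X1} (i.e.\ $Y=Y^t$ and the expressions of $E,F$ through $G,H$), not $\wedge^2$ or the quadratic relation; the relation $(\frac{\pi}{\ol{\pi}}\frac{\tr(H_{2m}Y)}{2}+\pi\sqrt{\theta})Y+XX^t=0$ of Theorem \ref{thm16}(1) comes from \textbf{LM3} (with \textbf{LM4} contributing only its half-diagonal), and it appears with the prefactor $t/\pi_0$ (Proposition \ref{prop-U0ring}). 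The hypothesis is used exactly to make $t/\pi_0$ a unit, so that the ideal cut out by \textbf{LM1--6} coincides with the flat ideal $\wt{\CI}^\rfl$ (Lemma \ref{lem-moduli}); it is not about normalizing the integral spin basis so that ``the leading coefficient matches $\pi/\ol{\pi}$,'' and \textbf{LM6} together with \textbf{LM2} will not produce that relation, while \textbf{LM4} does not give $Y=Y^t$. Also, without the assumption the paper does not claim the identification fails; it only proves topological coincidence (Proposition \ref{prop-Ufl}(2)), so your parenthetical ``would fail to coincide'' overstates what is known. Since your plan's ``main technical obstacle'' is aimed at the wrong condition, as described it would stall; carrying out the full translation of all six conditions, as the paper does, is what actually closes the argument.
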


We now give an overview of the paper. In \S \ref{sec-BTtheory}, we discuss Bruhat-Tits theory for (odd) unitary groups in residue characteristic two. In particular, we describe the maxi-minorant norms (\dfn{norme maximinorante} in French) used in \cite{bruhat1987schemas} in terms of graded lattice chains, and thus obtain a lattice-theoretic description of the Bruhat-Tits buildings of unitary groups. As a corollary, we deduce Theorem \ref{thm1.1}. In \S \ref{sec-main}, we first discuss some basic facts about quadratic extensions of $2$-adic fields. Then we equip the lattices $\Lambda_i$ in Theorem \ref{thm1.1} with the structure of hermitian quadratic modules. Using this, we define the naive local models $\RM_I^\naive$ and local models $\RM_I^\loc$. In \S 4-7, we prove Theorem \ref{thm-intromain}, \ref{thm16} and \ref{intro-thm1.6}. We address the (R-U) and (R-P) case separately, although the techniques are very similar. In each section, we introduce the refinement $\RM_I$ of $\RM_I^\naive$ by imposing certain linear algebraic conditions and then explicitly write down the local affine coordinate rings. We then obtain Theorem \ref{thm16} by computing the flat closure of these affine coordinate rings. Utilizing the group action on local models, we finish the proof of Theorem \ref{thm-intromain} and Theorem \ref{intro-thm1.6}. In \S \ref{sec-compa}, we review the Beilinson-Drinfeld Grassmannian (in mixed characteristic) and v-sheaf local models of Scholze-Weinstein. Then we show that the local models in Theorem \ref{thm-intromain} represent the v-sheaf local models, thereby proving Theorem \ref{thm1.3}. In Appendix \ref{appB}, we show that, under certain conditions, hermitian quadratic modules \etale locally have a normal form up to similitude. Along the way, we prove in Theorem \ref{thmsimm} and Theorem \ref{thmsimm0} that the similitude automorphism group scheme of $\Lambda_m$ (resp. $(\Lambda_0,\phi)$) is affine smooth over $\CO_{F_0}$ and is isomorphic to the parahoric group scheme attached to $\Lambda_m$ (resp. $\Lambda_0$).

\subsection*{Acknowledgments}
This work is part of my PhD thesis at Michigan State University. I thank my advisor G. Pappas for patient and helpful discussions and for reading drafts of the paper. I am grateful to M. Rapoport for conversations at MSRI in the beginning of this project. I also want to thank Y. Luo for sharing the preliminary version of \cite{luo}, and T. Haines for discussions regarding the works \cite{anschutz2022p} and \cite{fakhruddin2022singularities}. Finally, I would like to thank the anonymous referees for their careful reading and valuable suggestions and comments. This project is partially supported by the Graduate Research Associate Fellowship at MSU and NSF Grant DMS-2100743.

\section{Bruhat-Tits theory for unitary groups in residue characteristic two} 
\label{sec-BTtheory}

\subsection{Notations}\label{subsec-notation}
Let $F_0$ be a finite extension of $\BQ_2$. Let $\omega: F_0\ra \BZ\cup\{+\infty\}$ denote the normalized valuation on $F_0$. Let $F/F_0$ be a (wildly totally) ramified quadratic extension. The valuation $\omega$ uniquely extends to a valuation on $F$, which is still denoted by $\omega$. Denote by $\sigma$ the nontrivial element in $\Gal(F/F_0)$. For $x\in F$, we will write $x^\sigma$ or $\ol{x}$ for the Galois conjugate of $x$ in $F$. Let $\CO_F$ (resp. $\CO_{F_0}$) be the ring of integers of $F$ (resp. $F_0$) with uniformizer $\pi$ (resp. $\pi_0$). We assume $N_{F/F_0}(\pi)=\pi_0$. Let $k$ be the common residue field of $F$ and $F_0$. Let $V$ be an $F$-vector space of dimension $n=2m+1\geq 3$ with a non-degenerate hermitian form $h: V\times V\ra F$. We assume that there exists an $F$-basis $(e_i)_{1\leq i\leq n}$ of $V$ such that $h(e_i,e_j)=\delta_{i,n+1-j}$ for $1\leq i,j\leq n$. In this case, we will say the hermitian form $h$ is \dfn{split}, or $(V,h)$ is a split hermitian space.

(We remark that all results in \S \ref{sec-BTtheory} are valid when $F_0$ is a finite extension of $\BQ_p$ for $p>2$, see Remark \ref{rmk-25} and \ref{rmk-29}.)

\subsection{Bruhat-Tits buildings in terms of norms}\label{subsec-buildingnorm}
In this subsection, we would like to recall the description of Bruhat-Tits buildings of odd dimensional (quasi-split) unitary groups in residue characteristic two in terms of norms. The standard reference is \cite{bruhat1987schemas}. There is a summary (in English) in \cite[\S 1]{lemaire2009comparison}. See also \cite[Example 1.15,2.10]{tits1979reductive}. 
%and \cite[Chapter 15]{kaletha2023bruhat} (including a discussion about the case $p=2$ in Remark 15.2.12).

Let $G\coloneqq \operatorname{U}(V,h)$ denote the unitary group over $F_0$ attached to $(V,h)$. Then there is an embedding of (enlarged) buildings $$\CB(G,F_0)\hookrightarrow \CB(\GL_F(V),F).$$ 
 
\begin{defn}
	A \dfn{norm} on $V$ is a map $\alpha: V\ra \BR\cup\cbra{+\infty}$ such that for $x,y\in V$ and $\lambda\in F$, we have \begin{flalign*}
	   \alpha(x+y) \geq \inf\cbra{\alpha(x),\alpha(y)},\ \alpha(\lambda x)=\omega(\lambda)+\alpha(x), \text{\ and\ } x=0 \Leftrightarrow \alpha(x)=+\infty.
\end{flalign*}
\end{defn}

\begin{example}\label{ex-norms}
	\begin{enumerate}
	    \item Let $V$ be a one dimensional $F$-vector space. Then any norm $\alpha$ on $V$ is uniquely determined by its value of a non-zero element in $V$: for any $0\neq x\in V$ and $\lambda\in F$, we have $$\alpha(\lambda x)=\omega(\lambda)+\alpha(x).$$ 
		\item Let $V_1$ and $V_2$ be two finite dimensional $F$-vector spaces. Let $\alpha_i$ be a norm on $V_i$ for $i=1,2$. The \dfn{direct sum} of $\alpha_1$ and $\alpha_2$ is defined as a norm $\alpha_1\oplus\alpha_2: V_1\oplus V_2\ra \BR\cup\cbra{+\infty}$ via $$(\alpha_1\oplus\alpha_2)(x_1+x_2)\coloneqq \inf\cbra{\alpha_1(x_1),\alpha_2(x_2)}, \text{\ for\ }x_i\in V_i.$$
	\end{enumerate}
\end{example}

\begin{prop}[{\cite[15.1.11]{kaletha2023bruhat}}]
	Let $\alpha$ be a norm on $V$. Then there exists a basis $(e_i)_{1\leq i\leq n}$ of $V$ and $n$ real numbers $c_i$ for $1\leq i\leq n$ such that $$\alpha(\sum_{i=1}^nx_ie_i)=\inf_{1\leq i\leq n}\cbra{\omega(x_i)-c_i}.$$
	In this case, we say $(e_i)_{1\leq i\leq n}$ is a splitting basis of $\alpha$, or $\alpha$ is split by $(e_i)_{1\leq i\leq n}$.
\end{prop}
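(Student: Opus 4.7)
The plan is to proceed by induction on $n = \dim_F V$. The base case $n=1$ is immediate from Example \ref{ex-norms}(1): any nonzero vector $e_1$ serves as a splitting basis with $c_1 = -\alpha(e_1)$. For the inductive step, I would fix a hyperplane $W \subset V$ and apply the inductive hypothesis to $\alpha|_W$ to obtain a splitting basis $e_1, \ldots, e_{n-1}$ of $W$. It then suffices to find $e_n \in V \setminus W$ such that $\alpha$ agrees with the direct sum $\alpha|_W \oplus \alpha|_{Fe_n}$ of Example \ref{ex-norms}(2), since this direct sum formula combined with the inductive splitting of $\alpha|_W$ yields the required expression with $c_n = -\alpha(e_n)$.

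The key step is the following splitting lemma: there exists $e_n \in V \setminus W$ such that $\alpha(e_n + w) \leq \alpha(e_n)$ for every $w \in W$. To construct $e_n$, I would pick any $v_0 \in V \setminus W$ and set $M := \sup_{w \in W} \alpha(v_0 + w)$. The supremum is finite, for otherwise a sequence $w_n \in W$ with $\alpha(v_0 + w_n) \to \infty$ would converge to $-v_0$ in the $\alpha$-topology, contradicting the closedness of $W$ in $V$ (any finite-dimensional subspace over the complete field $F$ is closed). The supremum is attained because $V$ is locally compact with its canonical topology and $\alpha$ is continuous (the sets $\{v \in V : \alpha(v) \geq c\}$ are $\CO_F$-submodules, hence clopen by the non-archimedean inequality), so $\alpha$ achieves its maximum on the compact set $\{v \in v_0 + W : \alpha(v) \geq M-1\}$. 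If $w_0 \in W$ realizes the maximum, then $e_n := v_0 + w_0$ satisfies $\alpha(e_n + w) = \alpha(v_0 + (w_0 + w)) \leq M = \alpha(e_n)$ for every $w \in W$.

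To verify that $(e_1,\ldots,e_n)$ is a splitting basis of $\alpha$, it suffices, by the inductive splitting of $\alpha|_W$, to show $\alpha(u + x e_n) = \min\{\alpha(u), \omega(x) + \alpha(e_n)\}$ for all $u \in W$ and $x \in F$. The inequality $\geq$ is the norm axiom. For $\leq$, when $\alpha(u) \neq \omega(x) + \alpha(e_n)$ the standard reverse triangle inequality for non-archimedean norms yields equality directly. When $\alpha(u) = \omega(x) + \alpha(e_n)$ with $x \neq 0$, I would write $\alpha(u + xe_n) = \omega(x) + \alpha(e_n + x^{-1}u)$ and apply the defining property of $e_n$ to $x^{-1}u \in W$ to obtain $\alpha(u + xe_n) \leq \omega(x) + \alpha(e_n)$. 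The main obstacle in the argument is the splitting lemma, and specifically the attainment of the supremum $M$; once this is in hand, everything else reduces to routine manipulations with the non-archimedean inequality.
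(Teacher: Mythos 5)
The paper does not actually prove this proposition --- it is quoted verbatim from the literature with the citation to \cite[15.1.11]{kaletha2023bruhat} --- so there is no in-paper argument to compare against. Your strategy (induction on $\dim_F V$, splitting off a hyperplane $W$, and producing $e_n\in v_0+W$ on which $\alpha$ attains its supremum over the coset) is exactly the classical Goldman--Iwahori/Bruhat--Tits argument, and your final verification, the case split on $\alpha(u)$ versus $\omega(x)+\alpha(e_n)$ together with the maximality of $e_n$, is correct. Where you genuinely diverge from the cited treatment is in how the supremum is shown to be attained: you invoke local compactness of $F$, which is legitimate here since $F/\BQ_2$ is finite, whereas the classical argument uses only that $F$ is complete and discretely valued. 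Namely, by induction the value set of $\alpha|_W$ is a finite union of cosets of $\omega(F^\times)$, hence discrete; if $M$ were not attained, a strictly increasing sequence $\alpha(v_0+w_n)\nearrow M$ would give $\alpha(w_{n+1}-w_n)=\alpha(v_0+w_n)$ accumulating at $M$, a contradiction, and finiteness of $M$ follows because such a sequence $(w_n)$ is $\alpha$-Cauchy in the split (hence complete) space $W$, which would force $v_0\in W$. That route buys independence from local compactness and is presumably what the reference does.

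Two of your topological assertions are glossed and, as written, do not quite do the job. First, your finiteness argument appeals to ``closedness of $W$'', but what is needed is that $W$ is closed for the topology defined by $\alpha$ (equivalently, that $\alpha$-convergence implies convergence in the canonical topology); this is the nontrivial direction of the equivalence of norms on a finite-dimensional space over a complete field, and should be either cited explicitly or extracted from the inductive hypothesis (the split norm $\alpha|_W$ makes $W$ complete, hence closed). Second, the parenthetical ``the sets $\{v:\alpha(v)\ge c\}$ are $\CO_F$-submodules, hence clopen'' does not yield compactness of $\{v\in v_0+W:\alpha(v)\ge M-1\}$: clopenness is not boundedness, and boundedness of $\{\alpha\ge c\}$ is essentially the comparison of $\alpha$ with a split norm, i.e.\ dangerously close to what you are proving. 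Both points are repaired cheaply using the inductive hypothesis: for $v\in v_0+W$ with $\alpha(v)\ge M-1$ one has $\alpha(v-v_0)\ge\min\{M-1,\alpha(v_0)\}$, and the set of $w\in W$ with $\alpha(w)$ bounded below is compact because $\alpha|_W$ is split by $(e_1,\dots,e_{n-1})$; since $\alpha$ is locally constant away from $0$ and $0\notin v_0+W$, its maximum on $v_0+\{w\in W:\alpha(w)\ge c_0\}$ is attained and equals $M$. With either this repair or the discreteness argument above, your proof is complete.
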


Denote by $\CN$ the set of all norms on $V$. Then $\CN$ carries a natural $\GL_F(V)(F)$-action via \begin{flalign}
	  (g\alpha)(x)\coloneqq \alpha(g\inverse x), \text{\ for\ } g\in \GL_F(V)(F) \text{\ and\ } x\in V. 
	 \label{action}
\end{flalign} 

For each $F$-basis $\rbra{e_i}_{1\leq i\leq n}$ of $V$, we have a corresponding maximal $F$-split torus $T$ of $\GL_F(V)$ whose $F$-points are diagonal matrices with respect to the basis $(e_i)_{1\leq i\leq n}$. 
%Following the convention of Bruhat-Tits \cite{bruhat1972groupes} (and also Tits \cite{tits1979reductive}), 
The cocharacter group $X_*(T)$ has a $\BZ$-basis $(\mu_i)_{1\leq i\leq n}$, where $\mu_i: \BG_{m,F}\ra T$ is a cocharacter characterized by \begin{flalign}
	     \mu_i(t)e_j=t^{-\delta_{ij}} e_j, \text{\ for $t\in F\cross$ and $1\leq i,j\leq n$}, \label{eqmui}
\end{flalign} where $\delta_{ij}$ is the Kronecker symbol. Fixing an origin, we may identify the apartment $\CA\sset \CB(\GL_F(V),F)$ corresponding to $T$ with $X_*(T)_\BR$. 

\begin{prop}[{\cite[2.8,2.11]{bruhat1984schemas}}]  \label{prop23}
   The map \begin{flalign}
	    \CA=X_*(T)_\BR &\lra \CN \label{mapnorm} \\ \sum_{i=1}^n c_i\mu_i &\mapsto \rbra{\sum_{i=1}^n x_i e_i\mapsto \inf_{1\leq i\leq n} \cbra{\omega(x_i)-c_i }   }, \notag
\end{flalign} where $c_i\in \BR$, $x_i\in F$ and $\sum_{i=1}^n x_ie_i\in V$, extends uniquely to an isomorphism of $\GL_F(V)$-sets $$\CB(\GL_F(V),F) \simto \CN.$$ Moreover, the image of $X_*(T)_\BR$ in $\CN$ is the set of norms on $V$ admitting $(e_i)_{1\leq i\leq n}$ as a splitting basis. 
\end{prop}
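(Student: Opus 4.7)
The plan is to first construct the map on the apartment $\CA$ directly, verify it is a bijection onto the subset of norms with splitting basis $(e_i)_{1\leq i\leq n}$, and then bootstrap to all of $\CB(\GL_F(V),F)$ by $\GL_F(V)(F)$-equivariance.

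First I would check that the formula in \eqref{mapnorm} really does define a norm, using the ultrametric property of $\omega$: the scaling and non-degeneracy axioms are immediate, and the ultrametric inequality reduces to $\omega(x_i+y_i) \geq \inf\{\omega(x_i),\omega(y_i)\}$ termwise. The resulting assignment $(c_1,\ldots,c_n)\mapsto \alpha$ is injective, since evaluating $\alpha$ on $e_i$ recovers $-c_i$. Thus \eqref{mapnorm} gives a well-defined injection from $\CA = X_*(T)_\BR$ into $\CN$ whose image is exactly the norms split by $(e_i)_{1\leq i\leq n}$, by definition of a splitting basis.

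Next I would show the map is equivariant for the action of the normalizer $N_{\GL_F(V)}(T)(F)$. An element of $N(T)(F)$ is, with respect to $(e_i)$, the product of a permutation matrix $w$ and a diagonal matrix $\diag(\lambda_1,\ldots,\lambda_n)$; on $X_*(T)_\BR$ it acts by permuting the $\mu_i$ via $w$ and translating by $\sum \omega(\lambda_i)\mu_{w(i)}$. A direct calculation using \eqref{eqmui} and the action \eqref{action} shows that this matches the transformation of the associated norm. Combined with the previous paragraph, this lets us declare the map on $\CA$ to be the restriction of a putative $\GL_F(V)(F)$-equivariant extension $\Phi: \CB(\GL_F(V),F)\to\CN$.

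Then I would extend $\Phi$ to the whole building by setting $\Phi(g\cdot a) \coloneqq g\cdot\Phi(a)$ for $g\in\GL_F(V)(F)$ and $a\in \CA$, using that $\GL_F(V)(F)\cdot\CA = \CB(\GL_F(V),F)$ since all apartments are $\GL_F(V)(F)$-conjugate to $\CA$ (the maximal $F$-split tori being conjugate). Well-definedness reduces to the following claim: if $g\in \GL_F(V)(F)$ and $a,a'\in\CA$ satisfy $g\cdot a = a'$, then $g\cdot \Phi(a) = \Phi(a')$. By the building axioms one can factor such a $g$ as an element fixing $\CA$ pointwise followed by an element of $N(T)(F)$; the $N(T)(F)$-piece is handled by the previous paragraph, while for the pointwise stabilizer one shows that it acts trivially on norms split by $(e_i)$ (this uses that the stabilizer of the apartment in $\GL_F(V)(F)$ is generated by $N(T)(F)$ together with elements acting trivially on both sides).

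The last step is surjectivity and injectivity of $\Phi$. Surjectivity is immediate from the preceding splitting-basis proposition: every norm on $V$ has some splitting basis $(f_i)$, hence lies in the apartment associated to the corresponding torus, hence in the image of $\Phi$. Injectivity follows from the injectivity on $\CA$ together with equivariance. The main obstacle is the well-definedness check in the extension step, i.e., identifying precisely which elements of $\GL_F(V)(F)$ fix a given point of $\CA$ and verifying they act trivially on the associated norm; this is exactly the content of \cite[2.8,2.11]{bruhat1984schemas}, to which I would appeal for the detailed verification.
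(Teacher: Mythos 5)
Your outline is the standard Goldman--Iwahori/Bruhat--Tits argument, and since the paper offers no proof of this proposition beyond the citation, your route is essentially the same as the paper's: both ultimately rest on \cite[2.8,2.11]{bruhat1984schemas} for the decisive verification, namely the identification of point stabilizers in the building with stabilizers of the corresponding norms, which is what makes the equivariant extension well defined (and which is also needed for injectivity of $\Phi$ on all of $\CB(\GL_F(V),F)$ --- a step your sketch passes over a little too quickly as a formal consequence of equivariance). Given that reliance, there is no genuine gap; the remaining checks (the formula defines a norm, injectivity on $\CA$, the image being the norms split by $(e_i)_{1\leq i\leq n}$, $N(T)(F)$-equivariance, uniqueness of the extension) are routine as you describe.
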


By Proposition \ref{prop23}, we can identify the building $\CB(\GL_F(V),F)$ with the set $\CN$ of norms on $V$.
Next we will describe the image of the inclusion $\CB(G,F_0)\hookrightarrow \CB(\GL_F(V),F)=\CN$ in terms of maxi-minorant norms (\dfn{norme maximinorante} in French).  

Set $F_\sigma\coloneqq \cbra{\lambda-\lambda^\sigma\ |\ \lambda\in F}$. Then $F_\sigma$ is an $F_0$-subspace of $F$ and we denote by $F/F_\sigma$ the quotient space. We can associate the hermitian form $h$ with a map $\ol{q}: V\ra F/F_\sigma$, called the \dfn{pseudo-quadratic form} in \cite{bruhat1987schemas}, defined by $$\ol{q}(x)\coloneqq \frac{1}{2}h(x,x)+F_\sigma, \text{\ for\ }x\in V.$$
The valuation $\omega$ induces a quotient norm $\ol{\omega}$ on the $F_0$-vector space $F/F_\sigma$: \begin{flalign*}
	   \ol{\omega}(\lambda+F_\sigma)\coloneqq \sup\cbra{\omega(\lambda+\mu-\mu^\sigma )\ |\ \mu\in F }, \text{\ for $\lambda\in F$}.
\end{flalign*}
\begin{defn}
	Let $\alpha$ be a norm on $V$. We say $\alpha$ \dfn{minorizes} (\dfn{minores} in French) $(h,\ol{q})$ if for all $x,y\in V$,  \begin{flalign*}
		    \alpha(x)+\alpha(y) \leq \omega(h(x,y))\text{\ and\ } \alpha(x) \leq  \frac{1}{2}\ol{\omega}(\ol{q}(x)).
	\end{flalign*}
	Following the terminology of \cite[Remark 15.2.12]{kaletha2023bruhat}, we say $\alpha$ is \dfn{maxi-minorant} (\dfn{maximinorante} in French) for $(h,\ol{q})$ if $\alpha$ minorizes $(h,\ol{q})$ and $\alpha$ is maximal for this property. 
\end{defn}
Denote by $\CN_{mm}$ ($\subset \CN$) the set of maxi-minorant norms for $(h,\ol{q})$ on $V$. One can easily check that $\CN_{mm}$ carries a $G(F_0)$-action via \eqref{action}. Here we view $G(F_0)$ as a subgroup of $\GL_F(V)$.  

\begin{remark} \label{rmk-25}
	Let $\alpha$ be a norm on $V$. Set $$\alpha^\vee(x)\coloneqq \inf_{y\in V}\cbra{\omega(h(x,y))-\alpha(y)}, \text{\ for $x\in V$}.$$ Then $\alpha^\vee$ is also a norm on $V$, called the \dfn{dual norm} of $\alpha$. We say $\alpha$ is \dfn{self-dual} if $\alpha=\alpha^\vee$. If $F$ has odd residue characteristic, then by \cite[2.16]{bruhat1987schemas}, the norm $\alpha\in\CN_{mm}$ if and only if $\alpha$ is self-dual. 
\end{remark}

Note that for $x\in V$, we have \begin{flalign*}
	  \ol{q}(x) = \frac{1}{2}h(x,x)+F_\sigma=\tcbra{\frac{1}{2}h(x,x)+\mu-\mu^\sigma\ |\ \mu\in F }  =\tcbra{\lambda h(x,x)\ |\ \lambda \in F, \lambda +\lambda ^\sigma=1 }\in F/F_\sigma.
\end{flalign*} Therefore, \begin{flalign*}
	\ol{\omega}(\ol{q}(x)) =\sup\cbra{\omega(\lambda h(x,x))\ |\ \lambda \in F, \lambda +\lambda ^\sigma=1  } =\omega(h(x,x))+\sup\cbra{\omega(\lambda)\ |\ \lambda\in F,\lambda+\lambda^\sigma=1 }.
\end{flalign*} 
Set \begin{flalign}
	\delta\coloneqq \sup\cbra{\omega(\lambda)\ |\ \lambda\in F,\lambda+\lambda^\sigma=1 }. \label{delta}
\end{flalign}
%Note that {when $F$ has odd residue characteristic, we have $\delta=0$.} 
We obtain that $\alpha$ minores $(h,\ol{q})$ if and only if for $x,y\in V$, we have \begin{flalign*}
		    \alpha(x)+\alpha(y) \leq \omega(h(x,y)) \text{\ and\ } \alpha(x) \leq \frac{1}{2}\omega(h(x,x))+\frac{1}{2} \delta.
	\end{flalign*}

\begin{defn} 
 Let $(V,h)$ be a (split) hermitian $F$-vector space of dimension $n$ as in \S \ref{subsec-notation}.
    \begin{enumerate}
    	\item A \dfn{Witt decomposition} of $V$ is a decomposition $V=V_-\oplus V_0\oplus V_+$ such that $V_-$ and $V_+$ are two maximal isotropic subspaces of $V$, and $V_0$ is the orthogonal complement of $V_-\oplus V_+$ with respect to $h$. As we assume $h$ is split, we have $\dim_F V_-=\dim_F V_+=m$ and $\dim_F V_0=1$. 
    	\item For any $F$-basis $(e_i)_{1\leq i\leq n}$ of $V$, we put \begin{flalign*}
    		    V_-\coloneqq \Span_F \cbra{e_1,\ldots,e_m}, V_0\coloneqq \Span_F\cbra{e_{m+1}}, V_+\coloneqq \Span_F\cbra{e_{m+2},\ldots,e_n}.
    	    \end{flalign*} We say $(e_i)_{1\leq i\leq n}$ induces a Witt decomposition of $V$ if $V_-\oplus V_0\oplus V_+$ is a Witt decomposition of $V$ and $h(e_i,e_j)=\delta_{i,n+1-j}$ for $1\leq i,j\leq n$.
    \end{enumerate}
\end{defn}
Let $(e_i)_{1\leq i\leq n}$ be a basis of $V$ inducing a Witt decomposition. Such a basis defines a maximal $F_0$-split torus $S$ of $G$ whose $F_0$-points are given by \begin{flalign*}
	     \cbra{g\in G(F_0)\subset\GL_F(V)(F) \ |\ ge_i=x_ie_i \text{\ and $x_ix_{n+1-i}=x_{m+1}=1$ for some $x_i\in F_0$ and $1\leq i\leq n$} }.
\end{flalign*}
The centralizer of $S$ in $G\otimes_{F_0}F\simeq\GL_F(V)$ is $T$. For $m+2\leq i\leq n$, let $\lambda_i: \BG_{m,F_0}\ra S$ be the cocharacter of $S$ defined by \begin{flalign}
	    \lambda_i(t)e_i=t^{-1} e_i,\ \lambda_i(t)e_{n+1-i}=t e_{n+1-i}, \text{\ and $\lambda_i(t)e_j=e_j$ for $t\in F_0\cross$ and $j\neq i, n+1-i$}.   \label{lambdabasis}
\end{flalign} Then the set $(\lambda_i)_{m+2\leq i\leq n}$ forms a $\BZ$-basis of $X_*(S)$. Fixing an origin, we may identify the apartment $\CA(G,S)$ of $\CB(G,F_0)$ corresponding to $S$ with $X_*(S)_\BR$. Then we have the following proposition.

\begin{prop} \label{prop-mmnorm}
	The map \begin{flalign}
	    X_*(S)_\BR &\lra \CN_{mm} \label{eq-mapnorm1} \\ 
	    \sum_{i=m+2}^nc_i\lambda_i &\mapsto \rbra{\sum_{i=1}^nx_ie_i\mapsto \inf\tcbra{\omega(x_i)-c_i,\omega(x_{m+1})+ \frac{1}{2}\delta\ |\ 1\leq i\leq n \text{\ and\ } i\neq m+1 } }, \notag
\end{flalign}
where $c_i\coloneqq -c_{n+1-i}$ if $1\leq i\leq m$, extends uniquely to an isomorphism of $G(F_0)$-sets $$\CB(G,F_0)\ra \CN_{mm}.$$ The image of $X_*(S)_\BR$ in $\CN_{mm}$ is the set of maxi-minorant norms admitting $(e_i)_{1\leq i\leq n}$ as a splitting basis.  

Moreover, a norm $\alpha\in\CN_{mm}$ is special, i.e., $\alpha$ corresponds to a special point in $\CB(G,F_0)$, if and only if there is a basis $(f_i)_{1\leq i\leq n}$ of $V$ inducing a Witt decomposition and a constant $C\in \frac{1}{4}\BZ$ such that for $x_i\in F$, we have \begin{flalign*}
	    \alpha(\sum_{i=1}^nx_if_i) =\inf\tcbra{\omega(x_i)-C, \omega(x_j)+C, \omega(x_{m+1})+\frac{1}{2}\delta\ |\ 1\leq i<m+1 \text{\ and\ } m+1<j\leq n  }.
\end{flalign*}
\end{prop}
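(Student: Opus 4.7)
The plan is to follow the general strategy of Bruhat--Tits \cite{bruhat1987schemas} for quasi-split classical groups, specialized to our odd-dimensional unitary group $G$. We work inside the identification $\CB(\GL_F(V),F) = \CN$ of Proposition \ref{prop23}, so that both $\CB(G,F_0)$ and $\CN_{mm}$ embed into $\CN$ and both carry compatible actions of $G(F_0)\subset\GL_F(V)(F)$. The goal is first to check that \eqref{eq-mapnorm1} indeed lands in $\CN_{mm}$, then to identify its image with the $(e_i)$-split maxi-minorant norms, and finally to extend everything to a $G(F_0)$-equivariant bijection before singling out the special norms.

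First, for $a = \sum_{i=m+2}^n c_i \lambda_i \in X_*(S)_\BR$, I would verify by a direct computation that the norm $\alpha_a$ on the right of \eqref{eq-mapnorm1} minorizes $(h,\bar q)$. Expanding $h(x,y) = \sum_i x_i \bar y_{n+1-i}$ and checking the hermitian inequality termwise, the relation $c_i = -c_{n+1-i}$ makes the coefficient contributions cancel when $i\neq m+1$; for $i=m+1$ one needs $\delta \leq 0$, which follows from the definition since $\lambda + \bar\lambda = 1$ forces $\omega(\lambda) \leq 0$. The pseudo-quadratic inequality $\alpha_a(x) \leq \tfrac12\omega(h(x,x))+\tfrac12\delta$ is built in at $e_{m+1}$ by construction and, at the other basis vectors, reduces again to $c_i + c_{n+1-i}=0$. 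For maximality, write a general $(e_i)$-split norm as $\alpha(\sum x_ie_i) = \inf_i(\omega(x_i) - d_i)$; the minorization inequalities applied at $(e_i,e_{n+1-i})$ and at $e_{m+1}$ become $d_i+d_{n+1-i} \geq 0$ and $d_{m+1} \geq -\tfrac12\delta$ respectively, while increasing $\alpha$ corresponds to decreasing the $d_i$'s. Hence the maximal elements are exactly those with $d_i + d_{n+1-i} = 0$ and $d_{m+1} = -\tfrac12\delta$, which is precisely the parametrization by $X_*(S)_\BR$ in the formula.

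Second, I would extend this bijection to a $G(F_0)$-equivariant isomorphism $\CB(G,F_0) \simto \CN_{mm}$. Equivariance under $S(F_0)$, and more generally $N_G(S)(F_0)$, follows by direct computation from the formula together with \eqref{lambdabasis}. To extend to all of $G(F_0)$, one invokes the general Bruhat--Tits framework (see \cite[\S 2.9--2.12]{bruhat1987schemas}): both $\CB(G,F_0)$ and $\CN_{mm}$ are obtained from the apartment by amalgamating along the $G(F_0)$-action, using transitivity of $G(F_0)$ on the set of maximal $F_0$-split tori together with the local parahoric compatibilities. Finally, for the characterization of special norms, I would use that the relative root system of $G$ is of type $BC_m$ and that the affine Dynkin diagram $\tilde{BC}_m$ has exactly two orbits of special vertices under the extended affine Weyl group, sitting at the two ends of the diagram. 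Translating these two vertices via \eqref{eq-mapnorm1} and applying $G(F_0)$-equivariance yields the claimed form with constant $C \in \tfrac14\BZ$. The main obstacle is bookkeeping in this last step: the factor $\tfrac14$ reflects that the $BC_m$ coroot lattice has roots of two different lengths, so the special vertices need not be integral combinations of the cocharacters $\lambda_i$; pinning down the exact $\tfrac14\BZ$-value requires a careful comparison with the affine root datum of quasi-split odd unitary groups as recorded in \cite[4.2.22]{bruhat1984schemas}.
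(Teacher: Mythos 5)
Your overall route (check the explicit formula lands in $\CN_{mm}$, then defer equivariance, amalgamation and the classification of special vertices to Bruhat--Tits) is in the end the same as the paper's, whose entire proof of this proposition is the citation \cite[2.9, 2.12]{bruhat1987schemas} and \cite[Example 2.10]{tits1979reductive}. The problem is that the one step you do try to carry out by hand has genuine gaps. First, the pseudo-quadratic inequality is vacuous at the basis vectors $e_i$ with $i\neq m+1$, since $h(e_i,e_i)=0$; the real constraint comes from vectors with components along both $e_i$ and $e_{n+1-i}$, where $h(x,x)$ is a sum of a norm term and trace terms, and the required bound $2\alpha(x)\le \omega(h(x,x))+\delta$ needs the estimate $\omega(\Tr_{F/F_0}(z))\ge \omega(z)-\delta$ coming from the inverse different (compare Lemma \ref{lem-invdif} and Lemma \ref{lem-lambda}); the antisymmetry $c_i=-c_{n+1-i}$ alone does not suffice, because $\delta<0$ in the wildly ramified case, so your termwise bound comes out with the wrong sign on $\delta$. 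The same estimate is needed to show that split norms satisfying your boundary equalities do minorize, which your "exactly those with $d_i+d_{n+1-i}=0$, $d_{m+1}=-\tfrac12\delta$" conclusion silently uses.

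More seriously, your maximality argument only compares $\alpha_a$ against norms that are themselves split by $(e_i)$. Maxi-minorant means maximal among \emph{all} norms minorizing $(h,\ol{q})$, and a norm is not determined by its values on a basis unless that basis splits it; so knowing that no split competitor dominates $\alpha_a$ does not exclude a strictly larger minorizing norm that fails to be split by $(e_i)$. In residue characteristic two this is exactly the delicate point: Example \ref{ex-nonmmnorm} shows that norms with plausible numerical invariants (a self-dual piece plus the middle line at level $\tfrac12\delta$) can fail to be maxi-minorant, so maximality cannot be certified by inspecting one basis. Closing this gap — via the Witt-type decomposition of minorizing norms — is precisely the content of \cite[2.9--2.12]{bruhat1987schemas}, which you would either have to reproduce or cite, as the paper does; once that is conceded, your second and third paragraphs (equivariant extension by amalgamation, and reading off the two orbits of special vertices of the $BC_m$ affine root system with the $C\in\tfrac14\BZ$ normalization) agree with the paper's treatment, which obtains the special-point statement from the same sources together with the explicit vertex computation of \S\ref{subsec-paralattice}.
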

\begin{proof}
	See \cite[2.9, 2.12]{bruhat1987schemas} and \cite[Example 2.10]{tits1979reductive}.
\end{proof}

\begin{corollary}\label{cor-mmnorm}
	Let $\alpha\in\CN$. Then $\alpha\in\CN_{mm}$ if and only if there exists a basis $(f_i)_{1\leq i\leq n}$ of $V$ inducing a Witt decomposition $V=V_-\oplus V_0\oplus V_+$ such that $\alpha=\alpha_{\pm}\oplus\alpha_0$, where $\alpha_{\pm}$ is a self-dual norm on $V_-\oplus V_+$ split by the basis $(f_i)_{i\neq m+1}$, and $\alpha_0$ is the unique norm on $V_0$ with $\alpha(f_{m+1})=\frac{1}{2}\delta$. 
\end{corollary}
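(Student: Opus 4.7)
The plan is to derive Corollary \ref{cor-mmnorm} directly from Proposition \ref{prop-mmnorm} together with one short computation of the dual norm on the hyperbolic subspace $V_-\oplus V_+$.

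For the forward direction, suppose $\alpha\in\CN_{mm}$. Using that $\CB(G,F_0)$ is the union of the apartments $\CA(G,S)$ for $S$ a maximal $F_0$-split torus, and that every such $S$ arises from a basis $(f_i)_{1\le i\le n}$ inducing a Witt decomposition as recalled before Proposition \ref{prop-mmnorm}, the norm $\alpha$ must lie in the apartment attached to some such Witt basis. Applying Proposition \ref{prop-mmnorm} with this basis I obtain
\[
\alpha\!\left(\sum_{i=1}^n x_i f_i\right) \;=\; \inf\bigl\{\omega(x_i)-c_i,\ \omega(x_{m+1})+\tfrac{1}{2}\delta \,:\, i\neq m+1\bigr\},
\]
with $c_i = -c_{n+1-i}$. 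This formula manifestly realises $\alpha$ as the direct sum (Example \ref{ex-norms}(2)) of the norm $\alpha_\pm$ on $V_-\oplus V_+$ split by $(f_i)_{i\neq m+1}$ with coefficients $c_i$, together with the unique norm $\alpha_0$ on $V_0=Ff_{m+1}$ satisfying $\alpha_0(f_{m+1})=\tfrac{1}{2}\delta$ (Example \ref{ex-norms}(1)).

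To match the symmetry condition $c_i = -c_{n+1-i}$ with self-duality of $\alpha_\pm$ on $V_-\oplus V_+$ (with respect to $h|_{V_-\oplus V_+}$), I would do a direct computation: using $h(f_i,f_j)=\delta_{i,n+1-j}$ and testing $\alpha_\pm^\vee$ against single-coordinate vectors yields
\[
\alpha_\pm^\vee\!\left(\sum_{i\neq m+1} x_i f_i\right) \;=\; \inf_{i\neq m+1}\bigl\{\omega(x_i)+c_{n+1-i}\bigr\},
\]
so $\alpha_\pm^\vee$ is split by the same basis with coefficients $-c_{n+1-i}$ in place of $c_i$, and self-duality is precisely $c_i = -c_{n+1-i}$.

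For the converse, starting from a decomposition $\alpha = \alpha_\pm\oplus\alpha_0$ with the stated properties, I would expand $\alpha_\pm$ in its splitting basis as $\inf_{i\neq m+1}\{\omega(x_i)-c_i\}$; self-duality forces $c_i=-c_{n+1-i}$ by the dual-norm calculation above, while $\alpha_0(f_{m+1})=\tfrac{1}{2}\delta$ pins down the $V_0$-component. Thus $\alpha$ coincides with the normal form of Proposition \ref{prop-mmnorm} in the apartment attached to $(f_i)$, whence $\alpha\in\CN_{mm}$. The only mildly nontrivial ingredient is the dual-norm computation; it is a routine split-basis argument, the main subtlety being that the pairing $h(f_i,f_j)=\delta_{i,n+1-j}$ couples the indices $i$ and $n+1-i$.
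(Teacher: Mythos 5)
Your argument is correct and follows essentially the same route as the paper: reduce to a point of an apartment attached to a Witt basis (the paper does this via the transitive $G(F_0)$-action, which amounts to the same thing), read off the decomposition $\alpha=\alpha_\pm\oplus\alpha_0$ from the explicit formula in Proposition \ref{prop-mmnorm}, and for the converse identify $\alpha$ with the point $\sum_i c_i\lambda_i'$ of $X_*(S')_\BR$ for the torus $S'$ attached to $(f_i)$. The only cosmetic difference is that you verify self-duality of $\alpha_\pm$ by the direct dual-norm computation in the split basis, where the paper simply cites \cite[Remark 15.2.7]{kaletha2023bruhat}; your computation is the standard one and is fine.
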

\begin{proof}
	($\Rightarrow$) We can view $X_*(S)_\BR$ as a subset of $\CN_{mm}$ via the map \eqref{eq-mapnorm1}. Using the $G(F_0)$-action, we may assume $\alpha$ lies in $X_*(S)_\BR$, say $\alpha=\sum_{i=m+2}^nc_i\lambda_i \in X_*(S)_\BR$ for $c_i\in \BR$. Then we take $(f_i)$ to be $(e_i)$, which induces a Witt decomposition $V=V_-\oplus V_0\oplus V_+$. Define the norm $\alpha_\pm$ on $V_-\oplus V_+$ by  \begin{flalign}
		     V_-\oplus V_+ &\lra \BR\cup\cbra{+\infty} \notag \\  \sum_{1\leq i\leq n,i\neq m+1} x_if_i&\mapsto \inf\cbra{\omega(x_i)-c_i\ |\ 1\leq i\leq n \text{\ and\ } i\neq m+1}, \label{eq-alphapm}
	\end{flalign} 
	where we define $c_i\coloneqq -c_{n+1-i}$ for $1\leq i\leq m$. Clearly $\alpha_\pm$ is split by $(f_i)_{i\neq m+1}$. As $h(f_i,f_{n+1-j})=\delta_{ij}$ and $c_i=-c_{n+1-i}$ for $1\leq i,j\leq n$, we deduce that $\alpha_\pm$ is self-dual by \cite[Remark 15.2.7]{kaletha2023bruhat}. Moreover, from the expression of \eqref{eq-mapnorm1}, we immediately see that $\alpha$ decomposes as $\alpha=\alpha_{\pm}\oplus\alpha_0$. 
	
	($\Leftarrow$) Under the assumptions, there exist $n$ real numbers $c_i$ for $1\leq i\leq n$ such that $c_{n+1-i}=-c_i$ and $\alpha_\pm$ is given by the norm as in \eqref{eq-alphapm}. Let $S'$ be the maximal $F_0$-split torus in $G$ corresponding to the basis $(f_i)_{1\leq i\leq n}$. Let $(\lambda_i')_{m+2\leq i\leq n}$ be a $\BZ$-basis of $X_*(S')$ defined as in \eqref{lambdabasis}. Then $\alpha$ is the norm corresponding to the point $\sum_{i=m+2}^nc_i\lambda_i'\in X_*(S')_\BR$ via a similar map as in \eqref{eq-mapnorm1}. In particular, $\alpha\in\CN_{mm}$. 
\end{proof}

\begin{remark} \label{rmk-29}
	Assume $F$ has odd residue characteristic. Then $\delta=0$, and hence $\alpha_0$ is self-dual. Then the norm $\alpha_\pm\oplus\alpha_0$ as in the Corollary \ref{cor-mmnorm}  is self-dual. When $F$ has odd residue characteristic, any self-dual norm admits a splitting basis inducing a Witt decomposition of $V$, see for example \cite[Proposition 15.2.10]{kaletha2023bruhat}. Then we see again that $\alpha\in\CN_{mm}$ if and only $\alpha$ is self-dual. 
\end{remark}
\begin{remark}
	We can define a ``twisted" Galois action of $\Gal(F/F_0)$ on $\GL_F(V)(F)$ as follows: for $g\in \GL_F(V)(F)$, define $\sigma(g)$ to be the element satisfying $$h(g\inverse x,y)=h(x,\sigma(g)y), \text{\ for $x,y\in V$}. $$ Then we have $G(F_0)=\GL_F(V)(F)^{\sigma=1}$, the set of fixed points of $\sigma$. This twisted Galois action induces an involution on $\CN=\CB(\GL_F(V),F)=\CB(G\otimes_{F_0}F,F)$, which is still denoted by $\sigma$. Next we give an explicit description of this involution.  
	
	Let $(e_i)_{1\leq i\leq n}$ be a basis inducing a Witt decomposition $V=V_-\oplus V_0\oplus V_+$. Let $T$ be the induced maximal torus of $\GL_F(V)$. Let $\CA(T)\sset \CB(\GL_F(V),F)$ be the apartment corresponding to $T$. We can identify $\CA(T)$ with $X_*(T)_\BR$ through the injection (cf. \eqref{mapnorm}) 
	\begin{flalign*}
	     X_*(T)_\BR &\lra \CN  \\ \sum_{i=1}^n c_i\mu_i &\mapsto \rbra{\sum_{i=1}^n x_i e_i\mapsto \inf \tcbra{\omega(x_{m+1})-c_{m+1}+ \half\delta,\ \omega(x_i)-c_i \text{\ for\ }1\leq i\leq n \text{\ and $i\neq m+1$}  } }, \notag
        \end{flalign*} where $\mu_i$ is defined as in \eqref{eqmui}, $x_i\in F$ and $\sum_{i=1}^n x_ie_i\in V$. As $G$ is quasi-split, we can pick a $\sigma$-stable point as the origin such that the twisted $\sigma$-action on $\CA(T)$ is transported by the twisted $\sigma$-action on $X_*(T)_\BR$. For $\alpha\in\CN$, there is a $g\in \GL_F(V)(F)$ such that $g\alpha\in X_*(T)_\BR$, since $\GL_F(V)(F)$ acts transitively on the apartments of $\CN$. Then $$g\alpha=\alpha_1\oplus(\alpha_0+C),$$ where $\alpha_1$ is a norm on $V_-\oplus V_+$ admitting $(e_i)_{i\neq m+1}$ as a splitting basis, $\alpha_0$ is the norm on $V_0$ as in the Corollary \ref{cor-mmnorm}, and $C\in \BR$ is a certain constant. The twisted $\sigma$-action on $X_*(T)_\BR$ implies that $\sigma(\alpha_1\oplus(\alpha_0+C))=\alpha_1^\vee\oplus(\alpha_0-C)$. Hence, we see that $\sigma$ acts on $\alpha$ as \begin{flalign*}
		    \sigma(\alpha)=\sigma(g\inverse)\rbra{\alpha_1^\vee\oplus(\alpha_0-C) }.
	\end{flalign*}
	For $\alpha\in\CN_{mm}=\CB(G,F_0)$, we may take $g\in G(F_0)$ and $C=0$. Thus, we get an inclusion $$\CB(G,F_0)\hookrightarrow \CB(\GL_F(V),F)^{\sigma=1}.$$ The inclusion is strict: any norm of the form $\alpha_1\oplus\alpha_0$, where $\alpha_1$ is a self-dual norm on $V_-\oplus V_+$ but not split by any basis of $V_-\oplus V_+$ inducing a Witt decomposition, lies in $\CB(\GL_F(V),F)^{\sigma=1}$ but not in $\CB(G,F_0)$. Such a norm can only exist when the residue characteristic of $F$ is two. For an explicit example, see Example \ref{ex-nonmmnorm}.
\end{remark}

\subsection{Bruhat-Tits buildings in terms of lattices}
In this subsection, we will translate the results in \S \ref{subsec-buildingnorm} into the language of lattices, which is more useful in the theory of local models. 

\begin{defn}
    Let $V$ be a finite dimensional $F$-vector space.  
	\begin{enumerate}
	    \item A \dfn{lattice} $L$ in $V$ is a finitely generated $\CO_F$-submodule of $V$ such that $L\otimes_{\CO_F}F=V$. 
		\item A \dfn{(periodic) lattice chain} of $V$ is a non-empty set $L_\bullet$ of lattices in $V$ such that lattices in $L_\bullet$ are totally ordered with respect to the inclusion relation, and $\lambda L\in L_\bullet$ for $\lambda\in F\cross$ and $L\in L_\bullet$.
		\item A \dfn{graded lattice chain} is a pair $(L_\bullet,c)$, where $L_\bullet$ is a lattice chain of $V$ and $c: L_\bullet\ra \BR$ is a strictly decreasing function such that for any $\lambda\in F$ and $L\in L_\bullet$, we have $$c(\lambda L)=\omega(\lambda)+c(L).$$ The function $c$ is called a \dfn{grading} of $L_\bullet$.
		\item An $F$-basis $(e_i)_{1\leq i\leq n}$ of $V$ is called \dfn{adapted to} a graded lattice chain $(L_\bullet,c)$ of $V$ if for every $L\in L_\bullet$, there exist $x_1,\ldots,x_n\in F$ such that $(x_ie_i)_{1\leq i\leq n}$ is an $\CO_F$-basis of $L$. In this case, we also say $(L_\bullet,c)$ is adapted to the basis $(e_i)_{1\leq i\leq n}$. 
	\end{enumerate}
\end{defn}

\begin{remark}
	Since $L_\bullet$ is stable under homothety, the set $L_\bullet$ is determined by a finite number of lattices satisfying $$\pi L_0\subsetneq L_{r-1}\subsetneq L_{r-2}\subsetneq\cdots\subsetneq L_1\subsetneq L_0.$$ We say $(L_0,L_1,\ldots,L_{r-1})$ is a \dfn{segment} of $L_\bullet$, and the integer $r$ is the \dfn{rank} of $L_\bullet$. 
\end{remark}

Denote by $\CGLC$ the set of graded lattice chains of $V$. There is a $\GL_F(V)(F)$-action on $\CGLC$: for $(L_\bullet,c)\in \CGLC$ and $g\in\GL_F(V)(F)$, define $g(L_\bullet,c)\coloneqq (gL_\bullet,gc)$, where $gL_\bullet$ consists of lattices of the form $gL$  for $L\in L_\bullet$, and $(gc)(gL)\coloneqq c(L)$ for $L\in L_\bullet$. 

\begin{lemma}\label{lem-lat-norm}
    \begin{enumerate}
    	\item There is a one-to-one correspondence between $\CN$ and $\CGLC$. More precisely, given $\alpha\in\CN$, we can associate a graded lattice chain $(L_\alpha,c_\alpha)$, where $L_\alpha$ is the set of following lattices $$L_{\alpha,r}=\cbra{x\in V\ |\ \alpha(x)\geq r}, \text{\ for\ }r\in \BR,$$ and the grading $c_\alpha$ is defined by $$c_\alpha(L_{\alpha,r})=\inf_{x\in L_{\alpha,r}}\alpha(x).$$
	Conversely, given a graded lattice chain $(L_\bullet,c)\in\CGLC$, we can associate a norm $$\alpha_{(L_\bullet,c)}(x)\coloneqq  \sup\cbra{c(L)\ |\ x\in L \text{\ and\ }L\in L_\bullet}.$$
	We say the norm $\alpha$ and the graded lattice chain $(L_\alpha,c_\alpha)$ in the above bijection \dfn{correspond to} each other.
	   \item The bijection in (1) is $\GL_F(V)(F)$-equivariant. 
	   \item Let $(e_i)_{1\leq i\leq n}$ be a basis of $V$. Let $(L_\bullet,c)$ be the graded lattice chain corresponding to a norm $\alpha$ via (1). Then $(e_i)_{1\leq i\leq n}$ is adapted to $(L_\bullet,c)$ if and only if $(e_i)_{1\leq i\leq n}$ is a splitting basis of $\alpha$.
    \end{enumerate}
\end{lemma}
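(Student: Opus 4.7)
The plan is to construct the correspondence explicitly and then verify it is a bijection, equivariant for the $\GL_F(V)(F)$-action, and compatible with the notions of splitting and adapted bases. Starting from a norm $\alpha \in \CN$ with some splitting basis $(e_i)_{1\leq i\leq n}$ and constants $c_i$, I would observe that for each $r \in \BR$ one has $L_{\alpha,r} = \bigoplus_i \pi^{\lceil r+c_i\rceil}\CO_F e_i$, which is manifestly a lattice in $V$. The family $L_\alpha$ is totally ordered under inclusion and closed under homothety (indeed $\pi L_{\alpha,r} = L_{\alpha,r+1}$), and the grading $c_\alpha(L) = \inf_{x \in L}\alpha(x)$ is strictly decreasing and satisfies $c_\alpha(\lambda L) = \omega(\lambda) + c_\alpha(L)$ since $\omega$ takes values in $\BZ$. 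In the reverse direction, I would define $\alpha_{(L_\bullet,c)}(x) \coloneqq \sup\{c(L) : x \in L,\ L \in L_\bullet\}$; the norm axioms follow from the monotonicity of $c$, the twisted homogeneity under homothety, and the strict decrease of $c$ (which forces $\alpha_{(L_\bullet,c)}(x) = +\infty$ to imply $x = 0$).

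To see the two constructions are mutually inverse, I would check that $\alpha_{(L_\alpha,c_\alpha)}(x) = \alpha(x)$ directly from the definition, using that the value group of $\omega$ is $\BZ$ so that the supremum is attained on $L_{\alpha,\alpha(x)}$; and conversely that $L_{\alpha_{(L_\bullet,c)},r} = L$ for the unique $L \in L_\bullet$ with $c(L) \geq r > c(L')$ for the successor $L'$, using the strict decrease of $c$. A prerequisite is the existence of an adapted basis for any $(L_\bullet,c) \in \CGLC$; this reduces, via the periodicity, to the elementary divisor theorem for a finite descending chain of $\CO_F$-lattices in $V$.

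Part (2) follows by a direct unraveling: $L_{g\alpha,r} = \{x : \alpha(g\inverse x)\geq r\} = gL_{\alpha,r}$, and $c_{g\alpha}(gL) = c_\alpha(L)$ by definition, which is exactly the stated action on $\CGLC$. For part (3), if $(e_i)$ splits $\alpha$ with constants $c_i$, the explicit formula $L_{\alpha,r} = \bigoplus_i \pi^{\lceil r+c_i\rceil}\CO_F e_i$ shows $(e_i)$ is adapted to $(L_\alpha,c_\alpha)$. Conversely, if $(e_i)$ is adapted to $(L_\bullet,c)$, then each $L \in L_\bullet$ has the form $\bigoplus_i \pi^{a_{L,i}}\CO_F e_i$, and fixing one lattice $L_0$ one sets $c_i \coloneqq c(L_0) - a_{L_0,i}$; evaluating the sup defining $\alpha_{(L_\bullet,c)}$ on $\sum x_i e_i$ then reduces to $\inf_i(\omega(x_i) - c_i)$, exhibiting $(e_i)$ as a splitting basis.

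The main technical point, and the only step requiring real care, is verifying that every lattice in a given $(L_\bullet,c)$ is recovered as some $L_{\alpha_{(L_\bullet,c)},r}$; this is where the strict monotonicity hypothesis on the grading $c$ is essential, together with the existence of a basis simultaneously adapted to the finitely many lattices in a segment of $L_\bullet$.
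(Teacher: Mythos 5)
Your argument is correct in substance, but note that it is doing more than the paper does: for parts (1) and (3) the paper simply cites \cite[Proposition 15.1.21]{kaletha2023bruhat} and only remarks that (2) is a direct computation, whereas you reconstruct the standard proof in full (explicit description of the balls $L_{\alpha,r}$ via a splitting basis, verification of the norm axioms for $\alpha_{(L_\bullet,c)}$, mutual inversion using strict monotonicity of the grading and discreteness of the value set, and the equivalence of adapted and splitting bases via a basis adapted to a segment of the chain). That is exactly the content of the cited reference, so the route is the same; you just make it self-contained, which is harmless and arguably useful.

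Two bookkeeping points should be repaired before your sketch matches the paper's conventions. First, $\omega$ is normalized on $F_0$ and $F/F_0$ is ramified, so on $F$ the value group is $\frac{1}{2}\BZ$ (indeed $\omega(\pi)=\frac{1}{2}$), not $\BZ$; your arguments only use discreteness of the value group, so nothing breaks, but the formula $L_{\alpha,r}=\bigoplus_i\pi^{\lceil r+c_i\rceil}\CO_F e_i$ and the phrase ``the value group of $\omega$ is $\BZ$'' must be adjusted (e.g. $\pi^{\lceil 2(r+c_i)\rceil}$ with the paper's normalization, or renormalize). Second, in part (3) your recipe $c_i\coloneqq c(L_0)-a_{L_0,i}$ for the splitting constants is off in sign and scale: with the paper's convention $\alpha(\sum x_ie_i)=\inf_i\cbra{\omega(x_i)-c_i}$ one must take $c_i=-\alpha_{(L_\bullet,c)}(e_i)$, i.e. $-c(L_i)$ where $L_i$ is the smallest member of the chain containing $e_i$ (test the rank-one chain $\cbra{\pi^n L_0}$ to see the discrepancy). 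The surrounding argument—each member of the chain is $\bigoplus_i\pi^{a_{L,i}}\CO_F e_i$ in an adapted basis, the ultrametric inequality gives one direction, and the smallest chain member containing $\sum x_ie_i$ coincides with the smallest one containing some coordinate $x_ie_i$ for the other—is the right one and goes through once the constants are corrected.
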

\begin{proof}
	The proof of (1) and (3) can be found in \cite[Proposition 15.1.21]{kaletha2023bruhat}. The assertion in (2) can be checked by direct computation. 
\end{proof}
Using the above lemma, we can easily extend operations like direct sums or duality on norms to graded lattice chains. 

\begin{lemma}
   \begin{enumerate}
   	\item Let $V$ and $V'$ be two finite dimensional $F$-vector spaces. Let $\alpha$ and $\alpha'$ be two norms on $V$ and $V'$ respectively. Let $(L_\bullet,c)$ and $(L_\bullet',c')$ be graded lattice chains corresponding to $\alpha$ and $\alpha'$ respectively. Then the graded lattice chain $(L_\bullet,c)\oplus(L_\bullet',c')$ corresponding to $\alpha\oplus\alpha'$ is a pair $(L_\bullet\oplus L_\bullet' ,c\oplus c')$, where $L_\bullet\oplus L_\bullet'$ is the set of lattices of the form $L_{\alpha,r}\oplus L_{\alpha',r}$ for $r\in \BR$, and $$(c\oplus c')(L_{\alpha,r}\oplus L_{\alpha',r})\coloneqq  \inf\cbra{c(L_{\alpha,r}),c'(L_{\alpha',r}) }.$$
   	\item Let $(L_\bullet,c)$ be the graded lattice chain corresponding to a norm $\alpha$ on $V$. Then the dual norm $\alpha^\vee$ corresponds to the graded lattice chain $(L_\bullet^\vee,c^\vee)$, where $L_\bullet^\vee$ is the set of the lattices of the form $L^\vee\coloneqq \cbra{x\in V\ |\ h(x,L)\in\CO_F}$ for $L\in L_\bullet$, and $$c^\vee(L^\vee)\coloneqq -c(L^-)-1,$$ where $L^-$ is the smallest member of $L_\bullet$ that properly contains $L$.
   \end{enumerate}
\end{lemma}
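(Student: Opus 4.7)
The plan is to verify both parts by directly unwinding the bijection of Lemma \ref{lem-lat-norm}(1).

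For part (1), the key observation is tautological: a vector $x + x' \in V \oplus V'$ satisfies $(\alpha \oplus \alpha')(x + x') \geq r$ iff $\alpha(x) \geq r$ and $\alpha'(x') \geq r$, so $L_{\alpha \oplus \alpha', r} = L_{\alpha, r} \oplus L_{\alpha', r}$, giving the underlying chain. The grading is then $c_{\alpha\oplus\alpha'}(L_{\alpha,r} \oplus L_{\alpha',r}) = \inf_y (\alpha \oplus \alpha')(y)$ with $y$ ranging over this sum, and this unravels to $\inf\cbra{c_\alpha(L_{\alpha,r}),\, c_{\alpha'}(L_{\alpha',r})}$ by the componentwise nature of the direct-sum norm. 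There is no real subtlety here.

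For part (2), I would proceed in three steps. First, $L_\bullet^\vee \coloneqq \cbra{L^\vee : L \in L_\bullet}$ is a periodic lattice chain: non-degeneracy of $h$ makes each $L^\vee$ a lattice, duality is strictly inclusion-reversing, and the identity $(\lambda L)^\vee = \bar\lambda^{-1} L^\vee$ shows that $L_\bullet^\vee$ is closed under homothety. Second, $c^\vee$ is a legitimate grading on $L_\bullet^\vee$: the order-reversing bijection $L \leftrightarrow L^\vee$ converts the strict decrease of $c$ into the strict decrease of $c^\vee$, while the homothety relation $c^\vee(\lambda L^\vee) = \omega(\lambda) + c^\vee(L^\vee)$ follows by direct substitution from $(\lambda L^\vee) = (\bar\lambda^{-1} L)^\vee$, $(\bar\lambda^{-1}L)^- = \bar\lambda^{-1} L^-$, and the scaling relation for $c$. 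The main step is to identify the norm $\beta$ attached to $(L_\bullet^\vee, c^\vee)$ via Lemma \ref{lem-lat-norm}(1) with $\alpha^\vee$. By construction,
\begin{flalign*}
  \beta(x) = \sup\cbra{-c(L^-) - 1 \,:\, L \in L_\bullet,\ x \in L^\vee}.
\end{flalign*}
Let $L_x$ be the largest lattice in $L_\bullet$ with $h(x, L_x) \subseteq \CO_F$ (it exists since the chain is exhaustive and discretely periodic); because $-c(L^-) - 1$ is increasing in $L$ along $L_\bullet$, this supremum is realized at $L = L_x$, whence $\beta(x) = -c(L_x^-) - 1$.

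It remains to match this with $\alpha^\vee(x) = \inf_y(\omega(h(x,y)) - \alpha(y))$. For the upper bound, pick $y \in L_x^- \setminus L_x$ with $h(x,y) \notin \CO_F$ (such $y$ exists by the maximality of $L_x$); periodicity forces $\pi L_x^- \subseteq L_x$, so $h(x,\pi y) \in \CO_F$, whence $\omega(h(x,y)) = -1$, while $\alpha(y) = c(L_x^-)$, giving $\alpha^\vee(x) \leq -c(L_x^-) - 1$. For the lower bound, for any nonzero $y$ let $L_y$ be the smallest lattice in $L_\bullet$ containing $y$ (so $\alpha(y) = c(L_y)$); choose $k \geq 0$ with $\pi^k L_y \subseteq L_x$, deduce $\omega(h(x,\pi^k y)) \geq 0$ hence $\omega(h(x,y)) \geq -k$, and unwind the grading differences $c(L_y) - c(L_x^-)$ to conclude $\omega(h(x,y)) - \alpha(y) \geq -c(L_x^-) - 1$. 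The main technical obstacle will be the bookkeeping in this lower bound when the chain has period $r > 1$ and the step-to-step grading differences $c(L_i) - c(L_{i+1})$ are not uniform; a clean remedy is to reduce to rank one via a splitting basis of $\alpha$ (which exists by the proposition preceding Lemma \ref{lem-lat-norm}) and invoke part (1) to reassemble, the rank-one case being a direct computation of the type carried out for $V=F$ with the standard hermitian form.
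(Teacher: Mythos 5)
Part (1) is fine and is exactly the "straightforward" verification the paper intends; note also that the paper does not prove part (2) at all but cites \cite[Fact 15.2.18]{kaletha2023bruhat}, so your direct argument is a genuinely different route. However, it has a concrete gap at the decisive step. In your upper bound you deduce $\omega(h(x,y))=-1$ from $h(x,y)\notin\CO_F$ and $h(x,\pi y)\in\CO_F$; this requires $\omega(\pi)=1$, whereas in this paper $\omega$ is the normalized valuation of $F_0$ extended to $F$ (\S 2.1), so $\omega(F^\times)=\frac{1}{2}\BZ$ and $\omega(\pi)=\frac{1}{2}$, and the two conditions only give $\omega(h(x,y))=-\frac{1}{2}$. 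The same slip appears in the lower bound ("$\omega(h(x,y))\geq -k$" should be $\geq -k\,\omega(\pi)$). This is not cosmetic: the constant $-1$ is precisely the normalization-sensitive part of the statement. Already in rank one, with $V=F$, $h(x,y)=x\ol{y}$, $\alpha=\omega$ (so $L_j=\pi^j\CO_F$, $c(L_j)=j/2$, $\alpha^\vee=\alpha$), one has $c_{\alpha^\vee}(L_j^\vee)=-j/2$ while $-c(L_j^-)-1=-(j+1)/2$; so under the paper's normalization the correct constant is $\omega(\pi)$ (the formula as printed holds for the valuation normalized on $F$, which is the convention of the cited fact). Your proof must fix one normalization and carry it through; as written it mixes the two and would "prove" a formula that your own rank-one computation contradicts.

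The second gap is that the lower bound is only sketched, and the proposed fallback does not close it. A splitting basis of $\alpha$ need not be compatible with $h$, so $\alpha^\vee$ does not automatically decompose as the direct sum of the coordinate duals; you would need the nontrivial fact that the $h$-dual basis of a splitting basis of $\alpha$ splits $\alpha^\vee$ (or a splitting basis adapted to $h$), and part (1) by itself only describes the chain of a direct sum of norms, so it cannot reassemble $(L_\bullet^\vee,c^\vee)$ without that compatibility — which is essentially the content one is borrowing from the cited reference anyway. If you want a self-contained argument, complete the direct estimate instead: for $y\neq 0$ let $M$ be the smallest chain member containing $y$ (so $\alpha(y)=c(M)$), choose the unique $k\in\BZ$ with $\pi L_x^-\subseteq \pi^{-k}M\subseteq L_x$, deduce $\omega(h(x,y))\geq k\,\omega(\pi)$ and $\alpha(y)=k\,\omega(\pi)+c(\pi^{-k}M)\leq k\,\omega(\pi)+\omega(\pi)+c(L_x^-)$ using the homothety relation for $c$, whence $\omega(h(x,y))-\alpha(y)\geq -c(L_x^-)-\omega(\pi)$. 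Note that this again yields the constant $\omega(\pi)$, not $1$, confirming the normalization issue above.
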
 
\begin{proof}
	The proof of (1) is straightforward. The proof of (2) can be found in \cite[Fact 15.2.18]{kaletha2023bruhat}.
\end{proof}
We say $(L_\bullet,c)$ is \dfn{self-dual} if $(L_\bullet,c)=(L_\bullet^\vee,c^\vee)$. 

\begin{prop}\label{prop-latchain}
	Let $(L_\bullet,c)\in\CGLC$. Then $(L_\bullet,c)$ corresponds to a norm in $\CN_{mm}$ if and only if there exists a basis $(f_i)_{1\leq i\leq n}$ of $V$ inducing a Witt decomposition $V=V_-\oplus V_0\oplus V_+$ and $(L_\bullet,c)$ decomposes as $(L_\bullet^\pm,c^\pm)\oplus(L_\bullet^0,c^0)$, such that $(L_\bullet^\pm,c^\pm)$ is a self-dual graded lattice chain of $V_-\oplus V_+$ adapted to the basis $(f_i)_{i\neq m+1}$, and $(L_\bullet^0,c^0)$ is the graded lattice chain corresponding to the norm $\alpha_0$ on $V_0$.
\end{prop}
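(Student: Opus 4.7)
The plan is to reduce Proposition \ref{prop-latchain} to Corollary \ref{cor-mmnorm} by pulling it through the bijection $\CN \leftrightarrow \CGLC$ established in Lemma \ref{lem-lat-norm}, together with the compatibility of this bijection with direct sums and duality recorded in the subsequent lemma. Thus the proof will be essentially formal translation, with no new geometric input.

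Concretely, suppose first that $(L_\bullet, c)$ corresponds to a norm $\alpha \in \CN_{mm}$. By Corollary \ref{cor-mmnorm} there exists a basis $(f_i)_{1 \le i \le n}$ inducing a Witt decomposition $V = V_- \oplus V_0 \oplus V_+$ and a decomposition $\alpha = \alpha_\pm \oplus \alpha_0$, with $\alpha_\pm$ a self-dual norm on $V_- \oplus V_+$ split by $(f_i)_{i \neq m+1}$ and $\alpha_0$ the distinguished norm on $V_0$. Let $(L_\bullet^\pm, c^\pm)$ and $(L_\bullet^0, c^0)$ be the graded lattice chains associated to $\alpha_\pm$ and $\alpha_0$ by Lemma \ref{lem-lat-norm}(1). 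By the direct-sum formula for graded lattice chains, the pair $(L_\bullet^\pm, c^\pm) \oplus (L_\bullet^0, c^0)$ is the graded lattice chain associated to $\alpha_\pm \oplus \alpha_0 = \alpha$, hence equals $(L_\bullet, c)$ by the injectivity part of Lemma \ref{lem-lat-norm}(1). Self-duality of $\alpha_\pm$ transports to self-duality of $(L_\bullet^\pm, c^\pm)$ by the duality compatibility, and Lemma \ref{lem-lat-norm}(3) converts the splitting basis $(f_i)_{i \neq m+1}$ of $\alpha_\pm$ into an adapted basis of $(L_\bullet^\pm, c^\pm)$. This gives the required decomposition.

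For the converse, start with a decomposition $(L_\bullet, c) = (L_\bullet^\pm, c^\pm) \oplus (L_\bullet^0, c^0)$ with the stated properties. Let $\alpha_\pm$ and $\alpha_0$ be the norms on $V_- \oplus V_+$ and $V_0$ associated to $(L_\bullet^\pm, c^\pm)$ and $(L_\bullet^0, c^0)$. Again by the direct-sum compatibility, the norm associated to $(L_\bullet, c)$ is $\alpha_\pm \oplus \alpha_0$. The self-duality of $(L_\bullet^\pm, c^\pm)$ gives self-duality of $\alpha_\pm$ via the duality compatibility, and the adapted basis $(f_i)_{i\neq m+1}$ becomes a splitting basis of $\alpha_\pm$ by Lemma \ref{lem-lat-norm}(3). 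The norm on $V_0$ is by hypothesis $\alpha_0$. Invoking Corollary \ref{cor-mmnorm} in the other direction shows $\alpha_\pm \oplus \alpha_0 \in \CN_{mm}$, so $(L_\bullet, c)$ corresponds to a maxi-minorant norm.

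The only subtlety to keep track of is the consistent use of the bijection across the three structures (sum, duality, adapted bases), which is why everything is packaged through Lemma \ref{lem-lat-norm} and the lemma preceding this proposition; no step presents a real obstacle. If one wanted a more self-contained argument one could argue directly with the definitions of $L_{\alpha,r}$ and $\alpha_{(L_\bullet,c)}$, but invoking the already-recorded compatibilities keeps the proof short.
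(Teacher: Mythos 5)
Your proposal is correct and follows exactly the paper's route: the paper's proof is literally the one-line remark that the proposition is a translation of Corollary \ref{cor-mmnorm} via Lemma \ref{lem-lat-norm} and the direct-sum/duality compatibility lemma, which is precisely the translation you carry out in both directions.
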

\begin{proof}
	This is a translation of Corollary \ref{cor-mmnorm} in view of the previous two lemmas. 
\end{proof}

\begin{remark}\label{rmk-hyp}
	Let $(L_\bullet^\pm,c^\pm)$ be a self-dual graded lattice chain adapted to the basis $(f_i)_{i\neq m+1}$ as in Proposition \ref{prop-latchain}. Then for any $L\in L_\bullet^\pm$, there exist $x_i\in F$ for $i\neq m+1$ such that $(x_if_i)_{i\neq m+1}$ forms an $\CO_F$-basis of $L$. As $h(f_i,f_j)=\delta_{i,n+1-j}$, we see that $L$ is isomorphic to an orthogonal sum of ``hyperbolic planes" of the form $H(i)$ ($i\in\BZ$). Here $H(i)$ denotes a lattice in a two dimensional hermitian $F$-vector space $(W,h)$ such that $H(i)$ is $\CO_F\pair{x,y}$ spanned by some $x,y\in W$ with $h(x,x)=h(y,y)=0$ and $h(x,y)=\pi^i$. 
	
	A lattice in $W$ which is isomorphic to $H(i)$ for some $i\in \BZ$ is also called a \dfn{hyperbolic lattice} in the sense of \cite[\S 2]{kirch2017construction}. For any lattice $K$ in $W$, define the \dfn{norm ideal} $n(K)$ of $K$ to be the ideal in $\CO_{F_0}$ generated by $h(x,x)$ for $x\in K$. Let $K^\vee$ denote the dual lattice of $K$ with respect to the hermitian form $h$ on $W$. Then by \cite[\S 2]{kirch2017construction} (see also \cite[Proposition 9.2 (a)]{jacobowitz1962hermitian}), 
%	Note that we assume $H(i)\sset W$, in particular, the space $W$ is an isotropic hermitian space, i.e., $W$ contains a vector $w$ with $h(w,w)=0$. Then by \cite[Proposition 9.2 (a)]{jacobowitz1962hermitian}, 
	any lattice $K\sset W$ satisfying $K=\pi^iK^\vee$ (that is, $K$ is $\pi^i$-modular) and $n(K)=n(H(i))$ is isomorphic to $H(i)$.
\end{remark}
\begin{example}\label{ex-nonmmnorm}
	Let $F_0=\BQ_2$ and $F=\BQ_2(\sqrt{3})$. Pick uniformizers $\pi=\sqrt{3}-1\in F$ and $\pi_0=-2\in F_0$ so that $\pi^2+2\pi-2=0$. We have $$ \delta=\sup\cbra{\omega(\lambda)\ |\ \lambda\in F,\lambda+\lambda^\sigma=1 }=\omega( \frac{\pi}{2}) = -\half.$$ Let $(V,h)$ be a $3$-dimensional (split) hermitian $F$-vector space. Let $(e_i)_{1\leq i\leq 3}$ be a basis of $V$ inducing a Witt decomposition $V=V_-\oplus V_0\oplus V_+$. Denote $V_\pm\coloneqq V_-\oplus V_+=F\pair{e_1,e_3}$. Set $$f_1\coloneqq \pi\inverse(e_1+e_3),\ f_2\coloneqq e_2,\ f_3\coloneqq \pi\inverse(e_1-e_3).$$ Then $L_1\coloneqq \CO_F\pair{f_1,f_3}$ is a self-dual lattice in $(V_\pm,h)$. By \cite[Equation (9.1)]{jacobowitz1962hermitian}, the self-dual hyperbolic plane $H(0)$ in $V_\pm$ has norm ideal $2\CO_{F_0}$. On the other hand, we have $n(L_1)=\CO_{F_0}$ by direct computation. In particular, the self-dual lattice $L_1$ in $(V_\pm,h)$ is not isomorphic to $H(0)$, and hence $L_1$ is not adapted to any basis of $V_\pm$ induing a Witt decomposition. 
	
	Now define $$L\coloneqq L_1\oplus\CO_Ff_2.$$ Then the graded lattice chain $(L_\bullet,c)$, where $L_\bullet\coloneqq \tcbra{\pi^iL}_{i\in\BZ}$ and $c(\pi^iL)\coloneqq \frac{i}{2}+\frac{\delta}{2}=\frac{i}{2}-\frac{1}{4}$, defines a norm \begin{flalign*}
		    \alpha: V &\lra \BR\cup\cbra{+\infty}\\ \sum_{i=1}^3x_if_i &\mapsto \inf_{1\leq i\leq 3}\tcbra{\omega(x_i)- \frac{1}{4}}. 
	\end{flalign*} 
	Then we see $\alpha$ lies in the fixed point set $\CB(\GL_F(V),F)^{\sigma=1}=\CN^{\sigma=1}$, but does not lie in $\CN_{mm}$.
\end{example}

\subsection{Parahoric subgroups and lattices}\label{subsec-paralattice}
Let us keep the notations as in \S \ref{subsec-buildingnorm}. In particular, the set $(e_i)_{1\leq i\leq n}$ denotes a basis of $V$ inducing a Witt decomposition $V=V_-\oplus V_0\oplus V_+$ and $S$ denotes the corresponding maximal $F_0$-split torus of $G=\operatorname{U}(V,h)$. Denote by $(a_i)_{m+2\leq i\leq n}\in X^*(S) $ the dual basis of $(\lambda_i)_{m+2\leq i\leq n}\in X_*(S)$. 

By the calculations in \cite[Example 1.15]{tits1979reductive}, the relative root system $\Phi=\Phi(G,S)$ is $$\cbra{\pm a_i\pm a_j \ |\ m+2\leq i,j\leq n,i\neq j}\cup \cbra{\pm a_i,\pm 2a_i \ |\ m+2\leq i\leq n},$$ and the affine root system $\Phi_a$ is \begin{flalign*}
	   &\tcbra{\pm a_i\pm a_j+ \half\BZ\ |\ m+2\leq i,j\leq n, i\neq j }\\  \cup & \tcbra{\pm a_i+  \half\delta+\half\BZ\ |\ m+2\leq i\leq n }\cup\tcbra{\pm 2a_i+  \half+\delta+\BZ\ |\ m+2\leq i\leq n }.  
\end{flalign*} 
Here, $\delta$ is defined as in \eqref{delta}. These affine roots endow $X_*(S)_\BR$ with a simplicial structure. Following \cite[Example 3.11]{tits1979reductive}, we pick a chamber defined by the inequalities $$ \half\delta<a_{m+2}<\cdots<a_n<\half\delta+\frac{1}{4}.$$ Then we obtain $m+1$ vertices $v_0,\ldots,v_m$ in $X_*(S)_\BR$ such that for $0\leq i\leq m$, \begin{flalign*}
	    a_{j}(v_i)=\begin{cases}
	    	\frac{1}{2}\delta \quad &\text{if $m+2\leq j\leq n-i$},\\ \half\delta+\frac{1}{4} &\text{if $n-i<j\leq n$}.
	    \end{cases}
\end{flalign*}
Now each $v_i$ defines a (maxi-minorant) norm, and hence a graded lattice chain, by Proposition \ref{prop-mmnorm} and Lemma \ref{lem-lat-norm}. Let $\lambda\in F$ be an element satisfying $\omega(\lambda)=\delta$. We shall see an explicit expression of $\lambda$ in Lemma \ref{lem-lambda}.
Define \begin{flalign}
	   \Lambda_i &\coloneqq \CO_F\pair{\pi\inverse e_1,\ldots,\pi\inverse e_i,e_{i+1},\ldots,e_{m+1},\lambda e_{m+2},\ldots,\lambda e_n } \label{eq-Lambdai},   \\ \Lambda_i' &=\CO_F\pair{e_1,\ldots,e_m,e_{m+1},\lambda e_{m+2},\ldots,\lambda e_{n-i},\lambda\pi e_{n+1-i},\ldots,\lambda\pi e_n }.  \notag
\end{flalign} Then the graded lattice chain corresponding to $v_i$ is of rank $2$ and has a segment \begin{flalign*}
	    \pi\Lambda_i\subset \Lambda_i'\subset \Lambda_i.
\end{flalign*} 
Let $\wt{G}=\GU(V,h)$ be the unitary similitude group attached to the hermitian space $(V,h)$. Let $I$ be a non-empty subset of $\cbra{0,1,\ldots,m}$. Define $$P_I\coloneqq \cbra{g\in \wt{G}(F_0)\ |\ g\Lambda_i=\Lambda_i, \text{\ for $i\in I$} }.$$
As in \cite[1.2.3]{pappas2009local}, the Kottwitz map restricted to $P_I$ is trivial. In particular, we obtain that the (maximal) parahoric subgroup of $\wt{G}(F_0)$ is the stabilizer of $v_i$ in $\wt{G}(F_0)$, which also equals the stabilizer of $\Lambda_i$ in $\wt{G}(F_0)$ (as the stabilizer of $\Lambda_i'$ is larger). More generally, we have the following proposition.

\begin{prop} \label{prop-para}
	Denote $\wt{G}=\GU(V,h)$. The subgroup $P_I$ is a parahoric subgroup of $\wt{G}(F_0)$. Any parahoric subgroup of $\wt{G}(F_0)$ is conjugate to a subgroup $P_I$ for a unique $I\subset\cbra{0,1,\ldots,m}$. The conjugacy classes of special parahoric subgroups correspond to the sets $I=\cbra{0}$ and $\cbra{m}$. 
\end{prop}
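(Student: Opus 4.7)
The plan is to combine the lattice-theoretic description of the (enlarged) Bruhat-Tits building established through Proposition \ref{prop-latchain} and Lemma \ref{lem-lat-norm} with the standard dictionary between facets and parahoric subgroups. Let $F_I$ denote the facet of $\CB(\wt{G}, F_0)$ spanned by the vertices $\{v_i : i \in I\}$ of the chamber fixed above.

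First, I would show that for each nonempty $I \subset \{0, 1, \ldots, m\}$, the subgroup $P_I$ coincides with the pointwise $\wt{G}(F_0)$-stabilizer of $F_I$. Since $\wt{G}(F_0)$ acts on $\CB(\wt{G},F_0)$ compatibly with the bijection to $\CN_{mm}$ and thus to graded lattice chains, the inclusion "stabilizer of $F_I\subseteq P_I$" is immediate: each $\Lambda_i$ appears in the segment $\pi\Lambda_i \subset \Lambda_i' \subset \Lambda_i$ of the graded lattice chain associated to $v_i$. For the reverse inclusion, one observes that $\Lambda_i'$ is canonically determined by $\Lambda_i$ (it is intrinsically recoverable from the duality structure coming from $h$ and the explicit description in \eqref{eq-Lambdai}), so any $g \in P_I$ automatically stabilizes the whole graded chain at each $v_i$ and hence fixes $F_I$ pointwise.

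Second, I would invoke the discussion preceding the proposition (following \cite[1.2.3]{pappas2009local}) that the Kottwitz map vanishes on $P_I$. Since the parahoric subgroup attached to $F_I$ is, by definition, the kernel of the Kottwitz homomorphism restricted to the pointwise stabilizer of $F_I$, the equality $P_I = \mathrm{Stab}_{\wt{G}(F_0)}(F_I)$ combined with Kottwitz vanishing on $P_I$ yields that $P_I$ is precisely the parahoric subgroup attached to $F_I$. Conversely, every parahoric subgroup of $\wt{G}(F_0)$ is conjugate to the parahoric attached to some facet of the chosen chamber (a fundamental domain under the extended affine Weyl group action on the apartment), and the facets $F_I$ exhaust these up to conjugacy. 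The uniqueness of $I$ follows from the fact that distinct vertices $v_0, \ldots, v_m$ of the chosen chamber lie in distinct orbits of the normalizer of the chamber in $\wt{G}(F_0)$ — equivalently, they correspond to distinct vertices of the local Dynkin diagram.

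Finally, for the classification of special parahoric subgroups, I would read off the local Dynkin diagram directly from the explicit affine root system $\Phi_a$ computed in \S\ref{subsec-paralattice}. The finite relative root system is of type $BC_m$, and in the wildly ramified setting the affine diagram is $\widetilde{BC}_m$ with exactly two special vertices at its ends; a direct check of which affine roots vanish at each $v_i$ (keeping careful track of the wild ramification constant $\delta$) shows that only at $v_0$ and $v_m$ does the gradient subsystem span a rank-$m$ root system of full $BC_m$ type, so these are precisely the special vertices. The most delicate step is the uniqueness-of-$I$ and the identification of special vertices in the last paragraph, where one must track the effect of the nontrivial $\delta$ on the geometry of the chamber and ensure no accidental coincidences among the $v_i$ arise from elements of $\wt{G}(F_0)$ lying in the nontrivial part of the Kottwitz quotient.
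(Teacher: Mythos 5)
Your handling of the first two assertions is essentially the paper's own argument: identify $P_I$ with the parahoric attached to the facet spanned by $\cbra{v_i \ |\ i\in I}$ using the vanishing of the Kottwitz map on $P_I$ together with the fact that the stabilizer of $\Lambda_i$ already fixes $v_i$, and then use transitivity of $\wt{G}(F_0)$ on chambers; the uniqueness-of-$I$ issue you flag is also settled in the paper only by appeal to \cite[\S 4]{pappas2008twisted} and \cite[1.2.3]{pappas2009local}, so you are on equal footing there. Your claim that $\Lambda_i'$ is ``intrinsically recoverable'' from $\Lambda_i$ can be made concrete: one checks from \eqref{eq-Lambdai} that $\Lambda_i'=\Lambda_i\cap\lambda\Lambda_i^h$, and any $g\in\wt{G}(F_0)$ with $g\Lambda_i=\Lambda_i$ has $\det g\in\CO_F^\times$ and $c(g)^n=N_{F/F_0}(\det g)$, hence unit similitude factor, hence preserves $\lambda\Lambda_i^h$ and therefore $\Lambda_i'$; this justifies the reverse inclusion you assert.

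The only place you genuinely diverge is the last assertion: the paper reads off specialness from the explicit coordinates of the $v_i$ together with Proposition \ref{prop-mmnorm}, whereas you check directly against the affine root system of \S\ref{subsec-paralattice}. That route is fine, but your criterion is misstated in a way that, taken literally, would fail: a point is special when every root \emph{direction} of the non-reduced system of type $BC_m$ occurs, up to positive proportionality, as the vector part of an affine root vanishing there; it is not required (and here never happens) that the vanishing gradients exhaust all of $BC_m$. Indeed, since $\delta\in\frac{1}{2}\BZ$, at $v_0$ the vanishing affine roots have vector parts $\pm a_i\pm a_j$ and $\pm a_i$ only (none of type $\pm 2a_i$, as $2\delta\in\BZ$), a system of type $B_m$, while at $v_m$ they are $\pm a_i\pm a_j$ and $\pm 2a_i$ only (none of type $\pm a_i$, as $\delta+\frac{1}{4}\notin\frac{1}{2}\BZ$), a system of type $C_m$; your ``full $BC_m$'' test would reject both. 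With the corrected criterion the same computation does the job: $v_0$ and $v_m$ are special, and for $0<i<m$ the direction $a_j-a_{j'}$ with $m+2\le j'\le n-i<j\le n$ takes the value $\frac{1}{4}$ at $v_i$ while the corresponding affine roots have constants in $\frac{1}{2}\BZ$, so no wall in that direction passes through $v_i$ and $v_i$ is not special. After this repair your argument is correct and is, in substance, the same computation that underlies Proposition \ref{prop-mmnorm}.
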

\begin{proof}
	The results are similar to those in \cite[\S 4]{pappas2008twisted} and \cite[1.2.3]{pappas2009local}. The first two assertions follow from the fact that $\wt{G}(F_0)$ acts transitively on the chambers in the building, and each $I$ determines a (unique) facet in a chamber. 
    By \eqref{eq-mapnorm1}, the vertex $v_0$ (resp. $v_m$) corresponds to the norm 
    \begin{flalign*}
        \sum_{i=1}^nx_ie_i\mapsto \inf\tcbra{\omega(x_i)-C, \omega(x_j)+C, \omega(x_{m+1})+\frac{1}{2}\delta\ |\ 1\leq i<m+1 \text{\ and\ } m+1<j\leq n  },
    \end{flalign*} 
    where $C=-\half\delta$ (resp. $-\half\delta-\frac{1}{4}$). By Proposition \ref{prop-mmnorm}, the vertices $v_0$ and $v_m$, and thus the subsets $I=\tcbra{0}$ and $\tcbra{m}$, correspond to the conjugacy classes of special parahoric subgroups. Moreover, it is known that there are precisely two special vertices in a chamber, as shown by the local Dynkin diagram of type $C\-BC_n$ in \cite[\S 4.2, p.~60]{tits1979reductive}). The final assertion then follows.     
\end{proof}

%For each parahoric subgroup $P_I$, we denote by $\sG_I$ the associated parahoric group scheme. The scheme $\sG_I$ is a canonical smooth affine group scheme over $\CO_{F_0}$ which satisfies the properties that $\sG_I(\CO_{F_0})=P_I$, the generic fiber is $\wt{G}$, and the special fiber is connected. We also say that $\sG_I$ is the parahoric group scheme attached to the lattices $\Lambda_i$ for $i\in I$.
%
%\begin{thm}\label{thm-smooth}
%	The scheme $\sG_I$ can be identified with the schematic closure of $\wt{G}$ in $\prod_{i\in I}\GL(\Lambda_i)$. Here we view $\Lambda_i$ as an $\CO_{F_0}$-module, and hence $\GL(\Lambda_i)$ is non-canonically isomorphic to $\GL_{2n}$ over $\CO_{F_0}$.
%\end{thm}
%\begin{proof}
%	This follows from the construction of parahoric group schemes in \cite[\S 3]{bruhat1987schemas}. Let $\fG$ denote the schematic closure of $\wt{G}$. By \cite[3.9]{bruhat1987schemas}, the schemes $\fG$ and $\sG_I$ have the same $\CO_{\breve F_0}$-points for the completion $\breve F_0$ of the maximal unramified extension of $F_0$. By \cite[3.5, 3.11]{bruhat1987schemas}, we have that $\fG$ is smooth over $\CO_{F_0}$. Now the theorem follows by \cite[1.7.6]{bruhat1984groupes}. 
%\end{proof}

\section{Construction of the unitary local models}\label{sec-main}
%The notations are as in \S \ref{subsec-notation}. 

\subsection{Quadratic extensions of $2$-adic fields}\label{subsec-prel}
We start with some basic facts about quadratic extensions of $2$-adic fields. The readers can find more details in \cite[\S 5]{jacobowitz1962hermitian} and \cite[\S 63]{Meara2000}.

\begin{prop}\label{quad}
	Let $E$ be a finite extension of $\BQ_2$ of degree $d$ with ring of integer $\CO_E$. Let $e$ (resp. $f$) be the ramification degree (resp. residue degree) of the field extension $E/\BQ_2$. Note that $d=ef$. 
	\begin{enumerate}
		\item The map sending $a$ to $E(\sqrt{a})$ defines a bijection between $E\cross/(E\cross)^2$ and the set of isomorphism classes of field extensions of $E$ of degree at most two. Furthermore, the cardinality of $E\cross/(E\cross)^2$ is $2^{2+d}$. In particular, we have $2^{2+d}-1$ quadratic extensions of $E$. 
		\item Let $U$ be the unit group of $\CO_E$ and $\varpi$ be a uniformizer of $\CO_E$. For $i\geq 1$, let $U_i\coloneqq 1+\varpi^i\CO_{E}$ be a subgroup of $U$. Then 
			     $U_{i}$ is contained in $U^2$ for $i\geq 2e+1$
		 and the quotient $U_{2e}/(U_{2e}\cap U^2)$ has two elements corresponding to the trivial extension and the unramified quadratic extension of $E$. Note that $U_{2e}=1+4\CO_E$.
		\item Any non-trivial element in $E\cross/(E\cross)^2$ has a representative of the following three forms: 
		    \begin{enumerate}[label=(\roman*)]
		          \item a unit in $U_{2e}- U_{2e+1}$ (elements in $U_{2e}$ but not in $U_{2e+1}$), 
		          \item a prime element in $E$,
		          \item a unit in $U_{2i-1}- U_{2i}$ for some $1\leq i\leq e$.
		    \end{enumerate}
             The corresponding quadratic extensions in (ii) and (iii) are ramified. Following \cite[\S 5]{jacobowitz1962hermitian}, we will say the (ramified) quadratic extensions in (ii) and (iii) are of type \dfn{(R-P)} and \dfn{(R-U)} respectively. There are $2^{1+d}$ quadratic extensions of $E$ of type (R-P) and $2^{1+d}-2$ quadratic extensions of $E$ of type (R-U).
         \item Let $E(\sqrt{\theta})/E$ be a quadratic extension of type (R-U) for some unit $\theta\in U_{2i-1}- U_{2i}$ for some $1\leq i\leq e$. Then there exists a prime $\pi$ in $E(\sqrt{\theta})$ and a prime $\pi_0$ in $E$ satisfying  \begin{flalign*}
         	     \pi^2-t\pi+\pi_0=0
         \end{flalign*} for some $t\in \CO_E$ with $\ord(t)=e+1-i$, where $\ord$ denotes the normalized valuation on $E$. 
	\end{enumerate} 
\end{prop}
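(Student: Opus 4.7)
The plan is to prove the four parts in sequence, with the key technical input being a careful analysis of the squaring map on the principal-unit filtration $U \supset U_1 \supset U_2 \supset \cdots$ of $U := \CO_E^\times$, where $U_i = 1 + \varpi^i\CO_E$ for a chosen uniformizer $\varpi$. For \textbf{(1)}, I would invoke Kummer theory (valid because $\mu_2 \subset E$): the map $a \mapsto E[T]/(T^2 - a)$ identifies $E^\times/(E^\times)^2$ with isomorphism classes of \etale $\BZ/2$-algebras over $E$, non-trivial classes corresponding to quadratic field extensions. The cardinality follows from the standard formula $|E^\times/(E^\times)^n| = n \cdot |\mu_n(E)| \cdot |n|_E^{-1}$ applied with $n = 2$, using $|\mu_2(E)| = 2$ and $|2|_E = 2^{-d}$, giving $2^{2+d}$.

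For \textbf{(2)} and \textbf{(3)}, I would expand $(1 + \varpi^i x)^2 = 1 + 2\varpi^i x + \varpi^{2i} x^2$ and compare valuations. This yields $U_i^2 \subseteq U_{2i}$ for $1 \leq i \leq e$ and $U_i^2 \subseteq U_{i+e}$ for $i > e$; moreover, in the second regime the induced map $U_i/U_{i+1} \to U_{i+e}/U_{i+e+1}$ on one-dimensional graded pieces is multiplication by the residue class of $2\varpi^{-e}$ (a unit in $k$), hence bijective. A standard Hensel-type successive-approximation argument then upgrades this to $U_i^2 = U_{i+e}$ for $i > e$, so $U_{2e+1} = U_{e+1}^2 \subset U^2$. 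For the index-two claim, I would factor squaring through $U_{2e+1}$ and reduce to the map $U_e/U_{e+1} \to U_{2e}/U_{2e+1}$; in residue-field coordinates this becomes an Artin--Schreier-type map $y \mapsto y^2 + \bar c\, y$ with $\bar c \in k^\times$, whose cokernel has order $2$, and the non-trivial class is matched with the unramified quadratic extension by comparison with local class field theory. Part (3) then follows by choosing representatives: the decomposition $E^\times = \varpi^{\BZ} \times U$ reduces us to units, and the squaring analysis allows iterative shifting of a unit representative into either $U_{2i-1} \setminus U_{2i}$ for some $1 \leq i \leq e$ (type (R-U)) or $U_{2e}$ (either trivial, or representing the unramified class). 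The count $2^{1+d}$ for (R-P) and $2^{1+d} - 2$ for (R-U) is then forced by (1) together with the identification of the single unramified class.

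For \textbf{(4)}, given $\theta = 1 + u\varpi^{2i-1}$ with $u \in \CO_E^\times$, set $\pi := \varpi^{1-i}(1 - \sqrt\theta) \in L := E(\sqrt\theta)$. A direct calculation using $(1-\sqrt\theta)(1+\sqrt\theta) = 1 - \theta$ gives $\pi + \bar\pi = 2\varpi^{1-i}$ and $\pi\bar\pi = -u\varpi$; setting $t := 2\varpi^{1-i}$ and $\pi_0 := -u\varpi$ produces the Eisenstein equation $\pi^2 - t\pi + \pi_0 = 0$ over $\CO_E$ with $\ord(t) = e + 1 - i$ and $\ord(\pi_0) = 1$, so $\pi$ is a uniformizer of $L$.

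The technical heart is the index-two assertion in (2): showing that squaring on $U_{2e}/U_{2e+1}$ has cokernel of order exactly $2$ is the Artin--Schreier phenomenon in residue characteristic $2$, and this is what distinguishes the $p = 2$ setting from the case of odd residue characteristic; everything else is valuation-chasing or direct verification.
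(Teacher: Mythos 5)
Your proposal is correct and takes essentially the same route as the paper: part (4) is the paper's own construction verbatim ($\pi=(1-\sqrt{\theta})/\varpi^{i-1}$, $t=2/\varpi^{i-1}$, $\pi_0=-u\varpi$, with $\ord(t)=e+1-i$), and for (1)--(3) you simply write out the standard unit-filtration, squaring-on-graded-pieces, and Artin--Schreier cokernel computations that the paper cites from O'Meara (63:1, 63:2, 63:3, 63:9), together with the same counting via $|E^\times/(E^\times)^2|=2^{2+d}$. The only sketch-level point, identifying the nontrivial class of $U_{2e}/(U_{2e}\cap U^2)$ with the unramified quadratic extension, is more directly seen by noting that $\alpha=(1+\sqrt{\theta})/2$ with $\theta=1+4a$ satisfies $x^2-x-a$, whose reduction is separable -- which is exactly the content of the cited reference -- rather than appealing to local class field theory.
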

\begin{proof}
	(1) The bijection is well-known from Kummer theory. The formula for the cardinality can be found in \cite[63:9]{Meara2000}. 
	
	(2) See \cite[63:1, 63:3]{Meara2000}.
	
	(3) See \cite[63:2]{Meara2000}. The number of quadratic extensions of type (R-U) or (R-P) follows from the cardinality formula of $E\cross/(E\cross)^2$ in (1). 
	
	(4) Let $\varpi$ be any prime in $E$. By assumption, $\theta=1+\varpi^{2i-1}u$ for some unit $u$. Set \begin{flalign*}
		     \pi \coloneqq  \frac{1-\sqrt{\theta}}{\varpi^{i-1}}\in E(\sqrt{\theta}). 
	\end{flalign*}
	    Let $\ol{\pi}$ be the Galois conjugate of $\pi$. Then \begin{flalign*}
	    	  \pi+\ol{\pi} =\frac{2}{\varpi^{i-1}}\text{\ and\ } \pi\ol{\pi} = -\varpi u.
	    \end{flalign*}
	    Now take $\pi_0$ to be $-\varpi u$ and $t$ to be $\frac{2}{\varpi^{i-1}}$. Then $t\in \CO_E$, as $\ord(t)=e+1-i\geq 1$, and $\pi$ satisfies $$\pi^2-t\pi+\pi_0=0.$$ In particular, $\pi$ is a prime element in $E(\sqrt{\theta})$.  
\end{proof}

\begin{example}
	The (ramified) quadratic extension $\BQ_2(\sqrt{3})/\BQ_2$ is of type (R-U), while $\BQ_2(\sqrt{2})/\BQ_2$ is a quadratic extension of type (R-P). 
\end{example}

Let us return to the setting in \S \ref{subsec-notation}. By Proposition \ref{quad}, we can find uniformizers $\pi\in F$ and $\pi_0\in F_0$ such that the quadratic extension $F/F_0$ falls into one of the following two distinct cases\footnote{When $F_0/\BQ_2$ is an  unramified finite extension, there is a description in \cite[\S 2A]{cho2016group} of these two cases in terms of the ramification groups of $\Gal(F/F_0)$.}:
\begin{enumerate}
	\item [(R-U)] $F=F_0(\sqrt{\theta})$, where $\theta$ is a unit in $\CO_{F_0}$. The uniformizer $\pi$ satisfies  $$\pi^2-t\pi+\pi_0=0.$$ Here $t\in\CO_{F_0}$ with $\pi_0|t\text{\ and\ }t|2$ and $\omega(t)$ depends only on $F$. We have $\sqrt{\theta}=1-\frac{2\pi}{t}$ and $\theta=1-\frac{4\pi_0}{t^2}$. 
	\item [(R-P)] $F=F_0(\sqrt{\pi_0})$, where $\pi^2+\pi_0=0$. 
\end{enumerate}

\begin{lemma}\label{lem-invdif}
	Let $F, F_0, \pi$ and $\pi_0$ be as above. \begin{enumerate}
		\item Suppose $F/F_0$ is of type (R-U). Then the inverse different (see \cite[Chapter III, \S 3]{serre2013local}) of $F/F_0$ is $\frac{1}{t}\CO_F$. 
		\item Suppose $F/F_0$ is of type (R-P). Then the inverse different of $F/F_0$ is $\frac{1}{2\pi}\CO_F$. 
	\end{enumerate}
\end{lemma}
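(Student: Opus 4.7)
The plan is to apply the standard formula for the different of a totally ramified extension generated by a root of an Eisenstein polynomial, and then simplify the resulting generator in each case.

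First I would recall the general fact: since $F/F_0$ is totally ramified with uniformizer $\pi$, the ring of integers satisfies $\CO_F = \CO_{F_0}[\pi]$, and if $f(X) \in \CO_{F_0}[X]$ denotes the (Eisenstein) minimal polynomial of $\pi$ over $F_0$, then the different ideal $\mathfrak{d}_{F/F_0}$ is the principal ideal generated by $f'(\pi)$. Consequently, the inverse different is $\frac{1}{f'(\pi)}\CO_F$, and it suffices to identify the valuation of $f'(\pi)$ in each case.

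In case (R-U), the minimal polynomial is $f(X) = X^2 - tX + \pi_0$, so $f'(\pi) = 2\pi - t$. Using the identity $\sqrt{\theta} = 1 - 2\pi/t$ from the setup, we get $2\pi - t = -t\sqrt{\theta}$. Since $\theta$ is a unit in $\CO_{F_0}$, so is $\sqrt{\theta}$ in $\CO_F$, and therefore $(f'(\pi)) = (t)$ as ideals of $\CO_F$. Hence $\mathfrak{d}_{F/F_0} = t\,\CO_F$ and the inverse different equals $\frac{1}{t}\CO_F$, as claimed. In case (R-P), the minimal polynomial is $f(X) = X^2 + \pi_0$, giving $f'(\pi) = 2\pi$ directly, so $\mathfrak{d}_{F/F_0} = 2\pi\,\CO_F$ and the inverse different is $\frac{1}{2\pi}\CO_F$.

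No genuine obstacle is anticipated; the only point requiring a line of care is the assertion $\CO_F = \CO_{F_0}[\pi]$, which follows from $\pi$ being a uniformizer of a totally ramified extension of a complete discretely valued field. Everything else is the direct computation of $f'(\pi)$ from the Eisenstein equations given in the (R-U) and (R-P) descriptions at the start of \S\ref{subsec-prel}.
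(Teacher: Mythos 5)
Your proposal is correct and follows essentially the same route as the paper: both invoke $\CO_F=\CO_{F_0}[\pi]$ and the formula $\mathfrak d_{F/F_0}=(f'(\pi))$ for the Eisenstein minimal polynomial, then compute $f'(\pi)=2\pi-t$ and $2\pi$ respectively. The only cosmetic difference is in the (R-U) case: you identify $2\pi-t=-t\sqrt{\theta}$ with $\sqrt{\theta}$ a unit, while the paper simply notes $(2\pi-t)\CO_F=t\,\CO_F$ since $t\mid 2$; both observations are valid.
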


\begin{proof}
	As $\pi$ satisfies an Eisenstein polynomial $f$, by \cite[Chapter III, \S 6, Corollary 2]{serre2013local} and \cite[Chapter I, \S 6, Proposition 18]{serre2013local}, we obtain that $\CO_F=\CO_{F_0}[\pi]$ and the inverse different of $F/F_0$ is given by  $$\delta_{F/F_0}\inverse=\frac{1}{f'(\pi)}\CO_F.$$ More precisely, \begin{enumerate}
		\item when $F/F_0$ is of type (R-U), then $f(T)=T^2-tT+\pi_0$ and $\delta_{F/F_0}\inverse =\frac{1}{2\pi-t}\CO_F=\frac{1}{t}\CO_F$, as $t|2$. 
		\item when $F/F_0$ is of type (R-P), then $f(T)=T^2+\pi_0$ and $\delta_{F/F_0}\inverse =\frac{1}{2\pi}\CO_F$. 
	\end{enumerate}
\end{proof}

\subsection{Hermitian quadratic modules and parahoric group schemes} \label{subsec-hermpara}
In this subsection, we shall define hermitian quadratic modules following \cite[\S 9]{anschutz2018extending} and relate them to parahoric group schemes.

Let $R$ be an $\CO_{F_0}$-algebra. The non-trivial Galois involution on $\CO_F$ extends to a map \begin{flalign*}
	   \CO_F\otimes_{\CO_{F_0}}R\ra \CO_F\otimes_{\CO_{F_0}}R,\ x\otimes r\mapsto \ol{x}\otimes r
\end{flalign*} for $x\in \CO_F$ and $r\in R$. We will also denote the map by $a\mapsto \ol{a}$ for $a\in \CO_F\otimes_{\CO_{F_0}}R$. The norm map on $\CO_F$ induces the map $$N_{F/F_0}: \CO_F\otimes_{\CO_{F_0}}R\ra R,\ a\mapsto a\ol{a}.$$
 
\begin{defn}[{\cite[Definition 9.1]{anschutz2018extending}}] \label{defn-hermqm}
	Let $R$ be an $\CO_{F_0}$-algebra. Let $d\geq 1$ be an integer. Consider a triple $(M,q,\sL)$, where $M$ is a locally free $\CO_F\otimes_{\CO_{F_0}}R$-module of rank $d$, $\sL$ is an invertible $R$-module, and $q: M\ra \sL$ is an $\sL$-valued quadratic form. Let $f: M\times M\ra \sL$ denote the symmetric $R$-bilinear form sending $(x,y)\in M\times M$ to $f(x,y)\coloneqq q(x+y)-q(x)-q(y)\in \sL$.
	
	We say the triple $(M,q,\sL)$ is a \dfn{hermitian quadratic module of rank $d$} over $R$ if for any $a\in \CO_F\otimes_{\CO_{F_0}}R$ and any $x,y\in M$, we have \begin{flalign}
		   q(ax)=N_{F/F_0}(a)q(x) \text{\ and\ } f(ax,y)=f(x,\ol{a} y). \label{eqq}
	\end{flalign} 
%	\begin{enumerate}
%		\item $q(ax)=N_{F/F_0}(a)q(x)$,
%		\item $f(ax,y)=f(x,\ol{a} y)$. Here $f: M\times M\ra \sL$ denotes the symmetric $R$-bilinear form defined by $f(x,y)\coloneqq q(x+y)-q(x)-q(y)$.
%	\end{enumerate}
	A quadratic form $q: M\ra \sL$ satisfying \eqref{eqq} is called an \dfn{$\sL$-valued hermitian quadratic form} on $M$.
\end{defn}
\begin{defn}\label{defn-simisom}
	Let $(M_1,q_1,\sL_1)$ and $(M_2,q_2,\sL_2)$ be two hermitian quadratic modules over an $\CO_{F_0}$-algebra $R$. A \dfn{similitude isomorphism} or simply \dfn{similitude} between $(M_i,q_i,\sL_i)$ for $i=1,2$ is a pair $(\varphi,\gamma)$ of isomorphisms,  where $\varphi: M_1\simto M_2$ is an isomorphism of $\CO_F\otimes_{\CO_{F_0}}R$-modules and $\gamma:\sL_1\simto\sL_2$ is an isomorphism of $R$-modules such that $$q_2(\varphi(m_1))=\gamma(q_1(m_1)), \text{\ for any $m_1\in M_1$}.$$
\end{defn}
We will write \begin{flalign}
	  \ud{\Sim}\rbra{(M_1,q_1,\sL_1),(M_2,q_2,\sL_2) }, \text{\ or simply $\ud{\Sim}\rbra{M_1,M_2 }$}, \label{eqsim1}
\end{flalign}  for the functor over $R$ which sends an $R$-algebra $S$ to the set $\Sim(M_1\otimes_RS,M_2\otimes_RS)$ of similitude isomorphisms between $(M_i\otimes_RS,q_i\otimes_RS,\sL_i\otimes_RS)$ for $i=1,2$. In the case $(M_1,q_1,\sL_1)=(M_2,q_2,\sL_2)$, we will write \begin{flalign}
	   \ud{\Sim}(M_1,q_1,\sL_1), \text{\ or simply\ }\ud{\Sim}(M_1), \label{eqsim2}
\end{flalign}  for $\ud{\Sim}\rbra{(M_1,q_1,\sL_1),(M_2,q_2,\sL_2) }$. This is in fact a group functor, and represented by an affine group scheme of finite type over $R$. 

\begin{defn} \label{defn-hermphi}
	Let $R$ be an $\CO_{F_0}$-algebra. Denote by $\CC_R$ the category of quadruples $(M,q,\sL,\phi)$ such that $(M,q,\sL)$ is a hermitian quadratic module over $R$ and $\phi$ is an $R$-bilinear form $\phi: M\times M\ra \sL$ such that for $x,y\in M$ , we have \begin{flalign}
		    \phi(x,\pi y) =q(x+y)-q(x)-q(y),\ \phi(\pi x,y)=\phi(x,\ol{\pi}y),\ \phi(x,y) &=\phi(  \frac{\ol{\pi}}{\pi}y,x),\
		    \phi(x,x) = \frac{t}{\pi_0}q(x).  \label{eqphiq}
	\end{flalign}
	Here $t\coloneqq \pi+\ol{\pi}$. In particular, $t=0$ if $F/F_0$ is of type (R-P). 
	We will say an object $(M,q,\sL,\phi)\in\CC_R$ is a hermitian quadratic module with $\phi$, or simply a hermitian quadratic module.
	
	Let $(M_i,q_i,\sL_i,\phi_i)\in \CC_R$ for $i=1,2$. A \dfn{similitude isomorphism preserving $\phi$} between $(M_i,q_i,\sL_i,\phi_i)$ is a pair $(\varphi,\gamma)$ of isomorphisms such that $(\varphi,\gamma)$ is a similitude between $(M_i,q_i,\sL_i)$, and for $m_1,m_1'\in M_1$, we have \begin{flalign*}
		     \phi_2(\varphi(m_1),\varphi(m_1'))= \gamma(\phi_1(m_1,m_1')).
	\end{flalign*}
\end{defn}

We will use a similar notation as in \eqref{eqsim1} and \eqref{eqsim2} to denote the functor of similitudes preserving $\phi$ between two hermitian quadratic modules in $\CC_R$.

%We will write $$\ud{\Sim}\rbra{(M_1,q_1,\sL_1,\phi_1),(M_2,q_2,\sL_2,\phi_2) }, \text{\ or simply $\ud{\Sim}\rbra{(M_1,\phi_1),(M_2,\phi_2) }$}, $$ for the functor over $R$ which sends an $R$-algebra $S$ to the set of similitude isomorphisms preserving $\phi$ between $(M_i\otimes_RS,q_i\otimes_RS,\sL_i\otimes_RS,\phi_i\otimes_RS)$ for $i=1,2$. In the case $(M_1,q_1,\sL_1,\phi_1)=(M_2,q_2,\sL_2,\phi_2)$, we will write $$\ud{\Sim}(M_1,q_1,\sL_1,\phi_1), \text{\ or simply\ }\ud{\Sim}((M_1,\phi_1)),$$ for $\ud{\Sim}\rbra{(M_1,q_1,\sL_1,\phi_1),(M_2,q_2,\sL_2,\phi_2) }$. This is in fact a group functor, and represented by an affine group scheme of finite type over $R$. 
%

Recall that we defined in \S \ref{subsec-paralattice} lattices $\Lambda_i$ for $0\leq i\leq m$ via \begin{flalign*}
	 \Lambda_i &=\CO_F\pair{\pi\inverse e_1,\ldots,\pi\inverse e_i,e_{i+1},\ldots,e_{m+1},\lambda e_{m+2},\ldots,\lambda e_n },
\end{flalign*} where $\lambda$ is an element in $F$ such that $$\omega(\lambda)=\delta=\sup_{x\in F}\cbra{\omega(x)\ |\ x+\ol{x}=1}.$$ The stabilizer of $\Lambda_i$ is a maximal parahoric subgroup of $\GU(V,h)$. We sometimes call these lattices $\Lambda_i$ \dfn{standard lattices}. A more explicit expression of $\lambda$ is given as follows. 

\begin{lemma}\label{lem-lambda}
	\begin{enumerate}
		\item Suppose $F/F_0$ is of type (R-U). Then we may take $\lambda=\frac{\ol{\pi}}{t}$.
		\item Suppose $F/F_0$ is of type (R-P). Then we may take $\lambda=\frac{1}{2}$. 
	\end{enumerate}
\end{lemma}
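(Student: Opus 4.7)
The plan is to establish both $\omega(\lambda) \leq \delta$ (via a direct check that $\lambda + \ol{\lambda} = 1$) and $\delta \leq \omega(\lambda)$ (via a structural analysis of the trace-one fibre). Together these give $\omega(\lambda) = \delta$, which is exactly the condition required in the definition of $\lambda$ preceding the lemma.

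First I would verify the trace identity. In case (R-U), the elements $\pi$ and $\ol{\pi}$ are the two roots of $T^2 - tT + \pi_0$, so $\Tr_{F/F_0}(\ol{\pi}/t) = (\pi+\ol{\pi})/t = 1$. In case (R-P), $\Tr_{F/F_0}(1/2) = 1$ is immediate. In both cases this gives $\omega(\lambda) \leq \delta$ directly from the definition of $\delta$.

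For the reverse inequality I would use that $\{\mu \in F : \mu + \ol{\mu} = 1\}$ is the affine coset $\lambda + \ker(\Tr_{F/F_0})$, and that $\ker(\Tr_{F/F_0})$ is a one-dimensional $F_0$-subspace of $F$ with an explicit generator $\eta$. In (R-U), take $\eta = \pi - \ol{\pi}$; from $(\pi-\ol{\pi})^2 = t^2 - 4\pi_0 = t^2\theta$ and the fact that $\theta$ is a unit, one reads off $\omega(\eta) = \omega(t)$. In (R-P), take $\eta = \pi$, so $\omega(\eta) = 1/2$. Any $\mu$ with $\mu + \ol{\mu} = 1$ then has the form $\mu = \lambda + c\eta$ for a unique $c \in F_0$.

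The crux is then a parity-of-valuation observation, after which the ultrametric inequality closes the argument. In (R-U), $\omega(\lambda) = 1/2 - \omega(t) \in \tfrac{1}{2} + \BZ$, whereas $\omega(c\eta) = \omega(c) + \omega(t) \in \BZ$; in (R-P), $\omega(\lambda) = -\omega(2) \in \BZ$, whereas $\omega(c\eta) = \omega(c) + 1/2 \in \tfrac{1}{2} + \BZ$. In both cases $\omega(\lambda) \neq \omega(c\eta)$ for every $c \in F_0$, so the ultrametric inequality becomes an equality: $\omega(\mu) = \min(\omega(\lambda),\omega(c\eta)) \leq \omega(\lambda)$. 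Taking the supremum over $\mu$ yields $\delta \leq \omega(\lambda)$. I do not anticipate any substantive obstacle; the only non-formal step is the parity observation that a nonzero element of $\ker(\Tr_{F/F_0})$ and the proposed $\lambda$ always have valuations in opposite cosets of $\BZ$ inside $\tfrac{1}{2}\BZ$, which is immediate from the explicit formulas for $\eta$ and $\lambda$.
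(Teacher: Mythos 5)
Your proof is correct and follows essentially the same route as the paper: both check that the proposed $\lambda$ has $\Tr_{F/F_0}(\lambda)=1$ and then bound the valuation of an arbitrary trace-one element by writing it with a single $F_0$-parameter against a trace-zero generator and observing that the two terms have valuations in different cosets of $\BZ$ inside $\frac{1}{2}\BZ$, so the ultrametric inequality is an equality. Your choice of generator $\eta=\pi-\ol{\pi}=-t\sqrt{\theta}$ is a mild streamlining in the (R-U) case, where the paper instead writes $\lambda=\frac{1}{2}+b\sqrt{\theta}$ and needs a two-step case analysis (first forcing $\omega(b)=\omega(\frac{1}{2})$, then regrouping via $\sqrt{\theta}=1-\frac{2\pi}{t}$) before the same parity comparison; in the (R-P) case the two arguments coincide.
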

\begin{proof}
	(1) By construction, we have $\omega(\lambda)\geq\omega(\frac{\ol{\pi}}{t})> \omega(\half)$. Write $\lambda=a+b\sqrt{\theta}\in F$ for some $a,b\in F_0$. Then $\ol{\lambda}=a-b\sqrt{\theta}$. Since $\lambda+\ol{\lambda}=1$, we get $a=\half$ and \begin{flalign*}
		     \omega(\lambda) &=\omega( \half +b\sqrt{\theta}).
	\end{flalign*}
	If $\omega(\half)\neq \omega(b\sqrt{\theta})$, then $$\omega(\lambda)=\min\tcbra{\omega(\half), \omega(b\sqrt{\theta}) }\leq\omega(\half),$$ which is a contradiction.
	Therefore, we may assume $\omega(b) =\omega(b\sqrt{\theta}) =\omega(\half)$. Then we can write $b=\half u$ for some unit $u$ in $\CO_{F_0}$. Then \begin{flalign*}
		    \omega(\lambda)&= \omega( \half+\half u(1-\frac{2\pi}{t})) =\omega\trbra{(\half+u) -\frac{\pi}{t}u }.
	\end{flalign*} 
	Since $\omega(\pi)=1/2$, we have $\omega(\half+u)\neq \omega(\frac{\pi}{t}u)$. It implies that \begin{flalign*}
		   \omega(\lambda) =\min\tcbra{\omega(\half+u), \omega(\frac{\pi}{t}) } \leq \omega(\frac{\ol{\pi}}{t})
	\end{flalign*}
	Thus, we have $\omega(\lambda)=\omega(\frac{\ol{\pi}}{t})$. 
	
	(2) By construction, we have $\omega(\lambda)\geq \omega(\half)$. Write $\lambda=a+b\pi\in F$ for some $a,b\in F_0$. Then $\ol{\lambda}=a-b\pi$. Since $\lambda+\ol{\lambda}=1$, we have $a=\half$. As $\omega(\half)$ is even and $\omega(b\pi)$ is odd, they cannot be equal. We get \begin{flalign*}
		     \omega(\lambda) &=\omega\trbra{\half+b\pi }=\min\tcbra{\omega(\half), \omega(b\pi) } \leq \omega(\half).
	\end{flalign*}
	Thus, we have $\omega(\lambda)=\omega(\half)$. 
\end{proof}

%By \cite[5.3]{jacobowitz1962hermitian} (see also \cite[\S 2]{kirch2017construction}), the ramified extension $F/F_0$ falls into two distinct cases: 
%\begin{enumerate}
%	\item [(R-U)] $F=F_0(\sqrt{\theta})$, where $\theta$ is a unit in $F_0$. The uniformizer $\pi$ satisfies an Eisenstein equation $$\pi^2-t\pi+\pi_0=0.$$ Here $t\in\CO_{F_0}$ with $\pi_0|t\text{\ and\ }t|2$ and $\omega(t)$ depends only on $F$. We have $\sqrt{\theta}=1-\frac{2\pi}{t}$ and $\theta=1-\frac{4\pi_0}{t^2}$. The inverse different of $F/F_0$ is $\frac{1}{t}\CO_F$. We may take $\lambda=\frac{\ol{\pi}}{t}$.
%	\item [(R-P)] $F=F_0(\sqrt{\pi_0})$, where $\pi^2+\pi_0=0$. The inverse different of $F/F_0$ is $\frac{1}{2\pi}\CO_F$. We may take $\lambda=\frac{1}{2}$.
%\end{enumerate}

Set \begin{flalign*}
	    \varepsilon\coloneqq \begin{cases}
	    	 t\quad &\text{in the (R-U) case},\\ 2 &\text{in the (R-P) case}.
	    \end{cases}
\end{flalign*} 
The hermitian form $h$ defines a symmetric $F_0$-bilinear form $s(-,-): V\times V\ra F_0$ and a quadratic form $q: V\ra F_0$ via \begin{flalign*}
	    s(x,y) \coloneqq  {\varepsilon}\inverse \Tr_{F/F_0} h(x,y) \text{\ and\ } 
	    q(x) \coloneqq  \half s(x,x), \text{\ for $x,y\in V$}.
\end{flalign*}  
Set $\sL\coloneqq \varepsilon\inverse \CO_{F_0}$, which is an invertible $\CO_{F_0}$-module. Then for $0\leq i\leq m$, we obtain induced forms \begin{flalign}
	     s : \Lambda_i\times \Lambda_i\lra \sL \text{\ and\ } q : \Lambda_i\lra \sL.  \label{sqform}
\end{flalign} 
It is straightforward to verify the following lemma.

\begin{lemma} \label{lem-hermm0}
   \begin{enumerate}
   	\item For $0\leq i\leq m$, the triple $(\Lambda_i,q,\sL)$ forms an $\sL$-valued hermitian quadratic module of rank $n$ over $\CO_{F_0}$ in the sense of Definition \ref{defn-hermqm}.
   	\item Define $$\phi: \Lambda_0\times\Lambda_0\ra \varepsilon\inverse\CO_{F_0},\ (x,y)\mapsto \varepsilon\inverse\Tr_{F/F_0}h(x,\pi\inverse y). $$ Then $(\Lambda_0,q,\sL,\phi)$ is a hermitian quadratic module with $\phi$.
   \end{enumerate}
\end{lemma}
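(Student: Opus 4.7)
The plan is a direct verification of each axiom in Definitions \ref{defn-hermqm} and \ref{defn-hermphi}, using only sesquilinearity of $h$ (i.e.\ $h(ax,by)=a\ol{b}\,h(x,y)$), the trace formalism, and the explicit description of the inverse different $\delta_{F/F_0}^{-1}$ from Lemma \ref{lem-invdif}. Throughout, I would work with the explicit $\CO_F$-basis $(f_j)_{1\leq j\leq n}$ of $\Lambda_i$ obtained from \eqref{eq-Lambdai}, so that $f_j = \pi^{-1}e_j$ for $j\leq i$, $f_j=e_j$ for $i<j\leq m+1$, and $f_j=\lambda e_j$ for $j>m+1$, with $\lambda$ given by Lemma \ref{lem-lambda}.

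For part (1), $\Lambda_i$ is manifestly a free $\CO_F$-module of rank $n$. The identities $q(ax)=N_{F/F_0}(a)q(x)$ and $f(ax,y)=f(x,\ol{a}y)$ follow immediately from sesquilinearity once one observes that $\half\Tr_{F/F_0}h(x,x)=h(x,x)$ (since $h(x,x)\in F_0$), so that $q(x)=\varepsilon^{-1}h(x,x)$. The only substantive point is $q(\Lambda_i)\subset\sL=\varepsilon^{-1}\CO_{F_0}$, i.e.\ $h(x,x)\in\CO_{F_0}$. Writing $x=\sum_jc_jf_j$, the diagonal term is $c_{m+1}\ol{c}_{m+1}\in\CO_{F_0}$, and the off-diagonal contributions pair $f_j$ with $f_{n+1-j}$ and equal $\Tr_{F/F_0}\bigl(c_j\ol{c}_{n+1-j}\,h(f_j,f_{n+1-j})\bigr)$. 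A short case analysis on the index ranges $j\leq i$, $i<j\leq m$, and $j>m+1$ shows that $h(f_j,f_{n+1-j})$ is $1/t$, $\ol\pi/t$, or $\pi/t$ in case (R-U), and $1/(2\pi)$, $1/2$, or $1/2$ in case (R-P); Lemmas \ref{lem-invdif} and \ref{lem-lambda} then give $c_j\ol{c}_{n+1-j}\,h(f_j,f_{n+1-j})\in\delta_{F/F_0}^{-1}$, so each trace is in $\CO_{F_0}$.

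For part (2), I would check the four relations in \eqref{eqphiq} directly. The identity $\phi(x,\pi y)=q(x+y)-q(x)-q(y)$ is immediate, since $h(x,\pi^{-1}\pi y)=h(x,y)$ and $q(x+y)-q(x)-q(y)=s(x,y)$ by the computation in the proof of part (1). The relation $\phi(\pi x,y)=\phi(x,\ol\pi y)$ follows because both sides unfold to $\varepsilon^{-1}\Tr_{F/F_0}(\pi\ol\pi^{-1}h(x,y))$. The symmetry $\phi(x,y)=\phi((\ol\pi/\pi)y,x)$ reduces, after pulling scalars out of $h$, to $\Tr_{F/F_0}(\ol\pi^{-1}h(x,y))=\Tr_{F/F_0}(\pi^{-1}\ol{h(x,y)})$, which holds since the trace is invariant under the Galois involution. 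Finally, $\phi(x,x)=(t/\pi_0)q(x)$ follows from $\Tr_{F/F_0}(\ol\pi^{-1})=(\pi+\ol\pi)/(\pi\ol\pi)=t/\pi_0$, interpreting $t=0$ in case (R-P), where both sides vanish. The containment $\phi(\Lambda_0\times\Lambda_0)\subset\varepsilon^{-1}\CO_{F_0}$ is again the inverse-different bookkeeping: for $x,y\in\Lambda_0$ the element $\ol\pi^{-1}h(x,y)$ lies in $\delta_{F/F_0}^{-1}$ by the same case analysis as above (noting that $\ol\pi^{-1}$ itself lies in $\delta_{F/F_0}^{-1}$ since $\Tr_{F/F_0}(\ol\pi^{-1})=t/\pi_0\in\CO_{F_0}$ in (R-U) and $\omega(\ol\pi^{-1})>\omega(1/(2\pi))$ in (R-P)).

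The main (modest) obstacle is purely organizational: the scalars $h(f_j,f_k)$ take different explicit values in each of the three index ranges and in each of the two ramification types (R-U)/(R-P), so one must do the inverse-different verification uniformly. Every case, however, reduces cleanly to Lemmas \ref{lem-invdif} and \ref{lem-lambda}, so the entire argument is a finite check with no hidden subtlety.
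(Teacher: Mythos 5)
Your verification is correct and is exactly the argument the paper intends: the lemma is introduced with the remark that it is ``straightforward to verify,'' and your direct check of Definitions \ref{defn-hermqm} and \ref{defn-hermphi} via $q(x)=\varepsilon^{-1}h(x,x)$, sesquilinearity, and the inverse-different bookkeeping of Lemmas \ref{lem-invdif} and \ref{lem-lambda} is precisely that omitted proof. One small point of hygiene: to place $\ol{\pi}^{-1}$ (and more generally $\ol{\pi}^{-1}h(x,y)$) in $\delta_{F/F_0}^{-1}$ you should argue by valuations in both cases, as you do for (R-P) --- integrality of $\Tr_{F/F_0}(\ol{\pi}^{-1})$ alone does not characterize membership in the inverse different, though here $\omega(\ol{\pi}^{-1})=-\tfrac{1}{2}\geq -\omega(t)$ settles (R-U) immediately.
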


Now we state two theorems on hermitian quadratic modules. The proofs will be given in the appendix.

\begin{thm}
	The functor $\ud{\Sim}(\Lambda_m)$ (resp. $\ud{\Sim}(\Lambda_0,\phi)$) is representable by an affine smooth group scheme over $\CO_{F_0}$ with generic fiber $\GU(V,h)$. Moreover, the scheme $\ud{\Sim}(\Lambda)$ (resp. $\ud{\Sim}(\Lambda_0,\phi)$) is isomorphic to the parahoric group scheme attached to $\Lambda_m$ (resp. $\Lambda_0$).
\end{thm}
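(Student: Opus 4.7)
The plan is first to address representability and the generic fiber (both follow from fairly direct arguments), then to prove smoothness (the main obstacle), and finally to identify the resulting smooth group scheme with the parahoric via the schematic-closure description of Theorem~\ref{thm-smooth}.

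For representability, I would observe that a similitude is a pair $(g,\gamma)$ with $g$ an $\CO_F\otimes R$-linear automorphism of $\Lambda_i\otimes R$ and $\gamma\in R^\times$ (after trivializing the invertible $\CO_{F_0}$-module $\sL$) satisfying $q(g(x))=\gamma q(x)$ for all $x$, plus, in the $\phi$ case, $\phi(g(x),g(y))=\gamma\phi(x,y)$. Because $q(e_{m+1})=\varepsilon^{-1}$ generates $\sL$, the scalar $\gamma$ is determined by $g$ via $\gamma=q(g(e_{m+1}))/q(e_{m+1})$, so the functor is cut out by polynomial identities inside $\GL(\Lambda_i)$ and is represented by an affine closed subscheme of finite type over $\CO_{F_0}$.

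For the generic fiber, tensoring with $F_0$ gives $\Lambda_i\otimes F_0=V$ and $\sL\otimes F_0\cong F_0$. If $g:V\to V$ is $F$-linear with $s(gx,gy)=c\,s(x,y)$ for some $c\in F_0^\times$, then the sesquilinear form $\psi(x,y)\coloneqq h(gx,gy)-c\,h(x,y)$ satisfies $\Tr_{F/F_0}(a\,\psi(x,y))=0$ for every $a\in F$ (apply the similitude condition to $(ax,y)$ and use the identity $s(ax,y)=s(x,\bar a y)$); non-degeneracy of the trace pairing then forces $\psi=0$, so $g\in\GU(V,h)(F_0)$. The converse is immediate, and the same argument handles $\ud{\Sim}(\Lambda_0,\phi)$ since any element of $\GU(V,h)$ preserves both $q$ and $\phi$ with the same similitude factor.

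The hard part is smoothness. My strategy is to first establish an \'etale-local normal form for $(\Lambda_m,q,\sL)$ (resp.\ $(\Lambda_0,q,\sL,\phi)$): after a faithfully flat \'etale extension of $\CO_{F_0}$, these hermitian quadratic modules should decompose orthogonally as an explicit sum of standard rank-two ``hyperbolic'' blocks together with a single anisotropic rank-one block, with $q$ and $\phi$ given by fixed Gram matrices in each block. Such a normal form reduces the similitude functor to a direct matrix computation on a short list of elementary pieces, which allows one to (i) compute the Lie algebra of $\ud{\Sim}$ at the identity and verify that its dimension equals $\dim\GU(V,h)$ on every fiber, and (ii) check $\CO_{F_0}$-flatness (e.g.\ by exhibiting the defining ideal as generated by a regular sequence). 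Since $\ud{\Sim}$ is a group scheme, smoothness at the identity then propagates to every point by translation. I expect this normal form and the subsequent matrix computation to be the technical core of the appendix.

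Once smoothness is in hand, identification with the parahoric is formal: by Theorem~\ref{thm-smooth}, $\sG_{\{m\}}$ (resp.\ $\sG_{\{0\}}$) is the schematic closure of $\GU(V,h)$ in $\GL(\Lambda_m)$ (resp.\ $\GL(\Lambda_0)$). Because $\ud{\Sim}(\Lambda_m)$ is a flat closed subscheme of $\GL(\Lambda_m)$ whose generic fiber equals $\GU(V,h)$, it coincides with this schematic closure; the same reasoning gives $\ud{\Sim}(\Lambda_0,\phi)\cong\sG_{\{0\}}$.
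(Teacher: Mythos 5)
Your representability and generic-fiber arguments are correct and agree in substance with the paper (Theorems \ref{thmsimm} and \ref{thmsimm0} make the same reduction from $q$-similitudes to $s$-similitudes to $h$-similitudes, with the extra remark that over $F$ any similitude of $(\Lambda_0,q,\sL)$ automatically preserves $\phi$). Your identification with the parahoric is a genuinely different, and arguably cleaner, endgame: you use Theorem \ref{thm-smooth} together with the fact that a flat closed $\CO_{F_0}$-subscheme of $\GL(\Lambda_i)$ with generic fiber $\GU(V,h)$ must coincide with the schematic closure, whereas the paper (Corollaries \ref{app-coroparam}, \ref{app-coropara0}) compares $\CO_{\breve F_0}$-points with the parahoric subgroup from Proposition \ref{prop-para} and invokes the Bruhat--Tits uniqueness statement for smooth models. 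Both are fine once smoothness (hence flatness) is available.

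The gap is precisely the smoothness step, and your plan for it is both incomplete and partly circular. You take the \etale-local normal form as an input, but in the paper this normal form (Theorems \ref{thmstand}, \ref{thmstand0}) is itself \emph{proved} by establishing formal smoothness of the similitude sheaf via an infinitesimal lifting argument over nilpotent thickenings (Lemma \ref{lem-trick}, induction on the rank, plus a separate surjectivity argument on special fibers); so you cannot quote it without first verifying the normal form for the specific lattices $\Lambda_m$ and $(\Lambda_0,\phi)$ by hand. That verification is feasible --- for instance, for a hyperbolic block of $\Lambda_m$ in the (R-U) case one has $q(v)=q(w)=0$ already and the element $1-f(v,w)f(v,\pi^2w)$ that the normalization of Lemma \ref{lem-trick} requires to be invertible equals $-\theta$, a unit --- but the (R-P) case and the $\phi$-structure on $\Lambda_0$ need their own computations, none of which appear in your proposal. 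Second, your criterion ``fibrewise Lie algebra of dimension $\dim\GU(V,h)=n^2+1$ plus flatness'' is sound, but the two halves are not independent: the realistic route to flatness is to bound the special fiber by $n^2+1$, i.e.\ to show that the $n^2$ linearized similitude equations (cf.\ Lemma \ref{lem-affine}) at the identity of the special fiber are linearly independent in characteristic $2$, and then conclude flatness from the expected-codimension (local complete intersection) argument inside the smooth ambient of relative dimension $2n^2+1$; exhibiting a regular sequence directly is not easier than this. That characteristic-$2$ independence is exactly the kind of assertion this paper exists to worry about (naively defined closed subschemes over $\CO_{F_0}$ are routinely non-flat in residue characteristic $2$), and you state it only as an expectation. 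The paper avoids all dimension counting by verifying the lifting criterion directly, which yields smoothness and the torsor/normal-form statement simultaneously (the general version over arbitrary bases is also needed later, in the orbit analysis of Lemma \ref{lem-Gorbit}, though not for the theorem as stated). So: correct treatment of the easy parts and a nice alternative parahoric identification, but the decisive smoothness argument is missing and, as organized, must be restructured so as not to presuppose the unproven normal form.
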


\begin{thm}[{Theorem \ref{thmstand}, \ref{thmstand0}}]
	Let $R$ be an $\CO_{F_0}$-algebra. Let $(M,q,\sL)$ (resp. $(N,q,\sL,\phi)$) be a hermitian quadratic module over $R$ of rank $n$. Assume that $(M,q,\sL)$ (resp. $(N,q,\sL,\phi)$) is of type $\Lambda_m$ (\resp $\Lambda_0$) in the sense of Definition \ref{app-defn1} (\resp Definition \ref{app-defn2}). Then the hermitian quadratic module $(M,q,\sL)$ is \etale locally isomorphic to $(\Lambda_m,q,\varepsilon\inverse\CO_{F_0})\otimes_{\CO_{F_0}}R$ (\resp $(\Lambda_0,q,\varepsilon\inverse\CO_{F_0},\phi)\otimes_{\CO_{F_0}}R$) up to similitude. 
\end{thm}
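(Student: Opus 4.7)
My plan is a classical torsor argument based on the smoothness of the similitude automorphism group scheme $\ud{\Sim}(\Lambda_m)$ (resp.\ $\ud{\Sim}(\Lambda_0,\phi)$) supplied by the preceding theorem. I will describe the strategy for $(M,q,\sL)$ of type $\Lambda_m$; the case of $(N,q,\sL,\phi)$ of type $\Lambda_0$ is handled identically, using $\ud{\Sim}(\Lambda_0,\phi)$ in place of $\ud{\Sim}(\Lambda_m)$ and Definition~\ref{app-defn2} in place of Definition~\ref{app-defn1}.

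The first step is to introduce the similitude functor
\[
T \;:=\; \ud{\Sim}\bigl((\Lambda_m,q,\varepsilon\inverse\CO_{F_0})\otimes_{\CO_{F_0}}R,\; (M,q,\sL)\bigr)
\]
on $R$-algebras $S$, sending $S$ to the set of similitudes between the two base-changed hermitian quadratic modules. This functor is representable by an affine $R$-scheme of finite type, being cut out of the scheme of pairs (an $\CO_F\otimes_{\CO_{F_0}}R$-linear map $\Lambda_m\otimes R\to M$ together with an isomorphism $\varepsilon\inverse\CO_{F_0}\otimes R\simto \sL$) by the quadratic-form compatibility in Definition~\ref{defn-simisom}. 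The group scheme $G := \ud{\Sim}(\Lambda_m)\otimes_{\CO_{F_0}}R$ acts freely on $T$ from the right by pre-composition, and transitively on each non-empty geometric fiber, since any two similitudes with the same target differ by a similitude automorphism of $\Lambda_m$. Hence $T$ is a $G$-pseudo-torsor, and by the smoothness of $G$ provided by the previous theorem, choosing a local section identifies $T$ with $G$ wherever $T$ is non-empty; thus $T$ is smooth over the (open) locus of $\Spec R$ on which it has a point. If we can show that $T\to\Spec R$ is surjective, then $T\to\Spec R$ is smooth and surjective, and standard results provide a section étale locally on $\Spec R$, which is exactly the desired similitude.

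The surjectivity is the main obstacle, and it is precisely the content of the hypothesis that $(M,q,\sL)$ is of type $\Lambda_m$ (Definition~\ref{app-defn1}). I expect that definition to encode numerical and structural invariants -- such as the ranks of the $\pi$-torsion subquotients of $M$ viewed as an $\CO_F\otimes_{\CO_{F_0}}R$-module, the non-degeneracy of the induced symmetric pairing $s$, and the behavior of $q$ on a rank-one direct summand -- whose values match those of the standard lattice $\Lambda_m$. The task at a geometric point $\bar x\to\Spec R$ is then to classify hermitian quadratic modules over $\kappa(\bar x)$ satisfying these invariants and show that any such module is isomorphic (as a hermitian quadratic module) to $(\Lambda_m,q,\varepsilon\inverse\CO_{F_0})\otimes_{\CO_{F_0}}\kappa(\bar x)$. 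This is a concrete linear-algebra exercise in residue characteristic two: one picks a basis of $M\otimes\kappa(\bar x)$ adapted to the $\CO_F\otimes\kappa(\bar x)$-action, and verifies that $q$ (and, in the second case, $\phi$) take the values of the standard model, thereby exhibiting the similitude over $\kappa(\bar x)$. Once this is checked, the torsor argument above yields étale-local similitudes over $R$, completing the proof.
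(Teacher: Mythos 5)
Your overall architecture (form the similitude transporter $T$, show it is an étale torsor under the smooth group scheme $\ud{\Sim}(\Lambda_m)\otimes_{\CO_{F_0}}R$, then take étale-local sections) is the same as the paper's, but the decisive step is missing and the way you try to get it is circular. You assert that ``by the smoothness of $G$\dots choosing a local section identifies $T$ with $G$ wherever $T$ is non-empty; thus $T$ is smooth over the (open) locus on which it has a point.'' A pseudo-torsor under a smooth group scheme is not automatically smooth, or even flat: for instance the closed point of $\Spec k[\epsilon]$, with the trivial group acting, is a pseudo-torsor with nonempty (geometric) fiber but is not flat, hence admits no section and is no torsor. Fiberwise nonemptiness does not produce local sections of $T\to\Spec R$; sections exist étale-locally only \emph{after} one knows $T\to\Spec R$ is smooth (or at least flat of finite presentation) and surjective, and smoothness of $T$ is exactly what has to be proved. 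In the paper this is the bulk of the work: one verifies the infinitesimal lifting criterion for the transporter $\CF$ at points over $\Spec R/\pi_0R$ by an induction on the rank, lifting a similitude across a nilpotent thickening one hyperbolic pair at a time; the key mechanism is Lemma \ref{lem-trick}, which normalizes a pair $v,w$ with $f(v,\pi w)=1$ to satisfy $q(v)=q(w)=f(v,w)=0$ by solving a quadratic equation, using that $4\pi_0$ is nilpotent to extract the needed square root, together with the splitting $M=N\oplus N^\perp$ of Lemma \ref{lem-orthogo}. Over $R[1/\pi_0]$ one falls back on the self-dual case treated in Rapoport--Zink. None of this can be obtained formally from smoothness of the automorphism group.

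Your second step, reducing surjectivity to a classification over a geometric point, is the right idea and matches the paper's strategy, but as written it is only a promissory note, and your guess at what ``type $\Lambda_m$'' means (ranks of $\pi$-torsion subquotients, behavior on a rank-one summand) is off: the hypothesis is that the \emph{divided discriminant} $\disc'(q)$ (resp.\ $\disc'(\phi)$) is an isomorphism. Over $\ol{k}$ this unit discriminant is used twice, and non-trivially: first to rule out, by a discriminant computation, the degenerate possibilities in which no pair $v,w$ with $f(v,\pi w)=1$ exists (e.g.\ $M$ an orthogonal sum of rank-one modules), and then, after splitting off a copy of $M_{std,2}$ via the same normalization trick, to see that the orthogonal complement is again of the same type so the induction closes. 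In residue characteristic two this is not a routine ``pick an adapted basis'' verification, so even this half of your plan needs the concrete argument supplied. To repair the proposal you would need to (i) prove smoothness of $T$ over $R$ directly by the infinitesimal criterion as above, and (ii) carry out the fiberwise classification using the unit divided discriminant; with both in hand, smooth plus surjective gives étale-local sections and the statement follows (the version with the model $\Lambda_m\otimes_{\CO_{F_0}}R$ rather than $M_{std,n}$ is then immediate since $\Lambda_m$ is itself of type $\Lambda_m$ by Example \ref{app-exlambm}).
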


%We will see in \S \ref{subsec-prel} that each $\Lambda_i$ for $0\leq i\leq m$ in \eqref{eq-Lambdai} admits a structure of hermitian quadratic module over $\CO_{F_0}$ induced by the hermitian form $h$. Ansch{\"u}tz showed in \cite[Proposition 9.9, Theorem 9.10]{anschutz2018extending} that $\ud{\Sim}(\Lambda_m)$ is smooth with generic fiber $\GU(V,h)$. Hence, we can identify the parahoric group scheme $\sg_{\tcbra{m}}$ with $\ud{\Sim}(\Lambda_m)$. We will show similar results in the appendix \ref{app} for the lattice $\Lambda_0$. 

\subsection{Naive local models} Let $I=\cbra{0}$ or $\cbra{m}$. Then $I$ corresponds to a special parahoric subgroup of $\GU(V,h)$. Let $\Lambda_I$ denote the corresponding lattice, which is either $\Lambda_0$ or $\Lambda_m$. 
Set $$\Lambda_I^h\coloneqq \cbra{x\in V\ |\ h(x,\Lambda_I)\subset\CO_F},\ \Lambda_I^s\coloneqq \cbra{x\in V\ |\ s(x,\Lambda_I)\subset \CO_{F_0}}.$$ 
The symmetric pairing $s$ on $V$ induces a perfect $\CO_{F_0}$-bilinear pairing \begin{flalign}
	   \Lambda_I\times \Lambda_I^s\ra \CO_{F_0}, \label{sym}
\end{flalign} which is still denotes by $s(-,-)$.  By Lemma \ref{lem-invdif}, one can check that \begin{flalign}
	   \Lambda^s =\begin{cases}
	   	     \Lambda^h\quad &\text{in the (R-U) case}, \\ \pi\inverse\Lambda^h &\text{in the (R-P) case}.
	   \end{cases} \label{eq-dualsh}
\end{flalign}
Note that \begin{flalign*}
	 \Lambda_0^h &=\CO_F\pair{\ol{\lambda}\inverse e_1,\ldots,\ol{\lambda}\inverse e_m, e_{m+1},e_{m+2},\ldots, e_n },\\ 
	 \Lambda_m^h &=\CO_F\pair{\ol{\lambda}\inverse e_1,\ldots,\ol{\lambda}\inverse e_m, e_{m+1},\ol{\pi} e_{m+2},\ldots,\ol{\pi} e_n }.
\end{flalign*} 
%$$\Lambda_i^h=\CO_F\pair{\ol{\lambda}\inverse e_1,\ldots,\ol{\lambda}\inverse e_m,e_{m+1},e_{m+2},\ldots, e_{n-i}, \ol{\pi}e_{n-i+1},\ldots, \ol{\pi}e_n }.$$ 
Using \eqref{eq-dualsh} and Lemma \ref{lem-lambda}, we have \begin{flalign*}
	    \Lambda^s_0\hookrightarrow\Lambda_0\hookrightarrow  \frac{\ol{\pi}}{t} \Lambda_0^s,\  \text{in the (R-U) case},\ \pi\Lambda^s_0\hookrightarrow\Lambda_0\hookrightarrow \frac{{\pi}}{2} \Lambda_0^s,\ \text{in the (R-P) case},
\end{flalign*}
and  \begin{flalign*}
	    \Lambda^s_m\hookrightarrow\Lambda_m\hookrightarrow  \frac{1}{t} \Lambda_m^s,\  \text{in the (R-U) case},\ \pi\Lambda^s_m\hookrightarrow\Lambda_m\hookrightarrow \frac{{1}}{2} \Lambda_m^s,\ \text{in the (R-P) case}.
\end{flalign*}
In summary, we have an inclusion of lattices  \begin{flalign*}
	     \Lambda_I\hookrightarrow \alpha\Lambda_I^s, \ \text{\ where\ } \alpha \coloneqq \begin{cases}
	     	{\ol{\pi}}/{\varepsilon}\quad &\text{if $I=\cbra{0}$},\\  {1}/{\varepsilon} &\text{if $I=\cbra{m}$}.
	     \end{cases}
\end{flalign*}

We define the \dfn{naive unitary local model} of type $I$ (and of signature $(n-1,1)$) as follows. 

\begin{defn} \label{def-naive}
	Let $\RM_I^\naive$ be the functor $$\RM_I^\naive: (\Sch/\CO_F)^\op \lra \Sets$$ which sends an $\CO_F$-scheme $S$ to the set of $\CO_S$-modules $\CF$ such that 
	\begin{enumerate}
		\item $\CF$ is an $\CO_F\otimes_{\CO_{F_0}}\CO_S$-submodule of $\Lambda_I\otimes_{\CO_{F_0}}\CO_S$ and as an $\CO_S$-module, it is a locally direct summand of rank $n$.
		\item (Kottwitz condition) The action of $\pi\otimes 1\in\CO_F\otimes_{\CO_{F_0}}\CO_S$ on $\CF$ has characteristic polynomial $$\det(T-\pi\otimes 1\ |\ \CF)=(T-\pi)(T-\ol{\pi})^{n-1}.$$
		\item Let $\CF^\perp$ be the orthogonal complement of $\CF$ in $\Lambda_I^s\otimes_{\CO_{F_0}}\CO_S$  with respect to the perfect pairing $$  (\Lambda_I\otimes_{\CO_{F_0}}\CO_S) \times (\Lambda_I^s\otimes_{\CO_{F_0}}\CO_S) \ra \CO_S$$ induced by \eqref{sym}. We require that the map $\Lambda_I\otimes_{\CO_{F_0}}\CO_S \ra \alpha \Lambda_I^s\otimes_{\CO_{F_0}}\CO_S$ induced by the inclusion $\Lambda_I\hookrightarrow \alpha\Lambda_I^s$ sends $\CF$ to $\alpha\CF^\perp,$ where $\alpha\CF^\perp$ denotes the image of $\CF^\perp$ under the isomorphism $\alpha:\Lambda_I^s\otimes_{\CO_{F_0}}\CO_S \simto \alpha\Lambda_I^s\otimes_{\CO_{F_0}}\CO_S$.
		\item $\CF$ is totally isotropic with respect to the form $(\Lambda_I\otimes_{\CO_{F_0}}\CO_S)\times (\Lambda_I\otimes_{\CO_{F_0}}\CO_S)\ra \sL\otimes_{\CO_{F_0}}\CO_S$ induced by $s$ in \eqref{sqform}, i.e., $s(\CF,\CF)=0$ in $\sL\otimes_{\CO_{F_0}}\CO_S$.
	\end{enumerate}
%
%	\begin{enumerate}
%		\item ($\pi$-stability condition) $\CF$ is an $\CO_F\otimes_{\CO_{F_0}}\CO_S$-submodule of $\Lambda_I\otimes_{\CO_{F_0}}\CO_S$ and as an $\CO_S$-module, it is a locally direct summand of rank $n$.
%		\item (Kottwitz condition) The action of $\pi\otimes 1\in\CO_F\otimes_{\CO_{F_0}}\CO_S$ on $\CF$ has characteristic polynomial $$\det(T-\pi\otimes 1\ |\ \CF)=(T-\pi)(T-\ol{\pi})^{n-1}.$$
%		\item Let $\CF^\perp$ be the orthogonal complement in $\Lambda_I^s\otimes_{\CO_{F_0}}\CO_S$ of $\CF$ with respect to the perfect pairing $$ s(-,-): (\Lambda_I\otimes_{\CO_{F_0}}\CO_S) \times (\Lambda_I^s\otimes_{\CO_{F_0}}\CO_S) \ra \CO_S.$$ We require the induced map $\Lambda_I\otimes_{\CO_{F_0}}\CO_S \ra \alpha \Lambda_I^s\otimes_{\CO_{F_0}}\CO_S$ restricts to the map $$\CF\ra \alpha\CF^\perp,$$ where $\alpha\CF^\perp$ is the image of $\CF^\perp$ under the isomorphism $\alpha:\Lambda_I^s\otimes_{\CO_{F_0}}\CO_S \simto \alpha\Lambda_I^s\otimes_{\CO_{F_0}}\CO_S$.  
%		\item (Hyperbolicity condition) $\CF$ is totally isotropic for $q$, i.e., the induced quadratic form $$q: \Lambda_I\otimes_{\CO_{F_0}}\CO_S\ra \sL\otimes_{\CO_{F_0}}\CO_S$$ satisfies $q(\CF)=0$. 
%	\end{enumerate}
\end{defn}

\begin{lemma} \label{lem-generic}
	The functor $\RM_I^\naive$ is representable by a projective scheme over $\CO_F$ and the generic fiber is isomorphic to the $(n-1)$-dimensional projective space $\BP^{n-1}_F$ over $F$. 
\end{lemma}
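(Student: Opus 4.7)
The plan is to realize $\RM_I^\naive$ as a closed subscheme of a Grassmannian, and then base-change to $F$ and use an eigenspace decomposition to identify the generic fiber with projective space. First I would forget all conditions except that $\CF$ is a locally direct summand of $\Lambda_I\otimes_{\CO_{F_0}}\CO_S$ of rank $n$ as an $\CO_S$-module; this defines the Grassmannian $\Gr(n,\Lambda_I)_{\CO_F}$, which is projective over $\CO_F$. I would then verify that each of the remaining conditions cuts out a closed subfunctor. The $\CO_F\otimes_{\CO_{F_0}}\CO_S$-stability in condition (1) is the vanishing of the composite $\CF\hookrightarrow\Lambda_I\otimes\CO_S\xrightarrow{\pi\otimes 1}\Lambda_I\otimes\CO_S\twoheadrightarrow(\Lambda_I\otimes\CO_S)/\CF$ between locally free sheaves; the Kottwitz condition (2) is the vanishing of the coefficients of $\det(T-\pi\otimes 1\mid\CF)-(T-\pi)(T-\ol\pi)^{n-1}$; the perp condition (3) is the vanishing of the composite $\CF\hookrightarrow\alpha\Lambda_I^s\otimes\CO_S\twoheadrightarrow(\alpha\Lambda_I^s\otimes\CO_S)/\alpha\CF^\perp$; and the isotropy condition (4) is the vanishing of the induced map $\CF\otimes\CF\to\sL\otimes\CO_S$. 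Hence $\RM_I^\naive$ is a closed subscheme of $\Gr(n,\Lambda_I)_{\CO_F}$, and in particular projective over $\CO_F$.

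For the generic fiber, I would base-change to $F$. Since $F/F_0$ is separable at the generic point, $\CO_F\otimes_{\CO_{F_0}}F\simeq F\otimes_{F_0}F\simeq F\times F$ via the diagonal and its Galois twist. This gives an eigenspace decomposition $\Lambda_I\otimes_{\CO_{F_0}}F\simeq V_1\oplus V_2$, on which $\pi\otimes 1$ acts by $\pi$ on $V_1$ and by $\ol\pi$ on $V_2$; each $V_i$ is an $n$-dimensional $F$-vector space via the second tensor factor, and is canonically identified with $V$. Any $\CO_F$-stable locally direct summand $\CF$ of rank $n$ over an $F$-algebra $R$ splits accordingly as $\CF=\CF_1\oplus\CF_2$ with $\CF_i\subset V_i\otimes_FR$, and the Kottwitz condition forces $\CF_1$ to be locally free of rank $1$ and $\CF_2$ of rank $n-1$.

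It remains to show that $\CF_2$ is determined by $\CF_1$, so that the generic fiber becomes $\BP(V_1)\simeq\BP^{n-1}_F$. To this end, note that $\alpha\in F^{\times}$, so over the generic fiber the map $\Lambda_I\otimes F\to\alpha\Lambda_I^s\otimes F$ is an isomorphism, and the $\CO_{F_0}$-bilinear pairing $s$ extends $F$-bilinearly to the nondegenerate symmetric form on $V\otimes_{F_0}F\simeq V_1\oplus V_2$. A direct computation shows that, under the eigenspace decomposition, $s$ vanishes identically on each $V_i$ (so condition (4) becomes automatic) and restricts to a perfect duality between $V_1$ and $V_2$. The perp condition (3) then translates into $\CF_2=\CF_1^\perp\subset V_2$, so $\CF_2$ is determined by $\CF_1$, and conversely any line $\CF_1\subset V_1$ yields a valid point. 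Hence the generic fiber is $\BP(V_1)\simeq\BP^{n-1}_F$.

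The step I expect to require the most care is the identification in the last paragraph: how the pairing $s$ decomposes with respect to the two eigenspaces, and how the twist by $\alpha$ interacts with this decomposition, so that the perp condition (3) reduces cleanly to $\CF_2=\CF_1^\perp$ without imposing any further restriction on the line $\CF_1$. Everything else is an application of standard closed-subfunctor arguments on Grassmannians.
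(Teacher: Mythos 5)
Your proposal is correct and takes essentially the same route as the paper: representability by exhibiting each condition as a closed condition inside $\Gr(n,\Lambda_I)_{\CO_F}$, and the generic fiber via the eigenspace decomposition of $V\otimes_{F_0}F$ under $\pi\otimes 1$, noting that $s$ vanishes on each eigenspace and is perfect between them, so the rank-$(n-1)$ component is the annihilator of the line. The paper phrases the forward map as $\CF\mapsto\ker(\pi\otimes 1-1\otimes\pi\mid\CF)$ and the inverse as $\CG\mapsto\CG\oplus\CG'$ with $s(\CG,\CG')=0$, which matches your $\CF_2=\CF_1^\perp$ identification.
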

\begin{proof}
	This is similar to \cite[1.5.3]{pappas2009local}. The representability follows by identifying $\RM_I^\naive$ with a closed subscheme of the Grassmannian $\Gr(n,\Lambda_I)_{\CO_F}$ classifying locally direct summands of rank $n$ in $\Lambda_I$. 
	
	As $\pi\otimes 1$ is a semisimple operator on $V\otimes_{F_0}F$, we have $$V\otimes_{F_0}F=V_\pi\oplus V_{\ol{\pi}},$$ where $V_\pi$ (resp. $V_{\ol{\pi}}$) denotes the $\pi$-eigenspace (resp. $\ol{\pi}$-eigenspace) of $\pi\otimes 1$. Both eigenspaces $V_\pi$ and $V_{\ol{\pi}}$ are $n$-dimensional $F$-vector spaces. We claim that $V_\pi$ is totally isotropic for the induced symmetric pairing, which is still denoted by $s(-,-)$, on $V\otimes_{F_0}F$. Indeed, for any $x,y\in V_\pi$, we have $(\pi\otimes 1)x=\pi x$ and $(\pi\otimes 1)y=\pi y$. Then \begin{flalign*}
		     s(x,y) &={\pi^{-2}}s(\pi x,\pi y)={\pi^{-2}}s\rbra{(\pi\otimes 1)x,(\pi\otimes 1)y } =(\pi_0/\pi^2)s(x,y).
	\end{flalign*} So $s(x,y)=0$. Similarly, we obtain that $V_{\ol{\pi}}$ is also totally isotropic. It implies that the induced pairing \begin{flalign}
		    s(-,-): V_\pi\times V_{\ol{\pi}}\ra F  \label{lemperf}
	\end{flalign}  is perfect.  
	
	Let $\BP^{n-1}_F$ be the projective space associated with $V_\pi$. For any $F$-algebra $R$, define \begin{flalign*}
		   \varphi: \RM_I^\naive(R) \lra \BP^{n-1}_F(R),\quad \CF \mapsto \ker(\pi\otimes 1-1\otimes\pi\ |\ \CF).
	\end{flalign*}
	By the Kottwitz condition for $\CF$, this is a well-defined map. Conversely, let $\CG\in\BP^{n-1}_F(R)$, i.e., $\CG$ is a direct summand of rank one of $V_\pi\otimes_{F}R$. The perfect pairing \eqref{lemperf} gives a (unique) direct summand $\CG'$ of rank $n-1$ of $V_{\ol{\pi}}\otimes_FR$ such that $s(\CG,\CG')=0$. Set $$\CF\coloneqq \CG\oplus\CG'\sset V\otimes_{F_0}R.$$ Then by our construction, we have $\CF\in \RM_I^\naive(R)$. This process defines an inverse map of $\varphi$. In particular, $\varphi$ is bijective, and hence the generic fiber of $\RM_I^\naive$ is isomorphic to $\BP^{n-1}_F$.
\end{proof}

Similar arguments as in \cite[Proposition 3.8]{pappas2000arithmetic} on the dimension of the special fiber of $\RM_I^\naive$ show that $\RM_I^\naive$ is not flat over $\CO_F$. 

\subsection{Local models} \label{subsec-localModels}
\begin{defn}
	The \dfn{local model} $\RM_I^\loc$ is defined to be the (flat) Zariski closure of the generic fiber of $\RM^\naive_I$ in $\RM^\naive_I$.  
\end{defn}
By construction, the scheme $\RM_I^\loc$ is a flat projective scheme of (relative) dimension $n-1$ over $\CO_F$. In the rest of the paper, we will prove Theorem \ref{thm-intromain}-\ref{intro-thm1.6} from the Introduction. The proof of Theorem \ref{thm-intromain} and \ref{thm16} will be divided into four cases, depending on the index set $I$ and the ramification types of $F/F_0$, see \S \ref{sec-0ru}-\ref{sec-mrp}. In the course of the proof, we also establish Theorem \ref{intro-thm1.6}. The proof of Theorem \ref{thm1.3} is given in \S \ref{sec-compa}.

\section{The case $I=\cbra{0}$ and (R-U)}\label{sec-0ru}
In this section, we will prove Theorem \ref{thm-intromain} in the case when $I=\cbra{0}$ and the quadratic extension $F/F_0$ is of (R-U) type. In particular, we have $$\pi^2-t\pi+\pi_0=0,$$ where $t\in \CO_{F_0}$ with $\pi_0|t\text{\ and\ }t|2$. Consider the following ordered $\CO_{F_0}$-basis of $\Lambda_0$ and $\Lambda_0^s$: \begin{alignat}{2}
	   \Lambda_0&: \frac{\ol{\pi}}{t}e_{m+2},\ldots,\frac{\ol{\pi}}{t}e_n,e_1,\ldots,e_m,e_{m+1},\frac{\pi_0}{t}e_{m+2},\ldots,\frac{\pi_0}{t}e_n,\pi e_1,\ldots,\pi e_m,\pi e_{m+1}, \label{basis-1}  \\ 
	   \Lambda_0^s&: e_{m+2},\ldots,e_n,\frac{t}{\pi}e_1,\ldots,\frac{t}{\pi}e_m,e_{m+1},\pi e_{m+2},\ldots,\pi e_n,t e_1,\ldots,te_m,\pi e_{m+1}. \label{basis-2}
\end{alignat}

\subsection{A refinement of $\RM_{\cbra{0}}^\naive$ in the (R-U) case}\label{subsec-refM1}
In this subsection, we will propose a refinement of the functor $\RM_{\cbra{0}}^\naive$. We first recall the ``strengthened spin condition" introduced by Smithling in \cite{smithling2015moduli}. 

\subsubsection{The strengthened spin condition} \label{subsubsec-spin}
Take $g_1,\ldots,g_{2n}$ to be the ordered $F$-basis 
\begin{flalign*}
	e_1\otimes 1-\pi e_1\otimes\pi\inverse,\ldots, e_n\otimes 1-\pi e_n\otimes\pi\inverse, \pi e_1\otimes  \frac{\pi}{t}-e_1\otimes\frac{\pi_0}{t},\ldots, \pi e_n\otimes\frac{\pi}{t} -e_n\otimes\frac{\pi_0}{t}
\end{flalign*}
of $V\otimes_{F_0}F$. Then with respect to the basis $(g_i)_{1\leq i\leq 2n}$, the symmetric pairing $s(-,-)\otimes_{F_0}F$ on $V\otimes_{F_0}F$ is represented by the $2n\times 2n$ matrix anti-diag($\theta,\ldots,\theta$). Recall $\theta=1-\frac{4\pi_0}{t^2}$. One can easily check that \begin{itemize}
	\item $(g_i)_{1\leq i\leq n}$ is a basis for $V_{\ol{\pi}}$ (the $\ol{\pi}$-eigenspace of the operator $\pi\otimes 1$ acting on $V\otimes_{F_0}F$),
	\item $(g_i)_{n+1\leq i\leq 2n}$ is a basis for $V_\pi$ (the $\pi$-eigenspace of the operator $\pi\otimes 1$ acting on $V\otimes_{F_0}F$).
\end{itemize}   
Take $f_1,\ldots,f_{2n}$ to be the ordered $\CO_F$-basis \begin{flalign*}
	   e_1\otimes 1,\ldots,e_{m+1}\otimes 1,  \frac{\ol{\pi}}{t}e_{m+2}\otimes 1,\ldots, \frac{\ol{\pi}}{t}e_n\otimes 1, \pi e_1\otimes 1,\ldots, \pi e_{m+1}\otimes 1, \frac{\pi_0}{t}e_{m+2}\otimes 1,\ldots, \frac{\pi_0}{t}e_n\otimes 1
\end{flalign*} of $\Lambda_0\otimes_{\CO_{F_0}}\CO_F$. This is the base change of the basis in \eqref{basis-1}, but in different order. We have 

 \begin{flalign}
	    (g_1,\ldots,g_{2n})=(f_1,\ldots,f_{2n})\begin{pmatrix}
	    	I_{m+1} &0 &-\frac{\pi_0}{t}I_{m+1} &0\\ 0 &\frac{t}{\pi}I_m &0 &-\pi I_m\\ -\frac{1}{\pi}I_{m+1} &0 &\frac{\pi}{t}I_{m+1} &0\\ 0 &-\frac{t}{\pi^2}I_m &0 &\frac{\pi^2}{{\pi_0}}I_m
	    \end{pmatrix}. \label{eq-egbasis}
\end{flalign}
As in \cite{smithling2015moduli}, we use the following convenient notations: \begin{itemize}
	\item For an integer $i$, we write $$i^\vee\coloneqq n+1-i,\quad i^*\coloneqq 2n+1-i.$$ For $S\sset\cbra{1,\ldots,2n}$ of cardinality $n$, we write $$S^*\coloneqq \cbra{i^*\ |\ i\in S},\quad S^\perp\coloneqq \cbra{1,\ldots,2n}\backslash S^*.$$
	    Let $\sigma_S$ be the permutation on $\cbra{1,\ldots,2n}$ sending $\cbra{1,\ldots,n}$ to $S$ in increasing order and sending $\cbra{n+1,\ldots,2n}$ to $\cbra{1,\ldots,2n}\backslash S$ in increasing order. Denote by $\sgn(\sigma_S)\in\cbra{\pm 1}$ the sign of $\sigma_S$. 
	\item Set $W\coloneqq \wedge^n(V\otimes_{F_0}F)$. For $S=\tcbra{i_1<\cdots<i_n} \sset\cbra{1,\ldots,2n}$ of cardinality $n$, we write $$e_S\coloneqq f_{i_1}\wedge\cdots\wedge f_{i_n}\in W, \text{\ similarly,\ }g_S\coloneqq g_{i_1}\wedge\cdots\wedge g_{i_n}\in W.$$
	    Note that $(e_S)_{\cbra{\# S=n}}$ (or $(g_S)_{\cbra{\# S=n}}$) is an $F$-basis of $W$.
	\item Set $$W_{\pm 1}\coloneqq \Span_F\cbra{g_S\pm\sgn(\sigma_S)g_{S^\perp}\ |\ S\sset\cbra{1,\ldots,2n} \text{\ and\ } \# S=n}.$$ This is a sub $F$-vector space of $W$. For any $\CO_F$-lattice $\Lambda$ in $V\otimes_{F_0}F$, set $$W(\Lambda)\coloneqq \wedge^n\rbra{\Lambda\otimes_{\CO_{F_0}}\CO_F },\ W(\Lambda)_{\pm 1}\coloneqq W_{\pm 1}\cap W(\Lambda).$$ Then $W(\Lambda)$ (resp. $W(\Lambda)_{\pm 1}$) is an $\CO_F$-lattice in $W$ (resp. $W_{\pm 1}$).	
	\item Set $$W^{n-1,1}\coloneqq \rbra{\wedge^{n-1}V_{\ol{\pi}}}\otimes_F(V_\pi),\ W^{n-1,1}_{\pm 1}\coloneqq W^{n-1,1}\cap W_{\pm 1},\ W(\Lambda)_{\pm 1}^{n-1,1}\coloneqq W_{\pm 1}^{n-1,1}\cap W(\Lambda). $$
\end{itemize} 
Then the strengthened spin condition states that 
\begin{enumerate}
	\item [] For any $\CO_F$-algebra $R$ and $\CF\in\RM^\naive_{\tcbra{0}}(R)$, the line $\wedge^n\CF\sset W(\Lambda_0)\otimes_{\CO_F}R$ is contained in $$\Im\rbra{W(\Lambda_0)^{n-1,1}_{- 1}\otimes_{\CO_F}R\ra W(\Lambda_0)\otimes_{\CO_F}R }.$$	  
\end{enumerate}

\subsubsection{The definition of the refinement}
\begin{defn}\label{defn-M}
	Let $\RM_{\cbra{0}}$ be the functor $$\RM_{\cbra{0}}: (\Sch/\CO_{F})^\op\lra \Sets$$ which sends an $\CO_F$-scheme $S$ to the set of $\CO_S$-modules $\CF$ such that 
	\begin{enumerate}[label=\textbf{LM\arabic*}]
		\item ($\pi$-stability condition) $\CF$ is an $\CO_F\otimes_{\CO_{F_0}}\CO_S$-submodule of $\Lambda_0\otimes_{\CO_{F_0}}\CO_S$ and as an $\CO_S$-module, it is a locally direct summand of rank $n$.
		\item (Kottwitz condition) The action of $\pi\otimes 1\in\CO_F\otimes_{\CO_{F_0}}\CO_S$ on $\CF$ has characteristic polynomial $$\det(T-\pi\otimes 1\ |\ \CF)=(T-\pi)(T-\ol{\pi})^{n-1}.$$
%		\item (Perpendicularity condition)  The induced perfect pairing $$ s(-,-): (\Lambda_0\otimes_{\CO_{F_0}}\CO_S) \times (\Lambda_0^s\otimes_{\CO_{F_0}}\CO_S) \ra \CO_S$$ satisfies $s(\CF,\CG)=0$.
		\item Let $\CF^\perp$ be the orthogonal complement in $\Lambda_0^s\otimes_{\CO_{F_0}}\CO_S$ of $\CF$ with respect to the perfect pairing $$ s(-,-): (\Lambda_0\otimes_{\CO_{F_0}}\CO_S) \times (\Lambda_0^s\otimes_{\CO_{F_0}}\CO_S) \ra \CO_S.$$ We require the map $\Lambda_0\otimes_{\CO_{F_0}}\CO_S \ra (\frac{\ol{\pi}}{t} \Lambda_0^s)\otimes_{\CO_{F_0}}\CO_S$ induced by $\Lambda_0\hookrightarrow \frac{\ol{\pi}}{t}\Lambda_0^s$ sends $\CF$ to $\frac{\ol{\pi}}{t}\CF^\perp,$ where $\frac{\ol{\pi}}{t}\CF^\perp$ denotes the image of $\CF^\perp$ under the isomorphism $\frac{\ol{\pi}}{t}:\Lambda_0^s\otimes_{\CO_{F_0}}\CO_S \simto \frac{\ol{\pi}}{t}\Lambda_0^s\otimes_{\CO_{F_0}}\CO_S$.  \label{LMincl}
		\item (Hyperbolicity condition) The quadratic form $q: \Lambda_0\otimes_{\CO_{F_0}}\CO_S\ra \sL\otimes_{\CO_{F_0}}\CO_S$ induced by $q: \Lambda_0\ra \sL$ satisfies $q(\CF)=0$. 
		\item (Wedge condition) The action of $\pi\otimes 1-1\otimes\ol{\pi}\in \CO_F\otimes_{\CO_{F_0}}\CO_S$ satisfies $$\wedge^2(\pi\otimes 1-1\otimes \ol{\pi}\ |\ \CF )=0.$$
		\item (Strengthened spin condition) The line $\wedge^n\CF\sset W(\Lambda_0)\otimes_{\CO_F}\CO_S$ is contained in $$\Im\rbra{W(\Lambda_0)^{n-1,1}_{- 1}\otimes_{\CO_F}\CO_S\ra W(\Lambda_0)\otimes_{\CO_F}\CO_S }.$$	 
	\end{enumerate}
\end{defn} 
%Then $\RM_{\cbra{0}}$ is representable by a projective $\CO_F$-scheme, which is a closed subscheme of $\RM^\naive_{\tcbra{0}}$. Note that over the generic fiber of $\RM_{\cbra{0}}$, the quadratic form $q$ is determined by $s$ via $q(x)=\half s(x,x)$. So the hyperbolicity condition \textbf{LM4} is implied by the Condition (3) in $\RM_{\cbra{0}}^\naive$ over the generic fiber. Similarly as in \cite[1.5]{pappas2009local} and \cite[2.5]{smithling2015moduli}, we can deduce that the rest of the conditions of $\RM_{\cbra{0}}$ do not affect the generic fiber of $\RM^\naive_{\tcbra{0}}$, and hence $\RM_{\cbra{0}}$ and $\RM_{\cbra{0}}^\naive$ have the same generic fiber. 

Then $\RM_{\cbra{0}}$ is representable by a projective $\CO_F$-scheme, which is a closed subscheme of $\RM^\naive_{\tcbra{0}}$. Note that over the generic fiber of $\RM_{\cbra{0}}$, the quadratic form $q$ is determined by $s$ via $q(x)=\half s(x,x)$. So, over the generic fiber, the hyperbolicity condition \textbf{LM4} is implied by the Condition (3) in $\RM_{\cbra{0}}^\naive$. Similarly as in \cite[1.5]{pappas2009local} and \cite[2.5]{smithling2015moduli}, we can deduce that the rest of the conditions of $\RM_{\cbra{0}}$ do not affect the generic fiber of $\RM^\naive_{\tcbra{0}}$, and hence $\RM_{\cbra{0}}$ and $\RM_{\cbra{0}}^\naive$ have the same generic fiber. 

Hence, we have closed immersions $$\RM^\loc_{\cbra{0}}\subset\RM_{\cbra{0}}\subset\RM^\naive_{\tcbra{0}}$$ of projective schemes over $\CO_F$, where all schemes have the same generic fiber.

\subsection{An affine chart $\RU_{\cbra{0}}$ around the worst point}\label{subsec4.2}
Set $$\CF_0\coloneqq (\pi\otimes 1)(\Lambda_0\otimes_{\CO_{F_0}}k).$$ Then we can check that $\CF_0\in\RM_{\cbra{0}}(k)$. We call it the \dfn{worst point} of $\RM_{\cbra{0}}$. 

With respect to the basis \eqref{basis-1}, the standard affine chart around $\CF_0$ in $\Gr(n,\Lambda_0)_{\CO_F}$ is the $\CO_F$-scheme of $2n\times n$ matrices $\left(\begin{smallmatrix}
	X\\ I_n
\end{smallmatrix}\right)$. We denote by $\RU_{\cbra{0}}$ the intersection of $\RM_{\cbra{0}}$ with the standard affine chart in $\Gr(n,\Lambda_0)_{\CO_F}$. The worst point $\CF_0$ of $\RM_{\cbra{0}}$ is contained in $\RU_{\cbra{0}}$ and corresponds to the closed point defined by $X=0$ and $\pi=0$. The conditions \textbf{LM1-6} yield the defining equations for $\RU_{\cbra{0}}$. We will analyze each condition in detail. A reader who is only interested in the affine coordinate ring of $\RU_{\cbra{0}}$ may proceed directly to Proposition \ref{prop-U0ring}.
In \S 4.3, we will use the group action on $\RM^\loc_{\tcbra{0}}$ to extend the geometric properties of $\RU_{\tcbra{0}}$ to the entire local model.   

\subsubsection{Condition \textbf{LM1}} \label{subsubsec-lm1}
Let $R$ be an $\CO_F$-algebra. With respect to the basis \eqref{basis-1}, the operator $\pi\otimes 1$ acts on $\Lambda_0\otimes_{\CO_{F_0}}R$ via the matrix \begin{flalign*}
	    \begin{pmatrix}
	    	0 &-\pi_0I_n\\ I_n &tI_n
	    \end{pmatrix}.
\end{flalign*}
Then the $\pi$-stability condition \textbf{LM1} on $\CF$ means there exists an $n\times n$ matrix $P\in M_{n\times n}(R)$ such that \begin{flalign*}
	   \begin{pmatrix}
	    	0 &-\pi_0I_n\\ I_n &tI_n
	    \end{pmatrix} \begin{pmatrix}
	    	X\\ I_n
	    \end{pmatrix} =\begin{pmatrix}
	    	X\\ I_n
	    \end{pmatrix}P.
\end{flalign*}
We obtain $P=X+tI_n$ and $X^2+tX+\pi_0I_n=0$. 

\subsubsection{Condition \textbf{LM2}} We have already shown that $\pi\otimes 1$ acts on $\CF$ via $X+tI_n$. Then the Kottwitz condition \textbf{LM2} translates to \begin{flalign*}
	   \det(T-(X+tI_n)) =(T-\pi)(T-\ol{\pi})^{n-1}.
\end{flalign*} Equivalently, \begin{flalign*}
	  \det(T-(X+\pi I_n)) =(T+\ol{\pi}-\pi)T^{n-1}.
\end{flalign*} Note that \begin{flalign*}
	  \det(T-(X+\pi I_n)) =\sum_{i=0}^n (-1)^i\tr(\wedge^i(X+\pi I_n)) T^{n-i}.
\end{flalign*}  Then by comparing the coefficients of $T^{n-i}$, the Kottwitz condition \textbf{LM2} becomes \begin{flalign}
	    \tr(X+\pi I_n)=\pi-\ol{\pi},\ \tr\rbra{\wedge^i(X+\pi I_n)}=0, \text{\ for $i\geq 2$}. \label{kott0ru}
\end{flalign} 

\subsubsection{Condition \textbf{LM3}} 
%Note that the $\pi$-stability condition on $\CF$ implies \begin{flalign}
%	  \frac{2}{t}(X^t)^2+2X^t+\frac{2\pi_0}{t}I_n=0, \ \text{and hence\ } \rbra{\frac{2}{t}X^t+I_n}^2=(1-\frac{4\pi_0}{t^2})I_n=\theta I_n. \label{eq-theta}
%\end{flalign}

 With respect to the bases \eqref{basis-1} and \eqref{basis-2}, the perfect pairing $$s(-,-):(\Lambda_0\otimes_{\CO_{F_0}}R)\times (\Lambda_0^s\otimes_{\CO_{F_0}}R)\ra R  $$ and the map $\Lambda_0\otimes_{\CO_{F_0}}R\ra \frac{\ol{\pi}}{t}\Lambda_0^s\otimes_{\CO_{F_0}}R$ are represented respectively by the matrices \begin{flalign*}
	   S=\begin{pmatrix}
	   	  \frac{2}{t}H_{2m} &0 &H_{2m} &0 \\ 0 &\frac{2}{t} &0 &1\\ H_{2m} &0 &\frac{2\pi_0}{t}H_{2m} &0 \\ 0 &1 &0 &\frac{2\pi_0}{t}
	   \end{pmatrix} \text{\ and\ }  N=\begin{pmatrix}
	  	  I_m &0 &0 &0 &0 &0 \\ 0 &-I_m &0 &0 &-tI_m &0 \\ 0 &0 &0 &0 &0 &-t\\ 0 &0 &0 &I_m &0 &0 \\  0 &\frac{t}{\pi_0}I_m &0 &0 &\frac{t^2-\pi_0}{\pi_0}I_m &0 \\ 0 &0 &\frac{t}{\pi_0} &0 &0 &\frac{t^2}{\pi_0}
	  \end{pmatrix},
\end{flalign*} where $H_{2m}$ denotes the $2m\times 2m$ anti-diagonal unit matrix, and $I_m$ denotes the $m\times m$ identity matrix. 

Then the Condition \textbf{LM3} translates to $\matx^tS\rbra{N\matx }=0, \text{\ or equivalently,}$ 
 \begin{flalign}
	    \matx^t\begin{pmatrix}
		 0 &\frac{t^2-2\pi_0}{t\pi_0}H_m &0 &0 &\frac{t^2-3\pi_0}{\pi_0}H_m &0 \\ \frac{2}{t}H_m &0 &0 &H_m &0 &0 \\ 0 &0 &\frac{t}{\pi_0} &0 &0 &\frac{t^2-2\pi_0}{\pi_0}\\ 0 &H_m &0 &0 &\frac{t^2-2\pi_0}{t}H_m &0\\ H_m &0 &0 &\frac{2\pi_0}{t}H_m &0 &0 \\ 0 &0 &2 &0 &0 &t
	\end{pmatrix}\matx &=0. \label{eq-lm41} 
\end{flalign}

 Write $$X=\begin{pmatrix}
 	  A &B &E\\ C &D &F\\ G &H &x
 \end{pmatrix},$$ where $A,B,C,D\in M_{m\times m}(R)$, $E,F\in M_{m\times 1}(R)$, $G,H\in M_{1\times m}(R)$ and $x\in R$. Then Equation \eqref{eq-lm41} translates to 
 \setcounter{equation}{0}
 \renewcommand{\theequation}{\textbf{LM3}-\arabic{equation}}
 \begin{flalign}
 	 &\frac{2}{t}C^tH_mA+\frac{t^2-2\pi_0}{t\pi_0}A^tH_mC+\frac{t}{\pi_0}G^tG+H_mC+C^tH_m=0, \label{lm4-1} \\ 
 	 &\frac{2}{t}C^tH_mB+\frac{t^2-2\pi_0}{t\pi_0}A^tH_mD+\frac{t}{\pi_0}G^tH+H_mD+\frac{t^2-3\pi_0}{\pi_0}A^tH_m+\frac{t^2-2\pi_0}{t}H_m=0, \label{lm4-2} \\ 
 	 &\frac{2}{t}C^tH_mE+\frac{t^2-2\pi_0}{t\pi_0}A^tH_mF+\frac{t}{\pi_0}G^tx+H_mF+\frac{t^2-2\pi_0}{\pi_0}G^t=0, \label{lm4-3} \\ 
 	 &\frac{2}{t}D^tH_mA+\frac{t^2-2\pi_0}{t\pi_0}B^tH_mC+\frac{t}{\pi_0}H^tG+H_mA+D^tH_m+\frac{2\pi_0}{t}H_m=0, \label{lm4-4} \\ 
 	 &\frac{2}{t}D^tH_mB+\frac{t^2-2\pi_0}{t\pi_0}B^tH_mD+\frac{t}{\pi_0}H^tH+H_mB+\frac{t^2-3\pi_0}{\pi_0}B^tH_m=0, \label{lm4-5} \\ 
 	 &\frac{2}{t}D^tH_mE+\frac{t^2-2\pi_0}{t\pi_0}B^tH_mF+\frac{t}{\pi_0}xH^t+H_mE+\frac{t^2-2\pi_0}{\pi_0}H^t =0,  \label{lm4-6}\\ 
 	 &\frac{2}{t}F^tH_mA+\frac{t^2-2\pi_0}{t\pi_0}E^tH_mC+\frac{t}{\pi_0}xG+2G+F^tH_m=0, \label{lm4-7} \\
 	 &\frac{2}{t}F^tH_mB+\frac{t^2-2\pi_0}{t\pi_0}E^tH_mD+\frac{t}{\pi_0}xH+2H+\frac{t^2-3\pi_0}{\pi_0}E^tH_m=0, \label{lm4-8} \\ 
 	 &\frac{2}{t}F^tH_mE+\frac{t^2-\pi_0}{t\pi_0}E^tH_mF+\frac{t}{\pi_0}x^2+2x+\frac{t^2-2\pi_0}{\pi_0}x+t=0.  \label{lm4-9}
% 	 &\frac{t}{\pi_0}\rbra{BH_mA^t+AH_mB^t }+\frac{2}{t}EE^t+2H_mB^t+2BH_m=0\\ &\frac{t}{\pi_0}\rbra{BH_mC^t+AH_mD^t }+\frac{2}{t}EF^t+2H_mD^t+\frac{t^2-2\pi_0}{\pi_0}AH_m+tH_m=0\\ &\frac{t}{\pi_0}\rbra{BH_mG^t+AH_mH^t }+\frac{2}{t}xE+2H_mH^t+E=0\\ &\frac{t}{\pi_0}\rbra{DH_mA^t+CH_mB^t }+\frac{2}{t}FE^t+\frac{t^2-2\pi_0}{\pi_0}H_mA^t+2DH_m+tH_m =0\\ &\frac{t}{\pi_0}\rbra{DH_mC^t+CH_mD^t }+\frac{2}{t}FF^t+\frac{t^2-2\pi_0}{\pi_0}H_mC^t+\frac{t^2-2\pi_0}{\pi_0}CH_m=0\\ &\frac{t}{\pi_0}\rbra{HH_mG^t+GH_mH^t }+\frac{2}{t}xF+\frac{t^2-2\pi_0}{\pi_0}H_mG^t+F=0\\ &\frac{t}{\pi_0}\rbra{HH_mA^t+GH_mB^t }+\frac{2}{t}xE^t+E^t+2HH_m=0\\ &\frac{t}{\pi_0}\rbra{HH_mC^t+GH_mD^t }+\frac{2}{t}xF^t+F^t+\frac{2^2-2\pi_0}{\pi_0}GH_m=0\\ &\frac{t}{\pi_0}\rbra{HH_mG^t+GH_mH^t }+\frac{2}{t}x^2+2x+\frac{2\pi_0}{t}=0
 \end{flalign}

 \setcounter{equation}{5}
 \renewcommand{\theequation}{\thesection.\arabic{equation}}
\subsubsection{Condition \textbf{LM4}}
Recall $\sL=t\inverse \CO_{F_0}$. With respect to the basis \eqref{basis-1}, the induced $(\sL\otimes_{\CO_{F_0}}R)$-valued symmetric pairing on $\Lambda_0\otimes_{\CO_{F_0}}R$ is represented by the matrix \begin{flalign*}
	  S_1=\begin{pmatrix}
	     0 &H_m &0 &0 &\frac{t^2-2\pi_0}{t}H_m &0\\ H_m &0 &0 &\frac{2\pi_0}{t}H_m &0 &0 \\ 0 &0 &2 &0 &0 &t\\ 0 &\frac{2\pi_0}{t}H_m &0 &0 &\pi_0H_m &0\\ \frac{t^2-2\pi_0}{t}H_m &0 &0 &\pi_0H_m &0 &0 \\ 0 &0 &t &0 &0 &2\pi_0
\end{pmatrix}. 
\end{flalign*} 
%and the induced $\sL$-valued symmetric pairing on $\Lambda_0^s\otimes_{\CO_{F_0}}R$ is represented by the matrix  \begin{flalign*}
%	  S_2=\begin{pmatrix}
%	     &\frac{t^2}{\pi_0}H_m & & &2tH_m\\ \frac{t^2}{\pi_0}H_m & & &\frac{t(t^2-2\pi_0)}{\pi_0}H_m\\ & &2 & & &t\\ &\frac{t(t^2-2\pi_0)}{\pi_0}H_m & & &t^2H_m\\ 2tH_m & & &t^2H_m\\ & &t & & &2\pi_0
%\end{pmatrix}
%\end{flalign*}
\textit{Convention}: Throughout the rest of the paper, we often encounter a matrix $M=(M_{ij})\in M_{\ell\times\ell}(R)$ whose diagonal entries are of the form $M_{ii}=2a_{ii}$ for some $a_{ii}\in R$. We then use $\half M_{ii}$ to denote $a_{ii}$. When we refer to ``half of the diagonal of $M$", we mean the row matrix consisting of the entries $\half M_{ii}$ for $1\leq i\leq\ell$.

The Condition \textbf{LM4} translates to \begin{gather*}
	 \matx^tS_1\matx=0 \text{\ and\ } \text{half of the diagonal of ${\matx}^t S_1\matx$ equals zero}. 
\end{gather*} 
One can check that the diagonal entries of $\smatx^tS_1\smatx$ are indeed divisible by $2$ in $R$. Equivalently, we obtain the following equations.
 \setcounter{equation}{0}
 \renewcommand{\theequation}{\textbf{LM4}-\arabic{equation}}
\begin{flalign}
	&C^tH_mA+A^tH_mC+2G^tG+\frac{2\pi_0}{t}H_mC+\frac{2\pi_0}{t}C^tH_m=0, \label{lm5-1} \\ &C^tH_mB+A^tH_mD+2G^tH+\frac{2\pi_0}{t}H_mD+\frac{t^2-2\pi_0}{t}A^tH_m+\pi_0H_m=0,\\ &C^tH_mE+A^tH_mF+2xG^t+\frac{2\pi_0}{t}H_mF+tG^t=0,\\ &D^tH_mA+B^tH_mC+2H^tG+\frac{t^2-2\pi_0}{t}H_mA+\frac{2\pi_0}{t}D^tH_m+\pi_0H_m=0,\\ &D^tH_mB+B^tH_mD+2H^tH+\frac{t^2-2\pi_0}{t}H_mB+\frac{t^2-2\pi_0}{t}B^tH_m=0,\\ &D^tH_mE+B^tH_mF+2xH^t+\frac{t^2-2\pi_0}{t}H_mE+tH^t=0,\\ &F^tH_mA+E^tH_mC+2xG+tG+\frac{2\pi_0}{t}F^tH_m=0,\\ &F^tH_mB+E^tH_mD+2xH+tH+\frac{t^2-2\pi_0}{t}E^tH_m=0,  \\ &F^tH_mE+E^tH_mF+2x^2+2tx+2\pi_0=0, \label{lm5-9} \\ &\text{half of the diagonal  of matrices in \textbf{LM4}-1,5,9 equals $0$. } \label{lm5-10}
\end{flalign}
 
\setcounter{equation}{5}
\renewcommand{\theequation}{\thesection.\arabic{equation}}

\subsubsection{Condition \textbf{LM5}}
We already know from \S \ref{subsubsec-lm1} that $\pi\otimes 1$ acts as right multiplication by $X+tI_n$ on $\CF$. Thus, the wedge condition \textbf{LM5} on $\CF$ translates to \begin{flalign*}
	   \wedge^2(X+\pi I_n)=0.
\end{flalign*}

\subsubsection{Condition \textbf{LM6}} \label{subsubsec-strspin}
We will use the same notations as in \S \ref{subsubsec-spin}. To find the equations induced by the strengthened spin condition \textbf{LM6} on $\CF$, we need to determine an $\CO_F$-basis of $W(\Lambda_0)^{n-1,1}_{-1}$.

\begin{defn}
   Let $S\sset \cbra{1,\ldots,2n}$ be a subset of cardinality $n$.
   \begin{enumerate}
   	\item We say $S$ is of type $(n-1,1)$ if $$\#(S\cap\cbra{1,\ldots,n})=n-1 \text{\ and\ } \#(S\cap\cbra{n+1,\ldots,2n})=1.$$ Such $S$ necessarily has the form $\tcbra{1,\ldots,\wh{j},\ldots,n,n+i}$ for some $i,j\in\cbra{1,\ldots,n}$.
   	\item Let $S$ be of type $(n-1,1)$. Denote by $i_S$ the unique element in $S\cap\cbra{n+1,\ldots,2n}$. Define $S\preccurlyeq S^\perp \text{\ if $i_S\leq i_{S^\perp}$}.$
   \end{enumerate}
\end{defn}
Set \begin{flalign*}
	&\CB \coloneqq  \cbra{S\subset\cbra{1,\ldots,2n}\ |\ \# S=n }, \quad \CB^{n-1,1}\coloneqq \cbra{S\in\CB\ |\text{\ $S$ is of type $(n-1,1)$}},  \\ &\CB_0 \coloneqq \cbra{S\in\CB^{n-1,1}\ |\text{\ $S\preccurlyeq S^\perp$} }.
\end{flalign*}
By construction, the $F$-vector space $W(\Lambda_0)_{-1}^{n-1,1}\otimes_{\CO_F}F$ equals $W^{n-1,1}_{-1}$, which is an $F$-subspace of $W$.
\begin{lemma}\label{lembasis}
	\begin{enumerate}
		\item The set $\cbra{e_S\ |\ S\in\CB} \rbra{\resp \cbra{g_S\ |\ S\in\CB}}$ is an $F$-basis of $W$.
		\item For $S\in\CB$, denote $$h_S\coloneqq g_S-\sgn(\sigma_S)g_{S^\perp}.$$ The set $\cbra{h_S\ |\ S\in\CB_0}$ is an $F$-basis of $W^{n-1,1}_{-1}$.
	\end{enumerate}
\end{lemma}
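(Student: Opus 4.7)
The plan is to treat the two parts of the lemma separately. Part (1) is a change-of-basis assertion, and part (2) will combine a bigrading argument on $W$ with a short sign computation for the permutations $\sigma_S$.

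For (1), I will first verify that each of $(f_i)_{1\le i\le 2n}$ and $(g_i)_{1\le i\le 2n}$ is an $F$-basis of $V\otimes_{F_0}F$; the statement then follows from the standard fact that the wedges of $n$-element subsets of an $F$-basis of $V\otimes_{F_0}F$ form an $F$-basis of $\wedge^n$. For the $f_i$'s, the ordered list \eqref{basis-1} is by construction an $\CO_{F_0}$-basis of $\Lambda_0$, so after reordering and base change to $\CO_F$, $(f_i)$ is an $\CO_F$-basis of $\Lambda_0\otimes_{\CO_{F_0}}\CO_F$ and, upon inverting $\pi$, an $F$-basis of $V\otimes_{F_0}F$. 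For the $g_i$'s, I will use the already-noted fact that $(g_1,\ldots,g_n)$ is a basis of $V_{\ol{\pi}}$ and $(g_{n+1},\ldots,g_{2n})$ is a basis of $V_\pi$, together with the eigenspace decomposition $V\otimes_{F_0}F=V_{\ol{\pi}}\oplus V_\pi$ coming from the semisimplicity of $\pi\otimes 1$ (see the proof of Lemma~\ref{lem-generic}).

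For (2), the eigenspace decomposition above induces a bigrading on $W$, and the bidegree-$(n-1,1)$ piece is exactly $W^{n-1,1}=\wedge^{n-1}V_{\ol{\pi}}\otimes_F V_\pi$, spanned over $F$ by $\{g_S : S\in\CB^{n-1,1}\}$. A direct computation will show that the involution $S\mapsto S^\perp$ preserves $\CB^{n-1,1}$: for $S=\{1,\ldots,\widehat{j},\ldots,n,n+i\}$ one finds $S^\perp=\{1,\ldots,\widehat{n+1-i},\ldots,n,2n+1-j\}$, again of type $(n-1,1)$. The key combinatorial identity I will establish is
\[
    \sigma_{S^\perp}\;=\;\star\circ\sigma_S\circ\star,
\]
where $\star:k\mapsto 2n+1-k$ is the order-reversing involution on $\{1,\ldots,2n\}$; this is obtained by comparing the increasing enumerations of $S^\perp$ and $\{1,\ldots,2n\}\setminus S^\perp$ with those of $S$ and $\{1,\ldots,2n\}\setminus S$. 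It immediately gives $\sgn(\sigma_{S^\perp})=\sgn(\sigma_S)$, and hence $h_{S^\perp}=-\sgn(\sigma_S)\,h_S$, so within each size-two $\perp$-orbit the elements $h_S$ and $h_{S^\perp}$ span the same $F$-line.

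To finish (2), I will apply the projection $p:W\twoheadrightarrow W^{n-1,1}$ defined by the $g$-basis decomposition. Because $\CB^{n-1,1}$ is $\perp$-stable, $p(h_S)=h_S$ for $S\in\CB^{n-1,1}$ and $p(h_S)=0$ for $S\notin\CB^{n-1,1}$; hence an element of $W_{-1}$ lies in $W^{n-1,1}$ if and only if it is in the $F$-span of $\{h_S : S\in\CB^{n-1,1}\}$, and selecting one representative from each $\perp$-orbit (which is exactly the role of $\preccurlyeq$) reduces the spanning set to $\{h_S : S\in\CB_0\}$. Linear independence will follow from the disjointness of the $g$-supports of distinct $h_S$ with $S\in\CB_0$; the only point I need to check is that the $\perp$-fixed $S_i:=\{1,\ldots,\widehat{n+1-i},\ldots,n,n+i\}$ give $h_{S_i}\neq 0$, which I will verify by tracing orbits to see that $\sigma_{S_i}$ is a single $2i$-cycle on $\{n-i+1,\ldots,n+i\}$ (fixing all other indices), so $\sgn(\sigma_{S_i})=-1$ and $h_{S_i}=2g_{S_i}\neq 0$. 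The main computational hurdle is the identity $\sigma_{S^\perp}=\star\sigma_S\star$, but as indicated this follows by unwinding the definition of $\sigma_S$.
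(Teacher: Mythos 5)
Your proposal is correct, and its skeleton is the same as the paper's: span $W^{n-1,1}_{-1}$ by $\{h_S \mid S\in\CB^{n-1,1}\}$, use $h_{S^\perp}=-\sgn(\sigma_S)h_S$ (via $(S^\perp)^\perp=S$ and $\sgn(\sigma_{S^\perp})=\sgn(\sigma_S)$) to cut down to $\CB_0$, and get independence from the independence of the $g_S$. The difference is that where the paper simply cites \cite[Lemma 4.2]{smithling2015moduli} for the spanning statement and \cite[Lemma 2.8]{smithling2015moduli} (and Remark 4.3) for the sign identity, you prove these inputs directly: the bigraded projection $p\colon W\twoheadrightarrow W^{n-1,1}$ together with the observation that $\perp$ preserves the type $(n-1,1)$ gives the spanning claim; the conjugation identity $\sigma_{S^\perp}=\star\,\sigma_S\,\star$ (which does follow from unwinding the definition, since $\star$ is order-reversing and exchanges $S\leftrightarrow\complement S^\perp$) gives $\sgn(\sigma_{S^\perp})=\sgn(\sigma_S)$; and your cycle computation showing $\sigma_S$ is a single $2i$-cycle for the $\perp$-fixed sets $S=\{1,\ldots,\widehat{n+1-i},\ldots,n,n+i\}$ correctly yields $\sgn(\sigma_S)=-1$, hence $h_S=2g_S\neq 0$ (consistent with the paper's $\sgn(\sigma_S)=(-1)^{i+j+1}$ with $j=n+1-i$ and $n$ odd). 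Your version is thus self-contained and, in the independence step, slightly more careful than the paper, which leaves implicit that $h_S\neq 0$ at the $\perp$-fixed elements of $\CB_0$; the paper's route buys brevity by outsourcing these verifications to Smithling.
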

\begin{proof}
	(1) As $W=\wedge^n(V\otimes_{F_0}F)$ by definition, the statement is a standard fact about the wedge product of vector spaces.
	
	(2) By \cite[Lemma 4.2]{smithling2015moduli}, the $F$-space $W^{n-1,1}_{-1}$ is spanned by the set $\tcbra{h_S\ |\ S\in\CB^{n-1,1}}$. These $h_S$'s are not linearly independent over $F$. Indeed, for $S\in \CB^{n-1,1}$, we have $h_{S^\perp} =-\sgn(\sigma_S)h_{S}$ by using that $(S^\perp)^\perp=S$ and $\sgn(\sigma_S)=\sgn(\sigma_{S^\perp})$ (by \cite[Lemma 2.8]{smithling2015moduli}).  However, the set $\tcbra{h_S\ |\ S\in\CB_0}$ is $F$-linearly independent, since $\tcbra{g_S\ |\ S\in\CB}$ is $F$-linearly independent. So the set $\tcbra{h_S\ |\ S\in\CB_0}$ is an $F$-basis of $W_{-1}^{n-1,1}$. 
\end{proof}

\begin{defn}
	 Let $w=\sum_{S\in\CB}c_Se_S\in W$. The \dfn{worst term} of $w$ is defined to be $$WT(w)\coloneqq \sum_{S\in\CB(w)} c_Se_S,$$ where $\CB(w)\subset\CB$ consists of elements $S\in\CB$ such that $\omega(c_S)\leq \omega(c_T)$ for all $T\in \CB$.
\end{defn}

Recall $\sqrt{\theta}=1-{2\pi}/{t}\in\CO_F\cross$. Using \eqref{eq-egbasis}, we immediately obtain the following.
\begin{lemma}
    Let $S\in \CB^{n-1,1}$. Then exactly we have the following six cases. 
	\begin{enumerate}
		\item If $S=\tcbra{1,\ldots,\wh{i},\ldots,n,n+i}$ for some $i\leq m+1$, then $$WT(g_S)=(-1)^{i-1}\frac{t^{m-1}}{\pi^{3m-1}}e_{\tcbra{n+1,\ldots,2n}} .$$
		\item If $S=\tcbra{1,\ldots,\wh{i},\ldots,n,n+i}$ for some $m+1< i\leq n$, then $$WT(g_S)=(-1)^{i-1}\frac{t^{m-1}}{\pi^{3m-3}{\pi_0}}e_{\tcbra{n+1,\ldots,2n}} .$$
		\item If $S=\tcbra{1,\ldots,\wh{j},\ldots,n,n+i}$ for some $i,j\leq m+1$ with $i\neq j$, then $$WT(g_S)=-\sqrt{\theta}\frac{t^m}{\pi^{3m-1}}e_{\tcbra{i,n+1,\ldots,\wh{n+j},\ldots,2n}}.$$
		\item If $S=\tcbra{1,\ldots,\wh{j},\ldots,n,n+i}$ for some $i\leq m+1<j$, then $$ WT(g_S)=-\sqrt{\theta}\frac{t^{m-1}}{\pi^{3m-2}}e_{\tcbra{i,n+1,\ldots,\wh{n+j},\ldots,2n}}.$$
		\item If $S=\tcbra{1,\ldots,\wh{j},\ldots,n,n+i}$ for some $j\leq m+1<i$, then $$WT(g_S)=-\sqrt{\theta}\frac{t^{m+1}}{\pi^{3m-2}\pi_0}e_{\tcbra{i,n+1,\ldots,\wh{n+j},\ldots,2n}}.$$
		\item If $S=\tcbra{1,\ldots,\wh{j},\ldots,n,n+i}$ for some $i,j>m+1$ with $i\neq j$, then $$WT(g_S)=-\sqrt{\theta}\frac{t^m}{\pi^{3m-3}\pi_0} e_{\tcbra{i,n+1,\ldots,\wh{n+j},\ldots,2n}}.$$
	\end{enumerate}  
\end{lemma}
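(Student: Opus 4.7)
The plan is to expand each $g_S$ in the $e$-basis of $W$ via the change-of-basis matrix from \eqref{eq-egbasis}. Writing $M$ for this $2n \times 2n$ matrix, so that $g_k = \sum_\ell M_{\ell k} f_\ell$, we have for every size-$n$ subset $S \in \CB$
\begin{flalign*}
  g_S \;=\; \sum_{T \in \CB} \det\rbra{M[T, S]} \, e_T,
\end{flalign*}
where $M[T, S]$ denotes the $n \times n$ submatrix of $M$ with row-index set $T$ and column-index set $S$. Computing $WT(g_S)$ therefore reduces to identifying the $T \in \CB$ that minimize $\omega\rbra{\det(M[T, S])}$ and evaluating those determinants.

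The first key observation will be that each column $k$ of $M$ has exactly two nonzero entries, placed in rows $b(k)$ and $b(k) + n$, where $b(k) \in \cbra{1, \ldots, n}$ is the ``base'' of the column (equal to $k$ when $k \le n$, and to $k - n$ otherwise). Consequently $\det(M[T, S])$ is nonzero only when $T$ admits a perfect matching with $S$ sending each column $k$ to one of the rows $b(k), b(k)+n$. For each base $b$, the constraints imposed by $S$ fall into three regimes: (a) neither $b$ nor $b+n$ lies in $S$, so neither lies in $T$; (b) exactly one of $b, b+n$ lies in $S$, so exactly one of the two rows lies in $T$, to be chosen to minimize valuation; (c) both $b$ and $b+n$ lie in $S$, forcing both rows into $T$. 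A direct inspection of the eight entry-values of $M$ shows that in regime (b) row $b+n$ always yields an entry of valuation smaller by exactly $\frac{1}{2}$ than row $b$.

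For Cases (1)--(2) I will observe that $i = j$, so no base falls in regime (c); the optimal $T$ is then uniquely $\cbra{n+1, \ldots, 2n}$, with each column $k \ne n+i$ routed to row $n+k$ and column $n+i$ routed to row $n+i$. The product of entries, multiplied by the sign $(-1)^{n-i} = (-1)^{i-1}$ of the matching cyclic permutation $(i, i+1, \ldots, n)$, yields exactly the asserted coefficient (the distinction $i \le m+1$ versus $i > m+1$ dictating whether column $n+i$ contributes $\pi/t$ or $\pi^2/\pi_0$). For Cases (3)--(6) I will instead have $i \ne j$, so base $j$ is in regime (a) and base $i$ is the unique base in regime (c); the optimal $T$ is accordingly $\cbra{i} \cup \cbra{n+1, \ldots, 2n} \setminus \cbra{n+j}$, and two matchings contribute to $\det(M[T, S])$: ``option A'' (column $i \to $ row $i$, column $n+i \to $ row $n+i$) and ``option B'' (column $i \to $ row $n+i$, column $n+i \to$ row $i$). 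A direct computation will show the two signed contributions share the same valuation but combine into a single coefficient carrying the factor $\pi^2 - \pi_0$; using $\pi^2 = t\pi - \pi_0$ and $\sqrt{\theta} = 1 - 2\pi/t$, this equals $\pi(2\pi - t) = -t\pi\sqrt{\theta}$, which is precisely the source of the $\sqrt{\theta}$ appearing in Cases (3)--(6).

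The hard part will be verifying that the $T$ claimed in each case genuinely attains the minimum valuation. For Cases (1)--(2) this will be immediate: any alternative nonzero $T$ requires swapping row $n+k$ for row $k$ at some base $k \ne i$, at a strict cost of $+\frac{1}{2}$ in valuation. For Cases (3)--(6) the option-A/option-B cancellation raises the valuation at the claimed $T$ by $\omega(t) - \frac{1}{2} \ge \frac{1}{2}$, while a single swap also costs $+\frac{1}{2}$, so I will need to compare carefully and handle the borderline case $\omega(t) = 1$ by noting that any single-swap competitor exhibits the same kind of internal cancellation in its contribution, keeping the listed $T$ strictly optimal. The remaining work is a routine enumeration across the four sub-regimes of whether each of $i, j$ is $\le m+1$ or $> m+1$, each of which fixes a specific pair of entries of $M$ attached to the base-$i$ columns and hence the precise closed-form coefficient stated in the lemma.
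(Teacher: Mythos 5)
Your proposal is correct and is essentially the paper's own (unwritten) argument: the paper derives the lemma "immediately" from the change of basis \eqref{eq-egbasis}, and your minor-expansion of $g_S$ in the $f$-basis, with the two-nonzero-entries-per-column bookkeeping and the identity $\pi^2-\pi_0=\pi(2\pi-t)=-t\pi\sqrt{\theta}$, is exactly that computation made explicit, and it reproduces all six stated coefficients. The only place you hedge --- the borderline comparison when $\omega(t)=1$ in Cases (3)--(6) --- resolves more cleanly than you suggest: since the columns $i$ and $n+i$ can only be routed to the rows $i$ and $n+i$, the $2\times 2$ determinant on rows and columns $\{i,n+i\}$ is a common factor of \emph{every} nonzero minor $\det(M[T,S])$, so the internal cancellation affects all candidate $T$ equally and the comparison reduces to the single-entry products, where each swap strictly costs $\tfrac{1}{2}$; no case analysis on $\omega(t)$ is needed.
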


\begin{defn}
    For $S\in\CB^{n-1,1}$, the \dfn{weight vector} $\mathbf w_S\in\BZ^n$ attached to $S$ is defined to be an element of $\BZ^n$ such that the $i$-th coordinate of $\mathbf w_S$ is $\#(S\cap\cbra{i,n+i})$.
\end{defn}

Note that if $S\in\CB^{n-1,1}$, then $S=\tcbra{1,\ldots,\wh{j},\ldots,n,n+i}$ for some $1\leq i,j\leq n$. Moreover, we have $\sgn(\sigma_S)=(-1)^{i+j+1}$ (see \cite[Remark 4.3]{smithling2015moduli}) and $S^\perp=\tcbra{1,\ldots,\wh{i^\vee},\ldots,n,j^*}$. Similar arguments in \cite[Lemma 4.10]{smithling2015moduli} imply the following lemma.
\begin{lemma} \label{lem-wterm}
	Let $S\in\CB_0$. Then exactly we have the following nine cases.
	\begin{enumerate}
		\item $S=\tcbra{1,\ldots,\wh{m+1},\ldots,n,n+m+1}$. Then $S=S^\perp$, $\mathbf w_S=(1,\ldots,1)$, and $$WT(h_S)=WT(2g_S)=(-1)^m\frac{2t^{m-1}}{\pi^{3m-1}}e_{\tcbra{n+1,\ldots,2n}}.$$
		\item $S=\tcbra{1,\ldots,\wh{i^\vee},\ldots,n,n+i}$ for some $i<m+1$. Then $S=S^\perp$, $\mathbf w_S\neq (1,\ldots,1)$, and $$WT(h_S)=WT(2g_S)=-\sqrt{\theta}\frac{2t^{m-1}}{\pi^{3m-2}}e_{\tcbra{i,n+1,\ldots,\wh{i^*},\ldots,2n}}.$$
		\item $S=\tcbra{1,\ldots,\wh{i^\vee},\ldots,n,n+i}$ for some $i>m+1$. Then $S=S^\perp$, $\mathbf w_S\neq (1,\ldots,1)$, and $$WT(h_S)=WT(2g_S)=-\sqrt{\theta}\frac{2t^{m+1}}{\pi^{3m-2}\pi_0}e_{\tcbra{i,n+1,\ldots,\wh{i^*},\ldots,2n}}.$$
		\item $S=\tcbra{1,\ldots,\wh{i},\ldots,n,n+i}$ for some $i<m+1$. Then $S\neq S^\perp$, $\mathbf w_S=\mathbf w_{S^\perp}=(1,\ldots,1),$ and \begin{flalign*}
			   WT(h_S) &=WT(g_{\tcbra{1,\ldots,\wh{i},\ldots,n,n+i}}+g_{\tcbra{1,\ldots,\wh{i^\vee},\ldots,n,i^*}}) =(-1)^{i-1}\frac{t^m}{\pi^{3m-2}{\pi_0}}e_{\tcbra{n+1,\ldots,2n}}.
		\end{flalign*} 
		\item $S=\tcbra{1,\ldots,\wh{j},\ldots,n,n+i}$ for some $i<j^\vee<m+1$. Then $S\neq S^\perp$, $\mathbf w_S, \mathbf w_{S^\perp}$ and $(1,\ldots,1)$ are pairwise distinct and \begin{flalign*}
			   WT(h_S) &=WT(g_{\tcbra{1,\ldots,\wh{j},\ldots,n,n+i}}+(-1)^{i+j}g_{\tcbra{1,\ldots,\wh{i^\vee},\ldots,n,j^*}})\\ &=-\sqrt{\theta}\frac{t^{m-1}}{\pi^{3m-2}}e_{\tcbra{i,n+1,\ldots,\wh{n+j},\ldots,2n}}+(-1)^{i+j+1}\sqrt{\theta}\frac{t^{m-1}}{\pi^{3m-2}}e_{\tcbra{j^\vee,n+1,\ldots,\wh{i^*},\ldots,2n}} .
		\end{flalign*}
		\item $S=\tcbra{1,\ldots,\wh{m+1},\ldots,n,n+i}$ for some $i<m+1$. Then $S\neq S^\perp$, $\mathbf w_S, \mathbf w_{S^\perp}$ and $(1,\ldots,1)$ are pairwise distinct and \begin{flalign*}
			    WT(h_S) &= WT(g_{\tcbra{1,\ldots,\wh{m+1},\ldots,n,n+i}}-(-1)^{m+i}g_{\tcbra{1,\ldots,\wh{i^\vee},\ldots,n+m+1}})\\ &=(-1)^{m+i}\sqrt{\theta}\frac{t^{m-1}}{\pi^{3m-2}}e_{\tcbra{m+1,n+1,\ldots,\wh{i^*},\ldots,2n}}.
		\end{flalign*}
		\item $S=\tcbra{1,\ldots,\wh{j},\ldots,n,n+i}$ for some $i<m+1<j^\vee$. Then $S\neq S^\perp$, $\mathbf w_S, \mathbf w_{S^\perp}$ and $(1,\ldots,1)$ are pairwise distinct and \begin{flalign*}
			    WT(h_S) &=WT(g_{\tcbra{1,\ldots,\wh{j},\ldots,n,n+i}}-(-1)^{i+j+1}g_{\tcbra{1,\ldots,\wh{i^\vee},\ldots,n,j^*}})\\ &=-\sqrt{\theta}\frac{t^m}{\pi^{3m-1}}e_{\tcbra{i,n+1,\ldots,\wh{n+j},\ldots,2n}}-(-1)^{i+j}\sqrt{\theta}\frac{t^m}{\pi^{3m-3}\pi_0}e_{\tcbra{j^\vee,n+1,\ldots,\wh{i^*},\ldots,2n}}.
		\end{flalign*}
		\item $S=\tcbra{1,\ldots,\wh{j},\ldots,n,n+m+1}$ for some $j^\vee>m+1$. Then $S\neq S^\perp$, $\mathbf w_S,\mathbf w_{S^\perp}$ and $(1,\ldots,1)$ are pairwise distinct and \begin{flalign*}
			   WT(h_S) &= WT(g_{\tcbra{1,\ldots,\wh{j},\ldots,n,n+m+1}}-(-1)^{m+j+1}g_{\tcbra{1,\ldots,\wh{m+1},\ldots,n,j^*}})\\ &= -\sqrt{\theta}\frac{t^m}{\pi^{3m-1}}e_{\tcbra{m+1,n+1,\ldots,\wh{n+j},\ldots,2n}}.
		\end{flalign*}
		\item $S=\tcbra{1,\ldots,\wh{j},\ldots,n,n+i}$ for some $j^\vee>i>m+1$. Then $S\neq S^\perp$, $\mathbf w_S, \mathbf w_{S^\perp}$ and $(1,\ldots,1)$ are pairwise distinct and \begin{flalign*}
			    WT(h_S) &= WT(g_{\tcbra{1,\ldots,\wh{j},\ldots,n,n+i}}-(-1)^{i+j+1}g_{\tcbra{1,\ldots,\wh{i^\vee},\ldots,j^*}})\\ &=-\sqrt{\theta}\frac{t^{m+1}}{\pi^{3m-2}\pi_0}e_{\tcbra{i,n+1,\ldots,\wh{n+j},\ldots,2n}}+(-1)^{i+j+1}\sqrt{\theta}\frac{t^{m+1}}{\pi^{3m-2}\pi_0}e_{\tcbra{j^\vee,n+1,\ldots,\wh{i^*},\ldots,2n}}.
		\end{flalign*}
	\end{enumerate}
\end{lemma}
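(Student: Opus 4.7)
The plan is to reduce the computation of $WT(h_S)$ to the previous lemma's description of $WT(g_S)$, case by case, using only (a) the formula $\sgn(\sigma_S)=(-1)^{i+j+1}$, (b) the identity $S^\perp=\tcbra{1,\ldots,\wh{i^\vee},\ldots,n,j^*}$ when $S=\tcbra{1,\ldots,\wh j,\ldots,n,n+i}$, and (c) the Eisenstein relation $\pi^2=t\pi-\pi_0$. The condition $S\in\CB_0$ (i.e., $i_S\leq i_{S^\perp}$) will determine the natural index ranges appearing in the nine cases and is used only to enumerate representatives of each orbit $\cbra{S,S^\perp}$.

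First I would dispose of the self-dual cases (1)--(3), which are exactly those with $j=i^\vee$. A direct check gives $\sgn(\sigma_S)=(-1)^{i+i^\vee+1}=(-1)^n=-1$ since $n=2m+1$, so $h_S=2g_S$. Cases (1), (2), (3) then correspond to $i=m+1$, $i<m+1$, $i>m+1$ respectively, and each $WT(2g_S)$ is read off directly from the previous lemma (case (1) uses item (1) there with $i=m+1$; cases (2)--(3) use items (3) and (5) there with $j=i^\vee$).

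For the non-self-dual cases (4)--(9), I would write $h_S=g_S-\sgn(\sigma_S)g_{S^\perp}$ and compute $WT(g_S)$, $WT(g_{S^\perp})$ from the six sub-cases of the previous lemma. A short valuation comparison (using $\omega(t)\geq 1$, $\omega(\pi)=1/2$, $\omega(\pi_0)=1$) decides whether the two worst terms have equal $\omega$-value; when they do, they either land on distinct basis vectors $e_T$ (which is what produces the two-term expressions in cases (5), (7), (9)) or on the same $e_T$ (which is what happens in cases (4), (6), (8)). In the latter situation the two scalar coefficients must be combined, and here one uses $1+\pi^2/\pi_0=t\pi/\pi_0$ (equivalently $\pi^2=t\pi-\pi_0$) to simplify; for instance in case (4), the sum becomes $(-1)^{i-1}\tfrac{t^{m-1}}{\pi^{3m-1}}\bigl(1+\tfrac{\pi^2}{\pi_0}\bigr)e_{\tcbra{n+1,\ldots,2n\}}}=(-1)^{i-1}\tfrac{t^m}{\pi^{3m-2}\pi_0}e_{\tcbra{n+1,\ldots,2n\}}}$, which matches the claim. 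Analogous sign and $\pi$-power bookkeeping handles cases (6) and (8); cases (5), (7), (9) are easier because the worst terms lie on different basis vectors and no cancellation occurs.

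The main obstacle is purely bookkeeping: tracking all the signs coming from $(-1)^{i+j+1}$, from the previous lemma's factors $(-1)^{i-1}$, and from $(-1)^{i^\vee-1}=-(-1)^i$; and identifying which sub-case of the previous lemma applies to $S$ and to $S^\perp$ after the swap $(i,j)\leftrightarrow (j^\vee,i^\vee)$. I expect no conceptual difficulty beyond verifying, in the three ``collision'' cases (4), (6), (8), that the Eisenstein relation cleans up the $\pi$-expansion into a single monomial in $t,\pi,\pi_0$, yielding precisely the stated worst term.
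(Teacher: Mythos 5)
Your overall strategy---write $h_S=g_S-\sgn(\sigma_S)g_{S^\perp}$, identify $S^\perp$ and the sign, feed in the worst terms from the preceding six-case lemma, and compare valuations---is exactly the argument the paper intends (its own proof is a one-line appeal to the analogous computation in \cite[Lemma 4.10]{smithling2015moduli}), and your handling of the self-dual cases (1)--(3) and of the equal-valuation, distinct-basis-vector cases (5), (7), (9) is correct. However, your sorting of the remaining cases is off: in cases (6) and (8) the two worst terms do \emph{not} collide on a common $e_T$ at equal valuation; their valuations differ by $\omega(t)-\tfrac12>0$, so the single dominant worst term (coming from $g_{S^\perp}$ in case (6) and from $g_S$ in case (8)) already gives $WT(h_S)$, with no Eisenstein combination involved. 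Likewise your case (2) is read off from item (4), not item (3), of the preceding lemma. These are bookkeeping slips that the valuation comparison you propose would catch if actually carried out.

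The genuine gap is case (4), the only true collision. There the two worst terms do sit on the same basis vector $e_{\tcbra{n+1,\ldots,2n}}$ with equal valuation, and their coefficients sum, via $\pi^2+\pi_0=t\pi$, to $(-1)^{i-1}t^m/(\pi^{3m-2}\pi_0)$; but the ratio of the two coefficients is $\pi^2/\pi_0\equiv -1$ modulo positive valuation, so this is a leading-order cancellation: the surviving coefficient has valuation strictly larger, by $\omega(t)-\tfrac12$, than the two terms you added. Consequently, knowing only $WT(g_S)$ and $WT(g_{S^\perp})$ does not let you conclude that the surviving term is the worst term of $h_S$: a priori a sub-leading term of $g_S$ or of $g_{S^\perp}$ (about which the preceding lemma says nothing) could have valuation at or below the new level, e.g.\ at the old level plus $\tfrac12$ when $\omega(t)>1$, or at exactly the new level on another basis vector even when $\omega(t)=1$. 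Closing this requires finer control of the full expansions of $g_S$ and $g_{S^\perp}$ in the $e_T$-basis---precisely the kind of exact computation the paper performs when it later expands $h_{S_i}$ in terms of the $f$-basis in its analysis of the weight $(1,\ldots,1)$, and what the cited lemma of Smithling supplies in the analogous odd-$p$ setting. As written, your proposal asserts the conclusion of case (4) without this necessary extra input.
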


Let $w\in W_{-1}^{n-1,1}$. Recall that $\tcbra{h_S\ | S\in\CB_0}$ is an $F$-basis of $W_{-1}^{n-1,1}$ by Lemma \ref{lembasis}. Write $$w=\sum_{S\in\CB_0}a_Sh_S= \sum_{\mathbf w\in\BZ^n}\sum_{\substack{S\in\CB_0\\ \text{and $\mathbf w_S=\mathbf w$}} }a_Sh_S,\quad a_S\in F.$$
Then as in the proof of \cite[Proposition 4.12]{smithling2015moduli}, we have \begin{flalign*}
	w\in W(\Lambda_0)_{-1}^{n-1,1} \Longleftrightarrow \sum_{\substack{S\in\CB_0\\ \text{and $\mathbf w_S=\mathbf w$}}}a_Sh_S\in W(\Lambda_0)_{-1}^{n-1,1}, \text{\ for each $\mathbf w\in\BZ^n$}
\end{flalign*} 
We have two distinct situations for $\mathbf w$:
\begin{enumerate}[label=\textit{Case \arabic*:}]
	\item $\mathbf w\neq (1,\ldots,1)$. Then there exists at most one $S\in\CB_0$ such that $\mathbf w_S=\mathbf w$.
	\item $\mathbf w=(1,\ldots,1)$. Then $S$ is necessarily of the form $$S_i\coloneqq \cbra{1,\ldots,\wh{i},\ldots,n,n+i}$$ for some $1\leq i\leq m+1$. For any $1\leq i<m+1$, we have 
	    	\begin{flalign*}
	    		  h_{S_i} &= g_{S_i}+g_{S_{i^\vee}}\\ &=(-1)^ig_1\wedge\cdots\wedge\wh{g_i}\wedge\cdots\wedge\wh{g_{i^\vee}}\wedge\cdots\wedge g_n\wedge\rbra{g_i\wedge g_{i^*}-g_{i^\vee}\wedge g_{n+i} }\\ &= (-1)^ig_1\wedge\cdots\wedge\wh{g_i}\wedge\cdots\wedge\wh{g_{i^\vee}}\wedge\cdots\wedge g_n\\  &\quad \wedge\trbra{-tf_i\wedge f_{i^\vee}+\frac{t^2-\pi_0}{\pi_0} f_i\wedge f_{i^*}-2f_{i^\vee}\wedge f_{n+i}-\frac{t}{\pi_0}f_{n+i}\wedge f_{i^*} },
	    	   \end{flalign*}
	    	   and
	    	\begin{flalign*}
	    		   h_{S_{m+1}} &=2g_{S_{m+1}}\\ &=-2\cdot g_1\wedge\cdots\wedge \wh{g_i}\wedge\cdots\wedge \wh{g_{i^\vee}}\wedge\cdots\wedge g_n\wedge g_{n+m+1}\wedge \rbra{g_i\wedge g_{i^\vee} }\\ &= -2\cdot g_1\wedge\cdots\wedge \wh{g_i}\wedge\cdots\wedge \wh{g_{i^\vee}}\wedge\cdots\wedge g_n\wedge g_{n+m+1} \\ &\quad \wedge\trbra{\frac{t}{{\pi}}f_i\wedge f_{i^\vee}-\frac{t}{\pi^2}f_i\wedge f_{i^*}+\frac{t}{\pi^2}f_{i^\vee}\wedge f_{n+i}+\frac{t}{\pi^3}f_{n+i}\wedge f_{i^*} }.
	    	\end{flalign*}	    
	Define 
	     \begin{flalign*}
		    \wt{h}_{S_i}\coloneqq \begin{cases}
		    	  2\ol{\pi} h_{S_{i}}+(-1)^{m+i}th_{S_{m+1}}\quad &\text{if $i\neq m+1$},\\ h_{S_{m+1}} &\text{if $i=m+1$}.
		    \end{cases}
	    \end{flalign*}
	    Then for $1\leq i< m+1$, terms of $\wt{h}_{S_i}$ do not contain (multiples of) $$WT(h_{S_{m+1}})=(-1)^m\frac{2t^{m-1}}{\pi^{3m-1}}e_{\tcbra{n+1,\ldots,2n}},$$ and  \begin{flalign}
	  WT(\wt{h}_{S_i}) &=-\sqrt{\theta}\frac{2t^m\pi_0}{\pi^{3m}}e_{\tcbra{i,n+1,\ldots,\wh{n+i},\ldots,2n}}-\sqrt{\theta}\frac{2t^m}{\pi^{3m-2}}e_{\tcbra{i^\vee,n+1,\ldots,\wh{i^*},\ldots,2n}}.  \label{eq-newSi}
       \end{flalign} 
\end{enumerate}
For $S$ with $\mathbf w_S\neq (1,\ldots,1)$, we set $\wt{h}_S\coloneqq h_S$. By Lemma \ref{lembasis}, the set $\tcbra{\wt{h}_S\ |\ S\in\CB_0}$ forms an $F$-basis of $W_{-1}^{n-1,1}$. Previous analysis on $\mathbf w$ together with similar arguments in \cite[Proposition 4.12]{smithling2015moduli} imply the following lemma. 

\begin{lemma}\label{lem-Wbasis}
    For each $S\in\CB_0$, pick $b_S\in F$ such that the worst term $WT(b_S\wt{h}_S)$ is a sum of terms of the form $u_Te_T$ for some unit $u_T\in\CO_F\cross$ and $T\in \CB$. Then the set $\tcbra{b_S\wt{h}_S\ |\ S\in\CB_0}$ forms an $\CO_F$-basis of the $\CO_F$-module $W(\Lambda_0)_{-1}^{n-1,1}$.
\end{lemma}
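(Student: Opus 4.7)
The plan is to show two things: (i) each element $b_S\wt h_S$ is integral, i.e.\ lies in $W(\Lambda_0)_{-1}^{n-1,1}$; and (ii) these elements generate $W(\Lambda_0)_{-1}^{n-1,1}$ as an $\CO_F$-module. Since $\tcbra{\wt h_S\ |\ S\in\CB_0}$ is already an $F$-basis of $W_{-1}^{n-1,1}$ by Lemma \ref{lembasis} and the construction preceding \eqref{eq-newSi}, $\CO_F$-linear independence is automatic from $F$-linear independence. Step (i) is straightforward: if we write $b_S\wt h_S=\sum_{T\in\CB}c_T e_T$, then by the definition of the worst term, $\omega(c_T)\geq \omega(c_{T'})$ for any $T'$ appearing in $WT(b_S\wt h_S)$, and the latter valuation is $0$ by the choice of $b_S$. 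Hence every $c_T\in\CO_F$, so $b_S\wt h_S\in W(\Lambda_0)\cap W_{-1}^{n-1,1}=W(\Lambda_0)_{-1}^{n-1,1}$.

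The main work is step (ii). The key observation is that the weight vector $\bold w_T\in \BZ^n$ (whose $i$-th coordinate is $\#(T\cap\cbra{i,n+i})$) is preserved under the change of basis \eqref{eq-egbasis}, since each $g_i$ is an $F$-linear combination of $f_i$ and $f_{n+i}$ only. Consequently, the expansion of any $\wt h_S$ in the $e$-basis involves only $e_T$ with $\bold w_T=\bold w_S$, so both $W_{-1}^{n-1,1}$ and $W(\Lambda_0)_{-1}^{n-1,1}$ decompose as direct sums indexed by weight vectors. It therefore suffices to prove the generation statement on each weight component separately.

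For a weight vector $\bold w\neq(1,\ldots,1)$, at most one $S\in\CB_0$ satisfies $\bold w_S=\bold w$; given $w=a_S\wt h_S\in W(\Lambda_0)_{-1}^{n-1,1}$ with $a_S\in F$, integrality of the $e_T$-coefficient for any $T$ in $\CB(\wt h_S)$ forces $a_S/b_S\in\CO_F$, as desired. For the weight vector $\bold w=(1,\ldots,1)$, the relevant $S$'s are exactly $S_i=\tcbra{1,\ldots,\wh{i},\ldots,n,n+i}$ for $1\leq i\leq m+1$, and Lemma \ref{lem-wterm} together with \eqref{eq-newSi} shows that the worst terms of $\wt h_{S_{m+1}}$ and of $\wt h_{S_i}$ for $1\leq i<m+1$ are supported on the pairwise disjoint singletons and pairs $\cbra{n+1,\ldots,2n}$, $\tcbra{i,n+1,\ldots,\wh{n+i},\ldots,2n}$, $\tcbra{i^\vee,n+1,\ldots,\wh{i^*},\ldots,2n}$. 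Writing $w=\sum_{i=1}^{m+1}a_{S_i}\wt h_{S_i}\in W(\Lambda_0)_{-1}^{n-1,1}$, one reads off the coefficient of any $e_T$ appearing in $WT(\wt h_{S_i})$: the contribution from $\wt h_{S_i}$ has valuation exactly $-\omega(b_{S_i})$, while the contributions from $\wt h_{S_j}$, $j\neq i$, have strictly higher valuation (as those $e_T$'s do not appear in their worst terms). Integrality of this coefficient then forces $\omega(a_{S_i})\geq \omega(b_{S_i})$, i.e.\ $a_{S_i}/b_{S_i}\in\CO_F$.

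The main obstacle I expect is verifying the last assertion about the valuations: namely, that the coefficient of each $e_T\in WT(\wt h_{S_i})$ in the expansion of $\wt h_{S_j}$ for $j\neq i$ has strictly larger valuation than $-\omega(b_{S_i})$. This is precisely why the modified basis $\wt h_{S_i}$ was introduced in \eqref{eq-newSi} in the first place, namely to kill the $e_{\cbra{n+1,\ldots,2n}}$-contribution of $h_{S_i}$ so that the distinguished worst-term supports become disjoint. Granting this (which follows from a direct but careful expansion using \eqref{eq-egbasis} and cases (1), (4), (6) of Lemma \ref{lem-wterm}), the argument is complete.
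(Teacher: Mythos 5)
Your overall strategy is the same as the paper's (and Smithling's): normalize the $\wt{h}_S$ so their worst coefficients are units, split $W(\Lambda_0)^{n-1,1}_{-1}$ into blocks with disjoint $e_T$-supports, and read off integrality of coefficients block by block. However, the step you use to produce the blocks contains a genuine error: it is \emph{not} true that the $e$-expansion of $\wt{h}_S$ involves only $e_T$ with $\bold w_T=\bold w_S$. The observation that each $g_i$ is an $F$-combination of $f_i$ and $f_{n+i}$ shows only that $g_S$ is pure of weight $\bold w_S$; but $h_S=g_S-\sgn(\sigma_S)g_{S^\perp}$ also involves $g_{S^\perp}$, which is pure of weight $\bold w_{S^\perp}$, and $\bold w_{S^\perp}\neq\bold w_S$ in general --- this is exactly what cases (5)--(9) of Lemma \ref{lem-wterm} record ("$\bold w_S$, $\bold w_{S^\perp}$ and $(1,\ldots,1)$ are pairwise distinct"). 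Consequently $W^{n-1,1}_{-1}$ is \emph{not} the direct sum of its single-weight pieces, and the weight-$\bold w_S$ projection of a general element $w=\sum a_{S'}\wt{h}_{S'}$ is $a_Sg_S$, not $a_S\wt{h}_S$. This is not merely cosmetic: in case (6) of Lemma \ref{lem-wterm} the worst term of $h_S$ lies entirely in weight $\bold w_{S^\perp}$, so the worst coefficient of $g_S$ alone has strictly larger valuation than that of $h_S$; integrality of $a_Sg_S$ then gives a strictly weaker bound on $\omega(a_S)$ than the required $\omega(a_S)\geq\omega(b_S)$, so your reduction, taken literally, does not close the case $\bold w\neq(1,\ldots,1)$.

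The repair is short and is what the paper's grouping implicitly relies on: group the $e_T$'s by the unordered pair $\{\bold w_S,\bold w_{S^\perp}\}$ rather than by single weights. Since the weight of a type-$(n-1,1)$ subset different from $(1,\ldots,1)$ determines that subset (it records the positions of the unique $2$ and the unique $0$), and since distinct $S,S'\in\CB_0$ never satisfy $S=S'^\perp$, the pairs attached to distinct elements of $\CB_0$ are disjoint, except that all the $S_i$ share the weight $(1,\ldots,1)$. Thus the supports of the $\wt{h}_S$ fall into pairwise disjoint blocks, one per $S\in\CB_0$ with $\bold w_S\neq(1,\ldots,1)$ and one block containing all $\wt{h}_{S_i}$; the block component of $w$ is then genuinely $a_S\wt{h}_S$ (resp.\ $\sum_i a_{S_i}\wt{h}_{S_i}$), and your unit-worst-coefficient argument applies. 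One further small point in the $(1,\ldots,1)$ block: the assertion that the contributions of $\wt{h}_{S_j}$, $j\neq i$, to the coefficient of an $e_T$ in the worst support of $\wt{h}_{S_i}$ have strictly higher valuation presupposes control of the relative sizes of the coefficients; to rule out cancellation you should argue at an index $i$ where $\omega(a_{S_i}/b_{S_i})$ is minimal (then the other contributions are strictly smaller because their normalized coefficients at that $e_T$ lie in the maximal ideal, by the disjointness of worst supports guaranteed by \eqref{eq-newSi}). With these two adjustments your proof is correct and coincides with the argument the paper invokes via Smithling's Proposition 4.12.
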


%\begin{corollary}[{cf. \cite[Corollary 4.14]{smithling2015moduli}}]
%   Let $R$ be a $k$-algebra. Then $$\Im\rbra{W(\Lambda_0)^{n-1,1}_{- 1}\otimes_{\CO_F}R\ra W(\Lambda_0)\otimes_{\CO_F}R }$$ is a free $R$-module on the following basis elements
%   \begin{enumerate}
%   	\item $e_{\tcbra{n+1,\ldots,2n}}$;
%   	\item $e_{\tcbra{i,n+1,\ldots,\wh{i^*},\ldots,2n}}$ for $1\leq i\leq n$;
%   	\item $e_{\tcbra{i,n+1,\ldots,\wh{n+i},\ldots,2n}}+e_{\tcbra{i^\vee,n+1,\ldots,\wh{i^*},\ldots,2n}}$ for $i<m+1$;
%   	\item $e_{\tcbra{i,n+1,\ldots,\wh{n+j},\ldots,2n}}+e_{\tcbra{j^\vee,n+1,\ldots,\wh{i^*},\ldots,2n}} $ for $i<j^\vee$ with $i,j\neq m+1$;
%   	\item $e_{\tcbra{m+1,n+1,\ldots,\wh{n+i},\ldots,2n}}$ for $i=1,\ldots,\wh{m+1},\ldots,n$.
%%   	\item $e_{\tcbra{i,n+1,\ldots,\wh{n+j},\ldots,2n}}+e_{\tcbra{j^\vee,n+1,\ldots,\wh{i^*},\ldots,2n}}$ for $i<m+1<j^\vee $ ($i\neq j$) or $m+1<i<j^\vee$.
%   \end{enumerate}	
%\end{corollary}
%\begin{proof}
%	It follows from Lemma \ref{lem-Wbasis} and the proof of \cite[Corollary 4.14]{smithling2015moduli}.
%\end{proof}

For the matrix $\smatx$ corresponding to $\CF$, denote by $v\in\wedge^n\CF$ the wedge product of $n$-columns of the matrix in the order from left to right. Then the strengthened spin condition \textbf{LM6} on $\CF$ amounts to that $$v\in \Im\rbra{W(\Lambda_0)^{n-1,1}_{- 1}\otimes_{\CO_F}R\ra W(\Lambda_0)\otimes_{\CO_F}R }.$$

Write $v=\sum_{S\in\CB}a_Se_S$ for some $a_S\in R$. By Lemma \ref{lem-Wbasis}, we have \begin{flalign}
	  v=\sum_{S\in\CB}a_Se_S=\sum_{S\in\CB_0} c_Sb_S\wt{h}_S \label{eq-vwedge}
\end{flalign}  for some $c_S\in R$. By comparing the coefficients of both sides in Equation \eqref{eq-vwedge}, we will obtain the defining equations of the condition \textbf{LM6} on the chart $\RU_{\cbra{0}}$.

Recall $$X=\begin{pmatrix}
 	  A &B &E\\ C &D &F\\ G &H &x
 \end{pmatrix},$$ where $A,B,C,D\in M_{m\times m}(R)$, $E,F\in M_{m\times 1}(R)$, $G,H\in M_{1\times m}(R)$ and $x\in R$. In the following, we use $a_{ij}$ to denote the $(i,j)$-entry of the matrix $A$. We use similar notations for other block matrices in $X$. 
For $1\leq i<m+1$, comparing the coefficients of $e_{\tcbra{n+1,\ldots,2n}}$ and $e_{S_i}=e_{\tcbra{1,\ldots,\wh{i},\ldots,n,n+i }}$ in \eqref{eq-vwedge}, we obtain \begin{flalign*}
	   c_{S_{m+1}}(-1)^m b_{S_{m+1}} \frac{2t^{m-1}}{\pi^{3m-1}} &=1,\\ c_{S_{m+1}}b_{S_{m+1}}(-1)^{m+i}\frac{2t^{m-1}}{\pi^{3m-2}}+c_{S_i}b_{S_i}\rbra{-\sqrt{\theta}\frac{2t^m\pi_0}{\pi^{3m}} } &=(-1)^{1+i}d_{ii},\\ c_{S_{m+1}}b_{S_{m+1}}(-1)^{m+i}\frac{2t^{m-1}}{\pi^{3m-2}}+c_{S_i}b_{S_i}\rbra{-\sqrt{\theta}\frac{2t^m}{\pi^{3m-2}} } &=(-1)^{1+i}a_{m+i-i,m+1-i}.
\end{flalign*}
Hence, \begin{flalign}
	  d_{ii}=\frac{\pi_0}{\pi^2}a_{m+1-i,m+1-i}+t\sqrt{\theta}. \label{eq-AD1}
\end{flalign}
For $1\leq i,j<m+1$ and $i\neq j$, comparing the coefficients of $e_{\tcbra{1,\ldots,\wh{j},\ldots,n,n+i}}$ and $e_{\tcbra{j^\vee,n+1,\ldots,\wh{i^*},\ldots,2n } } $, we obtain \begin{flalign*}
	   c_{\cbra{1,\ldots,\wh{j},\ldots,n,n+i} }b_{\cbra{1,\ldots,\wh{j},\ldots,n,n+i} }\rbra{-\sqrt{\theta}\frac{t^m}{\pi^{3m-1}} } &=(-1)^{1+j}d_{ij},\\ c_{\cbra{1,\ldots,\wh{j},\ldots,n,n+i} }b_{\cbra{1,\ldots,\wh{j},\ldots,n,n+i} }\rbra{(-1)^{1+i+j}\sqrt{\theta}\frac{t^m}{\pi^{3m-3}\pi_0} } &= (-1)^{1+i}a_{m+1-j,m+1-i}.
\end{flalign*}
Hence, \begin{flalign}
	d_{ij}=\frac{\pi_0}{\pi^2}a_{m+1-j,m+1-i}.  \label{eq-AD2}
\end{flalign} 
Combining \eqref{eq-AD1} and \eqref{eq-AD2}, we obtain \begin{flalign*}
	  D=\frac{\pi_0}{\pi^2}H_mA^tH_m +t\sqrt{\theta}I_m.  
\end{flalign*} Here the matrix $H_mA^tH_m$ is the reflection of $A$ over its anti-diagonal. Equivalently, \begin{flalign}
	   D+\pi I_m=\frac{\ol{\pi}}{\pi}H_m(A+\pi I_m)^tH_m.  \label{eq-AD}
\end{flalign} Similarly, we can obtain \begin{flalign}
	   B=H_mB^tH_m,\ C=H_mC^tH_m,\  E=\frac{t}{\ol{\pi}}H_mH^t,\ F=\frac{t}{\pi}H_mG^t. \label{eq-BCEF}
\end{flalign}

Write \begin{flalign*}
	\wt{H}_{2m}\coloneqq  \begin{pmatrix}
	      0 &H_m\\ \frac{\ol{\pi}}{\pi}H_m &0
	 \end{pmatrix},\ X_1\coloneqq  \begin{pmatrix}
	   	    A &B\\ C &D
	   \end{pmatrix}.
\end{flalign*}  Combining \eqref{eq-AD} and \eqref{eq-BCEF}, we have \begin{flalign}
	   \wt{H}_{2m}\rbra{X_1+\pi I_{2m} }=(X_1+\pi I_{2m})^t\wt{H}_{2m}^t.  \label{eq-X1}
\end{flalign}
%Finally, comparing the coefficients of $e_S$ for $S$ not of type $(n-1,1)$, the Condition \textbf{LM6} gives $$\wedge^2(X+\pi I_n)=0.$$ 

\subsubsection{A simplification of equations}
First we can see under the wedge condition $\wedge^2(X+\pi I_n)=0$, the Kottwitz condition  \eqref{kott0ru} becomes \begin{flalign}
	   \tr(X+\pi I_n) =\pi-\ol{\pi}.   \label{trXpi}
\end{flalign}
Next we claim that the equation \begin{flalign}
	   X^2+tX+\pi_0I_n=0 \label{X2=0}
\end{flalign} of Condition \textbf{LM1} is implied by the Kottwitz condition \textbf{LM2} and the wedge condition \textbf{LM5}. To justify the claim, we need an easy but useful lemma. 
\begin{lemma}\label{lem-2wedge}
	Let $X$ be an $n\times n$ matrix. Then $X^2\equiv (\tr X)X$ modulo $(\wedge^2X)$.  
\end{lemma}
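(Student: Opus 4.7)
The plan is to prove the identity entry-by-entry. The ideal $(\wedge^2 X)$ is generated by the $2\times 2$ minors of $X$, so I want to show that every entry of $X^2 - (\tr X)X$ is an $R$-linear combination of $2\times 2$ minors of $X$. Write $X = (x_{ij})_{1 \leq i,j \leq n}$. The $(i,j)$-entry of $X^2 - (\tr X)X$ equals
\[
\sum_{k=1}^n x_{ik} x_{kj} - \Bigl(\sum_{k=1}^n x_{kk}\Bigr) x_{ij} = \sum_{k=1}^n \bigl( x_{ik} x_{kj} - x_{kk} x_{ij} \bigr).
\]
Each summand $x_{ik} x_{kj} - x_{kk} x_{ij}$ is, up to sign, the determinant
\[
\det \begin{pmatrix} x_{ik} & x_{ij} \\ x_{kk} & x_{kj} \end{pmatrix},
\]
which is one of the $2\times 2$ minors of $X$ (taken from rows $i,k$ and columns $k,j$; when $i=k$ or $k=j$ the expression vanishes identically, which is consistent). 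Hence each summand lies in the ideal $(\wedge^2 X)$, and so does the whole sum. This proves the congruence.

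The argument is essentially a one-line verification, and there is no real obstacle; the only thing to be careful about is interpreting $(\wedge^2 X)$ correctly as the ideal of $R$ generated by the entries of the matrix $\wedge^2 X$, i.e., by the $2\times 2$ minors of $X$. A conceptual way to remember the statement is the rank-one reduction: modulo $(\wedge^2 X)$ the matrix $X$ behaves as if it has rank $\leq 1$, i.e., of the form $uv^t$, and then $X^2 = u(v^t u)v^t = (\tr X)\, uv^t = (\tr X) X$. The entry-level computation above is just a direct realization of this observation, and it is what I would write down in the proof.
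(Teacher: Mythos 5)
Your proof is correct and is essentially identical to the paper's own argument: the paper likewise computes the $(i,j)$-entry of $X^2-(\tr X)X$ as $\sum_{k}\left(X_{ik}X_{kj}-X_{kk}X_{ij}\right)$ and observes each summand is a $2$-minor of $X$. Your additional rank-one heuristic is a fine mnemonic but not needed.
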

\begin{proof}
	The $(i,j)$-entry of the matrix $X^2-{\tr(X)}X$ is \begin{flalign*}
	    &\sum_{k=1}^nX_{ik}X_{kj}- \sum_{k=1}^nX_{kk}X_{ij} =\sum_{k=1}^n\rbra{X_{ik}X_{kj}- X_{kk}X_{ij} },
   \end{flalign*} which is a sum of $2$-minors of $X$.
\end{proof}

By Lemma \ref{lem-2wedge} and the wedge condition \textbf{LM5},  the equation \eqref{X2=0} \begin{flalign*}
	  X^2+tX+\pi_0 I_n = (X+\pi I_n)^2+(t-2\pi)(X+\pi I_n)=0
\end{flalign*}  is equivalent to \begin{flalign*}
	  \tr(X+\pi I_n) (X+\pi I_n)+(t-2\pi)(X+\pi I_n) =\rbra{\tr(X+\pi I_n)+\ol{\pi}-{\pi} }(X+\pi I_n)=0,
\end{flalign*} which is implied by the Kottwitz condition \eqref{trXpi}.

Next we look at the Condition \textbf{LM3}. For the equation \eqref{lm4-1}, we have \begin{flalign*}
	    &\frac{2}{t}C^tH_mA+\frac{t^2-2\pi_0}{t\pi_0}A^tH_mC+\frac{t}{\pi_0}G^tG+H_mC+C^tH_m \\ = &\frac{2}{t}C^tH_m(A+\pi I_m)+\frac{t^2-2\pi_0}{t\pi_0}(A+\pi I_m)^tH_mC-\frac{2\pi}{t}C^tH_m-\frac{t^2-2\pi_0}{t\pi_0}\pi H_mC\\ &\quad +\frac{t}{\pi_0}G^tG+H_mC+C^tH_m\\ =& \frac{2}{t}C^tH_m(A+\pi I_m)+\frac{t^2-2\pi_0}{t\pi_0}(A+\pi I_m)^tH_mC +\frac{t}{\pi_0}G^tG+\sqrt{\theta}C^tH_m+\frac{\pi}{\ol{\pi}}\sqrt{\theta}H_mC.
\end{flalign*} 
A similar argument as in the proof of Lemma \ref{lem-2wedge} implies that \begin{flalign*}
	 C^tH_m(A+\pi I_m)\equiv (A+\pi I_m)^tH_mC \text{\ modulo ($\wedge^2(X+\pi I_m)$). }
\end{flalign*} Hence, the equation \eqref{lm4-1} gives the same restriction on $\RU_{\cbra{0}}$ as the equation \begin{flalign*}
	    \frac{t}{\pi_0}(A+\pi I_m)^tH_mC+\frac{t}{\pi_0}G^tG+\sqrt{\theta}C^tH_m+\frac{\pi}{\ol{\pi}}\sqrt{\theta}H_mC=0.
\end{flalign*}
By \eqref{eq-BCEF}, we further obtain \begin{gather}
	\frac{t}{\pi_0}(A+\pi I_m)^tH_mC+\frac{t}{\pi_0}G^tG+\frac{t}{\ol{\pi}} \sqrt{\theta}H_mC=0, \label{eq-AC} \\ (A+\pi I_m)^tH_mC=(C(A+\pi I_m))^tH_m. \notag
\end{gather}
Again as in Lemma \ref{lem-2wedge}, the matrix $C(A+\pi I_m)$ is equivalent to $\tr(A+\pi I_m) C$. Thus, the equation \eqref{eq-AC} is equivalent to \begin{flalign*}
	    \frac{t}{\pi_0}\tr(A+\pi I_m) C^tH_m +\frac{t}{\pi_0}G^tG+\frac{t}{\ol{\pi}}\sqrt{\theta}H_mC=0.
\end{flalign*}
Equivalently, \begin{flalign}
	\frac{t}{\pi_0}\rbra{(\tr(A+\pi I_m)+\pi\sqrt{\theta})H_mC +G^tG}=0.  \label{lm7-1}
\end{flalign}

Similarly, under the wedge condition \textbf{LM5} and the strengthened spin condition \textbf{LM6}, one can verify that the   equation \eqref{lm4-2} can be simplified to \begin{flalign}
	   \frac{t}{\pi_0}\rbra{(\tr(A+\pi I_m)+\pi\sqrt{\theta})H_m(D+\pi I_m) +G^tH}=0; \label{lm7-2} 
\end{flalign}
the equation \eqref{lm4-3} is trivial; the equation \eqref{lm4-4} is equivalent to \eqref{lm4-2};  the equation \eqref{lm4-5} is equivalent to \begin{flalign}
	   \frac{t}{\pi_0}\rbra{(\frac{\ol{\pi}}{\pi} \tr(A+\pi I_m)+\pi\sqrt{\theta})H_mB +H^tH}=0;  \label{lm7-3}
\end{flalign} the rest of the equations are trivial. 

Set $$X_1\coloneqq \begin{pmatrix}
	A &B\\ C &D
\end{pmatrix}, X_2\coloneqq  \begin{pmatrix}
	E \\ F
\end{pmatrix}, X_3\coloneqq \begin{pmatrix}
	G &H
\end{pmatrix}, X_4\coloneqq x. $$ 
Then $X=\begin{psmallmatrix}
	X_1 &X_2\\ X_3 &X_4
\end{psmallmatrix}$, and equations \eqref{lm7-1}, \eqref{lm7-2}, \eqref{lm7-3} translate to \begin{flalign*}
	    \frac{t}{\pi_0}\rbra{\trbra{\tr(A+\pi I_m)+\pi\sqrt{\theta} }\wt{H}_{2m}(X_1+\pi I_{2m})+X_3^tX_3}=0.
\end{flalign*}

Using similar arguments, one can check that under the wedge condition \textbf{LM5} and the strengthened spin condition \textbf{LM6}, equations \eqref{lm5-1} to \eqref{lm5-9} are implied by the Condition \textbf{LM3}, and the equation \eqref{lm5-10} is equivalent to \begin{flalign*}
	    \text{the diagonal of $\trbra{\tr(A+\pi I_m)+\pi\sqrt{\theta} }\wt{H}_{2m}(X_1+\pi I_{2m})+X_3^tX_3$ equals $0$. }
\end{flalign*} 

Denote by $\CO_F[X]$ the polynomial ring over $\CO_F$ whose variables are entries of the matrix $X$. Then we can view the affine chart $\RU_{\cbra{0}}\subset\RM_{\cbra{0}}$ as a closed subscheme of $\Spec \CO_F[X]$. In summary, we have shown the following.
\begin{prop}\label{prop-U0ring}
	The scheme $\RU_{\cbra{0}}$ is a closed subscheme \footnote{In fact, we expect that $\RU_{\cbra{0}}=\RU_{\cbra{0}}'$. This amounts to saying that the equations obtained by comparing coefficients of $e_S$ in \eqref{eq-vwedge} for $S$ not of type $(n-1,1)$ are implied by relations in $\CI$.  } of $\RU_{\cbra{0}}'\coloneqq \Spec\CO_F[X]/\CI$, where $\CI$ is the ideal of $\CO_F[X]$ generated by: \begin{flalign*}
	    &\tr(X+\pi I_n)-\pi+\ol{\pi}, \ 
	    \wedge^2(X+\pi I_n),\
	    \wt{H}_{2m}\rbra{X_1+\pi I_{2m} }-(X_1+\pi I_{2m})^t\wt{H}_{2m}^t ,\\
	    &E-\frac{t}{\ol{\pi}}H_mH^t,\ F-\frac{t}{\pi}H_mG^t, \
	    \frac{t}{\pi_0}\rbra{\trbra{\tr(A+\pi I_m)+\pi\sqrt{\theta} }\wt{H}_{2m}(X_1+\pi I_{2m})+X_3^tX_3}, \\
	    &\text{the diagonal of $\trbra{\tr(A+\pi I_m)+\pi\sqrt{\theta} }\wt{H}_{2m}(X_1+\pi I_{2m})+X_3^tX_3$. }
\end{flalign*} 
\end{prop}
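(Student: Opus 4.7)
The plan is to verify that every relation defining $\RU_{\cbra{0}}\subset \Spec \CO_F[X]$ is already present in the ideal $\CI$, essentially by collecting the equations derived from each of \textbf{LM1}--\textbf{LM6} in the course of \S \ref{subsec-refM1} and noting the redundancies among them. Concretely, I would argue that the natural closed immersion $\RU_{\cbra{0}}\hookrightarrow \Spec \CO_F[X]$ factors through $\RU_{\cbra{0}}'$, i.e.\ that every generator of $\CI$ vanishes on $\RU_{\cbra{0}}$, and then observe that the generators of $\CI$ are precisely the non-redundant equations extracted from the analysis.

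First I would record the direct translations: \textbf{LM1} gives $X^2+tX+\pi_0 I_n=0$ together with the identification of $\pi\otimes 1$ with right multiplication by $X+tI_n$; \textbf{LM2} gives the Kottwitz identities \eqref{kott0ru}; \textbf{LM5} gives $\wedge^2(X+\pi I_n)=0$; and \textbf{LM6}, via the basis $\{b_S\wt h_S\}$ of $W(\Lambda_0)^{n-1,1}_{-1}$ from Lemma \ref{lem-Wbasis}, yields the symmetry relation \eqref{eq-X1} together with \eqref{eq-BCEF} (by comparing coefficients of the particular $e_S$ worked out via Lemma \ref{lem-wterm}). These are already in the list of generators of $\CI$.

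Next I would carry out the simplifications already sketched: using Lemma \ref{lem-2wedge} together with \textbf{LM5}, the identity $X^2+tX+\pi_0I_n=0$ reduces to $\bigl(\tr(X+\pi I_n)+\ol\pi-\pi\bigr)(X+\pi I_n)=0$, hence is implied by the trace part of \textbf{LM2}; and the higher Kottwitz identities $\tr(\wedge^i(X+\pi I_n))=0$ for $i\geq 2$ are automatic from \textbf{LM5}. Thus after reduction, the only remaining generator coming from \textbf{LM1}--\textbf{LM2} is $\tr(X+\pi I_n)-(\pi-\ol\pi)$. For \textbf{LM3}, I would substitute the relations \eqref{eq-AD}, \eqref{eq-BCEF} coming from \textbf{LM6}, and reduce the nine block equations \eqref{lm4-1}--\eqref{lm4-9} modulo $\wedge^2(X+\pi I_n)$, exactly as indicated in the text; the surviving non-trivial equations then assemble into the single block identity $\frac{t}{\pi_0}\bigl((\tr(A+\pi I_m)+\pi\sqrt\theta)\wt H_{2m}(X_1+\pi I_{2m})+X_3^tX_3\bigr)=0$. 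For \textbf{LM4}, the same kind of reduction shows that \eqref{lm5-1}--\eqref{lm5-9} are consequences of \textbf{LM3}, and the ``half-diagonal'' conditions \eqref{lm5-10} become the asserted diagonal vanishing.

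The main obstacle, and the only part that requires genuine bookkeeping rather than formal manipulation, is verifying the reductions of \textbf{LM3} and \textbf{LM4}: one must consistently use the block symmetries from \textbf{LM6} together with the congruence $MN\equiv (\tr M)N \pmod{\wedge^2 M}$ from Lemma \ref{lem-2wedge} to eliminate the cross-terms, and check that the nine block equations really do collapse to the single matrix identity and its diagonal trace. Once these reductions are in place, the statement follows: every relation defining $\RU_{\cbra{0}}$ lies in $\CI$, giving the claimed closed immersion $\RU_{\cbra{0}}\hookrightarrow \RU_{\cbra{0}}'$.
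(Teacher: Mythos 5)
Your proposal follows essentially the same route as the paper: translate each of \textbf{LM1}--\textbf{LM6} into matrix equations in the affine chart, use Lemma \ref{lem-2wedge} together with the wedge condition and the spin-derived symmetries \eqref{eq-AD}, \eqref{eq-BCEF} to collapse the Kottwitz, \textbf{LM3} and \textbf{LM4} equations to the listed generators, and conclude that every generator of $\CI$ vanishes on $\RU_{\cbra{0}}$, which is exactly how the paper obtains the closed immersion. One small caution: your opening and closing phrasing (``every relation defining $\RU_{\cbra{0}}$ lies in $\CI$'') states the inclusion in the wrong direction --- that stronger containment is precisely what the footnote leaves open --- but the clarification you give in the same sentence (``every generator of $\CI$ vanishes on $\RU_{\cbra{0}}$'') is the correct statement and is what your derivations actually establish.
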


%we have shown that $\RU_{\cbra{0}}$ is defined by the ideal $\CI\sset\CO_F[X]$ generated by the following relations \begin{flalign*}
%%	    &X^2+tX+\pi_0=0, \\
%	    &\tr(X+\pi I_n)=\pi-\ol{\pi}, \\
%	    &\wedge^2(X+\pi I_n)=0,\\
%	    &\wt{H}_{2m}\rbra{X_1+\pi I_{2m} }=(X_1+\pi I_{2m})^t\wt{H}_{2m}^t ,\\
%	    &E=\frac{t}{\ol{\pi}}H_mH^t,\ F=\frac{t}{\pi}H_mG^t, \\
%	    &\frac{t}{\pi_0}\rbra{\rbra{\tr(A+\pi I_m)+\pi\sqrt{\theta} }\wt{H}_{2m}(X_1+\pi I_{2m})+X_3^tX_3}=0, \\
%	    &\text{the diagonal of $\rbra{\tr(A+\pi I_m)+\pi\sqrt{\theta} }\wt{H}_{2m}(X_1+\pi I_{2m})+X_3^tX_3$ equals $0$. }
%\end{flalign*} 
Set $$ \wt{X}_1\coloneqq X_1+\pi I_{2m},\ \wt{A}\coloneqq A+\pi I_m,\ \wt{X}\coloneqq \begin{pmatrix}
	  \wt{X}_1 \\ X_3
\end{pmatrix}.$$ 
 As $X_2$ and $X_4$ are determined by $X_1$ and $X_3$ by relations in $\CI$, we obtain the following proposition.
\begin{prop}
	The scheme $\RU'_{\cbra{0}}=\Spec\CO_F[X]/\CI$ is isomorphic to $\Spec\CO_F[\wt{X}]/\wt{\CI}$, where $\wt{\CI}$ is the ideal of $\CO_F[\wt{X}]$ generated by: \begin{flalign*}
	    &\wedge^2(\wt{X}),\
	    \wt{H}_{2m}\wt{X}_1-\wt{X}_1^t\wt{H}_{2m}^t,\
	    \frac{t}{\pi_0}\rbra{({\tr(\wt{A})+\pi\sqrt{\theta}) }\wt{H}_{2m}\wt{X}_1 +X_3^tX_3}, \\
	    &\text{the diagonal of $\trbra{\tr(\wt{A})+\pi\sqrt{\theta} }\wt{H}_{2m}\wt{X}_1 +X_3^tX_3$. }
\end{flalign*}
\end{prop}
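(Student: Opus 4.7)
The proposition asserts a pure variable-elimination isomorphism: the relations defining $\CI$ either explicitly solve for $E$, $F$ and $x$, or descend to the stated relations defining $\wt\CI$. The plan is to construct mutually inverse $\CO_F$-algebra homomorphisms between $\CO_F[X]/\CI$ and $\CO_F[\wt X]/\wt\CI$ and verify descent in both directions.

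From the generators of $\CI$ I can read off $E = \tfrac{t}{\ol\pi}H_m H^t$ and $F = \tfrac{t}{\pi}H_m G^t$ directly. The symmetry generator $\wt H_{2m}\wt X_1 - \wt X_1^t\wt H_{2m}^t$ expands on the block $(\wt A, B, C, \wt D)$ to give in particular $\wt D = \tfrac{\ol\pi}{\pi}H_m \wt A^t H_m$, hence $\tr(\wt X_1) = \tr(\wt A) + \tr(\wt D) = \tfrac{t}{\pi}\tr(\wt A)$. Combined with the Kottwitz trace relation $\tr(X+\pi I_n) = \pi - \ol\pi$, which reads $\tr(\wt X_1) + (x+\pi) = \pi - \ol\pi$, this forces $x = -\ol\pi - \tfrac{t}{\pi}\tr(\wt A)$. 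Accordingly I define
\[
\psi:\CO_F[X] \longrightarrow \CO_F[\wt X]/\wt\CI,
\]
sending $X_1 \mapsto \wt X_1 - \pi I_{2m}$, $X_3 \mapsto X_3$, $E \mapsto \tfrac{t}{\ol\pi}H_m H^t$, $F \mapsto \tfrac{t}{\pi}H_m G^t$ and $x \mapsto -\ol\pi - \tfrac{t}{\pi}\tr(\wt A)$, and conversely
\[
\phi:\CO_F[\wt X] \longrightarrow \CO_F[X]/\CI,
\]
sending $\wt X_1 \mapsto X_1 + \pi I_{2m}$ and $X_3 \mapsto X_3$. On the underlying variables these are visibly inverse to each other, so the content is descent to the quotients.

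Descent of $\phi$ is tautological: the listed generators of $\wt\CI$ are exactly the surviving relations obtained when one passes from the generators of $\CI$ to their simplified forms derived earlier, namely the wedge condition $\wedge^2 \wt X$, the symmetry $\wt H_{2m}\wt X_1 - \wt X_1^t\wt H_{2m}^t$, and the quadratic relation $\tfrac{t}{\pi_0}\bigl((\tr(\wt A)+\pi\sqrt{\theta})\wt H_{2m}\wt X_1 + X_3^tX_3\bigr)$ together with its diagonal. Descent of $\psi$ needs a term-by-term check: the trace relation vanishes by the definition of $\psi(x)$; the $E$ and $F$ relations vanish by construction; the remaining symmetry, quadratic, and diagonal generators of $\CI$ map to the corresponding generators of $\wt\CI$.

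The main obstacle is the wedge relation $\wedge^2(X + \pi I_n) = 0$, a collection of $2\times 2$ minors of an $n\times n$ matrix that must all vanish in $\CO_F[\wt X]/\wt\CI$ after applying $\psi$. The $2\times 2$ minors lying entirely in the first $2m$ columns are minors of $\wt X$ and therefore vanish by $\wedge^2 \wt X \in \wt\CI$. Minors involving the last column or bottom-right entry have to be treated by substituting $X_2 = \bigl(\tfrac{t}{\ol\pi}H_m H^t,\ \tfrac{t}{\pi}H_m G^t\bigr)^t$ and $x+\pi = \pi - \ol\pi - \tfrac{t}{\pi}\tr(\wt A)$; one then uses the $\wt H_{2m}$-symmetry of $\wt X_1$ to rewrite each such minor as an $\CO_F[\wt X]$-linear combination of minors of $\wt X$, which vanish modulo $\wt\CI$. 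This last reduction, although essentially a formal matrix manipulation, is the one place where a genuine calculation with the block structure $(\wt A, B, C, \wt D, G, H)$ is required, and is the only real work in the proof.
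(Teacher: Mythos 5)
Your route is the same as the paper's: the paper justifies this proposition with a single sentence (``$X_2$ and $X_4$ are determined by $X_1$ and $X_3$ by relations in $\CI$''), i.e.\ exactly the variable-elimination argument you set up, and your two substitution maps together with most of the descent checks are fine. The one substantive point --- that every $2\times 2$ minor of $X+\pi I_n$ involving the last column lands in $\wt{\CI}$ --- is precisely what you defer, and your description of how it would go is not quite right, so as written there is a gap. For minors whose two rows both lie among the first $2m$ rows your recipe does work: after substitution the first $2m$ entries of the last column are $\frac{t}{\ol{\pi}}\wt{H}_{2m}X_3^t$, the minors of $\wt{X}$ reduce such a minor to $\frac{t}{\ol{\pi}}(X_3)_k$ times an entry of $\wt{X}_1\wt{H}_{2m}^t-(\wt{X}_1\wt{H}_{2m}^t)^t$, and this vanishes modulo the symmetry generator because $\wt{H}_{2m}^2=\frac{\ol{\pi}}{\pi}I_{2m}$ is scalar.

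However, for the minors involving the bottom row (those containing the corner entry $x+\pi=\pi-\ol{\pi}-\tr(\wt{X}_1)$), the $\wt{H}_{2m}$-symmetry and the minors of $\wt{X}$ alone do \emph{not} suffice, contrary to your sketch. Such a minor reduces to $(\wt{X}_1)_{ik}\bigl(\pi-\ol{\pi}-\tr(\wt{X}_1)\bigr)-\frac{t}{\ol{\pi}}(\wt{H}_{2m}X_3^tX_3)_{ik}$, and to eliminate the second term you must invoke the quadratic generator $\frac{t}{\pi_0}\bigl((\tr(\wt{A})+\pi\sqrt{\theta})\wt{H}_{2m}\wt{X}_1+X_3^tX_3\bigr)$; this is legitimate only because $\frac{t}{\ol{\pi}}=\pi\cdot\frac{t}{\pi_0}$, so the needed multiple of the relation is available despite the $\frac{t}{\pi_0}$ coefficient. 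After that substitution (using $\wt{H}_{2m}^2=\frac{\ol{\pi}}{\pi}I_{2m}$) the minor becomes $(\wt{X}_1)_{ik}\bigl(\pi-\ol{\pi}-\tr(\wt{X}_1)+\frac{t}{\pi}\tr(\wt{A})+t\sqrt{\theta}\bigr)$, which vanishes only after combining $\tr(\wt{X}_1)\equiv\frac{t}{\pi}\tr(\wt{A})$ (from the symmetry generator) with the Eisenstein identity $t\sqrt{\theta}=t-2\pi=\ol{\pi}-\pi$. So the missing computation genuinely uses the quadratic generator and the arithmetic of $\pi$, not just ``formal matrix manipulation with the symmetry''; once you supply it, your argument is complete and coincides with the paper's.
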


\begin{defn}
	Denote by $\RU^\rfl_{\cbra{0}}$ the closed subscheme of $\RU'_{\cbra{0}}=\Spec\CO_F[\wt{X}]/\CI$ defined by the ideal $\wt{\CI}^\rfl\subset\CO_F[\wt{X}]$ that is generated by: 
	\begin{flalign*}
	    &\wedge^2(\wt{X}),\  
	    \wt{H}_{2m}\wt{X}_1-\wt{X}_1^t\wt{H}_{2m}^t ,\   
	    {({\tr(\wt{A})+\pi\sqrt{\theta}) }\wt{H}_{2m}\wt{X}_1 +X_3^tX_3}. 
\end{flalign*}
Note that the ideal $\wt{\CI}^\rfl$ contains $\wt{\CI}$.
\end{defn}

\subsubsection{Geometric properties of $\RU_{\cbra{0}}$ and $\RU^\rfl_{\cbra{0}}$}

In the following, we write $\CR^\rfl$ for the ring $\CO_F[\wt{X}]/\wt{\CI}^\rfl$ and $\CR$ for the ring $\CO_F[\wt{X}]/\wt{\CI}$. 
\begin{lemma} \label{lem-moduli}
	If $\omega(\pi_0)=\omega(t)$, then $\CR=\CR^\rfl$. 
\end{lemma}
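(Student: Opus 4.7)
The plan is to compare the generators of the ideals $\wt{\CI}$ and $\wt{\CI}^\rfl$ directly. Write
$$M \coloneqq (\tr(\wt{A})+\pi\sqrt{\theta})\wt{H}_{2m}\wt{X}_1 + X_3^tX_3 \in M_{2m\times 2m}(\CO_F[\wt{X}]).$$
By construction, $\wt{\CI}^\rfl$ is generated by $\wedge^2(\wt{X})$, $\wt{H}_{2m}\wt{X}_1-\wt{X}_1^t\wt{H}_{2m}^t$, and the entries of $M$, whereas $\wt{\CI}$ is generated by the same first two families together with the entries of $\frac{t}{\pi_0}M$ and the diagonal entries of $M$. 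The inclusion $\wt{\CI}\subseteq \wt{\CI}^\rfl$ is already noted in the excerpt.

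First I would invoke the standing assumption from the (R-U) setup that $\pi_0\mid t$ in $\CO_{F_0}$, so that $t/\pi_0\in\CO_{F_0}\subseteq\CO_F$ and hence the generators $\frac{t}{\pi_0}M_{ij}$ of $\wt{\CI}$ indeed live in $\CO_F[\wt{X}]$. Under the extra hypothesis $\omega(\pi_0)=\omega(t)$, the element $t/\pi_0$ is moreover a \emph{unit} of $\CO_{F_0}$, so its inverse $\pi_0/t$ is also in $\CO_F$.

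Next I would deduce the reverse inclusion $\wt{\CI}^\rfl\subseteq \wt{\CI}$. For each entry $M_{ij}$ of $M$ we have
$$M_{ij} \;=\; \frac{\pi_0}{t}\cdot\Bigl(\frac{t}{\pi_0}M_{ij}\Bigr) \;\in\; \wt{\CI},$$
since $\pi_0/t\in\CO_F$ and $\frac{t}{\pi_0}M_{ij}$ is a generator of $\wt{\CI}$. Thus every entry of $M$, and not merely the diagonal entries, lies in $\wt{\CI}$. Combined with the fact that the other two families of generators of $\wt{\CI}^\rfl$ are already among the generators of $\wt{\CI}$, this gives $\wt{\CI}^\rfl\subseteq\wt{\CI}$.

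Combining the two inclusions yields $\wt{\CI}=\wt{\CI}^\rfl$, hence $\CR=\CR^\rfl$. The proof is essentially a one-line unit argument once one unpacks the definitions; there is no real obstacle, and the only subtlety to flag is confirming that the assumption $\omega(\pi_0)=\omega(t)$ really makes $t/\pi_0$ a unit rather than merely a nonzero integral element.
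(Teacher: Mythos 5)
Your proof is correct and is essentially the paper's own argument: the hypothesis $\omega(\pi_0)=\omega(t)$ makes $t/\pi_0$ a unit, so the generators $\frac{t}{\pi_0}M$ of $\wt{\CI}$ and the entries of $M$ in $\wt{\CI}^\rfl$ generate the same ideal, giving $\wt{\CI}=\wt{\CI}^\rfl$ and hence $\CR=\CR^\rfl$. The paper states this more tersely ("by comparing the lists of generators"), but the content is identical.
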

\begin{proof}
	Note that $\omega(\pi_0)=\omega(t)$ if and only if $t/\pi_0$ is a unit in $\CO_F$. By comparing the lists of generators of $\wt{\CI}$ and $\wt{\CI}^\rfl$, we immediately see that $\wt{\CI}=\wt{\CI}^\rfl$, and hence $\CR=\CR^\rfl$. 
\end{proof}
\begin{remark} \label{rmk-t=pi0}
	Since $\pi_0|t\text{\ and\ }t|2$, the condition $\omega(t)=\omega(\pi_0)$ clearly holds if $F_0/\BQ_2$ is unramified. More generally, by applying Proposition \ref{quad} (4) to $F_0$, we have $\omega(t)=\omega(\pi_0)$ if and only if $\theta\in U_{2e-1}- U_{2e}$. Namely, given a quadratic extension $F$ of $F_0$ with a uniformizer $\pi$ satisfying an Eisenstein equation $\pi^2-t\pi+\pi_0=0$, the condition $\omega(t)=\omega(\pi_0)$ holds if and only if $F$ is of the form $F_0(\sqrt{\theta})$ for some unit $\theta\in U_{2e-1}-U_{2e}$. We will count the number of such extensions $F$ in the following. 
	
	 We have a short exact sequence \begin{flalign}
		    0\ra \frac{U_{2e}}{U^2\cap U_{2e}} \ra \frac{U_{2e-1}}{U^2\cap U_{2e-1}}\ra \frac{U_{2e-1}}{U_{2e}(U^2\cap U_{2e-1})}\ra 0.  \label{eq-ses}
	\end{flalign}
	We claim that $U^2\cap U_{2e-1}\sset U_{2e}$. For any $x\in U^2\cap U_{2e-1}$, we can find $a\in \CO_{F_0}$ and $u\in U$ such that $x=1+\pi_0^{2e-1}a=u^2$. We want to show $\omega(a)\geq 1$. Set $b=u-1$. Then $b(b+2)=\pi_0^{2e-1}a$. If $\omega(b)< e=\omega(2)$, then $\omega(b+2)=\omega(b)$ and \begin{flalign*}
		     \omega(\pi_0^{2e-1}a)= \omega\rbra{b(b+2)} =2\omega(b).
	\end{flalign*}
	As $2e-1$ is odd, this forces $\omega(a)$ to be odd and in particular $\omega(a)\geq 1$. If $\omega(b)\geq e$, then  \begin{flalign*}
		    \omega(\pi_0^{2e-1}a)=\omega\rbra{b(b+2)} \geq \omega(b)+\omega(2) \geq 2e.
	\end{flalign*}
	Again we have $\omega(a)\geq 1$. This proves the claim.
	
	Then we have $U_{2e}(U^2\cap U_{2e-1})=U_{2e}$ and by the short exact sequence \eqref{eq-ses}, \begin{flalign*}
		   \left\lvert\frac{U_{2e-1}}{U^2\cap U_{2e-1}}\right\rvert &= \left\lvert\frac{U_{2e}}{U^2\cap U_{2e}}\right\rvert \left\lvert\frac{U_{2e-1}}{U_{2e}}\right\rvert =2\cdot 2^f=2^{1+f},
	\end{flalign*}
	where $f$ denotes the residue degree of $F_0/\BQ_2$. Note that there are two elements in $\frac{U_{2e-1}}{U^2\cap U_{2e-1}}$ defining the trivial extension and the unramified quadratic extension of $F_0$. Thus, we have $2^{1+f}-2$ ramified quadratic extensions of $F_0$ of type (R-U) with $\omega(t)=\omega(\pi_0)$. 
\end{remark}

By \eqref{eq-AD}, we have $$\tr(\wt{X}_1)=\tr(\wt{A})+\tr(\wt{D})=\frac{t}{\pi}\tr(\wt{A}). $$ So we can rewrite $\CR^\rfl$ as \begin{flalign*}
	    \CR^\rfl &= \frac{\CO_F[\begin{psmallmatrix}
	     	\wt{X}_1\\ X_3
	     \end{psmallmatrix} ]}{\rbra{\wedge^2\begin{psmallmatrix}
	     	\wt{X}_1\\ X_3
	     \end{psmallmatrix}, \wt{H}_{2m}\wt{X}_1-\wt{X}_1^t\wt{H}_{2m}^t, (\frac{\pi}{t}\tr(\wt{X}_1)+\pi\sqrt{\theta})\wt{H}_{2m}\wt{X}_1+X_3^tX_3   }}.
\end{flalign*}
Let $Y\coloneqq \wt{H}_{2m}\wt{X}_1$. Then $\wt{X}_1=\frac{\pi}{\ol{\pi}}\wt{H}_{2m}Y$ and \begin{flalign*}
	  \CR^\rfl &\simeq \frac{\CO_F[\begin{psmallmatrix}
	     	Y\\ X_3
	     \end{psmallmatrix} ]}{\rbra{\wedge^2\begin{psmallmatrix}
	     	\frac{\pi}{\ol{\pi}}\wt{H}_{2m}Y \\ X_3
	     \end{psmallmatrix}, Y-Y^t, (\frac{\pi^2}{t\ol{\pi}}\tr(\wt{H}_{2m}Y)+\pi\sqrt{\theta})Y+X_3^tX_3   }} \\ &= \frac{\CO_F[\begin{psmallmatrix}
	     	Y\\ X_3
	     \end{psmallmatrix} ]}{\rbra{\wedge^2\begin{psmallmatrix}
	     	Y \\ X_3
	     \end{psmallmatrix}, Y-Y^t, (\frac{\pi}{2\ol{\pi}}\tr({H}_{2m}Y)+\pi\sqrt{\theta})Y+X_3^tX_3   }}.
\end{flalign*}
For $1\leq i,j\leq 2m$, we denote by $y_{ij}$ the $(i,j)$-entry of $Y$ and by $x_i$ the $(1,i)$-entry of $X_3$.

\begin{lemma}\label{lem-smoothoutside}
	The scheme $\RU^\rfl_{\cbra{0}}$ is irreducible of Krull dimension $n$ and smooth over $\CO_F$ on the complement of the worst point, which is the closed point defined by $Y=X_3=\pi=0$. 
\end{lemma}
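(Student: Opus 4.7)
The plan is to cover $\RU^\rfl_{\cbra{0}}\setminus\{\text{worst point}\}$ by the generic fiber (where smoothness is automatic since this generic fiber is open in $\BP^{n-1}_F$ by Lemma~\ref{lem-generic}) together with the $2m$ open affine charts $U_{y_{ii}}$ on which $y_{ii}$ is invertible, for $1\leq i\leq 2m$. On each such chart the three relations defining $\CR^\rfl$ will force every entry of $Y$ and $X_3$ to be expressible in terms of the $i$-th row $(y_{i,1},\ldots,y_{i,2m})$ and $x_i$, cutting $\RU^\rfl_{\cbra{0}}\cap U_{y_{ii}}$ down to a single hypersurface in an affine space over $\CO_F$.

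First I would show that these charts, together with the generic fiber, cover $\RU^\rfl_{\cbra{0}}\setminus\{\text{worst point}\}$. Fix a point $p$ at which $\pi$ is not a unit and assume that no $y_{ii}$ is a unit at $p$. If some off-diagonal $y_{ij}$ ($i\neq j$) were a unit at $p$, the vanishing of the $2\times 2$ minor of $Y$ on rows and columns $\{i,j\}$ (forced by $\wedge^2\begin{psmallmatrix}Y\\X_3\end{psmallmatrix}=0$), combined with $Y=Y^t$, would give $y_{ii}y_{jj}=y_{ij}^2$, a unit, and in a local ring this forces both $y_{ii}$ and $y_{jj}$ to be units, a contradiction. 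If instead some $x_k$ were a unit at $p$, the $(k,k)$-entry of the quadratic relation, namely $\bigl(\tfrac{\pi}{2\ol\pi}\tr(H_{2m}Y)+\pi\sqrt{\theta}\bigr)y_{kk}+x_k^2=0$, would have first summand in $\fm_p^2$ (since $\pi\in\fm_p$ and all $y_{ii}\in\fm_p$), while $x_k^2$ would be a unit, another contradiction. Hence all $\pi,\,y_{ij},\,x_k$ lie in $\fm_p$, i.e.\ $p$ is the worst point.

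On each chart $U_{y_{ii}}$, the rank-$1$ and symmetry conditions force the identifications $y_{kj}=y_{ik}y_{ij}/y_{ii}$ and $x_j=x_iy_{ij}/y_{ii}$, so that all entries become scalar multiples of the $(i,i)$-entry. The quadratic relation then collapses to the single equation
\[
R_i \,\coloneqq\, x_i^2 \,+\, \pi\sqrt{\theta}\,y_{ii} \,+\, \tfrac{\pi}{\ol\pi}\sum_{k=1}^{m} y_{ik}\,y_{i,2m+1-k} \;=\; 0
\]
in the Laurent polynomial ring $\CO_F[y_{ii}^{\pm 1},\,y_{ik}\,(k\neq i),\,x_i]$. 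Since $R_i$ is monic of degree $2$ in $x_i$ with the non-$x_i^2$ part divisible by $\pi$, the element $\pi$ is a non-zerodivisor modulo $R_i$, so $U_{y_{ii}}\cap\RU^\rfl_{\cbra{0}}$ is $\CO_F$-flat of relative dimension $n-1$; its generic fiber is a non-empty open of $\BP^{n-1}_F$, hence irreducible. Because every chart contains this same generic fiber, combining with the fact that flatness together with an irreducible generic fiber forces global irreducibility, $\RU^\rfl_{\cbra{0}}$ is irreducible of Krull dimension $n$.

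Finally, I would verify smoothness over $\CO_F$ on each chart by the Jacobian criterion. The relation $\pi\ol\pi=\pi_0$ gives $\pi/\ol\pi=\pi^2/\pi_0\equiv -1\pmod{\pi}$, and in residue characteristic $2$ this reduces to $1$ in the residue field at every point of the special fiber. The partial derivative
\[
\partial R_i/\partial y_{i,2m+1-i} \;=\; \tfrac{\pi}{\ol\pi}\,y_{ii}
\]
is therefore a unit at every point of $U_{y_{ii}}$, both factors being units there, which yields smoothness of $U_{y_{ii}}\cap\RU^\rfl_{\cbra{0}}$ over $\CO_F$. The main subtlety in this plan is the covering argument in the second paragraph, where the residue-characteristic-$2$ analysis is essential to rule out units supported on off-diagonal entries of $Y$ or on $X_3$; once that is in hand, the reduction to a single monic quadratic equation in $x_i$ makes the remaining flatness, irreducibility, and smoothness verifications routine.
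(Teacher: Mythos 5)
Your proof is correct and takes essentially the same route as the paper: cover the complement of the worst point by the generic fiber together with the principal opens where some diagonal entry $y_{\ell\ell}$ is invertible, identify each such chart with the explicit hypersurface $x_\ell^2+\frac{\pi}{\ol{\pi}}\sum_{k=1}^m y_{\ell k}y_{\ell,n-k}+\pi\sqrt{\theta}\,y_{\ell\ell}=0$ in a Laurent polynomial ring, and apply the Jacobian criterion (your second paragraph merely spells out the covering claim that the paper compresses into the remark that the worst point is cut out set-theoretically by $(\pi, y_{11},\ldots,y_{2m,2m})$). The only cosmetic imprecision is the closing irreducibility sentence — each chart contains only an open piece of the generic fiber, and flatness is only established chart-by-chart rather than globally — but the intended argument, that the charts and the generic fiber all share the same generic point, is exactly the paper's, which is no more detailed on this point.
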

\begin{proof}
	 For $1\leq \ell\leq 2m$, consider the principal open subscheme $D(y_{\ell\ell})$ of $\RU^\rfl_{\cbra{0}}$, i.e., the locus where $y_{\ell\ell}$ is invertible. Then one can easily verify that $D(y_{\ell\ell})$ is isomorphic to the closed subscheme of $$\Spec \CO_F[y_{ij},x_i\ |\ 1\leq i,j\leq 2m]$$ defined by the ideal generated by the relations \begin{flalign*}
		     y_{ij} =y_{ji},\ y_{ij} =y_{\ell\ell}\inverse y_{\ell i}y_{\ell j},\ x_i =y_{\ell\ell}\inverse x_\ell y_{\ell i},\ -x_\ell^2 =(\frac{\pi}{\ol{\pi}} \sum_{i=1}^m y_{\ell i}y_{\ell,n-i}) +\pi\sqrt{\theta}y_{\ell\ell}.
	\end{flalign*} Hence, the scheme $D(y_{\ell \ell})$ is isomorphic to \begin{flalign*}
		   \Spec \frac{\CO_F[x_\ell,y_{\ell 1},\ldots, y_{\ell\ell},\ldots,y_{\ell,2m},y_{\ell\ell}\inverse]}{(x_\ell^2+ (\frac{\pi}{\ol{\pi}} \sum_{i=1}^m y_{\ell i}y_{\ell,n-i}) +\pi\sqrt{\theta}y_{\ell\ell} )}.
	\end{flalign*} By the Jacobian criterion, $D(y_{\ell\ell})$ is smooth over $\CO_F$ of Krull dimension $n$.  Note that the worst point is defined (set-theoretically) by the ideal generated by $\pi$ and $y_{\ell\ell}$ for $1\leq \ell\leq 2m$. Since the generic fiber of $\RU^\rfl_{\cbra{0}}$ is smooth, we obtain that $\RU^\rfl_{\cbra{0}}$ is smooth over $\CO_F$ on the complement of the worst point. As the generic fiber and all $D(y_{\ell\ell})$, for $1\leq\ell\leq 2m$,  are irreducible, we conclude that $\RU^\rfl_{\cbra{0}}$ is irreducible. 
\end{proof}

\begin{lemma} \label{lem-spfiberCM}
	The scheme $\RU^\rfl_{\cbra{0}}$ is Cohen-Macaulay.  
\end{lemma}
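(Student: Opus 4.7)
Since $\RU^\rfl_{\cbra{0}}$ is smooth over $\CO_F$ away from the worst point by Lemma \ref{lem-smoothoutside}, it is already Cohen-Macaulay there, and the proof reduces to checking Cohen-Macaulayness of the local ring $\CR^\rfl_\fm$ at the worst point $\fm = (\pi,\, Y,\, X_3)$. The strategy is to exhibit a module-finite injective local homomorphism $\varphi \colon A \hookrightarrow \CR^\rfl_\fm$ with $A$ regular local of the same dimension $n = 2m+1$, and then to invoke the following converse form of miracle flatness: for such a map, $\CR^\rfl_\fm$ is Cohen-Macaulay if and only if it is free as an $A$-module.

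A natural choice is $A = \CO_F[y_{11}, y_{22}, \dots, y_{2m,2m}]$ localized at $(\pi,\, y_{11},\dots,y_{2m,2m})$, a regular local ring of dimension $n$. Finiteness of $\varphi$ follows directly from the explicit presentation of $\CR^\rfl$: the wedge relation on rows $i,j$ and columns $i,j$ of $\begin{psmallmatrix}Y\\X_3\end{psmallmatrix}$ reads $y_{ii}y_{jj} - y_{ij}y_{ji} = 0$, and symmetrizing via $Y = Y^t$ gives $y_{ij}^2 = y_{ii}y_{jj}$, so every off-diagonal $y_{ij}$ is integral of degree $2$ over $A$; similarly the $(i,i)$-entry of the spin-type relation reads $x_i^2 = -f \cdot y_{ii}$, where $f = \tfrac{\pi}{2\bar{\pi}}\tr(H_{2m}Y) + \pi\sqrt{\theta}$ lies in the integral closure of $A$, so each $x_i$ is integral. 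To see that $\dim \CR^\rfl_\fm = n$, one verifies that $\pi,\, y_{11},\dots,y_{2m,2m}$ is a system of parameters: modding out by them kills $\pi$ and $f$, collapses the spin relation to $x_ix_j = 0$, and forces every remaining generator to be nilpotent (using $y_{ij}^2 = y_{ii}y_{jj} = 0$), rendering the quotient Artinian.

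The remaining and main step is to prove that $\CR^\rfl_\fm$ is free over $A$, which is where the main obstacle lies. The plan is to reduce this to the special fiber via two conditions: (i) $\pi$ is a non-zero-divisor on $\CR^\rfl_\fm$, which will follow from the irreducibility of $\RU^\rfl_{\cbra{0}}$ (Lemma \ref{lem-smoothoutside}) and the $R_1$ property guaranteed by smoothness outside a codimension-$n$ point; and (ii) the special fiber $\CR^\rfl_\fm / \pi$ is Cohen-Macaulay of dimension $n - 1$, which by the same converse miracle flatness applied to $A/\pi \hookrightarrow \CR^\rfl_\fm/\pi$ yields freeness of the special fiber over $A/\pi$. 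Together with (i), Nakayama then lifts this to freeness of $\CR^\rfl_\fm$ over $A$. The delicate part is (ii): the special fiber is cut out by $Y = Y^t$, $\wedge^2\begin{psmallmatrix}Y\\X_3\end{psmallmatrix} = 0$, and $X_3^t X_3 = 0$, so each $x_i$ becomes nilpotent with $x_ix_j = 0$, and the scheme is a square-zero thickening of the classical symmetric determinantal variety $\{Y = Y^t,\ \wedge^2 Y = 0\}$. I would verify its Cohen-Macaulayness by identifying the thickening ideal $(X_3)$ as a maximal Cohen-Macaulay module over the reduced subring, presented via the mixed wedge relations $y_{ij}x_l = y_{il}x_j$, and then invoking the classical Cohen-Macaulayness of symmetric determinantal varieties of rank $\leq 1$ (Hochster--Eagon, Kutz).
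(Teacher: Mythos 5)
Your overall framework is the same as the paper's: a module-finite injection from the regular local ring $A=\CO_F[y_{11},\ldots,y_{2m,2m}]_{(\pi,y_{11},\ldots,y_{2m,2m})}$ together with the converse of miracle flatness (\cite[Corollary 18.17]{eisenbud2013commutative}) reduces Cohen--Macaulayness to freeness/flatness over the diagonal variables. The divergence is in how you prove freeness at the worst point, and there the argument has two genuine gaps.

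First, step (i): irreducibility plus smoothness away from the worst point does not imply that $\pi$ is a non-zero-divisor on $\CR^\rfl_{\fm}$. For that you need the absence of embedded associated primes at the worst point (an $S_1$-type statement), which is unknown at this stage; for instance $\CO_F[x]/(x^2,\pi x)$ is irreducible and smooth over $\CO_F$ away from its closed point, yet $\pi$ is a zero divisor. In the paper the non-zero-divisor property of $\pi$ is only extracted \emph{after} Cohen--Macaulayness (via flatness over $\CO_F$), so assuming it beforehand is essentially circular. Second, and more seriously, step (ii) rests on a miscomputation of the special fiber: since $\pi/\ol{\pi}$ is a unit congruent to $1$ and only $\pi\sqrt{\theta}$ vanishes modulo $\pi$, the third relation reduces to $\frac{\tr(H_{2m}Y)}{2}Y+X_3^tX_3=0$ (this is exactly the reduction recorded in the proof of Lemma \ref{prop-topflat}), not to $X_3^tX_3=0$. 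Hence in $\CR^\rfl/\pi$ one has $x_ix_j=-\big(\sum_{i=1}^m y_{i,n-i}\big)y_{ij}$, the $x_i$ are not nilpotent, and the fiber is not a square-zero thickening of the rank $\leq 1$ symmetric determinantal variety; the proposed appeal to Hochster--Eagon/Kutz therefore does not apply as stated, and the ``delicate part'' of your plan is left unproved. The paper sidesteps both problems by proving flatness over $\CS=\CO_F[y_{11},\ldots,y_{2m,2m}]$ directly: away from the worst point by miracle flatness (using Lemma \ref{lem-smoothoutside}), and at the worst point by Nakayama's fibre-dimension criterion, exhibiting an explicit set of $2^{2m}$ monomials that generates the closed fibre and is a basis of the generic fibre (a tower of quadratic extensions), so the two fibre dimensions coincide.
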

\begin{proof}
	Let $\CS$ denote the polynomial ring $\CO_F[y_{ii}\ |\ 1\leq i\leq 2m]$. Then we have an obvious ring homomorphism $\CS\ra \CR^\rfl$. By the wedge condition \textbf{LM5} and $Y=Y^t$, for $1\leq i,j\leq 2m$, we have   \begin{flalign*}
	    y_{ij}^2 =y_{ij}y_{ji}=y_{ii}y_{jj} \text{\ and\ } x_i^2 =-(\frac{\pi}{\ol{\pi}} \sum_{\ell=1}^m y_{i \ell}y_{i,n-\ell}) -\pi\sqrt{\theta}y_{ii}
\end{flalign*} In particular, we deduce that $\CR^\rfl$ is integral (also of finite type) over $\CS$, and hence $\CR^\rfl$ is a finitely generated $\CS$-module. Since $\CS$ is a domain of the same Krull dimension as $\CR^\rfl$, the map $\CS\ra \CR^\rfl$ is necessarily injective. By \cite[Corollary 18.17]{eisenbud2013commutative}, to show $\CR^\rfl$ is Cohen-Macaulay, it suffices to show that $\CR^\rfl$ is a flat $\CS$-module. Equivalently, we need to show that the induced morphism $$\psi: \Spec \CR^\rfl\ra \Spec \CS\simeq \BA^{2m}$$ is flat.  Let $P_0$ be the closed point in $\Spec \CS$ corresponding to the maximal ideal $\fm_0\coloneqq (\pi, y_{11},\ldots,y_{2m,2m})$. Then $\psi$ maps the worst point of $\Spec \CR^\rfl$ to $P_0$ and the preimage of $\Spec \CS[y_{\ell\ell}\inverse]$ is the scheme $D(y_{\ell\ell})$ considered in the proof of Lemma \ref{lem-smoothoutside}. As $D(y_{\ell\ell})$ is smooth over $\CO_F$, by miracle flatness (see \cite[Theorem 18.16 b.]{eisenbud2013commutative}), the restriction $\psi|_{D(y_{\ell\ell})}$ is flat. Similarly, we obtain that $\psi$ restricted to the generic fiber of $\RU^\rfl_{\cbra{0}}$ is flat. It remains to show that $\psi$ is flat at the worst point, i.e., the localization map $\CS_{\fm_0}\ra \CR^\rfl_{\fm_0}$ is flat. The local ring $\CS_{\fm_0}$ has residue field $k$. Let $K$ denote the fraction field of $\CS_{\fm_0}$. By an application of Nakayama's lemma (see \cite[Chapter II, Lemma 8.9]{hartshorne2013algebraic}), we are reduced to show that \begin{flalign}
	   \dim_K(\CR^\rfl_{\fm_0}\otimes_{\CS_{\fm_0}}K)=\dim_k(\CR^\rfl_{\fm_0}\otimes_{\CS_{\fm_0}}k). \label{eq-dimen}
\end{flalign}  Note that $K$ is the field $F(y_{11},\ldots,y_{2m,2m})$ of rational functions. By the following Lemma \ref{lembas}, we have the desired equality \eqref{eq-dimen} of dimensions. 
\end{proof}

\begin{lemma}\label{lembas}
	The $K$-vector space (resp. $k$-vector space) $\CR^\rfl_{\fm_0}\otimes_{\CS_{\fm_0}}K$ (resp. $\CR^\rfl_{\fm_0}\otimes_{\CS_{\fm_0}}k$) has a $K$-basis (resp. $k$-basis) consisting of (images of) monomials \begin{flalign*}
	      x_i^\alpha y_{i_1j_1}^{\beta_1}y_{i_2j_2}^{\beta_2} \cdots y_{i_\ell j_\ell}^{\beta_\ell},
\end{flalign*} where $\alpha,\beta_i\in\cbra{0,1}$, $0\leq \ell\leq m$, and $1\leq i< i_1<j_1<i_2<j_2<\cdots<i_\ell<j_\ell\leq 2m$. Let $S$ denote the set of these monomials. Then the cardinality $\# S$ equals $2^{2m}$. In particular, \begin{flalign}
	   \dim_K(\CR^\rfl_{\fm_0}\otimes_{\CS_{\fm_0}}K)=\dim_k(\CR^\rfl_{\fm_0}\otimes_{\CS_{\fm_0}}k)=2^{2m}.
\end{flalign}
\end{lemma}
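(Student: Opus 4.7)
The plan is to prove the stronger statement that $\CR^\rfl$ itself is a free $\CS$-module of rank $2^{2m}$ with the monomials in $S$ as basis; both claims of the lemma then follow at once by localizing at $\fm_0$ and tensoring with $K$ or with $k$. The essential relations in $\CR^\rfl$, extracted directly from the presentation
\[
\CR^\rfl=\CO_F\bigl[\textstyle\begin{psmallmatrix}Y\\X_3\end{psmallmatrix}\bigr]\big/\bigl(\wedge^2\textstyle\begin{psmallmatrix}Y\\X_3\end{psmallmatrix},\ Y-Y^t,\ (\tfrac{\pi}{2\ol\pi}\tr(H_{2m}Y)+\pi\sqrt\theta)Y+X_3^tX_3\bigr),
\]
are: the symmetry $y_{ij}=y_{ji}$; the Pl\"ucker-type shuffle $y_{ij}y_{kl}=y_{ik}y_{jl}$ (whose special case is $y_{ij}^2=y_{ii}y_{jj}$); the totally symmetric relation $x_iy_{jk}=x_jy_{ik}=x_ky_{ij}$ coming from the mixed $2\times 2$ minors combined with $y_{ij}=y_{ji}$; and the quadratic relation $x_ix_j=-c\,y_{ij}$, where $c:=\tfrac{\pi}{\ol\pi}\sum_{i=1}^m y_{i,2m+1-i}+\pi\sqrt\theta\in\CO_F[y_{ij}]$.

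For spanning, I will reduce an arbitrary monomial in the $x_i$'s and $y_{ij}$'s to an $\CS$-linear combination of elements of $S$. First, I apply $x_ix_j=-c\,y_{ij}$ iteratively to cut the number of $x$-factors down to at most one: since $c$ is an $\CO_F$-linear combination of off-diagonal entries $y_{i,2m+1-i}$ and a constant, each substitution yields a sum of monomials involving only $y$'s and at most one $x$. Next, any repetition of an index among the surviving factors can be eliminated via $y_{ij}y_{ik}=y_{ii}y_{jk}$ (a direct consequence of shuffle plus $y_{ij}=y_{ji}$) or $x_ky_{kj}=y_{kk}x_j$, each of which extracts a scalar in $\CS$ and strictly reduces the number of non-distinct indices. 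After finitely many such steps the indices form a distinct subset $S'\subseteq\{1,\ldots,2m\}$; the shuffle relation and the $(i,j,k)$-symmetry of the mixed relation then allow me to sort so that the subscript of $x$ (if present) is the smallest element of $S'$ and the $y$-factors pair up the remaining elements in increasing order, which is precisely the canonical form in $S$.

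For linear independence I will introduce the auxiliary $\CS$-algebra
\[
T:=\CS[v_1,\ldots,v_{2m},w]\Big/\bigl(v_i^2-y_{ii},\ w^2+\tfrac{\pi}{\ol\pi}\textstyle\sum_{i=1}^m v_iv_{2m+1-i}+\pi\sqrt\theta\bigr),
\]
which is manifestly a free $\CS$-module of rank $2^{2m+1}$ with basis $\{v_{S'}w^{\epsilon}\}$ indexed by $S'\subseteq\{1,\ldots,2m\}$ and $\epsilon\in\{0,1\}$, and define $\phi\colon\CR^\rfl\to T$ by $y_{ij}\mapsto v_iv_j$ and $x_i\mapsto v_iw$. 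The main technical point is to verify that $\phi$ respects every relation: commutativity of the $v_i$'s gives the symmetry and shuffle relations, the mixed relation translates to the identity $v_iv_jv_kw=v_iv_kv_jw$, and the quadratic relation becomes $v_iv_jw^2=-c(v)v_iv_j$, which matches $\phi(-c\,y_{ij})$ by construction of the relation on $w^2$. The image of $\phi$ is then the free $\CS$-submodule of $T$ spanned by $\{v_{S'}:|S'|\text{ even}\}\cup\{v_{S'}w:|S'|\text{ odd}\}$, of total rank $\sum_{k=0}^{2m}\binom{2m}{k}=2^{2m}$; under $\phi$ the monomials in $S$ are sent bijectively onto this basis (pure $y$-monomials to $v_{S'}$ with $|S'|$ even, monomials containing one $x$ to $v_{S'}w$ with $|S'|$ odd), so they are $\CS$-linearly independent in $\CR^\rfl$. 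Together with spanning, this shows $\CR^\rfl$ is free of rank $2^{2m}$ over $\CS$ on the monomials in $S$, and the lemma follows by base change to $K$ and $k$. I expect the main obstacle to be guessing the correct target ring $T$---in particular the defining equation for $w^2$, which must be engineered to absorb precisely the non-$\CS$ element $c$---and the bookkeeping needed to confirm that the spanning reduction never introduces scalars outside $\CS$ even though the relation $x_ix_j=-c\,y_{ij}$ involves $c\notin\CS$.
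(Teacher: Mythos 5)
Your proof is correct, but the linear-independence half takes a genuinely different route from the paper. The paper proves spanning by the same reduction you use, then gets $\dim_K=2^{2m}$ by identifying $\CR^\rfl_{\fm_0}\otimes_{\CS_{\fm_0}}K$ with the function field of the chart $D(y_{11})$, an iterated tower of $2m$ quadratic extensions of $K$, and finally handles the special fiber by squeezing with Nakayama ($\dim_k\le \#S=\dim_K\le\dim_k$). You instead exhibit an explicit $\CS$-algebra map $\phi\colon\CR^\rfl\to T$ into a visibly free $\CS$-module of rank $2^{2m+1}$ (a Clifford-algebra-style model, with $w^2$ engineered so that $\phi(c)=-w^2$), under which the monomials of $S$ go to pairwise distinct basis elements $v_{S'}w^{\epsilon}$ with coefficient $1$; together with spanning this shows $\CR^\rfl$ is actually \emph{free} over $\CS$ of rank $2^{2m}$, and the lemma follows for both fibers simultaneously by base change, with no localization or Nakayama needed. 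I checked the key verifications: $\phi$ kills $Y-Y^t$, the $2\times 2$ minors (both the $y$-$y$ and $x$-$y$ ones), and the entries $c\,y_{ij}+x_ix_j$, provided you read $\frac{\pi}{2\ol{\pi}}\tr(H_{2m}Y)$ as $\frac{\pi}{\ol{\pi}}\sum_{l\le m}y_{l,2m+1-l}$ modulo the symmetry relation (which you do, and which matches the paper's parenthetical remark); also $\pi/\ol{\pi}$ and $\pi\sqrt{\theta}$ lie in $\CO_F$, so $T$ is indeed an iterated monic-quadratic extension of $\CS$ and hence free of the stated rank. Your approach buys more than the paper needs: freeness of $\CS\to\CR^\rfl$ immediately gives the flatness that Lemma \ref{lem-spfiberCM} extracts from this lemma via miracle flatness at the smooth charts plus the Nakayama argument at the worst point, so the Cohen--Macaulayness of $\RU^\rfl_{\cbra{0}}$ would follow directly from your statement and \cite[Corollary 18.17]{eisenbud2013commutative}. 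The only bookkeeping points to spell out in a final write-up are the termination of the reduction in the spanning step (each application of $y_{ij}y_{ik}=y_{ii}y_{jk}$, $x_ky_{kj}=y_{kk}x_j$, or $x_ix_j=-c\,y_{ij}$ strictly decreases the number of non-$\CS$ factors, respectively the number of $x$-factors) and the observation that substituting $c$ only introduces $\CO_F$-coefficients and off-diagonal $y$'s, never denominators outside $\CS$ -- both of which you already flag and which go through.
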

\begin{proof}
	We first count the cardinality of $S$. For an integer $0\leq \ell\leq  m$, the number of monomials of the form $x_i y_{i_1j_1}^{\beta_1}y_{i_2j_2}^{\beta_2} \cdots y_{i_\ell j_\ell}^{\beta_\ell}$ in $S$ is the number of tuples $(i,i_1,j_1,\ldots,i_\ell, j_\ell)$ such that $1\leq i< i_1<j_1<i_2<j_2<\cdots<i_\ell<j_\ell\leq 2m$. It is well-known that the number is $\binom{2m}{2\ell+1}$. Here, we set $\binom{2m}{2\ell +1}=0$ if $\ell=m$. Similarly, the number of monomials of the form $y_{i_1j_1}^{\beta_1}y_{i_2j_2}^{\beta_2} \cdots y_{i_\ell j_\ell}^{\beta_\ell}$ in $S$ is $\binom{2m}{2\ell}$. Hence, we obtain that \begin{flalign*}
		    \# S &= \sum_{\ell=0}^m\binom{2m}{2\ell+1} +\sum_{\ell=0}^m\binom{2m}{2\ell} = \sum_{i=0}^{2m}\binom{2m}{i}=2^{2m}.
	\end{flalign*}

	Let $x_i^\alpha x_j^{\alpha'} y_{i_1j_1}^{\beta_1}y_{i_2j_2}^{\beta_2} \cdots y_{i_\ell j_\ell}^{\beta_\ell}$ be a general monomial in $\CR^\rfl_{\fm_0}\otimes_{\CS_{\fm_0}}K$. As $y_{ij}^2=y_{ij}y_{ji} =y_{ii}y_{jj}$ in $\CR^\rfl_{\fm_0}$, we may assume $\beta_i$ for $1\leq i\leq \ell$ lies in $\cbra{0,1}$. As $$-X_3^tX_3=(\frac{\pi}{2\ol{\pi}}\tr({H}_{2m}Y)+\pi\sqrt{\theta})Y$$ in $\CR^\rfl_{\fm_0}$, we see $x_ix_j$ can be expressed by entries in $Y$. Hence, we may assume $\alpha'=0$ and $\alpha\in \cbra{0,1}$. We claim that the monomial $x_i^\alpha y_{i_1j_1}y_{i_2j_2} \cdots y_{i_\ell j_\ell}$ for $\alpha\in\cbra{0,1}$ is generated by elements in $S$. By the wedge condition and $Y=Y^t$, it is straightforward to check that the product $x_r y_{ij}y_{pq}$ only depends on the indices $\cbra{r, i,j,p,q}$, namely, changing the order of indices gives the same product in $\CR^\rfl_{\fm_0}$. Since $y_{ii}\in K$, we may assume $1\leq i< i_1<j_1<i_2<j_2<\cdots<i_\ell<j_\ell\leq 2m$, and hence we may assume $0\leq\ell\leq m$. Thus, the $K$-vector space $\CR^\rfl_{\fm_0}\otimes_{\CS_{\fm_0}}K$ is generated by (images of) the elements in $S$.
	Now it suffices to show that these elements are $K$-linearly independent.
	
	Note that the ring $\CR^\rfl_{\fm_0}\otimes_{\CS_{\fm_0}}K$ corresponds to the generic point of $\Spec \CR^\rfl$. Since $y_{11}$ is invertible over $\CR^\rfl_{\fm_0}\otimes_{\CS_{\fm_0}}K$, the ring $\CR^\rfl_{\fm_0}\otimes_{\CS_{\fm_0}}K$ is in fact the function field of $D(y_{11})$ in the proof of Lemma \ref{lem-smoothoutside} (take $\ell=1$), and we can identify the map \begin{flalign*}
		    K \lra \CR^\rfl_{\fm_0}\otimes_{\CS_{\fm_0}}K 
	\end{flalign*} with the field extension \begin{flalign*}
		   K=F(y_{11},\ldots,y_{2m,2m}) \lra \frac{K[y_{12},y_{13},\ldots,y_{1,2m},x_1]}{\rbra{y_{12}^2-y_{11}y_{22},\ldots, y_{1,2m}^2-y_{11}y_{2m,2m}, x_1^2+ (\frac{\pi}{\ol{\pi}} \sum_{i=1}^m y_{1 i}y_{1,n-i}) +\pi\sqrt{\theta}y_{11}  }}.
	\end{flalign*}
	We can see that this is a compositum of successive quadratic extensions. In particular, \begin{flalign*}
		    \dim_K(\CR^\rfl_{\fm_0}\otimes_{\CS_{\fm_0}}K) =2^{2m}.
	\end{flalign*} As $\# S=2^{2m}$, elements in $S$ are $K$-linearly independent, i.e., elements in $S$ form a $K$-basis of $\CR^\rfl_{\fm_0}\otimes_{\CS_{\fm_0}}K$.
	
	Similar arguments (just note that now $y_{ii}=0$ in $k$) as before imply that $\CR^\rfl_{\fm_0}\otimes_{\CS_{\fm_0}}k$ is generated by (images of) elements in $S$. Hence, \begin{flalign*}
		    \dim_k(\CR^\rfl_{\fm_0}\otimes_{\CS_{\fm_0}}k)\leq \# S=\dim_K(\CR^\rfl_{\fm_0}\otimes_{\CS_{\fm_0}}K).
	\end{flalign*} On the other hand, by Nakayama's lemma, we always have $$\dim_k(\CR^\rfl_{\fm_0}\otimes_{\CS_{\fm_0}}k)\geq \dim_K(\CR^\rfl_{\fm_0}\otimes_{\CS_{\fm_0}}K).$$
	This completes the proof of the lemma. 
\end{proof}

\begin{corollary}
	The scheme $\RU^\rfl_{\cbra{0}}$ is normal and flat over $\CO_F$. The geometric special fiber $\RU_{\cbra{0}}^\rfl\otimes_{\CO_F}\ol{k}$ is reduced and irreducible. 
\end{corollary}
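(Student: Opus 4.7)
The plan is to deduce all four assertions from Lemmas \ref{lem-smoothoutside} and \ref{lem-spfiberCM}, with an explicit parameterization handling the irreducibility of the special fiber.

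For flatness and normality: by Lemma \ref{lem-smoothoutside}, the scheme $\RU^\rfl_{\cbra{0}}$ is irreducible of Krull dimension $n$, and its generic fiber is nonempty since it contains the origin $\wt X = 0$ (which directly satisfies all defining relations of $\wt{\CI}^\rfl$). Hence the unique minimal prime does not contain $\pi$, so $\pi$ is a regular element and flatness over the discrete valuation ring $\CO_F$ follows. Normality then follows by Serre's criterion: Cohen-Macaulayness (Lemma \ref{lem-spfiberCM}) supplies $S_2$, and smoothness outside a single closed point of codimension $n = 2m+1 \geq 3$ (Lemma \ref{lem-smoothoutside}) supplies $R_1$.

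For reducedness of the geometric special fiber: as a quotient of a Cohen-Macaulay ring by the regular element $\pi$, the special fiber is itself Cohen-Macaulay and satisfies $S_1$; each generic point $\eta$ of the special fiber has codimension one in $\RU^\rfl_{\cbra{0}}$ and therefore lies in the smooth locus of Lemma \ref{lem-smoothoutside}, so $\CO_{\RU^\rfl_{\cbra{0}}, \eta}$ is a discrete valuation ring in which $\pi$ must be the uniformizer; the quotient is then a field, giving $R_0$, and together with $S_1$ this yields reducedness.

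For irreducibility of the special fiber, note first that $\pi/\ol{\pi} = \pi^2/\pi_0 \equiv -1 \equiv 1 \pmod{\pi}$ in residue characteristic two, so the third defining relation of $\RU^\rfl_{\cbra{0}}$ reduces modulo $\pi$ to $T Y + X X^t = 0$ with $T = \tfrac{1}{2}\tr(H_{2m}Y)$. I would then introduce the morphism
\[
\psi : \Spec\frac{\ol{k}[v_1, \ldots, v_{2m}, \beta]}{(\phi(v)+\beta^2)} \lra \RU^\rfl_{\cbra{0}} \otimes_{\CO_F} \ol{k}, \qquad (v, \beta) \longmapsto (Y, X) = (v v^t, \beta v),
\]
where $\phi(v) = \sum_{i=1}^{m} v_i v_{2m+1-i}$. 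Since $\tr(H_{2m} v v^t) = 2\phi(v)$, the defining relations of the special fiber pull back to $(\phi(v)+\beta^2) v v^t$, so $\psi$ is well defined. Every $\ol{k}$-point of the special fiber lies in the image: if $Y = 0$ then $X X^t = 0$ forces $X = 0$, giving $\psi(0,0)$; otherwise the symmetric rank-one matrix $Y$ has the form $u u^t$ for some $u \in \ol{k}^{2m}$ (using that $\ol{k}$ is algebraically closed, hence every scalar is a square), and the rank-one condition on $(Y|X)$ then forces $X = \beta u$ for some $\beta \in \ol{k}$.

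It remains to show that $\phi(v) + \beta^2$ is irreducible in $\ol{k}[v, \beta]$. Viewed as a degree-one polynomial in $v_{2m}$ with leading coefficient $v_1$, irreducibility reduces to the observation that $v_1 \nmid \phi(v)+\beta^2$: setting $v_1 = 0$ yields $\sum_{i=2}^{m} v_i v_{2m+1-i} + \beta^2 \neq 0$. Consequently the source of $\psi$ is integral, so its image is irreducible; since this image is dense in $\RU^\rfl_{\cbra{0}} \otimes_{\CO_F} \ol{k}$ (it contains every $\ol{k}$-point of a finite-type $\ol{k}$-scheme), the special fiber is topologically irreducible, and combined with reducedness is integral. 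The main obstacle is setting up the parameterization $\psi$ and verifying the elementary polynomial irreducibility; the remaining assertions are formal consequences of the two preceding lemmas.
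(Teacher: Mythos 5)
Your proposal is correct, and in two places it takes a genuinely different route from the paper. For flatness, the paper uses miracle flatness (Cohen--Macaulayness from Lemma \ref{lem-spfiberCM}, regularity of $\Spec\CO_F$, and the fact that the fibers have dimension $n-1$), while you argue torsion-freeness over the discrete valuation ring. One caveat there: ``the unique minimal prime does not contain $\pi$, so $\pi$ is a regular element'' is not a valid implication for an arbitrary Noetherian ring, since $\pi$ could lie in an embedded associated prime; you need to invoke Lemma \ref{lem-spfiberCM} at exactly this point --- a Cohen--Macaulay ring is unmixed, so all associated primes are minimal and your conclusion follows. With that citation added the step is complete. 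Normality and reducedness of the special fiber are handled essentially as in the paper, via Serre's criteria ($S_2$/$R_1$ from Cohen--Macaulayness and smoothness away from the worst point, and $S_1$/$R_0$ for the fiber, your explicit verification that $\pi$ uniformizes $\CO_{\RU^\rfl_{\cbra{0}},\eta}$ being a slightly more detailed version of the paper's argument). For irreducibility of the geometric special fiber the paper simply appeals to the chart analysis in the proof of Lemma \ref{lem-smoothoutside} (the loci $D(y_{\ell\ell})$ are irreducible and their union is dense), whereas you construct an explicit parameterization by the integral hypersurface $\ol{k}[v,\beta]/(\phi(v)+\beta^2)$ via $(Y,X)=(vv^t,\beta v)$. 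Your verification checks out: modulo $\pi$ one has $\pi/\ol{\pi}\equiv 1$ and $\pi\sqrt{\theta}\equiv 0$, so the third relation becomes $\tfrac{1}{2}\tr(H_{2m}Y)\,Y+XX^t$, which pulls back to $(\phi(v)+\beta^2)vv^t$; surjectivity on $\ol{k}$-points follows from the rank-one condition together with the fact that a nonzero symmetric rank-one matrix over $\ol{k}$ is of the form $uu^t$; and $\phi(v)+\beta^2$ is irreducible by the content argument you indicate (it is linear in $v_{2m}$ with coefficients $v_1$ and a nonzero polynomial not involving $v_1$). This buys a self-contained, fully explicit proof of irreducibility (and implicitly recovers the dimension count $n-1$), at the cost of a page of verification compared with the paper's one-line appeal to Lemma \ref{lem-smoothoutside}.
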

\begin{proof}
	As $\RU^\rfl_{\cbra{0}}$ is smooth over $\CO_F$ on the complement of a closed point and Cohen-Macaulay by Lemma \ref{lem-smoothoutside} and \ref{lem-spfiberCM}, the normality of $\RU^\rfl_{\cbra{0}}$ follows from the Serre's criterion for normality (see \cite[031S]{stacks-project}). 
	By Lemma \ref{lem-smoothoutside}, the scheme $\RU_{\cbra{0}}^\rfl\otimes_{\CO_F}\ol{k}$ is smooth over $\ol{k}$ on the complement of the worst point. The proof of Lemma \ref{lem-smoothoutside} also implies that $\RU_{\cbra{0}}^\rfl\otimes_{\CO_F}\ol{k}$ is irreducible of dimension $n-1$. As $\RU^\rfl_{\cbra{0}}$ is Cohen-Macaulay and $\Spec\CO_F$ is regular, then $\RU^\rfl_{\cbra{0}}$ is flat over $\CO_F$ by the miracle flatness (see \cite[Theorem 18.16 b.]{eisenbud2013commutative}).   

    It remains to show that $\RU_{\cbra{0}}^\rfl\otimes_{\CO_F}\ol{k}$ is reduced. Using Serre's criterion for reducedness (see \cite[031R]{stacks-project}), it is enough to verify that this scheme satisfies Serre's conditions ($R_0$) and ($S_1$). Since the singular locus of $\RU_{\cbra{0}}^\rfl\otimes_{\CO_F}\ol{k}$ consists of a singleton, condition $(R_0)$ is satisfied.
	Since $\RU^\rfl_{\cbra{0}}$ is Cohen-Macaulay and $\pi$ is not a zero divisor (followed by the flatness of $\RU_{\cbra{0}}^\rfl\otimes_{\CO_F}\ol{k}$ over $\CO_F$), the scheme $\RU_{\cbra{0}}^\rfl\otimes_{\CO_F}\ol{k}$ is also Cohen-Macaulay (see \cite[Chapter II, Theorem 8.21A (d)]{hartshorne2013algebraic}). In particular, it satisfies ($S_1$).  
\end{proof}

\begin{lem} \label{prop-topflat}
 The schemes $\RU_{\cbra{0}}$ and $\RU^\rfl_{\cbra{0}}$ have the same underlying topological space. 
%    \begin{enumerate}
%    	\item The schemes $\RU'_{\cbra{0}}$ and $\RU^\rfl_{\cbra{0}}$ have the same underlying topological space. 
%    	\item The closed immersion $\RU_{\cbra{0}}\hookrightarrow\RU_{\cbra{0}}'$ is an isomorphism.
%    \end{enumerate}
\end{lem}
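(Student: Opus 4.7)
The strategy is to verify the two inclusions of underlying sets separately. For $|\RU^\rfl_{\cbra{0}}| \subset |\RU_{\cbra{0}}|$, I would rely on flatness: by Lemma~\ref{lem-smoothoutside} and the corollary following Lemma~\ref{lem-spfiberCM}, the scheme $\RU^\rfl_{\cbra{0}}$ is irreducible and flat over $\CO_F$, and hence coincides topologically with the closure in $\RU'_{\cbra{0}}$ of its generic fiber. Since $\RU^\rfl_{\cbra{0}}$, $\RU_{\cbra{0}}$, and $\RU'_{\cbra{0}}$ all have the same generic fiber (by Lemma~\ref{lem-generic} and the discussion after Definition~\ref{defn-M}), and $\RU_{\cbra{0}}$ is a closed subscheme of $\RU'_{\cbra{0}}$ containing that generic fiber, $\RU_{\cbra{0}}$ must contain this closure.

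For the reverse inclusion, I would show that every generator of $\wt{\CI}^\rfl$ is nilpotent modulo the ideal $\CJ$ defining $\RU_{\cbra{0}}$ in $\CO_F[\wt{X}]$. Since $\wt{\CI} \subset \CJ$ by Proposition~\ref{prop-U0ring}, the only generators requiring attention are the entries of
\[
M := (\tr(\wt{A})+\pi\sqrt{\theta})\,\wt{H}_{2m}\wt{X}_1 + X_3^{\,t}X_3,
\]
which on $\RU_{\cbra{0}}$ are a priori only known to satisfy $(t/\pi_0)\,M = 0$ together with the vanishing of the diagonal entries $M_{ii}$. I would cover $\RU_{\cbra{0}}$ by the principal open subsets $D(y_{\ell\ell})$, $\ell = 1,\ldots,2m$, and their closed complement $Z := V(y_{11},\ldots,y_{2m,2m})$. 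On $D(y_{\ell\ell})$, the rank-one relation $\wedge^2 \wt{X} = 0$ combined with the symmetry $Y = Y^t$ expresses $y_{ij} = y_{i\ell}y_{\ell j}/y_{\ell\ell}$ and $x_i = y_{i\ell}x_\ell/y_{\ell\ell}$; a one-line substitution then gives
\[
M_{ij} \;=\; \frac{y_{i\ell}\,y_{\ell j}}{y_{\ell\ell}^{\,2}}\,M_{\ell\ell},
\]
and $M_{\ell\ell} = 0$ by the diagonal relation coming from \textbf{LM4}, so $M$ vanishes on $D(y_{\ell\ell})\cap\RU_{\cbra{0}}$. On $Z$, the wedge identity $y_{ij}^2 = y_{ii}y_{jj}$ forces every $y_{ij}$ to be nilpotent, after which the diagonal relation $M_{ii} = x_i^2 = 0$ forces every $x_i$ to be nilpotent, and so every entry of $M$ is nilpotent as well.

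The main point of difficulty, and the reason the argument does not reduce to Lemma~\ref{lem-moduli}, is that the scalar $t/\pi_0$ may have positive valuation, so $(t/\pi_0)M = 0$ does not formally imply $M = 0$ on $\RU_{\cbra{0}}$. The resolution is that the diagonal part of \textbf{LM4} supplies the single genuine equation $M_{\ell\ell} = 0$ stripped of the $t/\pi_0$ factor, and the rank-one structure imposed by the wedge condition \textbf{LM5} then transports this one scalar vanishing to every off-diagonal entry, rendering $\RU^\rfl_{\cbra{0}}$ and $\RU_{\cbra{0}}$ topologically indistinguishable.
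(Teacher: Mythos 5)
Your proposal is correct, and for the key reverse inclusion it takes a genuinely different route from the paper. The paper first reduces to comparing the special fibers of $\RU^\rfl_{\cbra{0}}$ and $\RU'_{\cbra{0}}$ (using reducedness of $\RU^\rfl_{\cbra{0}}\otimes_{\CO_F}k$), splits off the case $\omega(t)=\omega(\pi_0)$ via Lemma \ref{lem-moduli}, and then, in the remaining case, proves by a single global identity valid because $\Char(k)=2$ that
$M_{ij}^2-M_{ii}M_{jj}\equiv \alpha^2(y_{ij}^2-y_{ii}y_{jj})-x_i^2M_{jj}-x_j^2M_{ii}$
modulo the wedge and symmetry relations, so each $M_{ij}$ is nilpotent modulo $\wt{\CI}\otimes_{\CO_F}k$. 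Your argument instead works integrally over $\CO_F$ and needs neither the reduction modulo $\pi$, nor the characteristic-two trick, nor the case distinction: on each $D(y_{\ell\ell})$ the minors of $\bigl(\begin{smallmatrix} Y\\ X_3\end{smallmatrix}\bigr)$ together with $Y=Y^t$ give $M_{ij}=\frac{y_{i\ell}y_{\ell j}}{y_{\ell\ell}^{2}}M_{\ell\ell}$, so the diagonal generators of $\wt{\CI}$ kill all of $M$ there, while on the closed locus $V(y_{11},\ldots,y_{2m,2m})$ the relations $y_{ij}^2=y_{ii}y_{jj}$ and $M_{ii}=\beta y_{ii}+x_i^2$ make every $y_{ij}$, every $x_i$, and hence every $M_{ij}$ nilpotent; this puts the entries of $M$ in the radical of the ideal of $\RU_{\cbra{0}}$. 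Since you only use relations already lying in $\wt{\CI}$, your computation in fact shows $\lvert\RU'_{\cbra{0}}\rvert=\lvert\RU^\rfl_{\cbra{0}}\rvert$, which is exactly the statement the paper reduces to; the trade-off is a local cover and case analysis on the loci $D(y_{\ell\ell})$ versus one compact algebraic identity. The first inclusion (flatness of $\RU^\rfl_{\cbra{0}}$, hence equality with the closure of the common generic fiber inside $\RU'_{\cbra{0}}$) is the same in both proofs; just record explicitly, as the paper does implicitly, that the generic fibers of $\RU^\rfl_{\cbra{0}}$ and $\RU'_{\cbra{0}}$ coincide because $t/\pi_0$ is invertible in $F$, so that $\wt{\CI}$ and $\wt{\CI}^\rfl$ generate the same ideal after inverting $\pi$.
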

\begin{proof}
	(1) Since $\RU_{\cbra{0}}^\rfl$ is flat over $\CO_F$, the scheme $\RU_{\cbra{0}}^\rfl$ is the Zariski closure of its generic fiber. Then we have closed immersions $$\RU^\rfl_{\cbra{0}}\hookrightarrow \RU_{\cbra{0}}\hookrightarrow \RU_{\cbra{0}}'$$ where all schemes have the same generic fiber. Then it suffices to prove that the special fibers of $\RU_{\cbra{0}}^\rfl$ and $\RU'_{\cbra{0}}$ have the same underlying topological space. Since $\RU^\rfl_{\cbra{0}}\otimes_{\CO_F} k$ is reduced, we are reduced to show that $\CI^\rfl\otimes_{\CO_F}k$ is contained in the radical of $\CI\otimes_{\CO_F}k$.
	  
	If $\omega(\pi_0)=\omega(t)$, then the assertion follows from Lemma \ref{lem-moduli}. We may assume $t/\pi_0$ is not a unit. In this case, we have   \begin{flalign*}
		  \CI\otimes_{\CO_F}k &= {\rbra{\wedge^2\begin{psmallmatrix}
	     	Y\\ X_3
	     \end{psmallmatrix}, Y-Y^t, \text{the diagonal of ($\frac{\tr(H_{2m}Y)}{2}Y+X_3^tX_3$)} } },\\ 
	     \CI^\rfl\otimes_{\CO_F}k &= {\rbra{\wedge^2\begin{psmallmatrix}
	     	Y\\ X_3
	     \end{psmallmatrix}, Y-Y^t, {\frac{\tr(H_{2m}Y)}{2}Y+X_3^tX_3} } }.
	\end{flalign*} 
	Let $M$ denote the matrix $\frac{\tr(H_{2m}Y)}{2}Y+X_3^tX_3$. Then for $1\leq i,j\leq 2m$, the $(i,j)$-entry $M_{ij}$ of $M$ is \begin{flalign*}
		   \alpha y_{ij}+x_ix_j, \quad \alpha\coloneqq {\tr(H_{2m}Y)}/{2}.
	\end{flalign*}
	Since $\Char(k)=2$, we obtain $M_{ij}^2 = \alpha^2y_{ij}^2+x_i^2x_j^2$. Therefore, we have \begin{flalign*}
		  M_{ij}^2 -M_{ii}M_{jj} &= \alpha^2(y_{ij}^2-y_{ii}y_{jj})-\alpha x_i^2y_{jj}-\alpha x_j^2y_{ii}\\ &=\alpha^2(y_{ij}^2-y_{ii}y_{jj})-x_i^2M_{jj}-x_j^2M_{ii}+2x_i^2x_j^2\\ &= \alpha^2(y_{ij}^2-y_{ii}y_{jj})-x_i^2M_{jj}-x_j^2M_{ii} \in \wt{\CI}\otimes_{\CO_F}k
	\end{flalign*}
	In particular, any $M_{ij}^2$ for $1\leq i,j\leq 2m$ lies in $\wt{\CI}\otimes_{\CO_F}k$. Hence, $\CI^\rfl\otimes_{\CO_F}k$ is contained in the radical of $\CI\otimes_{\CO_F}k$. This finishes the proof.
\end{proof}

In summary, we have proven the following.
\begin{prop}
	\label{prop-Ufl}
    \begin{enumerate}
    	\item The scheme $\RU^\rfl_{\cbra{0}}$ is flat over $\CO_F$ of relative dimension $n-1$. In particular, $\RU^\rfl_{\cbra{0}}$ is isomorphic to an open subscheme of the local model $\RM_{\cbra{0}}^\loc$ containing the worst point. Furthermore, $\RU^\rfl_{\cbra{0}}$ is normal, Cohen-Macaulay, and smooth over $\CO_F$ on the complement of the worst point. The special fiber $\RU_{\cbra{0}}^\rfl\otimes_{\CO_F}k$ is (geometrically) reduced and irreducible. 
    	\item $\RU_{\cbra{0}}$ and $\RU^\rfl_{\cbra{0}}$ have the same underlying topological space. 
    	\item If $\omega(\pi_0)=\omega(t)$, then $\RU_{\cbra{0}}=\RU^\rfl_{\cbra{0}}$.
    \end{enumerate} 
\end{prop}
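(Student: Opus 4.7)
The proposition is a summary of the geometric properties established in the preceding sequence of lemmas, so the plan is simply to assemble them in the right order and fill in the remaining deductions.

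For part (1), the strategy is to verify Serre's criteria for regularity in codimension one and Cohen--Macaulayness, then invoke miracle flatness. First, I would quote Lemma \ref{lem-smoothoutside}, which shows via the Jacobian criterion on the principal opens $D(y_{\ell\ell})$ that $\RU^\rfl_{\cbra{0}}$ is irreducible of Krull dimension $n$ and smooth over $\CO_F$ away from the worst point $(\pi, y_{ij}, x_i) = 0$. Next, Lemma \ref{lem-spfiberCM} gives Cohen--Macaulayness: one uses the finite injection $\CS = \CO_F[y_{11},\ldots,y_{2m,2m}] \hookrightarrow \CR^\rfl$ (finiteness comes from the relations $y_{ij}^2 = y_{ii}y_{jj}$ and $x_i^2 = -(\tfrac{\pi}{\ol\pi}\sum_\ell y_{i\ell}y_{i,n-\ell}) - \pi\sqrt\theta y_{ii}$) and applies the criterion of \cite[Cor.~18.17]{eisenbud2013commutative}, reducing Cohen--Macaulayness of $\CR^\rfl$ to flatness of the induced finite map $\psi:\Spec\CR^\rfl \to \BA^{2m}_{\CO_F}$. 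Flatness of $\psi$ on $D(y_{\ell\ell})$ and on the generic fiber follows from miracle flatness applied to the smooth locus; at the worst point, one uses Lemma \ref{lembas}'s explicit monomial basis of cardinality $2^{2m}$ to compare fibre dimensions at the generic and closed points, invoking Nakayama.

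With smoothness in codimension one and Cohen--Macaulayness in hand, normality is immediate from Serre's criterion ($R_1 + S_2$). Flatness of $\RU^\rfl_{\cbra{0}}$ over $\CO_F$ then follows from miracle flatness applied to the structure morphism, since $\Spec\CO_F$ is one-dimensional regular and the fibres are equidimensional of dimension $n-1$. In particular, flatness identifies $\RU^\rfl_{\cbra{0}}$ with an open subscheme of the flat closure $\RM^\loc_{\cbra{0}}$ containing the worst point. For the special fibre: Cohen--Macaulayness of $\RU^\rfl_{\cbra{0}}$ together with flatness shows $\pi$ is a nonzerodivisor, so $\RU^\rfl_{\cbra{0}} \otimes_{\CO_F} \ol k$ is Cohen--Macaulay, hence has no embedded primes; combined with generic smoothness of the fibre (inherited from the smooth locus in Lemma \ref{lem-smoothoutside}), Serre's reducedness criterion \cite[031R]{stacks-project} applies. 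Irreducibility of the geometric fibre then follows by inspecting the charts $D(y_{\ell\ell}) \otimes \ol k$, which are each isomorphic to a smooth hypersurface and share the worst point.

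Part (2) is Lemma \ref{prop-topflat}: since $\RU^\rfl_{\cbra{0}}$ is flat over $\CO_F$ and the three schemes $\RU^\rfl_{\cbra{0}} \subset \RU_{\cbra{0}} \subset \RU'_{\cbra{0}}$ share the same generic fibre, it suffices to identify their underlying topological spaces on the special fibre, which reduces to showing $\CI^\rfl \otimes_{\CO_F} k \subset \sqrt{\CI \otimes_{\CO_F} k}$. The computation $M_{ij}^2 - M_{ii} M_{jj} \in \wt\CI \otimes k$ (where $M = \tfrac{\tr(H_{2m} Y)}{2} Y + X_3^t X_3$), using that $\Char(k) = 2$, yields $M_{ij}^2 \in \wt\CI \otimes k$ and finishes the argument. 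Part (3) is Lemma \ref{lem-moduli}: if $\omega(\pi_0) = \omega(t)$ then $t/\pi_0 \in \CO_F^\times$, so inspecting the generating sets of $\wt\CI$ and $\wt\CI^\rfl$ shows they coincide.

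The main obstacle is the Cohen--Macaulayness in Lemma \ref{lem-spfiberCM}; everything else is a formal deduction from classical criteria once that is in place. The delicate point in the Cohen--Macaulayness argument is producing the explicit monomial basis of Lemma \ref{lembas} and verifying the dimension equality $\dim_K = \dim_k = 2^{2m}$ at the worst point, which is where the specific structure of the defining equations (the wedge condition plus symmetry $Y = Y^t$ plus the quadratic relation on $X_3$) is genuinely used.
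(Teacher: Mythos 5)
Your proposal is correct and follows essentially the same route as the paper: Proposition \ref{prop-Ufl} is stated there precisely as the summary of Lemmas \ref{lem-smoothoutside}, \ref{lem-spfiberCM}, \ref{lembas}, the ensuing corollary (Serre's criteria plus miracle flatness), Lemma \ref{prop-topflat} and Lemma \ref{lem-moduli}, which is exactly how you assemble it. The only slip is the phrase that the charts $D(y_{\ell\ell})\otimes \ol{k}$ ``share the worst point'' --- the worst point lies in none of them (it is where all $y_{\ell\ell}$ vanish), and irreducibility of the special fiber instead comes from the pairwise-intersecting irreducible charts covering the complement of that single closed point, as in the proof of Lemma \ref{lem-smoothoutside}.
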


\subsection{Global results}
Recall that $(\Lambda_0,q,\sL,\phi)$ is a hermitian quadratic module with $\phi$ over $\CO_{F_0}$ by Lemma \ref{lem-hermm0}. Let $\sH_{\cbra{0}}\coloneqq \ud{\Sim}((\Lambda_0,q,\sL,\phi))$ be the group scheme over $\CO_{F_0}$ of similitudes preserving $\phi$ of $(\Lambda_0,q,\sL,\phi)$. By Theorem \ref{thmsimm0}, $\sH_{\cbra{0}}$ is an affine smooth group scheme over $\CO_{F_0}$. 

\begin{lemma}\label{lemGstable}
	The group scheme $\sH_{\cbra{0}}$ acts on $\RM^\naive_{\cbra{0}}$ and $\RM_{\cbra{0}}$.
\end{lemma}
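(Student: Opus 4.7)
The plan is to define the action by $(\varphi, \gamma) \cdot \CF := \varphi(\CF)$, where $\varphi$ is $R$-linearly extended from $\Lambda_0 \otimes_{\CO_{F_0}} R$ to $V \otimes_{F_0} R$ whenever needed, and then to verify that $\varphi(\CF)$ satisfies each defining condition of $\RM^\naive_{\cbra{0}}$ and $\RM_{\cbra{0}}$. That the prescription is a genuine group action and is functorial in $R$ is immediate from the definition of $\sH_{\cbra{0}}$ as a group functor of similitudes.

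The conditions that are formal consequences of $\varphi$ being an $\CO_F \otimes_{\CO_{F_0}} R$-module automorphism are: the $\pi$-stability condition \textbf{LM1} (equivalently condition (1) of $\RM^\naive_{\cbra{0}}$), since $\varphi$ preserves the class of $\CO_F \otimes R$-submodules that are locally direct summands of rank $n$; the Kottwitz condition \textbf{LM2} and the wedge condition \textbf{LM5}, since $\varphi$ commutes with the actions of $\pi \otimes 1$ and $1 \otimes \ol{\pi}$, so that the restrictions of these operators to $\varphi(\CF)$ are conjugate via $\varphi$ to their restrictions to $\CF$.

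The quadratic/bilinear conditions use the similitude identity $q(\varphi(x)) = \gamma(q(x))$ and its polarization $s(\varphi(x), \varphi(y)) = \gamma(s(x,y))$ in $\sL \otimes R$. Since $\gamma$ acts on the rank-one free $R$-module $\sL \otimes R$ as multiplication by a unit of $R$, it preserves the submodule $\CO_{F_0} \otimes R \subset \sL \otimes R$. This immediately gives the hyperbolicity condition \textbf{LM4} and the isotropy condition (4) of $\RM^\naive_{\cbra{0}}$ for $\varphi(\CF)$. The same identity shows that $\varphi$ preserves $\Lambda_0^s \otimes R$ and satisfies $\varphi(\CF)^\perp = \varphi(\CF^\perp)$, from which the duality condition \textbf{LM3} for $\varphi(\CF)$ follows by applying $\varphi$ to the corresponding inclusion for $\CF$ (and using that $\varphi$ is $\CO_F \otimes R$-linear, hence commutes with multiplication by $\ol\pi/t$).

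The main obstacle is the strengthened spin condition \textbf{LM6}. Since $\varphi$ commutes with $\pi$, its $F$-linear extension preserves the eigenspace decomposition $V \otimes_{F_0} F = V_{\pi} \oplus V_{\ol{\pi}}$, and therefore preserves the subspace $W^{n-1,1} \subset W$ as well as the lattice $W(\Lambda_0) \subset W$. What remains is to show that the induced action on $W$ preserves the half-spin summand $W_{-1}$. A priori an orthogonal similitude could swap $W_{+1}$ and $W_{-1}$, but this is controlled by an open and closed condition on the group scheme acting on $W$. Using Theorem \ref{thmsimm0}, which identifies $\sH_{\cbra{0}}$ with the parahoric group scheme attached to $\Lambda_0$, one has that $\sH_{\cbra{0}}$ is smooth with geometrically connected fibers; since the identity element preserves $W_{-1}$, connectedness forces every element of $\sH_{\cbra{0}}$ to do so as well. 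Intersecting, $\varphi$ preserves $W(\Lambda_0)^{n-1,1}_{-1} = W(\Lambda_0) \cap W^{n-1,1} \cap W_{-1}$, and hence \textbf{LM6} is preserved. This connectedness step is the delicate point of the argument; everything else is essentially linear algebra once the similitude identities are unpacked.
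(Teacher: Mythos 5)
Your overall route is the paper's: define $g\CF\coloneqq\varphi(\CF)$ and verify the conditions one at a time. The treatment of \textbf{LM1}, \textbf{LM2}, \textbf{LM4}, \textbf{LM5} and the isotropy condition of $\RM^\naive_{\cbra{0}}$ matches the paper, and for \textbf{LM6} the paper likewise appeals to the smoothness of $\sH_{\cbra{0}}$ and the argument of \cite[Lemma 7.1]{rapoport2018regular}, which is the rigorous form of your smoothness-plus-connectedness step. One caution on your phrasing there: for a non-flat $\CO_F$-algebra $R$ there is no ``$F$-linear extension'' of $\varphi$ and no action on $W$ or on the eigenspace decomposition of $V\otimes_{F_0}F$; the statement to prove is stability of the image of $W(\Lambda_0)^{n-1,1}_{-1}\otimes_{\CO_F}R$ in $W(\Lambda_0)\otimes_{\CO_F}R$, and connectedness only settles the generic fiber — it is smoothness (flatness) of $\sH_{\cbra{0}}$, as in \loccit, that propagates the statement integrally. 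That is essentially what you intend, so I count it as the same argument.

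The genuine gap is \textbf{LM3} (equivalently condition (3) of $\RM^\naive_{\cbra{0}}$). You deduce it from the polarization identity $s(\varphi x,\varphi y)=\gamma(s(x,y))$ together with the claims that ``$\varphi$ preserves $\Lambda_0^s\otimes R$'' and ``commutes with multiplication by $\ol{\pi}/t$''. Neither claim is available: $\varphi$ is defined only on $\Lambda_0\otimes_{\CO_{F_0}}R$ and admits no extension to $V\otimes_{F_0}R$ (which does not even exist when $R$ is not an $F_0$-algebra, e.g.\ over the special fiber). One can let $\varphi$ act on $\Lambda_0^s\otimes R$ through the perfect pairing, but the compatibility of that action with the map $\Lambda_0\otimes R\ra \frac{\ol{\pi}}{t}\Lambda_0^s\otimes R$ is precisely the nontrivial content of \textbf{LM3}-stability, and it does not follow from the $s$-identity alone. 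This is exactly why the paper takes $\sH_{\cbra{0}}=\ud{\Sim}(\Lambda_0,q,\sL,\phi)$, with the auxiliary pairing $\phi(x,y)=\varepsilon\inverse\Tr_{F/F_0}h(x,\pi\inverse y)$ as part of the data: integrally $\phi$ is not determined by $q$ or $s$ (only $\phi(x,\pi y)=s(x,y)$ is forced; when $\pi_0=0$ in $R$ the remaining values of $\phi$ are unconstrained by $q$), and the paper's proof of \textbf{LM3}-stability consists of the reformulation that, given the other conditions, \textbf{LM3} is equivalent to $\phi(\CF,\CF)=0$, which is visibly preserved since $\phi(g\CF,g\CF)=\gamma\phi(\CF,\CF)=0$ by the very definition of $\sH_{\cbra{0}}$. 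As written, your argument would apply verbatim to any similitude of $(\Lambda_0,q,\sL)$ ignoring $\phi$, and for such a similitude there is no reason \textbf{LM3} should be preserved on the special fiber; the repair is to invoke the $\phi$-preservation built into $\sH_{\cbra{0}}$.
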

\begin{proof}
	It suffices to show the result for $\RM_{\cbra{0}}$. Let $R$ be an $\CO_F$-algebra. Let $g=(\varphi,\gamma) \in \sH_{\cbra{0}}(R)$ be a similitude preserving $\phi$. For $\CF\in \RM_{\cbra{0}}$, we define $g\CF\coloneqq \varphi(\CF)\sset \Lambda_0\otimes_{\CO_{F_0}}R$. We need to show that $g\CF\in \RM_{\cbra{0}}(R)$. It is clear that $g\CF$ satisfies conditions \textbf{LM1,2,4}. Recall that $\phi: \Lambda_0\times \Lambda_0\ra t\inverse\CO_{F_0}$ is defined by $(x,y)\mapsto t\inverse\Tr_{F/F_0} h(x,\pi\inverse y)$. We also use $\phi$ to denote the base change to $\Lambda_0\otimes_{\CO_{F_0}}R$. Then we see that $\CF$ satisfies \textbf{LM3} if and only if $\phi(\CF,\CF)=0$. As $g$ preserves $\phi$, we have that $$\phi(g\CF,g\CF)=\gamma \phi(\CF,\CF)=0.$$ So $g\CF$ satisfies \textbf{LM3}.   As $g$ is $\CO_F\otimes_{\CO_{F_0}}R$-linear by definition, we obtain that $$(\pi\otimes 1-1\otimes\ol{\pi})\circ g=g\circ (\pi\otimes 1-1\otimes\ol{\pi}).$$ By the functoriality of the wedge product of linear maps, we have $$\wedge^2(\pi\otimes 1-1\otimes\ol{\pi}\ |\ g\CF)=\wedge^2(g\circ (\pi\otimes 1-1\otimes \ol{\pi})\ |\ \CF)=\wedge^2(g)\circ \wedge^2(\pi\otimes 1-1\otimes\ol{\pi}\ |\ \CF)=0. $$
	Therefore, $g\CF$ satisfies the wedge condition $\textbf{LM5}$. Since $\sH_{\cbra{0}}$ is smooth over $\CO_{F_0}$, using a similar argument of \cite[Lemma 7.1]{rapoport2018regular}, we can show that the $R$-submodule $$ \Im\rbra{W(\Lambda_0)^{n-1,1}_{-1}\otimes_{\CO_F}R\ra W(\Lambda_0)\otimes_{\CO_F}R}$$ of $W(\Lambda_0)\otimes_{\CO_F}R$ is stable under the natural action of $\sH_{\cbra{0}}(R)$ on $W(\Lambda_0)\otimes_{\CO_F}R=\wedge^n(\Lambda_0\otimes_{\CO_{F_0}}R)$. It follows that $g\CF$ satisfies the strengthened spin condition \textbf{LM6}.  
\end{proof}

\begin{lemma}\label{lem-Gorbit}
	Let $\ol{k}$ be the algebraic closure of the residue field $k$. Then $\RM_{\cbra{0}}\otimes_{\CO_F}\ol{k}$ has two $\sH_{\cbra{0}}\otimes_{\CO_{F_0}}\ol{k}$-orbits, one of which consists of the worst point. 
\end{lemma}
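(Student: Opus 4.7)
First I would check that $\CF_0 = (\pi \otimes 1)(\Lambda_0 \otimes \ol k)$ is fixed by $\sH_{\cbra{0}}(\ol k)$. By Definition \ref{defn-simisom}, any $(\varphi, \gamma) \in \sH_{\cbra{0}}(\ol k)$ is $\CO_F \otimes_{\CO_{F_0}} \ol k$-linear and hence commutes with multiplication by $\pi \otimes 1$, giving $\varphi(\CF_0) = (\pi \otimes 1)\varphi(\Lambda_0 \otimes \ol k) = \CF_0$; so $\{\CF_0\}$ is an $\sH_{\cbra{0}}$-orbit on its own.

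Set $U := (\RM_{\cbra{0}} \otimes \ol k) \setminus \{\CF_0\}$. By Proposition \ref{prop-Ufl} the scheme $\RM_{\cbra{0}}\otimes\ol k$ is irreducible of dimension $n-1$, and $\RU^\rfl_{\cbra{0}}\otimes \ol k$ is an affine open neighborhood of $\CF_0$ whose non-worst-point locus is smooth of dimension $n-1$, covered by the principal opens $D(y_{\ell\ell})$ described in the proof of Lemma \ref{lem-smoothoutside}. Since $\sH_{\cbra{0}}\otimes\ol k$ is smooth and connected (Theorem \ref{thm-smooth}) and $U$ is irreducible, there is a unique open orbit $O\subseteq U$ of dimension $n-1$, and it suffices to prove $O = U$. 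I would first establish transitivity of the $\sH_{\cbra{0}}(\ol k)$-action on $\RU^\rfl_{\cbra{0}}\otimes\ol k\setminus\{\CF_0\}$: fix a base point $\CF_\star\in D(y_{11})(\ol k)$, and using the identification of $\sH_{\cbra{0}}$ with the similitude automorphism group of $(\Lambda_0, q, \sL, \phi)$ from Theorem \ref{thm-smooth} and Appendix \ref{appB}, exhibit enough diagonal, permutation (of symplectic pairs $\{e_i, e_{n+1-i}\}$) and hermitian-unipotent elements of $\sH_{\cbra{0}}(\ol k)$ to move $\CF_\star$ onto every point of each $D(y_{\ell\ell})\setminus\{\CF_0\}$. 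To extend this to all of $U$, note that $\RM_{\cbra{0}}\otimes\ol k$ is projective, so every $\sH_{\cbra{0}}$-orbit closure is a projective $\sH_{\cbra{0}}$-stable subvariety and contains a closed orbit; a direct calculation in the chart identifies $\{\CF_0\}$ as the unique closed orbit (any other fixed point would, by the same commutation-with-$\pi\otimes 1$ argument, have to coincide with $\CF_0$, and a positive-dimensional closed flag-variety orbit is excluded by the chart analysis). Consequently every orbit closure in $U$ contains $\CF_0$ and, being dense in its closure, the orbit itself meets the open neighborhood $\RU^\rfl_{\cbra{0}}\otimes\ol k$, reducing to the transitivity statement just proved.

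The main obstacle is the explicit transitivity step on $\RU^\rfl_{\cbra{0}}\otimes\ol k\setminus\{\CF_0\}$: one must produce concrete elements of $\sH_{\cbra{0}}(\ol k)$ that preserve $q$ and $\phi$ up to a similitude factor and act transitively on the chart coordinates $(y_{ij}, x_i)$, as well as rule out intermediate closed orbits on the Grassmannian side. This computation rests on the normal-form theory for hermitian quadratic modules developed in Appendix \ref{appB}, which guarantees that the standard module $(\Lambda_0, q, \sL, \phi)$ admits a sufficiently rich automorphism group over $\ol k$ for the required transformations.
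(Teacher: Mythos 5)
Your proposal does not actually prove the statement: the entire content of the lemma is the transitivity of $\sH_{\cbra{0}}(\ol{k})$ on the non-worst locus, and you explicitly defer this ("the main obstacle") to an unspecified construction of diagonal, permutation and unipotent elements acting on the chart coordinates. The paper proves exactly this step, and it does so without any chart computation: given $\CF$ with $\dim_{\ol k}(\pi\otimes 1)\CF=1$, one picks $v\in\CF$ with $(\pi\otimes1)v$ spanning the image, uses the hyperbolicity condition \textbf{LM4} (namely $q(v)=0$) to show $\pi v$ does not lie in the radical of $\phi$, then normalizes $w$ so that $q(w)=q(v)=\phi(w,v)=0$ and $\phi(w,\pi v)=1$, splits off $W_1=\pair{v,\pi v,w,\pi w}$, uses condition \textbf{LM3} (i.e. $\phi(\CF,\CF)=0$) to get $\CF\cap W=\pi W$ on the orthogonal complement $W$, checks via the divided discriminant that $W$ is of type $\Lambda_0$, and invokes Theorem \ref{thmstand0} to extend $v,w$ to a standard basis; this exhibits every such $\CF$ in the same normal form $\pair{v_1,\pi v_1,\pi v_i,\,2\le i\le n-1}$, hence in one orbit. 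Nothing in your sketch supplies a substitute for this use of \textbf{LM3}, \textbf{LM4} and the normal-form theorem over $\ol k$.

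Moreover, the orbit-closure reduction you propose has independent gaps. The existence of an open orbit of dimension $n-1$ in $U$ does not follow from smoothness and connectedness of $\sH_{\cbra{0}}\otimes\ol k$ together with irreducibility of $U$; a connected group acting on an irreducible variety need not have a dense orbit, so "it suffices to prove $O=U$" presupposes what must be shown. Likewise, your argument that $\{\CF_0\}$ is the unique closed orbit is not valid as stated: the commutation-with-$\pi\otimes 1$ computation only shows that $\CF_0$ is fixed, not that every fixed point (let alone every closed orbit, which need not be a point) coincides with $\CF_0$, and "excluded by the chart analysis" is an assertion, not a proof. Even if repaired, this machinery is superfluous once the pointwise normal-form argument is available, since that argument handles all $\ol k$-points of $\RM_{\cbra{0}}$ directly without passing through orbit closures or the affine chart.
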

\begin{proof}
	By Lemma \ref{lemGstable}, the special fiber $\RM_{\cbra{0}}\otimes_{\CO_F}\ol{k}$ has an action of $\sH_{\cbra{0}}\otimes_{\CO_{F_0}}\ol{k}$. Let $\CF\in\RM_{\cbra{0}}(\ol{k})$. In particular, the subspace $\CF\sset(\Lambda_0\otimes_{\CO_{F_0}}\ol{k})$ is  an $n$-dimensional $\ol{k}$-vector space. The wedge condition in this case becomes $\wedge^2(\pi\otimes 1\ |\ \CF)=0$. Therefore, the image $(\pi\otimes 1)\CF$ is at most one dimensional. We have the following two cases.
	
	Suppose $(\pi\otimes 1)\CF=0$. Then $\CF=(\pi\otimes 1)(\Lambda_0\otimes_{\CO_{F_0}}\ol{k})$, namely, $\CF$ is the worst point.
	
	Suppose $(\pi\otimes 1)\CF$ is one dimensional. Then there exists a vector $v\in\CF$ such that $(\pi\otimes 1)v$ generates $(\pi\otimes 1)\CF$. For simplicity, write $\pi$ for $\pi\otimes 1$. Recall the $\ol{k}$-bilinear form \begin{flalign*}
		 \phi(-,-): (\Lambda_0\otimes_{\CO_{F_0}}\ol{k})\times (\Lambda_0\otimes_{\CO_{F_0}}\ol{k}) &\lra \sL\otimes_{\CO_{F_0}}\ol{k}\\ (x,y) &\mapsto s(x,\pi\inverse y)=t\inverse \Tr h(x,\pi\inverse y),
	\end{flalign*} where $\pi\inverse$ is the induced isomorphism $\Lambda_0\otimes_{\CO_{F_0}}\ol{k}\simto ({\pi\inverse}\Lambda_0)\otimes_{\CO_{F_0}}\ol{k}$. We can identify $\sL\otimes_{\CO_{F_0}}\ol{k}$ with $\ol{k}$ by sending $t\inverse \otimes 1$ to $1$. 
%	Then with respect to the basis \eqref{basis-1}, the paring $\phi$ is represented by the matrix
%	\begin{flalign*}
%		    \begin{pmatrix}
%		    	0 &\frac{2}{t}H_m &0 &0 &H_m &0\\ \frac{t^2-2\pi_0}{\pi_0t}H_m &0 &0 &H_m &0 &0\\ 0 &0 &\frac{t}{\pi_0} &0 &0 &2\\ 0 &H_m &0 &0 &\frac{2\pi_0}{t}H_m &0\\ \frac{t^2-3\pi_0}{\pi_0}H_m &0 &0 &\frac{t^2-2\pi_0}{t}H_m &0 &0\\ 0 &0 &\frac{t^2-2\pi_0}{\pi_0} &0 &0 &t
%		    \end{pmatrix}
%	\end{flalign*} 
%	\begin{flalign*}
%		    \begin{pmatrix}
%		    	0 &\frac{2}{t}H_m &0 &0 &H_m &0\\ \frac{t^2-2\pi_0}{\pi_0t}H_m &0 &0 &H_m &0 &0\\ 0 &0 &\frac{t}{\pi_0} &0 &0 &0\\ 0 &H_m &0 &0 &0 &0\\ H_m &0 &0 &0 &0 &0\\ 0 &0 &0 &0 &0 &0
%		    \end{pmatrix}.
%	\end{flalign*} 
	Denote by $N\coloneqq \ol{k}\pair{e_{m+1},\pi e_{m+1}}$ the submodule of $\Lambda_0\otimes_{\CO_{F_0}}\ol{k}$. Then one can check that the radical of $\phi$ is contained in $N$. We claim that $\pi v$ is not in $N$. Otherwise, after rescaling, we may assume $v=e_{m+1}\otimes1+\pi v_1$ for some $v_1\in \Lambda_0\otimes_{\CO_{F_0}}\ol{k}$. Then for the quadratic form $$q: \Lambda_0\otimes_{\CO_{F_0}}\ol{k}\lra \sL\otimes_{\CO_{F_0}}\ol{k}\simeq\ol{k} ,$$ we have  \begin{flalign*}
		     q(v) &=q(e_{m+1}\otimes 1+\pi v_1)=q(e_{m+1}\otimes 1)+s(e_{m+1}\otimes 1,\pi v_1)+q(\pi v_1).
	\end{flalign*}
	One can check that $q(e_{m+1}\otimes 1)=1$ and $s(e_{m+1}\otimes 1,\pi v_1)=q(\pi v_1)=0$. Hence $q(v)\neq 0$. This contradicts the hyperbolicity condition \textbf{LM4} that $q(\CF)=0$. In particular, we obtain that $\pi v$ is not in the radical of $\phi$. Thus, we can find $w\in \Lambda_0\otimes_{\CO_{F_0}}\ol{k}$ such that $\phi(w,\pi v)\neq 0$ in $\ol{k}$. By rescaling, we may assume $\phi(w,\pi v)=1$. 
	Note that for $a\in\ol{k}$, \begin{flalign*}
		q(w+av) &=q(w)+as(w,v)+a^2q(v)\\ &=q(w)+a\phi(w,\pi v)+0,\quad \text{since $q(v)=0$,}\\ &=q(w)+a.
	\end{flalign*} 
	Replacing $w$ by $w-q(w)v$, we may assume $q(w)=0$. Put $b\coloneqq  -\phi(w,v)$. One can check that $\phi(w+b\ol{\pi}w)=0$. Replacing $w$ by $w+b\ol{\pi}w$, we have \begin{flalign*}
		\text{$q(w)=q(v)=0$, $\phi(w,v)=0$ and $\phi(w,\pi v)=1$.}
	\end{flalign*}  
	Let $W_1\coloneqq \pair{v,\pi v,w,\pi w}$, the $\ol{k}$-subspace of $\Lambda_0\otimes_{\CO_{F_0}}\ol{k}$ generated by $v,\pi v,w,\pi w$. Then $\phi$ restricts to a perfect pairing on $W_1$. Now we can write \begin{flalign}
		    \Lambda_0\otimes_{\CO_{F_0}}\ol{k} =W_1\oplus W, \label{spaceW}
	\end{flalign} where $W$ is the orthogonal complement of $W_1$ with respect to $\phi$ whose dimension is $2n-4$ over $\ol{k}$. Note that the Condition \ref{LMincl} in Definition \ref{defn-M} of $\RM_{\cbra{0}}$ implies that $\phi(\CF,\CF)=0$, and hence $\CF\cap \pair{w,\pi w}=0$. Since $\pair{v,\pi v}\sset\CF$ and $\phi(\CF,\CF)=0$, we obtain that the $\ol{k}$-dimension of $\CF\cap W$ is $n-2$ and $\CF\cap W$ is contained in $\pi W=\ker(\pi\ |\ W)$. Therefore, $\CF\cap W=\pi W$ for dimension reasons.
%	Using the following Lemma 	\ref{lem-Nstandard} and Lemma \ref{lemWsat}, 
     By \eqref{spaceW}, we have \begin{flalign*}
		    \disc'(\phi) = \disc(\phi|_{W_1})\disc'(\phi|_W).
	\end{flalign*}
	Here $\disc'(\phi)$ is the divided discriminant in the sense of Definition \ref{app-defn2}, and we view it as an element in $\ol{k}$ by using a basis of $\Lambda_0\otimes_{\CO_{F_0}}\ol{k}$. By Example \ref{exlambda0}, we have $\disc'(\phi)\in\ol{k}\cross$.
	Since $\phi$ is perfect on $W_1$, we obtain that $\disc(\phi|_{W_1})\in\ol{k}\cross$, and hence $\disc'(\phi|_W)\in \ol{k}\cross$. So $W$ is a hermitian quadratic module of type $\Lambda_0$ over $\ol{k}$ in the sense of Definition \ref{app-defn2}. Set $v_1\coloneqq v$ and $v_n\coloneqq w$. By applying Theorem \ref{thmstand0} to $W$, we deduce that there is an $\CO_F\otimes_{\CO_{F_0}}\ol{k}$-basis $\tcbra{v_i: 1\leq i\leq n}$ of $\Lambda_0\otimes_{\CO_{F_0}}\ol{k}$ with the property that $q(v_{m+1})$ generates $R$, $q(v_i)=0$, $\phi(v_i,v_j)=0$ and $\phi(v_i,\pi v_j)=\delta_{i,n+1-j}$ for all $1\leq i<j\leq n$. With respect to this basis, we have $$\CF=\pair{v,\pi v}\oplus(\CF\cap W)= \pair{v,\pi v}\oplus(\pi W)=\pair{v_1,\pi v_1,\pi v_{i}, 2\leq i\leq n-1}.$$ 
	This shows that points $\CF\in\RM_{\cbra{0}}(\ol{k})$ with $\dim_{\ol{k}} \pi\CF=1$ are in the same $\sH_{\cbra{0}}(\ol{k})$-orbit.
\end{proof}

As $\RU^\rfl_{\cbra{0}}$ is flat over $\CO_F$, we may view $\RU^\rfl_{\cbra{0}}$ as an open subscheme of $\RM^\loc_{\cbra{0}}$ containing the worst point. By Lemma \ref{lem-Gorbit}, the  $\sH_{\cbra{0}}$-translation of $\RU^\rfl_{\cbra{0}}$ covers $\RM^\loc_{\cbra{0}}$. By Proposition \ref{prop-Ufl}, we have shown Theorem \ref{intro-thm1.6}, and Theorem \ref{thm-intromain}, \ref{thm16} in the case $I=\cbra{0}$ and (R-U).
%\begin{thm}\label{thm-ru0main}
%    Let $\RM_{\cbra{0}}$ be the scheme defined as in Definition \ref{defn-M}.
%	\begin{enumerate}
%		\item The closed immersion $\RM_{\cbra{0}}^\loc\hookrightarrow \RM_{\cbra{0}}$ induces an equality on the underlying topological spaces. If $\omega(\pi_0)=\omega(t)$, then $\RM_{\cbra{0}}^\loc=\RM_{\cbra{0}}$. In particular, in this case, we have an explicit moduli description of $\RM^\loc_{\cbra{0}}$. 
%		\item The local model $\RM^\loc_{\cbra{0}}$ is normal and Cohen-Macaulay with geometrically integral special fiber. Moreover, $\RM^\loc_{\cbra{0}}$ is smooth over $\CO_F$ on the complement of a single closed point. 
%	\end{enumerate}
%\end{thm}

\section{The case $I=\cbra{0}$ and (R-P)}\label{sec-0rp}
In this section, we consider the case when $F/F_0$ is of (R-P) type. In particular, we have $$\pi^2+\pi_0=0 \text{\ and $\ol{\pi}=-\pi$}. $$  Consider the following ordered $\CO_{F_0}$-basis of $\Lambda_0$ and $\Lambda_0^s$: \begin{alignat}{2}
	    \Lambda_0&: \half e_{m+2},\ldots,\half e_n,e_1,\ldots,e_m,e_{m+1},\frac{\pi}{2}e_{m+2},\ldots,\frac{\pi}{2}e_n,\pi e_1,\ldots,\pi e_m,\pi e_{m+1}, \label{rpbasis-1}  \\  \Lambda_0^s&: \pi\inverse e_{m+2},\ldots, \pi\inverse e_n,  \frac{2}{\pi}e_1,\ldots,\frac{2}{\pi}e_m, \pi\inverse e_{m+1}, e_{m+2},\ldots, e_n,2 e_1,\ldots,2e_m, e_{m+1}. \label{rpbasis-2}
\end{alignat}
Recall $(\Lambda_0,q,\sL)$ is a hermitian quadratic module for $\sL=\frac{1}{2}\CO_{F_0}$.

\subsection{A refinement of $\RM^\naive_{\tcbra{0}}$ in the (R-P) case}
\begin{defn}\label{rpdefn-M}
	Let $\RM_{\cbra{0}}$ be the functor $$\RM_{\cbra{0}}: (\Sch/\CO_{F})^\op\lra \Sets$$ which sends an $\CO_F$-scheme $S$ to the set of $\CO_S$-modules $\CF$ such that 
	\begin{enumerate}[label=\textbf{LM\arabic*}]
		\item ($\pi$-stability condition) $\CF$ is an $\CO_F\otimes_{\CO_{F_0}}\CO_S$-submodule of $\Lambda_0\otimes_{\CO_{F_0}}\CO_S$ and as an $\CO_S$-module, it is a locally direct summand of rank $n$.
		\item (Kottwitz condition) The action of $\pi\otimes 1\in\CO_F\otimes_{\CO_{F_0}}\CO_S$ on $\CF$ has characteristic polynomial $$\det(T-\pi\otimes 1\ |\ \CF)=(T-\pi)(T-\ol{\pi})^{n-1}.$$
%		\item (Perpendicularity condition)  The induced perfect pairing $$ s(-,-): (\Lambda_0\otimes_{\CO_{F_0}}\CO_S) \times (\Lambda_0^s\otimes_{\CO_{F_0}}\CO_S) \ra \CO_S$$ satisfies $s(\CF,\CG)=0$.
		\item Let $\CF^\perp$ be the orthogonal complement in $\Lambda_0^s\otimes_{\CO_{F_0}}\CO_S$ of $\CF$ with respect to the perfect pairing $$ s(-,-): (\Lambda_0\otimes_{\CO_{F_0}}\CO_S) \times (\Lambda_0^s\otimes_{\CO_{F_0}}\CO_S) \ra \CO_S.$$ We require the map $\Lambda_0\otimes_{\CO_{F_0}}\CO_S \ra (\frac{{\pi}}{2} \Lambda_0^s)\otimes_{\CO_{F_0}}\CO_S$ induced by $\Lambda_0\hookrightarrow \frac{\pi}{2}\Lambda_0^s$ sends $\CF$ to $\frac{{\pi}}{2}\CF^\perp,$ where $\frac{{\pi}}{2}\CF^\perp$ is the image of $\CF^\perp$ under the isomorphism $\frac{{\pi}}{2}:\Lambda_0^s\otimes_{\CO_{F_0}}\CO_S \simto \frac{{\pi}}{2}\Lambda_0^s\otimes_{\CO_{F_0}}\CO_S$.  \label{rpLMincl}
		\item (Hyperbolicity condition) The quadratic form $q: \Lambda_0\otimes_{\CO_{F_0}}\CO_S\ra \sL\otimes_{\CO_{F_0}}\CO_S$ induced by $q: \Lambda_0\ra \sL$ satisfies $q(\CF)=0$. 
		\item (Wedge condition) The action of $\pi\otimes 1-1\otimes\ol{\pi}\in \CO_F\otimes_{\CO_{F_0}}\CO_S$ on $\CF$ satisfies $$\wedge^2(\pi\otimes 1-1\otimes \ol{\pi}\ |\ \CF )=0.$$
	\end{enumerate}
\end{defn} 

\begin{remark}
    In this case, the strengthened spin condition appears to be implied by conditions \textbf{LM1-5} (although we do not have a proof). Since it does not play a role in our computation of the flat closure of $\RM_{\cbra{0}}$, we omit it from the definition of $\RM_{\cbra{0}}$. A similar remark applies to the case $I=\cbra{m}$ and (R-U) in \S \ref{sec-mru}.  
\end{remark}
As in the (R-U) case, the functor $\RM_{\cbra{0}}$ is representable and we have closed immersions $$\RM_{\cbra{0}}^\loc\subset\RM_{\cbra{0}}\subset\RM^\naive_{\tcbra{0}}$$ of projective schemes over $\CO_F$, where all schemes have the same generic fiber.

\subsection{An affine chart $\RU_{\cbra{0}}$ around the worst point}
Set $$\CF_0\coloneqq (\pi\otimes 1)(\Lambda_0\otimes_{\CO_{F_0}}k).$$ Then we can check that $\CF_0\in\RM_{\cbra{0}}(k)$. We call it the worst point of $\RM_{\cbra{0}}$. 
With respect to the basis \eqref{rpbasis-1}, the standard affine chart around $\CF_0$ in $\Gr(n,\Lambda_0)_{\CO_F}$ is the $\CO_F$-scheme of $2n\times n$ matrices $\left(\begin{smallmatrix}
	X\\ I_n
\end{smallmatrix}\right)$. As in \S \ref{subsec4.2}, we can obtain an affine neighborhood $\RU_{\cbra{0}}$ in $\RM_{\cbra{0}}$ containing $\CF_0$. We will analyze the conditions \textbf{LM1-5} and obtain the affine coordinate ring of $\RU_{\cbra{0}}$ in Proposition \ref{prop-U0rpring}. 

% The intersection $\RM_{\cbra{0}}$ with the standard affine chart in $\Gr(n,\Lambda_0)_{\CO_F}$. 
% The worst point $\CF_0$ of $\RM_{\cbra{0}}$ is contained in $\RU_{\cbra{0}}$ and corresponds to the closed point defined by $X=0$ and $\pi=0$. The conditions \textbf{LM1-5} yield the defining equations for $\RU_{\cbra{0}}$. We will analyze each condition as in the (R-U) case. A reader who is only interested in the affine coordinate ring of $\RU_{\cbra{0}}$ may proceed directly to Proposition \ref{prop-U0rpring}.

\subsubsection{Condition \textbf{LM1}} \label{rpsubsubsec-lm1}
Let $R$ be an $\CO_F$-algebra. With respect to the basis \eqref{rpbasis-1}, the operator $\pi\otimes 1$ acts on $\Lambda_0\otimes_{\CO_{F_0}}R$ via the matrix \begin{flalign*}
	    \begin{pmatrix}
	    	0 &-\pi_0I_n\\ I_n &0
	    \end{pmatrix}.
\end{flalign*}
Then the $\pi$-stability condition \textbf{LM1} on $\CF$ means there exists an $n\times n$ matrix $P\in M_{n\times n}(R)$ such that \begin{flalign*}
	   \begin{pmatrix}
	    	0 &-\pi_0I_n\\ I_n &0
	    \end{pmatrix} \begin{pmatrix}
	    	X\\ I_n
	    \end{pmatrix} =\begin{pmatrix}
	    	X\\ I_n
	    \end{pmatrix}P.
\end{flalign*}
We obtain $P=X$ and $X^2+\pi_0I_n=0$. 

\subsubsection{Condition \textbf{LM2}} We have already shown that $\pi\otimes 1$ acts on $\CF$ via right multiplication of $X$. Then as in the (R-U) case, the Kottwitz condition \textbf{LM2} translates to 
%\begin{flalign*}
%	   \det(T-X) =(T-\pi)(T-\ol{\pi})^{n-1}.
%\end{flalign*} Equivalently, \begin{flalign*}
%	  \det(T-(X+\pi I_n)) =(T-2\pi)T^{n-1}.
%\end{flalign*} Note that \begin{flalign*}
%	  \det(T-(X+\pi I_n)) =\sum_{i=0}^n (-1)^i\tr(\wedge^i(X+\pi I_n)) T^{n-i}.
%\end{flalign*}  Then by comparing the coefficients of $T^{n-i}$, the Kottwitz condition becomes 
\begin{flalign}
	    \tr(X+\pi I_n)=\pi-\ol{\pi}=2\pi,\ \tr\rbra{\wedge^i(X+\pi I_n)}=0, \text{\ for $i\geq 2$}. \label{kott}
\end{flalign} 

\subsubsection{Condition \textbf{LM3}} 
 With respect to the bases \eqref{rpbasis-1} and \eqref{rpbasis-2}, the perfect pairing $$s(-,-):(\Lambda_0\otimes_{\CO_{F_0}}R)\times (\Lambda_0^s\otimes_{\CO_{F_0}}R)\ra R  $$ and the map $\Lambda_0\otimes_{\CO_{F_0}}R \ra \frac{{\pi}}{2}\Lambda_0^s\otimes_{\CO_{F_0}}R$ are represented respectively by the matrices \begin{flalign*}
	   S=\begin{pmatrix}
	   	  0 &0 &H_{2m} &0 \\ 0 &0 &0 &1\\ -H_{2m} &0 &0 &0 \\ 0 &-1 &0 &0
	   \end{pmatrix} \text{\ and\ } N= \begin{pmatrix}
     	  I_{2m} &0 &0 &0\\ 0 &2 &0 &0\\ 0 &0 &I_{2m} &0\\ 0 &0 &0 &2
     \end{pmatrix}.
\end{flalign*}  
Then the Condition \textbf{LM3} translates to $
	   \matx^tS\rbra{N\matx }=0,$ or equivalently, \begin{flalign}
	\matx^t\begin{pmatrix}
		 0 &0 &H_{2m} &0\\ 0 &0 &0 &2\\ -H_{2m} &0 &0 &0\\ 0 &-2 &0 &0
	\end{pmatrix}\matx &=0. \label{rpeq-lm41} 
%	\\ \maty^t\begin{pmatrix}
%		 &\frac{t}{\pi_0}H_m & & &2H_m \\ \frac{t}{\pi_0}H_m & & &\frac{t^2-2\pi_0}{\pi_0}H_m \\ & &\frac{2}{t} & & &1\\ &\frac{t^2-2\pi_0}{\pi_0}H_m & & &tH_m \\ 2H_m & & &tH_m\\ & &1 & & &\frac{2\pi_0}{t}
%	\end{pmatrix} \maty &=0. \label{eq-lm42}  
\end{flalign}

 Write $$X=\begin{pmatrix}
 	  X_1 &X_2\\ X_3 &x
 \end{pmatrix},$$ where $X_1\in M_{2m}(R)$, $X_2\in M_{2m\times 1}(R)$, $X_3\in M_{1\times 2m}(R)$ and $x\in R$. Then \eqref{rpeq-lm41} translates to \begin{flalign*}
		                 \begin{pmatrix}
		                 	X_1^tH_{2m}-H_{2m}X_1 &2X_3^t-H_{2m}X_2\\ X_2^tH_{2m}-2X_3 &0
		                 \end{pmatrix} =0.
	                \end{flalign*}
	                
\subsubsection{Condition \textbf{LM4}}
Recall $\sL=\frac{1}{2}\CO_{F_0}$. With respect to the basis \eqref{rpbasis-1}, the induced $\sL\otimes_{\CO_{F_0}}R$-valued symmetric pairing on $\Lambda_0\otimes_{\CO_{F_0}}R$ is represented by the matrix \begin{flalign*}
	  S_1=\begin{pmatrix}
	     H_{2m} &0 &0 &0\\ 0 &2 &0 &0\\ 0 &0 &\pi_0H_{2m} &0\\ 0 &0 &0 &2\pi_0
\end{pmatrix}. 
\end{flalign*} 
The Condition \textbf{LM4} translates to \begin{gather*}
	 \matx^tS_1\matx=0 \text{\ and\ } \text{half of the diagonal  of $\matx^tS_1\matx$ equals zero}. 
\end{gather*} 
One can check that the diagonal entries of $\smatx^tS_1\smatx$ are indeed divisible by $2$ in $R$. Equivalently, we obtain \begin{flalign*}
		             &\begin{pmatrix}
		             	 X_1^tH_{2m}X_1+2X_3^tX_3+\pi_0H_{2m} &X_1^tH_{2m}X_2+2xX_3^t\\ X_2^tH_{2m}X_1+2xX_3 &X_2^tH_{2m}X_2+2x^2+2\pi_0
		             \end{pmatrix} =0, \\ &\text{half of the diagonal  
		             of $X_1^tH_{2m}X_1+2X_3^tX_3+\pi_0H_{2m}$ equals $0$}, \\ &\half\rbra{X_2^tH_{2m}X_2+2x^2+2\pi_0}=0.
	          \end{flalign*}
	          
\subsubsection{Condition \textbf{LM5}}
As $\pi\otimes 1$ acts as right multiplication by $X$ on $\CF$, the wedge condition on $\CF$ translates to \begin{flalign*}
	   \wedge^2(X+\pi I_n)=0.
\end{flalign*}

\subsubsection{A simplification of equations} As in the (R-U) case, we can simplify the above equations and obtain the following proposition.

\begin{prop}\label{prop-U0rpring}
	The scheme $\RU_{\cbra{0}}=\Spec\CO_F[X]/\CI$, where $\CI$ is the ideal generated by: 
     \begin{align*}
	     &\tr(X+\pi I_n)-2\pi,\  \wedge^2(X+\pi I_n),\ X_1^tH_{2m}-H_{2m}X_1, \ 2X_3^t-H_{2m}X_2, \\  &(\tr(X_1+\pi I_{2m})-2\pi)H_{2m} ({X_1}+\pi I_{2m})+2X_3^tX_3,\\ &\text{half of the diagonal  of $(\tr(X_1+\pi I_{2m})-2\pi)H_{2m} ({X_1}+\pi I_{2m})+2X_3^tX_3$}.
     \end{align*}
\end{prop}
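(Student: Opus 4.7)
The plan is to show that the matrix equations translated from \textbf{LM1}--\textbf{LM5} in \S5.2.1--\S5.2.5 generate the same ideal in $\CO_F[X]$ as the six families of generators listed in the proposition. The key algebraic input is the identity $\pi^2+\pi_0=0$, which gives
\[
X^2+\pi_0 I_n=(X+\pi I_n)^2-2\pi(X+\pi I_n).
\]
Combined with Lemma~\ref{lem-2wedge} applied to $X+\pi I_n$ and the linear trace condition $\tr(X+\pi I_n)=2\pi$ from \textbf{LM2}, this shows that the \textbf{LM1} relation $X^2+\pi_0 I_n=0$ is automatic modulo the wedge condition and the linear trace, and therefore contributes nothing new. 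The higher Kottwitz relations $\tr\wedge^i(X+\pi I_n)=0$ for $i\geq 2$ are direct consequences of $\wedge^2(X+\pi I_n)=0$, so only the linear trace survives from \textbf{LM2}.

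For \textbf{LM3}, the matrix equation yields precisely the two relations $X_1^tH_{2m}-H_{2m}X_1=0$ and $2X_3^t-H_{2m}X_2=0$, together with a trivially zero block. The second expresses $X_2=2H_{2m}X_3^t$, which I would use to eliminate $X_2$ inside the \textbf{LM4} block equations. For the $(1,1)$-block of \textbf{LM4}, the symmetry $X_1^tH_{2m}=H_{2m}X_1$ converts $X_1^tH_{2m}X_1+\pi_0 H_{2m}$ into $H_{2m}(X_1^2+\pi_0 I_{2m})$; applying the same wedge-plus-trace trick to $X_1+\pi I_{2m}$ (whose $2$-wedge vanishes because it sits as a submatrix of $\wedge^2(X+\pi I_n)$) and adding $2X_3^tX_3$ produces exactly the fifth listed generator
\[
(\tr(X_1+\pi I_{2m})-2\pi)H_{2m}(X_1+\pi I_{2m})+2X_3^tX_3.
\]

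The remaining blocks of \textbf{LM4} should reduce to consequences of the relations already listed. Substituting $X_2=2H_{2m}X_3^t$ turns the $(1,2)$-block $X_1^tH_{2m}X_2+2xX_3^t$ into $2(X_3X_1+xX_3)^t$, twice the transpose of the lower-left block of $X^2+\pi_0 I_n$, which vanishes modulo the earlier relations; the scalar $X_2^tH_{2m}X_2+2x^2+2\pi_0$ similarly becomes $2(X_3X_2+x^2+\pi_0)$, twice the $(2,2)$-entry of $X^2+\pi_0 I_n$. Since every diagonal entry of $H_{2m}$ is zero, the diagonal of the $(1,1)$-block simplifies and the "half of the diagonal" clause collapses into the single stated generator, while the scalar half-of-diagonal condition on the $(2,2)$-entry reduces to $X_3X_2+x^2+\pi_0=0$, again automatic.

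The main obstacle will be the bookkeeping for the "half of the diagonal" clauses: although $\CO_F[X]$ has characteristic zero, so any relation $2a=0$ mathematically forces $a=0$, the ideals $(2a)$ and $(a)$ differ, and this distinction is crucial for the special fiber, where $\pi$ becomes nilpotent. I would verify entry-by-entry, exploiting the anti-diagonality $(H_{2m})_{ii}=0$ to control the diagonal terms $(H_{2m}X_1^2)_{ii}=(X_1^2)_{2m+1-i,\,i}$, that no additional "half" generator is hidden in the translations of \textbf{LM4} beyond the single one listed.
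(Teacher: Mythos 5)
Your proposal is correct and follows essentially the same route as the paper, whose proof of this proposition is simply the remark ``as in the (R-U) case'': eliminate \textbf{LM1} and the higher Kottwitz relations via Lemma \ref{lem-2wedge} together with the trace and wedge conditions, use the \textbf{LM3} relations to reduce the off-diagonal blocks and the scalar entry of \textbf{LM4}, and identify the $(1,1)$-block with the fifth generator. Your flagged concern about the half-of-diagonal clauses (the distinction between the ideals $(2a)$ and $(a)$) is exactly the point the paper's ``one can check'' hides, and the entry-by-entry verification you sketch, using $(H_{2m})_{ii}=0$, the symmetry relation $X_1^tH_{2m}=H_{2m}X_1$, and $2\times 2$ minors of $X_1+\pi I_{2m}$, does go through.
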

  
Set $$ \wt{X}_1\coloneqq X_1+\pi I_{2m},\ \wt{X}\coloneqq \begin{pmatrix}
	  \wt{X}_1 \\ X_3
\end{pmatrix}.$$ 
Then we have the following proposition.
\begin{prop}
	The scheme $\RU_{\cbra{0}}$ is isomorphic to $\Spec\CO_F[\wt{X}]/\wt{\CI}$, where $\wt{\CI}$ is the ideal in $\CO_F[\wt{X}]$ generated by:  \begin{align*}
	    &\wedge^2(\wt{X}),\
	    {H}_{2m}\wt{X}_1-\wt{X}_1^t{H}_{2m} ,\
	    (\tr(\wt{X}_1)-2\pi)H_{2m} \wt{X}_1+2X_3^tX_3, \\
	    &\text{half of the diagonal  of $(\tr(\wt{X}_1)-2\pi)H_{2m} \wt{X}_1+2X_3^tX_3$. }
\end{align*}  
\end{prop}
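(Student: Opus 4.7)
The plan is to construct an explicit ring isomorphism
\[
\bar\phi:\; \CO_F[X]/\CI \;\simto\; \CO_F[\wt X]/\wt\CI
\]
that realizes the change of variables $\wt X_1 = X_1 + \pi I_{2m}$ and eliminates $X_2$ and $x$ using relations (a) and (d) of the previous proposition. First I would define the easier direction $\psi:\CO_F[\wt X]\to\CO_F[X]$ by $\wt X_1\mapsto X_1+\pi I_{2m}$ and $X_3\mapsto X_3$, and verify it carries each generator of $\wt\CI$ into $\CI$. The wedge generator $\wedge^2\wt X$ gives $2\times 2$ minors of the first $2m$ columns of $X+\pi I_n$, so it lies inside (b); the symmetry generator $H_{2m}\wt X_1 - \wt X_1^tH_{2m}$ equals $H_{2m}X_1 - X_1^tH_{2m}$ since the $\pi$-shifts cancel, matching (c); and the last two generators are (e) and (f) verbatim. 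Hence $\psi$ induces $\bar\psi:\CO_F[\wt X]/\wt\CI \to \CO_F[X]/\CI$.

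Next I would construct the inverse. Using (d) multiplied by $H_{2m}$ together with $H_{2m}^2=I_{2m}$ gives $X_2 \equiv 2H_{2m}X_3^t \pmod{\CI}$, and (a) gives $x \equiv \pi - \tr(\wt X_1)\pmod{\CI}$, so I would define $\phi:\CO_F[X]\to\CO_F[\wt X]$ by
\[
X_1 \mapsto \wt X_1 - \pi I_{2m},\quad X_3 \mapsto X_3,\quad X_2 \mapsto 2H_{2m}X_3^t,\quad x \mapsto \pi - \tr(\wt X_1),
\]
and check $\phi(\CI)\subset \wt\CI$. Generators (a), (c), (d), (e), (f) translate immediately, so the substantive task is the wedge condition (b), i.e.\ checking that every $2\times 2$ minor of
\[
\phi(X+\pi I_n) \;=\; \begin{pmatrix} \wt X_1 & 2H_{2m}X_3^t \\ X_3 & 2\pi - \tr(\wt X_1) \end{pmatrix}
\]
lies in $\wt\CI$. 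I would split into three cases. Minors not involving the last column are exactly the entries of $\wedge^2\wt X$. Minors involving the last column with both rows $i,k\le 2m$ reduce, via $\wedge^2\wt X = 0$ (applied using that $X_3$ is the last row of $\wt X$), to a multiple of $(\wt X_1)_{i,2m+1-k}-(\wt X_1)_{k,2m+1-i}$, which vanishes by rewriting the symmetry generator as $(\wt X_1)_{a,b} = (\wt X_1)_{2m+1-b,\,2m+1-a}$. Minors mixing the last row and last column become, after substitution, $(\wt X_1)_{i,j}(2\pi - \tr(\wt X_1)) - 2X_{3,2m+1-i}X_{3,j}$, which vanishes thanks to the quadratic generator $(\tr(\wt X_1)-2\pi)H_{2m}\wt X_1 + 2X_3^tX_3 = 0$ after relabeling the row index.

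Finally I would verify that $\bar\phi\circ\bar\psi$ and $\bar\psi\circ\bar\phi$ are the identity. On generators of $\CO_F[\wt X]$ this is immediate; on generators of $\CO_F[X]$ we recover $X_2 \equiv 2H_{2m}X_3^t$ by (d) and $x \equiv \pi - \tr(X_1) - 2m\pi$, which equals $x$ modulo $\CI$ via (a) since $n = 2m+1$ gives $\tr(X_1)+x = (1-2m)\pi$. The main obstacle is the wedge-minor case analysis in the middle step, which is where all three nontrivial generators of $\wt\CI$ are used simultaneously; the remaining verifications are routine bookkeeping.
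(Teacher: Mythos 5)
Your proof is correct and follows essentially the same route as the paper, which simply notes that $X_2$ and $x$ are determined by $X_1$ and $X_3$ via the relations $2X_3^t=H_{2m}X_2$ and the trace condition and then performs the change of variables $\wt{X}_1=X_1+\pi I_{2m}$; your explicit two-sided ideal comparison, including the case analysis of the minors of $\phi(X+\pi I_n)$ using the symmetry and quadratic generators, is just the detailed verification the paper leaves implicit.
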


\begin{defn}
	Denote by $\RU^\rfl_{\cbra{0}}$ the closed subscheme of $\RU_{\cbra{0}}=\Spec\CO_F[\wt{X}]/\wt{\CI}$ defined by the ideal $\wt{\CI}^\rfl\subset\CO_F[\wt{X}]$ generated by:  \begin{align*}
	    &\wedge^2(\wt{X}),\  
	    {H}_{2m}\wt{X}_1-\wt{X}_1^t{H}_{2m},\   
	    (\half{\tr(\wt{X}_1)}-\pi)H_{2m} \wt{X}_1+X_3^tX_3. 
\end{align*}
    Note that $\tr(\wt{X}_1)$ is divisible by $2$ by the relation ${H}_{2m}\wt{X}_1=\wt{X}_1^t{H}_{2m}$.
\end{defn}

\subsection{Global results}
As in the (R-U) case, we can prove the following proposition.
\begin{prop}
	\label{rpprop-Ufl}
    \begin{enumerate}
    	\item The scheme $\RU^\rfl_{\cbra{0}}$ is flat over $\CO_F$ of relative dimension $n-1$. In particular, $\RU^\rfl_{\cbra{0}}$ is isomorphic to an open subscheme of $\RM_{\cbra{0}}^\loc$ containing the worst point. Furthermore, $\RU^\rfl_{\cbra{0}}$ is normal, Cohen-Macaulay, and smooth over $\CO_F$ on the complement of the worst point. The special fiber $\RU_{\cbra{0}}^\rfl\otimes_{\CO_F}k$ is (geometrically) reduced and irreducible. 
    	\item $\RU_{\cbra{0}}$ and $\RU^\rfl_{\cbra{0}}$ have the same underlying topological space. 
    \end{enumerate} 
\end{prop}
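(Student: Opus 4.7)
The strategy is to transplant the arguments from the (R-U) case (\S\ref{sec-0ru}) almost verbatim, exploiting the fact that the defining ideal $\wt{\CI}^\rfl$ in the (R-P) setting has the same shape as in (R-U), only with the coefficient $\frac{\pi}{2\bar\pi}\tr(H_{2m}Y)+\pi\sqrt{\theta}$ replaced by $\frac{1}{2}\tr(H_{2m}Y)-\pi$. After the change of variables $Y = H_{2m}\wt{X}_1$, the relation $H_{2m}\wt{X}_1 = \wt{X}_1^t H_{2m}$ becomes $Y = Y^t$, and $\wt{\CI}^\rfl$ takes exactly the form displayed in Theorem~\ref{thm16}(1) for (R-P), so that $\CR^\rfl \simeq \CO_F[Y|X_3]/(\wedge^2(Y|X_3),\, Y-Y^t,\, (\tfrac{1}{2}\tr(H_{2m}Y)-\pi)Y + X_3^t X_3)$.

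First, I would prove smoothness of $\RU^\rfl_{\cbra{0}}$ away from the worst point by analyzing, for each $1\leq \ell\leq 2m$, the principal open $D(y_{\ell\ell})$. Just as in the proof of Lemma~\ref{lem-smoothoutside}, the wedge relation forces $y_{ij} = y_{\ell\ell}^{-1} y_{\ell i} y_{\ell j}$ and $x_i = y_{\ell\ell}^{-1} x_\ell y_{\ell i}$, so that $D(y_{\ell\ell})$ becomes a hypersurface of the form $x_\ell^2 + (\tfrac{1}{2}\tr(H_{2m}Y) - \pi)y_{\ell\ell} = 0$ after eliminating the other variables, and the Jacobian criterion gives smoothness over $\CO_F$ of relative dimension $n-1$. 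This also shows $\RU^\rfl_{\cbra{0}}$ is irreducible with geometrically integral special fiber.

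Next, I would establish Cohen-Macaulayness by imitating Lemma~\ref{lem-spfiberCM}: map the polynomial ring $\CS = \CO_F[y_{ii} : 1\leq i\leq 2m]$ into $\CR^\rfl$, note that $y_{ij}^2 = y_{ii}y_{jj}$ and that $x_i^2$ can be solved for in terms of the $y_{ij}$'s (using the diagonal entries of $(\tfrac12\tr(H_{2m}Y)-\pi)Y + X_3^t X_3 = 0$), so $\CR^\rfl$ is a finite $\CS$-algebra of the same Krull dimension. The resulting morphism $\Spec\CR^\rfl \to \BA^{2m}_{\CO_F}$ is flat on the complement of the worst point by smoothness together with miracle flatness; at the worst point I would reuse the exact dimension count of Lemma~\ref{lembas} to exhibit the monomial basis of cardinality $2^{2m}$ for the generic and special fibers, invoking Nakayama to conclude flatness. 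This yields the Cohen-Macaulay property, hence flatness of $\RU^\rfl_{\cbra{0}}$ over $\CO_F$ by miracle flatness (since $\Spec\CO_F$ is regular), normality by Serre's $R_1 + S_2$, and reducedness of the special fiber since $\pi$ is a non-zerodivisor.

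Finally, for part (2), I would show that the reductions mod $\pi$ of $\wt{\CI}$ and $\wt{\CI}^\rfl$ have the same radical. The only additional generators of $\wt{\CI}$ beyond the ``diagonal'' part lie in $\wt{\CI}^\rfl$, so one direction is trivial; for the other, writing $M_{ij} = \alpha y_{ij} + x_ix_j$ with $\alpha = \tfrac12\tr(H_{2m}Y)$, the computation $M_{ij}^2 - M_{ii}M_{jj} = \alpha^2(y_{ij}^2 - y_{ii}y_{jj}) - x_i^2 M_{jj} - x_j^2 M_{ii}$ modulo $2$ (valid since $\Char k = 2$) shows $M_{ij}^2 \in \wt{\CI}\otimes k$, exactly as in the proof of Lemma~\ref{prop-topflat}; this is the same computation, and it works uniformly in (R-P) without any case split on $\omega(t)$ versus $\omega(\pi_0)$. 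The main obstacle is the same as in (R-U), namely the dimension count underlying the Cohen-Macaulay argument, but the monomial basis from Lemma~\ref{lembas} transfers directly since the relations among the $y_{ij}, x_i$ have the same combinatorial shape.
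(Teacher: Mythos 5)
Your proposal is correct and is essentially the paper's own argument: the paper proves this proposition simply by invoking the (R-U) case, and your transfer of Lemmas \ref{lem-smoothoutside}--\ref{prop-topflat} (the chart computation on $D(y_{\ell\ell})$, the finite map to $\CS=\CO_F[y_{ii}]$ with the $2^{2m}$ monomial basis and miracle flatness, Serre's criterion, and the characteristic-two radical computation $M_{ij}^2=M_{ii}M_{jj}+\alpha^2(y_{ij}^2-y_{ii}y_{jj})-x_i^2M_{jj}-x_j^2M_{ii}$) goes through verbatim with the coefficient $\tfrac12\tr(H_{2m}Y)-\pi$ in place of $\tfrac{\pi}{2\ol{\pi}}\tr(H_{2m}Y)+\pi\sqrt{\theta}$. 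Your observation that in (R-P) no analogue of the $\omega(t)=\omega(\pi_0)$ dichotomy of Lemma \ref{lem-moduli} is available, so the radical argument is always needed for part (2), is also accurate.
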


Similar arguments as in the proof of Lemma \ref{lem-Gorbit} imply that the special fiber $\RM_{\cbra{0}}\otimes_{\CO_F}\ol{k}$ has only two $\sH_{\cbra{0}}(\ol{k})$-orbits. Together with Proposition \ref{rpprop-Ufl}, we can deduce Theorem \ref{thm-intromain} and \ref{thm16} in the case $I=\cbra{0}$ and (R-P).

%\begin{thm}
%    Let $\RM_{\cbra{0}}$ be the scheme defined as in Definition \ref{rpdefn-M}.
%	\begin{enumerate}
%	    \item The closed immersion $\RM_{\cbra{0}}^\loc\hookrightarrow \RM_{\cbra{0}}$ induces an equality on the underlying topological spaces. 
%		\item The local model $\RM^\loc_{\cbra{0}}$ is normal and Cohen-Macaulay with geometrically integral special fiber. Moreover, $\RM^\loc_{\cbra{0}}$ is smooth over $\CO_F$ on the complement of a single closed point. 
%	\end{enumerate}
%\end{thm}

\section{The case $I=\cbra{m}$ and (R-U)} \label{sec-mru}
In this section, we consider the case when $F/F_0$ is of (R-U) type and $I=\cbra{m}$. In particular, we have $$\pi^2-t\pi+\pi_0=0, $$ where $t\in \CO_{F_0}$ with $\pi_0|t\text{\ and\ }t|2$.  Consider the following ordered $\CO_{F_0}$-basis of $\Lambda_m$ and $\Lambda_m^s$: \begin{alignat}{2}
	   \Lambda_m&: \frac{\ol{\pi}}{t} e_{m+2},\ldots,\frac{\ol{\pi}}{t} e_n,\pi\inverse e_1,\ldots,\pi\inverse e_m,e_{m+1},\frac{\pi_0}{t}e_{m+2},\ldots,\frac{\pi_0}{t}e_n, e_1,\ldots,e_m,\pi e_{m+1}, \label{mbasis-1}  \\ \Lambda_m^s&: \ol{\pi} e_{m+2},\ldots, \ol{\pi} e_n,\frac{t}{\pi}e_1,\ldots,\frac{t}{\pi}e_m,  e_{m+1}, \pi_0e_{m+2},\ldots, \pi_0e_n, te_1,\ldots,te_m, \pi e_{m+1}. \label{mbasis-2}
\end{alignat}
Recall $(\Lambda_m,q,\sL)$ is a hermitian quadratic module for $\sL=t\inverse \CO_{F_0}$.

\subsection{A refinement of $\RM^\naive_{\tcbra{m}}$ in the (R-U) case}
\begin{defn}\label{mdefn-M}
	Let $\RM_{\cbra{m}}$ be the functor $$\RM_{\cbra{m}}: (\Sch/\CO_{F})^\op\lra \Sets$$ which sends an $\CO_F$-scheme $S$ to the set of $\CO_S$-modules $\CF$ such that 
	\begin{enumerate}[label=\textbf{LM\arabic*}]
		\item ($\pi$-stability condition) $\CF$ is an $\CO_F\otimes_{\CO_{F_0}}\CO_S$-submodule of $\Lambda_m\otimes_{\CO_{F_0}}\CO_S$ and as an $\CO_S$-module, it is a locally direct summand of rank $n$.
		\item (Kottwitz condition) The action of $\pi\otimes 1\in\CO_F\otimes_{\CO_{F_0}}\CO_S$ on $\CF$ has characteristic polynomial $$\det(T-\pi\otimes 1\ |\ \CF)=(T-\pi)(T-\ol{\pi})^{n-1}.$$
		\item Let $\CF^\perp$ be the orthogonal complement in $\Lambda_m^s\otimes_{\CO_{F_0}}\CO_S$ of $\CF$ with respect to the perfect pairing $$ s(-,-): (\Lambda_m\otimes_{\CO_{F_0}}\CO_S) \times (\Lambda_m^s\otimes_{\CO_{F_0}}\CO_S) \ra \CO_S.$$ We require that the map $\Lambda_m\otimes_{\CO_{F_0}}\CO_S \ra (t\inverse \Lambda_m^s)\otimes_{\CO_{F_0}}\CO_S$ induced by the inclusion $\Lambda_m\hookrightarrow t\inverse\Lambda_m^s$ sends $\CF$ to $t\inverse \CF^\perp,$ where $t\inverse \CF^\perp$ is the image of $\CF^\perp$ under the isomorphism $t\inverse:\Lambda_m^s\otimes_{\CO_{F_0}}\CO_S \simto t\inverse\Lambda_m^s\otimes_{\CO_{F_0}}\CO_S$.  
		\item (Hyperbolicity condition) The quadratic form $q: \Lambda_m\otimes_{\CO_{F_0}}\CO_S\ra \sL\otimes_{\CO_{F_0}}\CO_S$ induced by $q: \Lambda_m\ra\sL$ satisfies $q(\CF)=0$. 
		\item (Wedge condition) The action of $\pi\otimes 1-1\otimes\ol{\pi}\in \CO_F\otimes_{\CO_{F_0}}\CO_S$ on $\CF$ satisfies $$\wedge^2(\pi\otimes 1-1\otimes \ol{\pi}\ |\ \CF )=0.$$
	\end{enumerate}
\end{defn} 
Then $\RM_{\cbra{m}}$ is representable and we have closed immersions $$\RM^\loc_{\cbra{m}}\subset\RM_{\cbra{m}}\subset\RM^\naive_{\tcbra{m}}$$ of projective schemes over $\CO_F$, where all schemes have the same generic fiber.

\subsection{An affine chart $\RU_{\cbra{m}}$ around the worst point}\label{subsec6.2}
Set $$\CF_0\coloneqq (\pi\otimes 1)(\Lambda_m\otimes_{\CO_{F_0}}k).$$ Then we can check that $\CF_0\in\RM_{\cbra{m}}(k)$. We call it the worst point of $\RM_{\cbra{m}}$. 
With respect to the basis \eqref{mbasis-1}, the standard affine chart around $\CF_0$ in $\Gr(n,\Lambda_m)_{\CO_F}$ is the $\CO_F$-scheme of $2n\times n$ matrices $\left(\begin{smallmatrix}
	X\\ I_n
\end{smallmatrix}\right)$. 
As in the case $I=\tcbra{0}$, we can obtain an affine neighborhood $\RU_{\cbra{m}}$ in $\RM_{\cbra{m}}$ containing $\CF_0$. We will analyze the conditions \textbf{LM1-5} and obtain the affine coordinate ring of $\RU_{\cbra{m}}$ in Proposition \ref{prop-Umruring}. 
% We denote by $\RU_{\cbra{m}}$ the intersection of $\RM_{\cbra{m}}$ with the standard affine chart in $\Gr(n,\Lambda_m)_{\CO_F}$. The worst point $\CF_0$ of $\RM_{\cbra{m}}$ is contained in $\RU_{\cbra{m}}$ and corresponds to the point defined by $X=0$ and $\pi=0$. The conditions \textbf{LM1-5} yield the defining equations for $\RU_{\cbra{m}}$. We will analyze each condition as in the case $I=\cbra{0}$. A reader who is only interested in the affine coordinate ring of $\RU_{\cbra{m}}$ may proceed directly to Proposition \ref{prop-Umruring}.

\subsubsection{Condition \textbf{LM1}} 
Let $R$ be an $\CO_F$-algebra. With respect to the basis \eqref{mbasis-1}, the operator $\pi\otimes 1$ acts on $\Lambda_m\otimes_{\CO_{F_0}}R$ via the matrix \begin{flalign*}
	    \begin{pmatrix}
	    	0 &-\pi_0I_n\\ I_n &tI_n
	    \end{pmatrix}.
\end{flalign*}
Then the $\pi$-stability condition \textbf{LM1} on $\CF$ means there exists an $n\times n$ matrix $P\in M_{n\times n}(R)$ such that \begin{flalign*}
	   \begin{pmatrix}
	    	0 &-\pi_0I_n\\ I_n &tI_n
	    \end{pmatrix} \begin{pmatrix}
	    	X\\ I_n
	    \end{pmatrix} =\begin{pmatrix}
	    	X\\ I_n
	    \end{pmatrix}P.
\end{flalign*}
We obtain $P=X+tI_n$ and $X^2+tX+\pi_0I_n=0$. 

\subsubsection{Condition \textbf{LM2}} We have already shown that $\pi\otimes 1$ acts on $\CF$ via right multiplication of $X+t I_n$. Then the Kottwitz condition \textbf{LM2} translates to 
%\begin{flalign*}
%	   \det(T-X) =(T-\pi)(T-\ol{\pi})^{n-1}.
%\end{flalign*} Equivalently, \begin{flalign*}
%	  \det(T-(X+\pi I_n)) =(T-2\pi)T^{n-1}.
%\end{flalign*} Note that \begin{flalign*}
%	  \det(T-(X+\pi I_n)) =\sum_{i=0}^n (-1)^i\tr(\wedge^i(X+\pi I_n)) T^{n-i}.
%\end{flalign*}  Then by comparing the coefficients of $T^{n-i}$, the Kottwitz condition becomes 
\begin{flalign}
	    \tr(X+\pi I_n)=\pi-\ol{\pi},\ \tr\rbra{\wedge^i(X+\pi I_n)}=0, \text{\ for $i\geq 2$}. 
\end{flalign} 

\subsubsection{Condition \textbf{LM3}} 
 With respect to the bases \eqref{mbasis-1} and \eqref{mbasis-2}, the perfect pairing $$s(-,-):(\Lambda_m\otimes_{\CO_{F_0}}R)\times (\Lambda_m^s\otimes_{\CO_{F_0}}R)\ra R $$ and the map $\Lambda_m\otimes_{\CO_{F_0}}R \ra \frac{{1}}{t}\Lambda_m^s\otimes_{\CO_{F_0}}R$  are represented respectively by the matrices \begin{flalign*}
	   S=\begin{pmatrix}
	   	  \frac{2}{t}H_{2m} &0 &H_{2m} &0 \\ 0 &\frac{2}{t} &0 &1\\ H_{2m} &0 &\frac{2\pi_0}{t}H_{2m} &0 \\ 0 &1 &0 &\frac{2\pi_0}{t}
	   \end{pmatrix} \text{\ and\ } N= \begin{pmatrix}
     	  I_{2m} &0 &0 &0\\ 0 &t &0 &0\\ 0 &0 &I_{2m} &0\\ 0 &0 &0 &t
     \end{pmatrix}.
\end{flalign*} 
%One can check that for $Y\coloneqq \begin{pmatrix}
%	    	H_{2m} &0 \\ 0 &1
%	    \end{pmatrix}X^t\begin{pmatrix}
%	    	H_{2m} &0 \\ 0 &1
%	    \end{pmatrix}$, the matrix $\maty$ corresponds to $\CF^\perp$ with respect to the basis \eqref{mbasis-2}.
Then the Condition \textbf{LM3} translates to $
	   \matx^tS\rbra{N\matx }=0,$ or equivalently, \begin{flalign}
	\matx^t\begin{pmatrix}
		 \frac{2}{t}H_{2m} &0 &H_{2m} &0\\ 0 &2 &0 &t\\ H_{2m} &0 &\frac{2\pi_0}{t}H_{2m} &0\\ 0 &t &0 &2\pi_0
	\end{pmatrix}\matx &=0. \label{meq-lm41}  
\end{flalign}
It amounts to the following equation.
\begin{flalign}
	(\frac{2}{t}X^t+I_n) \begin{pmatrix}
		H_{2m} &0 \\ 0 &t
	\end{pmatrix}X +X^t\begin{pmatrix}
		H_{2m} &0 \\ 0 &t
	\end{pmatrix} +\begin{pmatrix}
		\frac{2\pi_0}{t} H_{2m} &0 \\ 0 &2\pi_0
	\end{pmatrix} =0.  \label{eq-m1}
\end{flalign}
Note that the $\pi$-stability condition \textbf{LM1} on $\CF$ implies \begin{flalign*}
	  \frac{2}{t}(X^t)^2+2X^t+\frac{2\pi_0}{t}I_n=0, \ \text{and hence\ } \trbra{\frac{2}{t}X^t+I_n}^2=(1-\frac{4\pi_0}{t^2})I_n=\theta I_n. \label{eq-theta}
\end{flalign*}
Multiplying $\frac{2}{t}X^t+I_n$ on both sides of \eqref{eq-m1}, we can obtain \begin{flalign*}
	    \begin{pmatrix}
		H_{2m} &0 \\ 0 &t
	\end{pmatrix}X =X^t\begin{pmatrix}
		H_{2m} &0 \\ 0 &t
	\end{pmatrix}.
\end{flalign*} 
Write $$X=\begin{pmatrix}
 	  X_1 &X_2\\ X_3 &x
 \end{pmatrix},$$ where $X_1\in M_{2m}(R)$, $X_2\in M_{2m\times 1}(R)$, $X_3\in M_{1\times 2m}(R)$ and $x\in R$. Equivalently, we obtain  \begin{gather*}
	    H_{2m}X_1=X_1^tH_{2m},\ H_{2m}X_2=tX_3^t.
\end{gather*}

\subsubsection{Condition \textbf{LM4}}
Recall $\sL=\frac{1}{t}\CO_{F_0}$. With respect to the basis \eqref{mbasis-1}, the induced $\sL\otimes_{\CO_{F_0}}R$-valued symmetric pairing on $\Lambda_m\otimes_{\CO_{F_0}}R$ is represented by the matrix \begin{flalign}
	  S_1=\begin{pmatrix}
		 \frac{2}{t}H_{2m} &0 &H_{2m} &0\\ 0 &2 &0 &t\\ H_{2m} &0 &\frac{2\pi_0}{t}H_{2m} &0\\ 0 &t &0 &2\pi_0
	\end{pmatrix}.   \label{S1mRU}
\end{flalign} 
The Condition \textbf{LM4} translates to \begin{gather*}
	 \matx^tS_1\matx=0 \text{\ and} \ \text{half of the diagonal  of $\matx^tS_1\matx$ equals zero}. 
\end{gather*} 
Equivalently, we obtain \begin{align*}
		             &\begin{pmatrix}
		             	\frac{2}{t} X_1^tH_{2m}X_1+2X_3^tX_3+H_{2m}X_1+X_1^tH_{2m}+\frac{2\pi_0}{t}H_{2m} &\frac{2}{t}X_1^tH_{2m}X_3+2xX_3^t+H_{2m}X_2+tX_3^t \\ \frac{2}{t}X_2^tH_{2m}X_1+2xX_3+tX_3+X_2^tH_{2m} &\frac{2}{t}X_2^tH_{2m}X_2+2x^2+2tx+2\pi_0
		             \end{pmatrix} =0, \\ &\text{half of the diagonal of $ \frac{2}{t} X_1^tH_{2m}X_1+2X_3^tX_3+H_{2m}X_1+X_1^tH_{2m}+\frac{2\pi_0}{t}H_{2m}$ equals $0$}, \\ &\half\trbra{\frac{2}{t}X_2^tH_{2m}X_2+2x^2+2tx+2\pi_0}=0.
	          \end{align*}
	          
\subsubsection{Condition \textbf{LM5}}
As $\pi\otimes 1$ acts as right multiplication by $X+tI_n$ on $\CF$, the wedge condition \textbf{LM5} on $\CF$ translates to \begin{flalign*}
	   \wedge^2(X+\pi I_n)=0.
\end{flalign*}

\subsubsection{A simplification of equations} As in the case $I=\cbra{0}$, we can simplify the above equations and obtain the following.
\begin{prop}\label{prop-Umruring}
	The scheme $\RU_{\cbra{m}}=\Spec\CO_F[X]/\CI$, where $\CI$ is the ideal generated by: \begin{align*}
	     &\tr(X+\pi I_n)-\pi+\ol{\pi},\ \wedge^2(X+\pi I_n),\ X_1^tH_{2m}-H_{2m}X_1, \ tX_3^t-H_{2m}X_2, \\  &\text{half of the diagonal  of $( \frac{2}{t} \tr(X_1+\pi I_{2m})+2\sqrt{\theta} )H_{2m} ({X_1}+\pi I_{2m})+2X_3^tX_3$}.
         \end{align*} 
\end{prop}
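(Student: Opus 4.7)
The plan is to translate each of conditions \textbf{LM1}--\textbf{LM5} into matrix equations in the entries of $X$ with respect to the basis \eqref{mbasis-1}, then show that almost all of these equations are redundant in the presence of the others, leaving only the short list in the statement. The strategy parallels the case $I=\{0\}$ treated in \S\ref{sec-0ru}, but is simpler because no strengthened spin condition is imposed.

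First, I would handle \textbf{LM1} and \textbf{LM2} together. The matrix of $\pi\otimes 1$ on $\Lambda_m\otimes_{\CO_{F_0}}R$ with respect to \eqref{mbasis-1} is $\begin{psmallmatrix}0 & -\pi_0 I_n\\ I_n & tI_n\end{psmallmatrix}$; stability forces $\pi\otimes 1$ to act on $\CF$ as right multiplication by $X+tI_n$ and gives the equation $X^2+tX+\pi_0 I_n=0$, i.e.\ $(X+\pi I_n)^2+(t-2\pi)(X+\pi I_n)=0$. By Lemma~\ref{lem-2wedge}, modulo $\wedge^2(X+\pi I_n)$ we have $(X+\pi I_n)^2\equiv \tr(X+\pi I_n)(X+\pi I_n)$, so this equation is implied by $\tr(X+\pi I_n)=\pi-\ol{\pi}$ together with $\wedge^2(X+\pi I_n)=0$. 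For the Kottwitz polynomial, the expansion $\det(T-(X+\pi I_n))=\sum_i(-1)^i\tr(\wedge^i(X+\pi I_n))T^{n-i}$ together with $\wedge^2(X+\pi I_n)=0$ kills all coefficients for $i\ge 2$, so \textbf{LM2} reduces to the single trace equation.

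Next, for \textbf{LM3} I would carry out the computation $\smatx^t S N \smatx=0$ using the explicit $S$ and $N$ given in the text. As indicated, one multiplies \eqref{eq-m1} on the left by $\tfrac{2}{t}X^t+I_n$; since $\bigl(\tfrac{2}{t}X^t+I_n\bigr)^2=\theta I_n$ is a unit in $\CO_F$, this operation is invertible, and one obtains the cleaner relation $\begin{psmallmatrix}H_{2m}&0\\0&t\end{psmallmatrix}X = X^t\begin{psmallmatrix}H_{2m}&0\\0&t\end{psmallmatrix}$. Writing $X$ in block form yields precisely $H_{2m}X_1=X_1^tH_{2m}$ and $H_{2m}X_2=tX_3^t$ (the bottom-right block becomes a tautology $tx=tx$). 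In particular, $X_2$ and the column $tX_3^t$ determine each other.

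For \textbf{LM4}, the matrix $S_1$ in \eqref{S1mRU} coincides with the block matrix appearing in \textbf{LM3} (the induced symmetric pairing factors through the hermitian pairing). After subtracting the \textbf{LM3}-relation one finds that the off-diagonal block equations and the vanishing of the non-anti-diagonal entries of the diagonal blocks become consequences of \textbf{LM3}, \textbf{LM5} and the trace equation, via the same kind of $X^2\equiv \tr(X) X \bmod \wedge^2 X$ manipulation: writing $X_1=\wt X_1-\pi I_{2m}$ and using $\wt X_1^2\equiv \tr(\wt X_1)\wt X_1$, each quadratic expression $\tfrac{2}{t}X_1^tH_{2m}X_1$ becomes linear in $\wt X_1$ modulo the wedge relations, and after substitution the remaining genuine constraint is that the anti-diagonal entries of $\bigl(\tfrac{2}{t}\tr(\wt X_1)+2\sqrt{\theta}\bigr)H_{2m}\wt X_1+2X_3^tX_3$ vanish, which is the ``half of the diagonal'' condition in the statement. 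The half-of-diagonal phrasing appears because, in characteristic $2$, the quadratic form carries strictly more information than its associated bilinear form only along the diagonal; the factors of $2$ are absorbed by passing to half of the (even) entries.

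The main obstacle I anticipate is the bookkeeping in the last step: systematically showing that every entry of every matrix arising from \textbf{LM4} either coincides with an entry from \textbf{LM3}, is a consequence of the wedge condition applied to $\wt X_1$, or reduces to one of the ``half-diagonal'' relations. This requires carefully tracking which identity ($X^2+tX+\pi_0 I_n=0$, $H_{2m}X_1=X_1^tH_{2m}$, $H_{2m}X_2=tX_3^t$, or $\wedge^2(X+\pi I_n)=0$) is used to eliminate each off-diagonal constraint, and verifying that no independent equation is lost. Once this is done, assembling the reduced generators yields exactly the ideal $\CI$ in the proposition.
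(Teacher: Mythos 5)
Your proposal is correct and follows essentially the same route as the paper: translate \textbf{LM1}--\textbf{LM5} into matrix identities with respect to the basis \eqref{mbasis-1}, use the invertible factor $\tfrac{2}{t}X^t+I_n$ (whose square is $\theta I_n$ modulo the $\pi$-stability relation) to reduce \textbf{LM3} to the block symmetry relations, observe that the matrix part of \textbf{LM4} coincides with \textbf{LM3} so only the half-diagonal conditions are new, and then eliminate the redundant equations via Lemma \ref{lem-2wedge} exactly as in the $I=\{0\}$ case. The only cosmetic quibble is your explanation for why $S_1$ agrees with the \textbf{LM3} matrix (this is just the explicit computation of $SN$ for $\Lambda_m\hookrightarrow t^{-1}\Lambda_m^s$), which does not affect the argument.
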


Set $$ \wt{X}_1\coloneqq X_1+\pi I_{2m},\ \wt{X}\coloneqq \begin{pmatrix}
	  \wt{X}_1 \\ X_3
\end{pmatrix}.$$ Then we have the following proposition.

\begin{prop}
	The scheme $\RU_{\cbra{m}}$ is isomorphic to $\Spec\CO_F[\wt{X}]/\wt{\CI}$, where $\wt{\CI}$ is the ideal generated by  \begin{align*}
	    &\wedge^2(\wt{X}),\
	    {H}_{2m}\wt{X}_1-\wt{X}_1^t{H}_{2m} ,\
	    \text{half of the diagonal  of $( \frac{2}{t} \tr(\wt{X}_1)+2\sqrt{\theta} )H_{2m} \wt{X}_1+2X_3^tX_3$. }
        \end{align*}
\end{prop}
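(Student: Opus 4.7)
The plan is to construct an explicit isomorphism between $\CO_F[\wt{X}]/\wt{\CI}$ and the presentation $\CO_F[X]/\CI$ of $\RU_{\cbra{m}}$ from the previous proposition. The key observation is that the cross-block generator $tX_3^t - H_{2m}X_2$ (together with $H_{2m}^2 = I_{2m}$) determines $X_2 = tH_{2m}X_3^t$, and the trace generator $\tr(X + \pi I_n) - \pi + \ol{\pi}$ determines $x = -\ol{\pi} - \tr(\wt{X}_1)$. Hence the variables $X_2$ and $x$ can be eliminated from the presentation, and one expects the remaining generators of $\CI$ to correspond precisely to those of $\wt{\CI}$.

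Concretely, I would define a ring homomorphism $\varphi: \CO_F[X] \to \CO_F[\wt{X}]/\wt{\CI}$ sending $X_1 \mapsto \wt{X}_1 - \pi I_{2m}$, $X_3 \mapsto X_3$, $X_2 \mapsto tH_{2m} X_3^t$, and $x \mapsto -\ol{\pi} - \tr(\wt{X}_1)$, together with a reverse map $\psi: \CO_F[\wt{X}] \to \CO_F[X]/\CI$ sending $\wt{X}_1 \mapsto X_1 + \pi I_{2m}$ and $X_3 \mapsto X_3$. The trace, cross-block, symmetry, and ``half of the diagonal'' generators of $\CI$ either map to zero by direct substitution or match the corresponding generators of $\wt{\CI}$ after noting that the $\pi I_{2m}$ shift drops out of the commutator with $H_{2m}$. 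Mutual inverseness on $\wt{X}_1$ and $X_3$ is immediate; on $X_2$ and $x$ it follows from the fact that the two eliminating relations already lie in $\CI$.

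The one substantial computation is to verify that $\varphi$ carries the wedge condition $\wedge^2(X+\pi I_n)=0$ into $\wt{\CI}$, and that $\psi$ carries $\wedge^2(\wt{X})=0$ into $\CI$. For $\psi$, the entries of $\wedge^2(\wt{X})$ are precisely those $2\times 2$ minors of $X+\pi I_n$ which avoid the last column, so they automatically lie in $\CI$. For $\varphi$, after substitution the matrix $X+\pi I_n$ becomes $\left(\begin{smallmatrix} \wt{X}_1 & tH_{2m}X_3^t \\ X_3 & 2\pi - t - \tr(\wt{X}_1)\end{smallmatrix}\right)$; minors involving the new last column can be shown to vanish modulo $\wt{\CI}$ by using the anti-diagonal symmetry $H_{2m}\wt{X}_1 = \wt{X}_1^t H_{2m}$ to rewrite certain entries of $\wt{X}_1$, then combining $\wedge^2(\wt{X})=0$ with the ``half of the diagonal'' relation, which expresses squares of entries of $X_3$ in terms of entries of $\wt{X}_1$ multiplied by $\sqrt{\theta}$. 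The required cancellations hinge on the identity $2\pi - t + t\sqrt{\theta} = 0$, which follows directly from $\sqrt{\theta}=1-2\pi/t$ in the (R-U) case. This wedge verification is the main obstacle, since it requires coordinating all three generators of $\wt{\CI}$ simultaneously; once it is in hand, the two maps $\varphi$ and $\psi$ descend to the quotients and are mutually inverse by construction.
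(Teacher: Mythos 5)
Your overall route --- eliminating $X_2$ and $x$ via $X_2=tH_{2m}X_3^t$ and $x=-\ol{\pi}-\tr(\wt{X}_1)$ and checking that the surviving generators match --- is exactly the (unwritten) argument the paper has in mind, and most of your checks are fine: the trace and cross-block generators map to zero, the symmetry generator is unaffected by the shift by $\pi I_{2m}$, the minors of $X+\pi I_n$ whose two rows both lie in the top block do reduce into $\wt{\CI}$ by combining the row-$(2m+1)$ minors of $\wedge^2(\wt{X})$ with the symmetry relation $(\wt{X}_1)_{i,2m+1-j}=(\wt{X}_1)_{j,2m+1-i}$, and the identity $2\pi-t+t\sqrt{\theta}=0$ you isolate is correct.

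However, the step you yourself flag as the main obstacle --- that the minors of the substituted matrix involving the last \emph{row} and the last column lie in $\wt{\CI}$ --- is not merely left unverified; as an ideal-membership claim it is false. Up to multiplying by $H_{2m}$ on the left, these minors are the entries of the matrix $(\tr(\wt{X}_1)+t\sqrt{\theta})H_{2m}\wt{X}_1+tX_3^tX_3$. The ``half of the diagonal'' generator of $\wt{\CI}$ accounts only for the diagonal entries of this matrix (each diagonal entry is $t$ times a listed generator), and $\wedge^2(\wt{X})$ together with the symmetry relation never produces the off-diagonal entries, because nothing in $\wt{\CI}$ controls products $x_kx_l$ with $k\neq l$. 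Concretely, the $\CO_F$-algebra map $\CO_F[\wt{X}]\to\CO_F[\epsilon]/(\epsilon^2)$ sending $\wt{X}_1\mapsto \epsilon I_{2m}$ and $X_3\mapsto 0$ kills every listed generator of $\wt{\CI}$ (all minors become $\pm\epsilon^2$, and the diagonal of $H_{2m}\wt{X}_1=\epsilon H_{2m}$ vanishes since $H_{2m}$ has even size), yet the substituted minor with rows $\{1,n\}$ and columns $\{1,n\}$ evaluates to $(2\pi-t)\epsilon=-t\sqrt{\theta}\,\epsilon\neq 0$. So your map $\varphi$ does not descend to $\CO_F[\wt{X}]/\wt{\CI}$ with the ideal as listed, and no cleverer manipulation of the three listed generators can fix this. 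To make the elimination argument literally correct one must either enlarge the target ideal by the full matrix $(\tr(\wt{X}_1)+t\sqrt{\theta})H_{2m}\wt{X}_1+tX_3^tX_3$ (i.e.\ $t$ times the generator later used for $\wt{\CI}^{\rfl}$), or settle for an identification up to nilpotents, e.g.\ of underlying topological spaces, which is all the paper's subsequent steps (flat closure, comparison of $\RU_{\cbra{m}}$ with $\RU^{\rfl}_{\cbra{m}}$) actually require; in particular this delicate point is glossed over by the paper's own one-line justification, and your write-up would need to confront it explicitly rather than assert the cancellation.
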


\begin{defn}
	Denote by $\RU_{\cbra{m}}^\rfl$ the closed subscheme of $\RU_{\cbra{m}}=\Spec\CO_F[\wt{X}]/\wt{\CI}$ defined by the ideal $\wt{\CI}^\rfl\subset\CO_F[\wt{X}]$ generated by  
 \begin{flalign*}
	    &\wedge^2(\wt{X}),\  
	    {H}_{2m}\wt{X}_1-\wt{X}_1^t{H}_{2m} ,\   
	    (\frac{\tr(\wt{X}_1)}{t}+\sqrt{\theta} )H_{2m} \wt{X}_1+X_3^tX_3. 
\end{flalign*}
Note that $\wt{\CI}\subset\wt{\CI}^\rfl$.
\end{defn}

\subsection{Global results}
We first give the results for the schemes $\RU_{\cbra{m}}$ and $\RU_{\cbra{m}}^\rfl$.
\begin{prop}
	\label{mprop-Ufl}
    \begin{enumerate}
    	\item $\RU_{\cbra{m}}^\rfl$ is smooth over $\CO_F$ of relative dimension $n-1$. The special fiber is geometrically reduced and irreducible.     	
        \item $\RU_{\cbra{m}}$ and $\RU_{\cbra{m}}^\rfl$ have the same underlying topological space. 
    \end{enumerate} 
\end{prop}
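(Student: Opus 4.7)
The plan is to work in coordinates $Y := H_{2m}\wt{X}_1$ and $X := X_3^t$, under which $\wt{\CI}^\rfl$ is generated by $Y - Y^t$, $\wedge^2\begin{pmatrix} Y\\ X^t\end{pmatrix}$, and $cY + XX^t$, where $c := \tfrac{\tr(H_{2m}Y)}{t} + \sqrt{\theta}$. The structural fact that drives the argument, and that distinguishes this case from $I = \{0\}$, is that $\sqrt{\theta} = 1 - 2\pi/t$ satisfies $\omega(2\pi/t) = \omega(2) + 1 - \omega(t) \geq 1$ (since $t \mid 2$), so $\sqrt{\theta}$ is a unit in $\CO_F$ congruent to $1$ modulo $\pi$. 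Consequently $c$ itself is a unit at the worst point, in sharp contrast to the $I = \{0\}$ case where the analogous coefficient vanishes at $\pi = 0$.

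For Part $(1)$, I would first check smoothness at the worst point $\pi = Y = X = 0$ by linearizing the generators: the relation $cY + XX^t = 0$ contributes $\sqrt{\theta}\cdot Y \equiv Y \pmod{\pi}$ to the tangent space (because both $XX^t$ and the $\tr(H_{2m}Y)/t$ piece of $c$ are quadratic), forcing every $Y_{ij}$ to vanish and leaving exactly the $2m$ variables $X_i$. Combined with the generic fibre dimension $n-1$ from Lemma \ref{lem-generic}, this shows the local ring at the worst point is regular of dimension $n$, hence $\CO_F$-smooth of relative dimension $n-1$ there. To extend smoothness globally I would invoke the smooth parahoric group scheme $\sH_{\cbra{m}} = \ud{\Sim}(\Lambda_m, q, \sL)$ from Theorem \ref{thmsimm} and establish analogs of Lemmas \ref{lemGstable} and \ref{lem-Gorbit}: $\sH_{\cbra{m}}$ acts on $\RM_{\cbra{m}}$, and the geometric special fibre has exactly two orbits, the worst point and an open dense orbit. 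Smoothness at the worst point together with smoothness on the open orbit (inherited from the smooth generic fibre $\mathbb{P}^{n-1}_F$ via flatness) then yields global smoothness, and geometric integrality of the special fibre follows from smoothness together with connectedness.

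For Part $(2)$, the plan mirrors Lemma \ref{prop-topflat}. Set $M := \bigl(\tfrac{2}{t}\tr(\wt X_1) + 2\sqrt{\theta}\bigr) H_{2m}\wt{X}_1 + 2 X_3^t X_3$, so that $M/2 = cY + XX^t$, while $\wt{\CI}$ contains the half-diagonal entries $(M/2)_{ii}$ and $\wt{\CI}^\rfl$ contains the full matrix $M/2$. It suffices to show every off-diagonal entry $(M/2)_{ij}$ is nilpotent modulo $\wt{\CI}\otimes k$. Squaring in characteristic two (so that the cross term $2c y_{ij} x_i x_j$ vanishes) and using $y_{ij}^2 = y_{ii} y_{jj}$ (from the vanishing of $2\times 2$ minors together with $Y = Y^t$) as well as $x_i^2 = (M/2)_{ii} - c y_{ii}$ (from the definition of $(M/2)_{ii}$), one computes
\[
(M/2)_{ij}^2 \equiv (M/2)_{ii}(M/2)_{jj} + c y_{jj}(M/2)_{ii} + c y_{ii}(M/2)_{jj} \pmod{\wt{\CI}\otimes k},
\]
and since the right-hand side lies in $\wt{\CI}\otimes k$ (as $(M/2)_{ii}, (M/2)_{jj} \in \wt{\CI}$), one obtains $(M/2)_{ij}^2 \in \wt{\CI}\otimes k$, which gives the topological equivalence.

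The main obstacle will be extending smoothness beyond the worst point in Part $(1)$. The cleanest approach is the two-orbit argument on the special fibre, analogous to Lemma \ref{lem-Gorbit} but using the hermitian quadratic module $(\Lambda_m, q, \sL)$ without the extra datum $\phi$; verifying this orbit structure requires a careful analysis of the hyperbolicity and wedge conditions at $\pi = 0$, similar in spirit to the one carried out in the $I = \{0\}$ case but adapted to the different lattice. An alternative is to work explicitly on the principal opens $D(X_i)$, where the wedge plus symmetry conditions force $Y = \lambda X X^t$ for a single scalar $\lambda$ and reduce the chart to a quadratic equation in $\lambda$ over $\CO_F[X]$, for which smoothness follows by a direct Jacobian computation using that $\sqrt{\theta}$ is a unit.
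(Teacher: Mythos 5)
Your part (2) is essentially the paper's own argument: the paper just says it is proved as in Lemma \ref{prop-topflat}, and your characteristic-two computation (squaring $cy_{ij}+x_ix_j$ and using $y_{ij}^2=y_{ii}y_{jj}$ and $x_i^2=(M/2)_{ii}-cy_{ii}$) is the correct adaptation, so that half is sound, granted the flatness/reducedness of $\RU^\rfl_{\cbra{m}}$ coming from part (1).

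Part (1) is where the gaps are. At the worst point, the tangent-space bound alone does not give regularity: you also need the local dimension to be $n$, and justifying this by the generic fibre dimension of Lemma \ref{lem-generic} presupposes that the worst point lies in the closure of the generic fibre of $\RU^\rfl_{\cbra{m}}$, which is essentially the flatness you are in the middle of proving. (This is repairable, e.g.\ by solving $Y=-c^{-1}X_3^tX_3$ and observing that $c$ then satisfies $c^2-\sqrt{\theta}\,c+\tfrac{2}{t}\sum_{k\le m}x_kx_{2m+1-k}=0$, whose reduction has the simple root $\sqrt{\theta}$, so Hensel's lemma identifies the completed local ring with a power series ring in the $x_i$; but that argument has to be made.) More seriously, your globalization is circular: the two-orbit statement (Lemma \ref{lem-Gorbitm}) lives on $\RM_{\cbra{m}}\otimes_{\CO_F}\ol{k}$ and is used in the paper only \emph{after} $\RU^\rfl_{\cbra{m}}$ is known to be flat, hence an open subscheme of $\RM^\loc_{\cbra{m}}$, in order to spread the chart over the whole local model; you instead invoke ``smoothness on the open orbit inherited from the smooth generic fibre via flatness'' at a stage where flatness of $\RU^\rfl_{\cbra{m}}$ is precisely what is unknown, and in any case smoothness of the special fibre at orbit points together with smoothness of the generic fibre does not yield $\CO_F$-smoothness without flatness. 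Your fallback cover by the principal opens $D(x_i)$ (plus the worst-point neighborhood) also fails to cover: when $\omega(t)=\omega(2)$ the special fibre of $\RU^\rfl_{\cbra{m}}$ contains points with $X_3=0$ and $Y\neq 0$ symmetric of rank one satisfying $\tfrac{2}{t}\sum_{k\le m}y_{k,2m+1-k}+\sqrt{\theta}=0$ (so the main relation is vacuous there), and these lie in no $D(x_i)$ and are not local at the worst point. The paper's proof avoids all of this by an explicit open cover of the chart itself: the opens $D(y_{\ell\ell})$, $1\le\ell\le 2m$, handled exactly as in Lemma \ref{lem-smoothoutside}, together with $D(z)$ for $z=\tfrac{1}{t}\tr(H_{2m}Y)+\sqrt{\theta}$, on which $Y=-z^{-1}X_3^tX_3$ is eliminated and smoothness is immediate; these cover because wherever all $y_{\ell\ell}$ vanish the wedge condition kills all of $Y$, so $z\equiv\sqrt{\theta}$ is a unit. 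Your $D(x_i)$ Jacobian computation is fine as far as it goes, but to complete part (1) along your lines you would need to replace your cover by one of this type (and supply the connectedness/irreducibility of the special fibre, which you assert but do not argue).
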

\begin{proof}
	The proof of (2) is similar as that of Lemma \ref{prop-topflat}. Now we prove the smoothness of $\RU_{\cbra{m}}^\rfl$. We use the notation as in the proof of Lemma \ref{lem-smoothoutside}. In particular, \begin{flalign*}
	  \CR^\rfl &= \frac{\CO_F[\begin{psmallmatrix}
	     	Y\\ X_3
	     \end{psmallmatrix} ]}{\rbra{\wedge^2\begin{psmallmatrix}
	     	Y \\ X_3
	     \end{psmallmatrix}, Y-Y^t, (\frac{1}{t}\tr({H}_{2m}Y)+\sqrt{\theta})Y+X_3^tX_3   }}.
\end{flalign*}
Then one can similarly show that $D(y_{\ell\ell})$ for $1\leq \ell\leq 2m$ is smooth over $\CO_F$. Let $z\coloneqq \frac{1}{t}\tr(H_{2m}Y)+\sqrt{\theta}$. Note that \begin{flalign*}
	     Y=-z\inverse X_3^tX_3.
\end{flalign*}  in $\CR^\rfl[z\inverse]$.
   Thus, $Y$ is determined by $X_3$ and the scheme $D(z)=\Spec \CR^\rfl[z\inverse]\simeq \Spec \CO_F[X_3]$ is smooth over $\CO_F$. By Lemma \ref{lemcover}, we conclude that $\RU_{\cbra{m}}^\rfl$ is smooth over $\CO_F$. The special fiber is geometrically reduced by the smoothness. It is geometrically irreducible because the geometric special fibers of $D(z)$ and $D(y_{\ell\ell})$ for $1\leq \ell \leq 2m$ are irreducible.
\end{proof}

\begin{lem}\label{lemcover}
    The scheme $\RU_{\cbra{m}}^\rfl$ is covered by $D(z)$ and $D(y_{\ell\ell})$ for $1\leq \ell\leq 2m$.
\end{lem}
\begin{proof}
    It suffices to show that the closed subscheme $Z$ defined by the ideal $(z,y_{11},\ldots,y_{2m,2m})$ is empty. Suppose that $P$ is a $\bar k$-point $Z$. Then $P$ is determined by the elements $(y_{ij},x_k)_{1\leq i,j\leq 2m, 1\leq k\leq 2m}$ in $\bar k$, where $Y=(y_{ij})$ and $X_3=(x_k)$ satisfy the relations in $\wt{\CI}^\rfl$, and $z=y_{\ell\ell}=0$ for $1\leq \ell\leq 2m$. Since we have \begin{flalign*}
        y_{ij}=y_{ji} \text{\ and\ } y_{ii}y_{jj}=y_{ij}y_{ji},
    \end{flalign*}
    we obtain $y_{ij}^2=0$ in $\bar k$ for $1\leq i,j\leq 2m$. Hence, $Y=0$. It follows that \[z=\frac{1}{t}\tr(H_{2m}Y)+\sqrt{\theta} =\sqrt{\theta}\neq 0, \] which is a contradiction. 
\end{proof}

Recall that $(\Lambda_m,q,\sL)$ is a hermitian quadratic module over $\CO_{F_0}$ for $\sL=\frac{1}{t}\CO_{F_0}$. Let $$\sH_{\cbra{m}}\coloneqq \ud{\Sim}((\Lambda_m,q,\sL))$$ be the group scheme over $\CO_{F_0}$ of similitude automorphisms of $(\Lambda_m,q,\sL)$. By Theorem \ref{thmsimm}, $\sH_{\cbra{m}}$ is an affine smooth group scheme over $\CO_{F_0}$. As in Lemma \ref{lemGstable}, the group scheme $\sH_{\cbra{m}}$ acts on $\RM_{\cbra{m}}$.

\begin{lemma}\label{lem-Gorbitm}
	Let $\ol{k}$ be the algebraic closure of the residue field $k$. Then $\RM_{\cbra{m}}\otimes_{\CO_F}\ol{k}$ has two $\sH_{\cbra{m}}\otimes_{\CO_{F_0}}\ol{k}$-orbits, one of which consists of the worst point. 
\end{lemma}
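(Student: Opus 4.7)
The plan is to mimic the proof of Lemma~\ref{lem-Gorbit}, replacing the auxiliary form $\phi$ (which is only available in the $I=\cbra{0}$ setting) with the pairing that appears in the orthogonality condition \ref{mLMincl} for $\RM_{\cbra{m}}$, and invoking Theorem~\ref{thmstand} (in place of Theorem~\ref{thmstand0}) to put the hermitian quadratic module $(\Lambda_m,q,\sL)\otimes_{\CO_{F_0}}\ol{k}$ in standard form. As a preliminary step, one checks that $\sH_{\cbra{m}}$ acts on $\RM_{\cbra{m}}$: conditions \textbf{LM1}--\textbf{LM5} of Definition~\ref{mdefn-M} are all expressed through the $\CO_F$-module structure, the pairing $s$, and the quadratic form $q$, and any similitude of $(\Lambda_m,q,\sL)$ preserves these, with smoothness of $\sH_{\cbra{m}}$ ensuring that the orthogonality condition \ref{mLMincl} (which a priori is defined via a saturation) is also preserved just as in Lemma~\ref{lemGstable}.

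Let $\CF\in\RM_{\cbra{m}}(\ol{k})$ and write $\pi$ for the operator $\pi\otimes 1$ on $\Lambda_m\otimes_{\CO_{F_0}}\ol{k}$. Since $\ol{\pi}=0$ in $\ol{k}$, the wedge condition \textbf{LM5} reads $\wedge^{2}(\pi\,|\,\CF)=0$, so $\dim_{\ol{k}}\pi\CF\leq 1$. If $\pi\CF=0$, then $\CF$ is contained in $\ker(\pi\,|\,\Lambda_m\otimes\ol{k})=\pi(\Lambda_m\otimes\ol{k})$, and since both spaces have $\ol{k}$-dimension $n$, we conclude $\CF=\pi(\Lambda_m\otimes\ol{k})$ is the worst point.

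Assume $\dim_{\ol{k}}\pi\CF=1$ and choose $v\in\CF$ with $\pi v\neq 0$. The hyperbolicity condition forces $q(v)=0$, and as in the proof of Lemma~\ref{lem-Gorbit} this rules out $\pi v$ lying in the radical of the symmetric pairing induced by \ref{mLMincl}; otherwise $v$ would be forced into a subspace on which the quadratic form does not vanish, contradicting $q(\CF)=0$. Hence we can find $w\in\Lambda_m\otimes\ol{k}$ with the induced pairing $s(w,\pi v)$ equal to $1$ after rescaling by $t$. Replacing $w$ by $w-q(w)v$ and then by $w+b\ol{\pi}w$ for a suitable scalar $b$ (again as in the proof of Lemma~\ref{lem-Gorbit}) we may arrange $q(w)=0$ and $s(w,v)=0$ while keeping $s(w,\pi v)$ a unit. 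Set $W_1\coloneqq \pair{v,\pi v,w,\pi w}$; the rescaled form $s$ is then perfect on $W_1$, and condition \ref{mLMincl} applied to $\CF$ forces $\CF\cap \pair{w,\pi w}=0$.

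Decomposing $\Lambda_m\otimes_{\CO_{F_0}}\ol{k}=W_1\oplus W$ into the $s$-orthogonal summand $W$, we find $\dim_{\ol{k}}(\CF\cap W)=n-2$; combined with $s(\CF,\CF)=0$ this forces $\CF\cap W\subset \ker(\pi\,|\,W)=\pi W$, and equality $\CF\cap W=\pi W$ follows from dimensions. A discriminant computation (tracking the factors of $t$, $\pi$ and $\pi_0$ in the matrix $S_1$ of \eqref{S1mRU}, in the spirit of Example~\ref{exlambda0}) shows that $(W,q|_W,\sL\otimes\ol{k})$ is again a hermitian quadratic module of type $\Lambda_m$. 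Applying Theorem~\ref{thmstand} to $W$ yields an $\CO_F\otimes_{\CO_{F_0}}\ol{k}$-basis $(v_i)_{1\leq i\leq n}$ of $\Lambda_m\otimes\ol{k}$ in standard form with $v_1=v$ and $v_n=w$, and with respect to which $\CF=\pair{v_1,\pi v_1,\pi v_2,\ldots,\pi v_{n-1}}$. Since any two such standard bases differ by an element of $\sH_{\cbra{m}}(\ol{k})$, this proves the remaining points lie in a single orbit.

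The main obstacle I anticipate is the discriminant check that $W$ is still of type $\Lambda_m$: unlike in the $I=\cbra{0}$ case where the form $\phi$ cleanly isolates the contribution of $W_1$, here one has to keep track of the $t$-rescalings in condition \ref{mLMincl} and of the fact that the symmetric pairing $s$ is not $\phi$-like, so the splitting $\disc'=\disc(s|_{W_1})\cdot\disc'(s|_W)$ requires a little care (and is presumably immediate once normalized via the basis \eqref{mbasis-1}).
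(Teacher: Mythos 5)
Your proposal is correct and follows essentially the same route as the paper's proof: reduce via the wedge condition to the two cases $\pi\CF=0$ and $\dim\pi\CF=1$, in the latter case use the hyperbolicity condition to see $\pi v$ is not in the radical of the $\sL$-valued symmetric pairing, split off a perfect rank-two piece $W_1=\pair{v,\pi v,w,\pi w}$, check via the multiplicativity of the divided discriminant (with Example \ref{app-exlambm}) that the orthogonal complement $W$ is again of type $\Lambda_m$, and then apply Theorem \ref{thmstand} to put everything in standard form, so that all non-worst points form a single $\sH_{\cbra{m}}(\ol{k})$-orbit. The discriminant step you flag as a potential obstacle is handled in the paper exactly as you anticipate, by the factorization $\disc'(q)=\disc(q|_{W_1})\,\disc'(q|_W)$.
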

\begin{proof}
	Let $\CF\in\RM_{\cbra{m}}(\ol{k})$. In particular, the subspace $\CF\sset(\Lambda_m\otimes_{\CO_{F_0}}\ol{k})$ is  an $n$-dimensional $\ol{k}$-vector space. The wedge condition \textbf{LM5} in this case becomes $\wedge^2(\pi\otimes 1\ |\ \CF)=0$. Therefore, the image $(\pi\otimes 1)\CF$ is at most one dimensional. We have the following two cases.
	
	Suppose $(\pi\otimes 1)\CF=0$. Then $\CF=(\pi\otimes 1)(\Lambda_m\otimes_{\CO_{F_0}}\ol{k})$, namely, $\CF$ is the worst point.
	
	Suppose $(\pi\otimes 1)\CF$ is one dimensional. Then there exists a vector $v\in\CF$ such that $(\pi\otimes 1)v$ generates $(\pi\otimes 1)\CF$. For simplicity, write $\pi$ for $\pi\otimes 1$. Let $f: (\Lambda_m\otimes_{\CO_{F_0}}\ol{k})\times(\Lambda_m\otimes_{\CO_{F_0}}\ol{k})\ra \sL\simeq\ol{k}$ denote the associated symmetric pairing on $\Lambda_m\otimes_{\CO_{F_0}}\ol{k}$. As in the proof of Lemma \ref{lem-Gorbit}, we see that $\pi v$ is not in the radical of the paring $f$, because $q(v)=0$. Then we can find some $w\in \Lambda_m\otimes_{\CO_{F_0}}\ol{k}$ such that $f(w,\pi v)\neq 0$ in $\ol{k}$. By rescaling, we may assume that $f(w,\pi v)=1$. Similar arguments in Lemma \ref{lem-Gorbit} imply that after some linear transformations, we may assume 
	\begin{flalign*}
		\text{$q(w)=q(v)=f(w,v)=0$ and $f(w,\pi v)=1$.}
	\end{flalign*}  
	Let $W_1\coloneqq \pair{v,\pi v,w,\pi w}$. Then $f$ restricts to a perfect symmetric pairing on $W_1$. Now we can write \begin{flalign}
		    \Lambda_m\otimes_{\CO_{F_0}}\ol{k} =W_1\oplus W, \label{spaceWm}
	\end{flalign} where $W$ is the orthogonal complement of $W_1$ with respect to $f$ whose dimension is $2n-4$ over $\ol{k}$. Since $q(\CF)=0$, we have $\CF\cap \pair{w,\pi w}=0$.  Hence, we obtain that the $\ol{k}$-dimension of $\CF\cap W$ is $n-2$ and $\CF\cap W\sset\pi W=\ker(\pi\ |\ W)$. Therefore, $\CF\cap W=\pi W$ for dimension reasons. 
%	Using the following Lemma 	\ref{lem-Nstandard} and Lemma \ref{lemWsat}, 
    Note that the space $W$ carries a structure of hermitian quadratic module. By \eqref{spaceWm}, we have  \begin{flalign*}
		    \disc'(q) = \disc(q|_{W_1})\disc'(q|_W).
	\end{flalign*}
	Here $\disc'(q)$ is the divided discriminant in the sense of Definition \ref{app-defn1}, and we view it as an element in $\ol{k}$ by using a basis of $\Lambda_m\otimes_{\CO_{F_0}}\ol{k}$. By Example \ref{app-exlambm}, we have $\disc'(q)\in\ol{k}\cross$.
	Since $\phi$ is perfect on $W_1$, we obtain that $\disc(\phi|_{W_1})\in\ol{k}\cross$, and hence $\disc'(q|_W)\in \ol{k}\cross$. 
	In particular, $W$ is a hermitian quadratic module of type $\Lambda_m$ over $\ol{k}$ in the sense of Definition \ref{app-defn1}. Applying Theorem \ref{thmstand} to $W$ and using similar arguments as in the proof Lemma \ref{lem-Gorbit}, we can conclude that points $\CF\in\RM_{\cbra{m}}(\ol{k})$ with $\dim_{\ol{k}} \pi\CF=1$ are in the same orbit under the action of $\sH_{\cbra{m}}\otimes_{\CO_{F_0}}\ol{k}$.
\end{proof}

As $\RU^\rfl_{\cbra{m}}$ is flat over $\CO_F$, we may view $\RU^\rfl_{\cbra{m}}$ as an open subscheme of $\RM^\loc_{\cbra{m}}$ containing the worst point. By Lemma \ref{lem-Gorbitm}, the  $\sH_{\cbra{m}}$-translation of $\RU^\rfl_{\cbra{m}}$ covers $\RM^\loc_{\cbra{m}}$. Together with Proposition \ref{mprop-Ufl}, we have proven Theorem \ref{thm-intromain} and \ref{thm16} in the case $I=\cbra{m}$ and (R-U).

%\begin{thm}
%    Let $\RM_{\cbra{m}}$ be the scheme defined as in Definition \ref{mdefn-M}.
%	\begin{enumerate}
%		\item The closed immersion $\RM_{\cbra{m}}^\loc\hookrightarrow \RM_{\cbra{m}}$ induces an equality on the underlying topological spaces. 
%		\item The local model $\RM^\loc_{\cbra{m}}$ is smooth over $\CO_F$ of relative dimension $n-1$ with geometrically integral special fiber.
%	\end{enumerate}
%\end{thm}

\section{The case $I=\cbra{m}$ and (R-P)}\label{sec-mrp}
In this section, we consider the case when $F/F_0$ is of (R-P) type and $I=\cbra{m}$. In particular, we have $$\pi^2+\pi_0=0 \text{\ and\ }\pi+\ol{\pi}=0. $$ 
Consider the following ordered $\CO_{F_0}$-basis of $\Lambda_m$ and $\Lambda_m^s$: \begin{alignat}{2}
	   \Lambda_m&: \half e_{m+2},\ldots,\half e_n,\pi\inverse e_1,\ldots,\pi\inverse e_m,e_{m+1},\frac{\pi}{2}e_{m+2},\ldots,\frac{\pi}{2}e_n, e_1,\ldots,e_m,\pi e_{m+1}, \label{rpmbasis-1}  \\  \Lambda_m^s&: e_{m+2},\ldots,  e_n,  \frac{2}{\pi}e_1,\ldots,\frac{2}{\pi}e_m, \pi\inverse e_{m+1}, \pi e_{m+2},\ldots, \pi e_n, 2e_1,\ldots,2e_m,  e_{m+1}. \label{rpmbasis-2}
\end{alignat}
Recall $(\Lambda_m,q,\sL)$ is a hermitian quadratic module for $\sL=2\inverse \CO_{F_0}$.

\subsection{A refinement of $\RM_{\cbra{m}}^\naive$ in the (R-P) case}
\begin{defn}\label{rpmdefn-M}
	Let $\RM_{\cbra{m}}$ be the functor $$\RM_{\cbra{m}}: (\Sch/\CO_{F})^\op\lra \Sets$$ which sends an $\CO_F$-scheme $S$ to the set of $\CO_S$-modules $\CF$ such that 
	\begin{enumerate}[label=\textbf{LM\arabic*}]
		\item ($\pi$-stability condition) $\CF$ is an $\CO_F\otimes_{\CO_{F_0}}\CO_S$-submodule of $\Lambda_m\otimes_{\CO_{F_0}}\CO_S$ and as an $\CO_S$-module, it is a locally direct summand of rank $n$.
		\item (Kottwitz condition) The action of $\pi\otimes 1\in\CO_F\otimes_{\CO_{F_0}}\CO_S$ on $\CF$ has characteristic polynomial $$\det(T-\pi\otimes 1\ |\ \CF)=(T-\pi)(T-\ol{\pi})^{n-1}.$$
		\item Let $\CF^\perp$ be the orthogonal complement in $\Lambda_m^s\otimes_{\CO_{F_0}}\CO_S$ of $\CF$ with respect to the perfect pairing $$ s(-,-): (\Lambda_m\otimes_{\CO_{F_0}}\CO_S) \times (\Lambda_m^s\otimes_{\CO_{F_0}}\CO_S) \ra \CO_S.$$ We require the map $\Lambda_m\otimes_{\CO_{F_0}}\CO_S \ra (2\inverse \Lambda_m^s)\otimes_{\CO_{F_0}}\CO_S$ induced by $\Lambda_m\hookrightarrow 2\inverse\Lambda_m^s$ sends $\CF$ to $2\inverse\CF^\perp,$ where $2\inverse\CF^\perp$ denotes the image of $\CF^\perp$ under the isomorphism $2\inverse:\Lambda_m^s\otimes_{\CO_{F_0}}\CO_S \simto 2\inverse \Lambda_m^s\otimes_{\CO_{F_0}}\CO_S$.  
		\item (Hyperbolicity condition) The quadratic form $q: \Lambda_m\otimes_{\CO_{F_0}}\CO_S\ra \sL\otimes_{\CO_{F_0}}\CO_S$ induced by $q:\Lambda_m\ra \sL$ satisfies $q(\CF)=0$. 
		\item (Wedge condition) The action of $\pi\otimes 1-1\otimes\ol{\pi}\in \CO_F\otimes_{\CO_{F_0}}\CO_S$ satisfies $$\wedge^2(\pi\otimes 1-1\otimes \ol{\pi}\ |\ \CF )=0.$$
		\item (Strengthened spin condition) The line $\wedge^n\CF\sset W(\Lambda_m)\otimes_{\CO_F}\CO_S$ is contained in $$\Im\rbra{W(\Lambda_m)^{n-1,1}_{- 1}\otimes_{\CO_F}\CO_S\ra W(\Lambda_m)\otimes_{\CO_F}\CO_S }.$$	
		     Here we use similar notations as in \S \ref{subsubsec-spin}.
	\end{enumerate}
\end{defn} 
Then $\RM_{\cbra{m}}$ is representable and we have closed immersions $$\RM^\loc_{\cbra{m}}\subset\RM_{\cbra{m}}\subset\RM_{\cbra{m}}^\naive$$ of projective schemes over $\CO_F$, where all schemes have the same generic fiber.

\subsection{An affine chart $\RU_{\cbra{m}}$ around the worst point}
Set $$\CF_0\coloneqq (\pi\otimes 1)(\Lambda_m\otimes_{\CO_{F_0}}k).$$ Then we can check that $\CF_0\in\RM_{\cbra{m}}(k)$. We call it the worst point of $\RM_{\cbra{m}}$. 
With respect to the basis \eqref{mbasis-1}, the standard affine chart around $\CF_0$ in $\Gr(n,\Lambda_m)_{\CO_F}$ is the $\CO_F$-scheme of $2n\times n$ matrices $\left(\begin{smallmatrix}
	X\\ I_n
\end{smallmatrix}\right)$. 
As in \S \ref{subsec6.2}, we can obtain an affine neighborhood $\RU_{\cbra{m}}$ in $\RM_{\cbra{m}}$ containing $\CF_0$. We will analyze the conditions \textbf{LM1-6} and obtain the affine coordinate ring of $\RU_{\cbra{m}}$ in Proposition \ref{prop-Umrpring}. 

% We denote by $\RU_{\cbra{m}}$ the intersection of $\RM_{\cbra{m}}$ with the standard affine chart in $\Gr(n,\Lambda_m)_{\CO_F}$. The  worst point $\CF_0$ of $\RM_{\cbra{m}}$ is contained in $\RU_{\cbra{m}}$ and corresponds to the closed point defined by $X=0$ and $\pi=0$. The conditions \textbf{LM1-6} yield the defining equations for $\RU_{\cbra{m}}$. We will analyze each condition as in the (R-U) case. A reader who is only interested in the affine coordinate ring of $\RU_{\cbra{m}}$ may proceed directly to Proposition \ref{prop-Umrpring}.

\subsubsection{Condition \textbf{LM1}} 
Let $R$ be an $\CO_F$-algebra. With respect to the basis \eqref{rpmbasis-1}, the operator $\pi\otimes 1$ acts on $\Lambda_m\otimes_{\CO_{F_0}}R$ via the matrix \begin{flalign*}
	    \begin{pmatrix}
	    	0 &-\pi_0I_n\\ I_n &0
	    \end{pmatrix}.
\end{flalign*}
Then the $\pi$-stability condition \textbf{LM1} on $\CF$ means there exists an $n\times n$ matrix $P\in M_{n\times n}(R)$ such that \begin{flalign*}
	   \begin{pmatrix}
	    	0 &-\pi_0I_n\\ I_n &0
	    \end{pmatrix} \begin{pmatrix}
	    	X\\ I_n
	    \end{pmatrix} =\begin{pmatrix}
	    	X\\ I_n
	    \end{pmatrix}P.
\end{flalign*}
We obtain $P=X$ and $X^2+\pi_0I_n=0$. 

\subsubsection{Condition \textbf{LM2}} We have already shown that $\pi\otimes 1$ acts on $\CF$ via right multiplication by $X$. Then the Kottwitz condition \textbf{LM2} translates to 
%\begin{flalign*}
%	   \det(T-X) =(T-\pi)(T-\ol{\pi})^{n-1}.
%\end{flalign*} Equivalently, \begin{flalign*}
%	  \det(T-(X+\pi I_n)) =(T-2\pi)T^{n-1}.
%\end{flalign*} Note that \begin{flalign*}
%	  \det(T-(X+\pi I_n)) =\sum_{i=0}^n (-1)^i\tr(\wedge^i(X+\pi I_n)) T^{n-i}.
%\end{flalign*}  Then by comparing the coefficients of $T^{n-i}$, the Kottwitz condition becomes 
\begin{flalign}
	    \tr(X+\pi I_n)=\pi-\ol{\pi}=2\pi,\ \tr\rbra{\wedge^i(X+\pi I_n)}=0, \text{\ for $i\geq 2$}. 
\end{flalign} 

\subsubsection{Condition \textbf{LM3}} 
 With respect to the bases \eqref{rpmbasis-1} and \eqref{rpmbasis-2}, the perfect pairing $$s(-,-):(\Lambda_m\otimes_{\CO_{F_0}}R)\times (\Lambda_m^s\otimes_{\CO_{F_0}}R)\ra R  $$ and the map $\Lambda_m\otimes_{\CO_{F_0}} \ra \frac{{1}}{2}\Lambda_m^s\otimes_{\CO_{F_0}}R$  are represented respectively by the matrices \begin{flalign*}
	   S=\begin{pmatrix}
	   	  0 &0 &J_{2m} &0 \\ 0 &0 &0 &1\\ -J_{2m} &0 &0 &0 \\ 0 &-1 &0 &0
	   \end{pmatrix} \text{\ and\ } N= \begin{pmatrix}
     	  I_{2m} &0 &0 &0\\ 0 &0 &0 &-2\pi_0\\ 0 &0 &I_{2m} &0\\ 0 &2 &0 &0
     \end{pmatrix},
	 \end{flalign*}  
	 \text{where $J_{2m}\coloneqq \begin{pmatrix}
	0 &H_m\\ -H_m &0
\end{pmatrix}$}. 

Then the Condition \textbf{LM3} translates to $\matx^tS\rbra{N\matx }=0,$ or equivalently,
\begin{flalign}
	    \matx^t\begin{pmatrix}
		 0 &0 &J_{2m} &0\\ 0 &2 &0 &0\\ -J_{2m} &0 &0 &0\\ 0 &0 &0 &2\pi_0
	\end{pmatrix}\matx &=0. \label{rpmeq-lm41}  
\end{flalign}
%Equivalently, \begin{flalign}
%	\matx^t\begin{pmatrix}
%		 0 &0 &J_{2m} &0\\ 0 &2 &0 &0\\ -J_{2m} &0 &0 &0\\ 0 &0 &0 &2\pi_0
%	\end{pmatrix}\matx &=0. \label{rpmeq-lm41}  
%\end{flalign}  
 Write $$X=\begin{pmatrix}
 	  X_1 &X_2\\ X_3 &x
 \end{pmatrix},$$ where $X_1\in M_{2m}(R)$, $X_2\in M_{2m\times 1}(R)$, $X_3\in M_{1\times 2m}(R)$ and $x\in R$. The Equation \eqref{rpmeq-lm41} translates to \begin{flalign*}
	   \begin{pmatrix}
	   	  2X_3^tX_3+X_1^tJ_{2m}-J_{2m}X_1 &2xX_3^t-J_{2m}X_2\\ 2xX_3+X_2^tJ_{2m} &2x^2+2\pi_0
	   \end{pmatrix} =0.
\end{flalign*}

\subsubsection{Condition \textbf{LM4}}
Recall $\sL=\frac{1}{2}\CO_{F_0}$. With respect to the basis \eqref{rpmbasis-1}, the induced $\sL\otimes_{\CO_{F_0}}R$-valued symmetric pairing on $\Lambda_m\otimes_{\CO_{F_0}}R$ is represented by the matrix \begin{flalign}
	  S_1=\begin{pmatrix}
		 0 &0 &J_{2m} &0\\ 0 &2 &0 &0\\ -J_{2m} &0 &0 &0\\ 0 &0 &0 &2\pi_0
	\end{pmatrix}. \label{S1mRP}
\end{flalign} 
The Condition \textbf{LM4} translates to \begin{gather*}
	 \matx^tS_1\matx=0 \text{\ and\ } \text{half of the diagonal of $\matx^tS_1\matx$ equals zero}. 
\end{gather*} 
Equivalently, we obtain \begin{flalign*}
	   &\begin{pmatrix}
	   	  2X_3^tX_3+X_1^tJ_{2m}-J_{2m}X_1 &2xX_3^t-J_{2m}X_2\\ 2xX_3+X_2^tJ_{2m} &2x^2+2\pi_0
	   \end{pmatrix} =0, \\ &x^2+\pi_0=0,\\ &\text{half of the diagonal of $2X_3^tX_3+X_1^tJ_{2m}-J_{2m}X_1$ equals zero}.
\end{flalign*}

\subsubsection{Condition \textbf{LM5}}
Since $\pi\otimes 1$ acts as right multiplication by $X$ on $\CF$, the wedge condition \textbf{LM5} on $\CF$ translates to \begin{flalign*}
	   \wedge^2(X+\pi I_n)=0.
\end{flalign*}

\subsubsection{Condition \textbf{LM6}}
As in \S \ref{subsubsec-strspin}, the strengthened spin condition \textbf{LM6} in this case implies that \begin{flalign*}
	    X_1=J_{2m}X_1^tJ_{2m},\ 2\pi X_3^t=J_{2m}X_2.
\end{flalign*}

\subsubsection{A simplification of equations} 
As in the case $I=\cbra{0}$, we can simplify the above equations and obtain the following.
\begin{prop}\label{prop-Umrpring}
  The scheme $\RU_{\cbra{m}}$ is a closed subscheme of $\RU'_{\cbra{m}}\coloneqq \Spec\CO_F[X]/\CI$, where $\CI$ is the ideal generated by: \begin{align*}
	     &\tr(X+\pi I_n)-2\pi,\  \wedge^2(X+\pi I_n),\  X_1^tJ_{2m}+J_{2m}X_1, \ 2\pi X_3^t-J_{2m}X_2, \\  &\text{half of the diagonal of $2X_3^tX_3+X_1^tJ_{2m}-J_{2m}X_1$}.
        \end{align*} 
\end{prop}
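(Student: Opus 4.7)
\textit{Proof plan.} The strategy mirrors the one carried out for the three previous cases, so the plan is to assemble the six matrix identities coming from \textbf{LM1--LM6} (derived in \S\ref{sec-mrp} above) and show, after eliminating redundancies, that exactly the generators listed in the statement remain. Concretely, I would first eliminate the $\pi$-stability equation $X^2+\pi_0 I_n=0$ of \textbf{LM1}: writing this as $(X+\pi I_n)^2-2\pi(X+\pi I_n)=0$ and applying Lemma~\ref{lem-2wedge}, together with the wedge condition $\wedge^2(X+\pi I_n)=0$ of \textbf{LM5} and the trace relation $\tr(X+\pi I_n)=2\pi$ of \textbf{LM2}, one checks that $X^2+\pi_0 I_n$ is a combination of entries of $\wedge^2(X+\pi I_n)$ and of $\tr(X+\pi I_n)-2\pi$. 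The same argument shows that the higher Kottwitz identities $\tr(\wedge^i(X+\pi I_n))=0$ for $i\ge 2$ are consequences of \textbf{LM5} alone. Thus \textbf{LM1} and the ``higher'' part of \textbf{LM2} disappear from the ideal.

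Next I would analyze \textbf{LM3}, whose matrix identity splits into the block equations
\[
   X_1^t J_{2m}-J_{2m}X_1+2X_3^t X_3=0,\qquad 2xX_3^t-J_{2m}X_2=0,\qquad 2xX_3+X_2^t J_{2m}=0,\qquad 2x^2+2\pi_0=0.
\]
The last is implied by the \textbf{LM1} reduction above (applied to the scalar $x$), and the third is the transpose of the second. The equation $x^2+\pi_0=0$ from \textbf{LM4} is then automatic as well, while the main block equation of \textbf{LM4} duplicates the first block equation of \textbf{LM3}. I would then invoke the strengthened spin condition \textbf{LM6} by repeating the worst-term calculation of \S\ref{subsubsec-strspin}: reading off the coefficients of $e_S$ in the expansion $\wedge^n\CF=\sum c_S b_S\tilde h_S$ for $S\in\CB_0$ forces the identities $X_1=J_{2m}X_1^t J_{2m}$ and $2\pi X_3^t=J_{2m}X_2$ (equivalently $J_{2m}X_1+X_1^t J_{2m}=0$ in view of $J_{2m}^t=-J_{2m}$ and $J_{2m}^2=-I_{2m}$). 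Combined with the block equation of \textbf{LM3}, only the ``half of the diagonal'' vanishing of $2X_3^t X_3+X_1^t J_{2m}-J_{2m}X_1$ remains, giving precisely the final generator of $\CI$.

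Putting these steps together produces exactly the list $\tr(X+\pi I_n)-2\pi$, $\wedge^2(X+\pi I_n)$, $X_1^t J_{2m}+J_{2m}X_1$, $2\pi X_3^t-J_{2m}X_2$, and the half of the diagonal of $2X_3^tX_3+X_1^t J_{2m}-J_{2m}X_1$, so that $\RU_{\cbra{m}}\hookrightarrow \Spec\CO_F[X]/\CI$ as required. (As in the case $I=\cbra{0}$, the inclusion rather than equality reflects that one must still check the coefficients of $e_S$ for $S$ not of type $(n-1,1)$ in \textbf{LM6} give no new constraints; since we only need the containment $\RU_{\cbra{m}}\subset\RU'_{\cbra{m}}$, this point can be sidestepped.)

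The main technical obstacle I anticipate is the bookkeeping inside \textbf{LM6}: one must reproduce the analogue of Lemma~\ref{lem-wterm} for the pair $(\Lambda_m,J_{2m})$ in place of $(\Lambda_0,H_{2m})$ and carefully verify that the worst-term calculation extracts both the symmetry relation $J_{2m}X_1+X_1^t J_{2m}=0$ and the coupling $2\pi X_3^t=J_{2m}X_2$, without inadvertently generating extra equations that would cut the ring down further. The rest of the proof is the same formal manipulation of $2\times 2$ block matrices as in \S\ref{sec-0ru}--\S\ref{sec-mru}, using only Lemma~\ref{lem-2wedge} and the relations $\bar\pi=-\pi$, $\pi^2=-\pi_0$.
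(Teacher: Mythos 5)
Your proposal is correct and follows essentially the same route as the paper: translate \textbf{LM1--LM6} into block-matrix identities in the chosen basis, observe via Lemma~\ref{lem-2wedge} and the antisymmetry of $J_{2m}$ that the $\pi$-stability equation, the higher Kottwitz traces, and the remaining \textbf{LM3/LM4} blocks are redundant, and keep exactly the generators coming from \textbf{LM2}, \textbf{LM5}, \textbf{LM6} and the half-diagonal part of \textbf{LM4}, with the same caveat (only a closed immersion is claimed because of the non-$(n-1,1)$ coefficients in \textbf{LM6}). The paper likewise only asserts the spin-condition identities $X_1=J_{2m}X_1^tJ_{2m}$ and $2\pi X_3^t=J_{2m}X_2$ by analogy with \S\ref{subsubsec-strspin} rather than redoing the worst-term computation, so deferring that bookkeeping matches the paper's level of detail.
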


Set $$ \wt{X}_1\coloneqq X_1+\pi I_{2m},\ \wt{X}\coloneqq \begin{pmatrix}
	  \wt{X}_1 \\ X_3
\end{pmatrix}.$$ 
As $X_2$ and $x$ are determined by $X_1$ and $X_3$ by relations in $\CI$, we obtain the following proposition.

\begin{prop}
   The scheme $\RU_{\cbra{m}}'$ is isomorphic to $\Spec\CO_F[\wt{X}]/\wt{\CI}$, where $\wt{\CI}$ is the ideal generated by:  
   \begin{align*}
	    &\wedge^2(\wt{X}),\
	    {J}_{2m}\wt{X}_1+ \wt{X}_1^t{J}_{2m} ,\
	    \text{half of the diagonal of $2X_3^tX_3+\wt{X}_1^tJ_{2m}-J_{2m}\wt{X}_1$. }
\end{align*}
\end{prop}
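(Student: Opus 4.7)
The plan is to exhibit a ring isomorphism $\CO_F[X]/\CI \simto \CO_F[\wt{X}]/\wt{\CI}$ by eliminating the auxiliary block variables $X_2$ and $x$ from the presentation of $\RU'_{\cbra{m}}$ using two of the linear generators of $\CI$, and then verifying that the remaining generators translate into $\wt{\CI}$ under the shift $X_1 = \wt{X}_1 - \pi I_{2m}$.

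First I would use that $J_{2m} = \begin{psmallmatrix} 0 & H_m \\ -H_m & 0 \end{psmallmatrix}$ satisfies $J_{2m}^2 = -I_{2m}$ (since $H_m^2 = I_m$), so $J_{2m}$ is invertible over $\CO_F$ with inverse $-J_{2m}$. The generator $2\pi X_3^t - J_{2m} X_2$ of $\CI$ then yields $X_2 = -2\pi J_{2m} X_3^t$, a polynomial expression in $X_3$ alone. Similarly, the trace part of the Kottwitz condition reads $\tr(X_1) + x + n\pi = 2\pi$ (with $n = 2m + 1$), solving uniquely for $x$; after substituting $\tr(X_1) = \tr(\wt{X}_1) - 2m\pi$ this gives $x = \pi - \tr(\wt{X}_1)$. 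These two identifications define a surjective ring homomorphism $\psi \colon \CO_F[X]/\CI \twoheadrightarrow \CO_F[\wt{X}]/\wt{\CI}$, whose inverse on variables is $\wt{X}_1 \mapsto X_1 + \pi I_{2m}$ and $X_3 \mapsto X_3$.

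To establish that $\psi$ is an isomorphism, I would substitute the expressions for $X_2$ and $x$ into the remaining generators of $\CI$ and verify that each lies in $\wt{\CI}$. Using the antisymmetry $J_{2m}^t = -J_{2m}$, the relation $X_1^t J_{2m} + J_{2m} X_1$ becomes $\wt{X}_1^t J_{2m} + J_{2m} \wt{X}_1$ up to a correction term $-2\pi J_{2m}$, which is antisymmetric with vanishing diagonal; the combination $X_1^t J_{2m} - J_{2m} X_1$ translates exactly to $\wt{X}_1^t J_{2m} - J_{2m} \wt{X}_1$ (the $\pi$-shifts cancel), so the ``half of the diagonal'' generator translates verbatim. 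The higher-order Kottwitz trace conditions $\tr(\wedge^i(X + \pi I_n)) = 0$ for $i \geq 2$ follow formally from $\wedge^2(X + \pi I_n) = 0$, since each such trace is a sum of $i \times i$ principal minors and is therefore a polynomial combination of $2\times 2$ minors.

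The main obstacle, as in the analogous propositions of \S\S\ref{sec-0ru}--\ref{sec-mru}, is the reduction of the full wedge condition $\wedge^2(X + \pi I_n) = 0$ to $\wedge^2(\wt{X}) = 0$. After substitution, the last column of $X + \pi I_n$ becomes the vector $\bigl(-2\pi J_{2m} X_3^t,\; 2\pi - \tr(\wt{X}_1)\bigr)^t$, which is nonlinear in the remaining variables, so the $2 \times 2$ minors involving this column are not \emph{a priori} polynomial combinations of the $2\times 2$ minors of $\wt{X}$. The bookkeeping required is to use the antisymmetry of $J_{2m}$, the relation $J_{2m} \wt{X}_1 + \wt{X}_1^t J_{2m} \in \wt{\CI}$, and the quadratic constraint from the ``half of the diagonal'' generator to rewrite cross terms such as $(J_{2m} X_3^t)_i \cdot \wt{X}_{jk}$ as polynomial combinations of $2 \times 2$ minors of $\wt{X}$ and the other generators of $\wt{\CI}$. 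Once this column-by-column reduction is carried out, the two presentations agree and the asserted isomorphism follows.
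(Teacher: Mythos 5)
Your overall route is the same as the paper's (implicit, one-line) argument: use the linear generators to eliminate the auxiliary blocks, via $X_2=-2\pi J_{2m}X_3^t$ and $x=\pi-\tr(\wt{X}_1)$, and then match the remaining generators. However, your treatment of the symmetry generator is faulty. From the generator $X_1^tJ_{2m}+J_{2m}X_1$ of Proposition \ref{prop-Umrpring} one gets $\wt{X}_1^tJ_{2m}+J_{2m}\wt{X}_1-2\pi J_{2m}$, and a nonzero \emph{constant} matrix cannot be discarded on the grounds that it is antisymmetric with vanishing diagonal: every element of $\wt{\CI}$ has zero constant term, so $2\pi J_{2m}\notin\wt{\CI}$ and the two linear relations generate genuinely different ideals. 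The correct resolution is that the strengthened spin condition, computed as in \S\ref{subsubsec-strspin} (compare \eqref{eq-X1}, where the relation comes out for $X_1+\pi I_{2m}$, and note $\ol{\pi}/\pi=-1$ here), gives $J_{2m}\wt{X}_1+\wt{X}_1^tJ_{2m}=0$, i.e.\ the relation already lives on the shifted matrix, so there is no correction term at all; the unshifted form is in fact inconsistent with the flat closure, since on $\RU^\rfl_{\cbra{m}}$ one has $\wt{X}_1=-J_{2m}X_3^tX_3$, hence $\wt{X}_1^tJ_{2m}+J_{2m}\wt{X}_1=0$ but $X_1^tJ_{2m}+J_{2m}X_1=-2\pi J_{2m}\neq0$ on the generic fibre. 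You should either work with the shifted generator or flag the discrepancy; waving it away is not an option.

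The step you defer, the reduction of $\wedge^2(X+\pi I_n)$ to $\wedge^2(\wt{X})$ modulo the other generators, is the entire content of the proposition, and it is not routine bookkeeping. The minors with both rows $\leq 2m$ and one column equal to the last one do reduce as you suggest (using the minors of $\wt{X}$ together with $\wt{X}_1J_{2m}+J_{2m}\wt{X}_1^t=-J_{2m}(J_{2m}\wt{X}_1+\wt{X}_1^tJ_{2m})J_{2m}$), but the corner minors with rows $(i,n)$ and columns $(k,n)$ become, after substitution, $(\wt{X}_1)_{ik}(2\pi-\tr\wt{X}_1)+2\pi(J_{2m}X_3^t)_i(X_3)_k$, whose class modulo the linear generators (note $\tr\wt{X}_1\in\wt{\CI}$) is $2\pi\,(\wt{X}_1+J_{2m}X_3^tX_3)_{ik}$. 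Already for $n=3$ and $i=k=1$ this is $2\pi((\wt{X}_1)_{11}+x_1x_2)$, whose linear part $2\pi(\wt{X}_1)_{11}$ does not lie in the $\CO_F$-span of the linear parts of the generators of $\wt{\CI}$ (those involve only the combinations $(J_{2m}\wt{X}_1+\wt{X}_1^tJ_{2m})_{ab}$ and the entries $(\wt{X}_1)_{2m+1-a,\,a}$), so it is not in $\wt{\CI}$; it does lie in $\wt{\CI}^\rfl$ and vanishes on the flat closure, but that is not what the proposition asserts. So your concluding sentence that "once this column-by-column reduction is carried out, the two presentations agree" cannot simply be asserted: a complete proof has to confront these corner terms explicitly, and as matters stand they show the eliminated ideal differs from $\wt{\CI}$ by $2\pi$-multiples of the off-diagonal entries of $X_3^tX_3+\wt{X}_1^tJ_{2m}$. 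Your proposal therefore has one incorrect step (the dismissal of $2\pi J_{2m}$) and one unexecuted step exactly where the real difficulty sits.
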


\begin{defn}
	Denote by $\RU_{\cbra{m}}^\rfl$ the closed subscheme of $\RU'_{\cbra{m}}=\Spec\CO_F[\wt{X}]/\wt{\CI}$ defined by the ideal $\CI^\rfl\sset\CO_F[\wt{X}]$ generated by:  
 \begin{align*}
	    &\wedge^2(\wt{X}),\  
	    {J}_{2m}\wt{X}_1+\wt{X}_1^t{J}_{2m} ,\    
	    X_3^tX_3+\wt{X}_1^tJ_{2m}. 
\end{align*}
Note that $\wt{\CI}\subset\wt{\CI}^\rfl$.
\end{defn}

\subsection{Global results}
We first give results for the schemes $\RU_{\cbra{m}}$ and $\RU_{\cbra{m}}^\rfl$.
\begin{prop}
	\label{rpmprop-Ufl}
    \begin{enumerate}
    	\item $\RU_{\cbra{m}}^\rfl$ is smooth over $\CO_F$ of relative dimension $n-1$ with geometrically integral special fiber.     	
        \item $\RU_{\cbra{m}}$ and $\RU_{\cbra{m}}^\rfl$ have the same underlying topological space. 
    \end{enumerate} 
\end{prop}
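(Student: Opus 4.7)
\medskip

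\noindent\textbf{Proof proposal for Proposition \ref{rpmprop-Ufl}.}
The plan is to show that $\RU_{\cbra{m}}^\rfl$ is in fact isomorphic to affine $(n-1)$-space over $\CO_F$, which will immediately give (1), and then to deduce (2) by the method of Lemma \ref{prop-topflat}.

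For part (1), the key observation is that in characteristic-free form, the antisymmetric matrix $J_{2m}=\begin{psmallmatrix}0 & H_m\\ -H_m & 0\end{psmallmatrix}$ satisfies $J_{2m}^2=-I_{2m}$ and $J_{2m}^t J_{2m}=I_{2m}$. In $\CR^\rfl=\CO_F[\wt X]/\wt\CI^\rfl$ the relation $X_3^tX_3+\wt X_1^tJ_{2m}=0$ can therefore be solved uniquely for $\wt X_1$: taking transpose and multiplying by $-J_{2m}$ gives the closed formula
\[
\wt X_1 \;=\; -J_{2m}\,X_3^tX_3.
\]
One then verifies that this formula automatically satisfies the remaining two generators of $\wt\CI^\rfl$. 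For the symmetry relation, a direct computation gives $J_{2m}\wt X_1=X_3^tX_3$ and $\wt X_1^tJ_{2m}=-X_3^tX_3$, whose sum vanishes. For the wedge relation, note that the $i$-th row of $-J_{2m}X_3^tX_3$ equals the scalar $(-J_{2m}X_3^t)_i$ times the row $X_3$, so every row of $\wt X=\begin{psmallmatrix}\wt X_1\\ X_3\end{psmallmatrix}$ is proportional to $X_3$, forcing $\wedge^2\wt X=0$. Thus the projection $\CO_F[X_3]\to\CR^\rfl$ is an isomorphism, and $\RU_{\cbra{m}}^\rfl\cong\BA^{2m}_{\CO_F}=\BA^{n-1}_{\CO_F}$, which is smooth of relative dimension $n-1$ with geometrically integral special fiber.

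For part (2), since $\RU_{\cbra{m}}^\rfl$ is flat over $\CO_F$, it coincides with the Zariski closure of its generic fiber, and $\RU_{\cbra{m}}^\rfl\hookrightarrow \RU_{\cbra{m}}\hookrightarrow\RU_{\cbra{m}}'$ all share the same generic fiber. Because the special fiber of $\RU_{\cbra{m}}^\rfl$ is reduced, it suffices to prove the containment $\wt\CI^\rfl\otimes_{\CO_F}k\subset \sqrt{\wt\CI\otimes_{\CO_F}k}$. Set $N\coloneqq X_3^tX_3+\wt X_1^tJ_{2m}$. Modulo the symmetry relation $J_{2m}\wt X_1+\wt X_1^tJ_{2m}=0$ we have $2X_3^tX_3+\wt X_1^tJ_{2m}-J_{2m}\wt X_1=2N$, so the ``half of the diagonal'' generator of $\wt\CI$ provides the diagonal entries $N_{ii}\in \wt\CI$. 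It therefore remains to show $N_{ij}\in\sqrt{\wt\CI\otimes k}$ for $i\neq j$.

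The hard (but short) part will be this last verification, carried out in characteristic two. Set $B\coloneqq \wt X_1^tJ_{2m}$. In characteristic two, $J_{2m}^t=J_{2m}$, so the symmetry relation gives $B^t=B$, and $B_{ij}=(\wt X_1)_{2m+1-j,\,i}$ using $(J_{2m})_{kj}=\delta_{k+j,2m+1}$. Applying the wedge relation to rows $2m+1-j,\,2m+1-i$ and columns $i,j$ of $\wt X$ yields
\[
B_{ij}^2 \;=\; B_{ij}B_{ji} \;=\; B_{ii}B_{jj} \pmod{\wt\CI\otimes k}.
\]
Writing $N_{ij}=x_ix_j+B_{ij}$, a direct computation in characteristic two then gives
\[
N_{ij}^2 - N_{ii}N_{jj} \;=\; x_i^2 B_{jj} + x_j^2 B_{ii} \;=\; x_i^2 N_{jj}+x_j^2 N_{ii} \pmod{\wt\CI\otimes k},
\]
so $N_{ij}^2\in\wt\CI\otimes k$ and hence $N_{ij}\in\sqrt{\wt\CI\otimes k}$, completing the proof of~(2). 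The main subtlety throughout is consistent characteristic-two bookkeeping (the sign changes in $J_{2m}$ and the collapse of $2N$ to the diagonal condition), but no new ideas beyond those already used in \S\ref{sec-0ru}--\ref{sec-mru} are required.
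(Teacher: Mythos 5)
Your proposal is correct and takes essentially the same route as the paper: part (1) is precisely the paper's observation that the relation $X_3^tX_3+\wt{X}_1^tJ_{2m}=0$ determines $\wt{X}_1$ from $X_3$, giving $\RU^\rfl_{\cbra{m}}\simeq\BA^{n-1}_{\CO_F}$ (you additionally check that the symmetry and wedge generators become redundant, which the paper leaves implicit), and part (2) carries out the radical computation modelled on Lemma \ref{prop-topflat} that the paper invokes without writing out. Your characteristic-two bookkeeping is accurate: $N_{ii}\in\wt{\CI}$ from the half-diagonal generator, $B_{ij}^2=B_{ii}B_{jj}$ via the wedge relation and the symmetry of $B$, and $N_{ij}^2\equiv N_{ii}N_{jj}+x_i^2N_{jj}+x_j^2N_{ii}\pmod{\wt{\CI}\otimes_{\CO_F}k}$, so $\wt{\CI}^\rfl\otimes_{\CO_F}k\subset\sqrt{\wt{\CI}\otimes_{\CO_F}k}$ as required.
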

\begin{proof}
	The proof of (2) is similar as that of Lemma \ref{prop-topflat}. Now we prove the smoothness of $\RU_{\cbra{m}}^\rfl$. It is clear from the expression of $\wt{\CI}^\rfl$ that $\wt{X}_1$ is determined by $X_3$, and hence, \begin{flalign*}
		     \CO_F[\wt{X}]/\wt{\CI}^\rfl \simeq \Spec\CO_F[X_3] \simeq {\BA}_{\CO_F}^{n-1},
	\end{flalign*} which is smooth over $\CO_F$ of relative dimension $n-1$. The special fiber of $\RU_{\cbra{m}}^\rfl$ is isomorphic to $\BA_k^{n-1}$, which is geometrically integral. 
\end{proof}

As $\RU^\rfl_{\cbra{m}}$ is flat over $\CO_F$, we may view $\RU^\rfl_{\cbra{m}}$ as an open subscheme of $\RM^\loc_{\cbra{m}}$ containing the worst point.
Then as in Lemma \ref{lem-Gorbitm}, we can show that the special fiber $\RM_{\cbra{m}}\otimes_{\CO_F}\ol{k}$ has only two orbits under the action of $\sH_{\cbra{m}}\otimes_{\CO_{F_0}}\ol{k}$. Together with Proposition \ref{rpmprop-Ufl}, we deduce Theorem \ref{thm-intromain} and \ref{thm16} in the case $I=\cbra{m}$ and (R-P).

%\begin{thm}
%    Let $\RM_{\cbra{m}}$ be the scheme defined as in Definition \ref{rpmdefn-M}.
%	\begin{enumerate}
%		\item The closed immersion $\RM_{\cbra{m}}^\loc\hookrightarrow \RM_{\cbra{m}}$ induces an equality on the underlying topological spaces. 
%		\item The local model $\RM_{\cbra{m}}^\loc$ is smooth over $\CO_F$ of relative dimension $n-1$ with geometrically integral special fiber. 
%	\end{enumerate}
%\end{thm}

\section{Comparison with the $v$-sheaf local models} \label{sec-compa}
In this section, we will relate the local model $\RM^\loc_I$ for $I=\cbra{0}$ or $\cbra{m}$ to the v-sheaf local models considered in \cite[\S 21.4]{scholze2020berkeley} and \cite{anschutz2022p}. Let $G$ be \dfn{any} connected reductive group over a complete discretely valued field $L/\BQ_p$, where $p$ is \dfn{any} prime. Let $\CO_L$ be the ring of integers of $L$. Let $\sG$ be a parahoric group scheme over $\CO_L$ of $G$. Then we can form the Beilinson-Drinfeld Grassmannian $\Gr_\sG$, which is a v-sheaf over $\CO_L$. We have the following properties. 
\begin{thm} \label{thm-Gr}
	\begin{enumerate}
		\item The structure morphism $\Gr_\sG\lra \Spd\CO_L$ is ind-proper and ind-representable in spatial diamonds. The generic fiber of $\Gr_\sG$ can be naturally identified with the $B_\dR^+$-affine Grassmannian $\Gr_G$.
         \item If $\sG\hookrightarrow \sH$ is a closed immersion of parahoric group schemes, then the induced morphism $\Gr_\sG\ra \Gr_\sH$ is a closed immersion.
	\end{enumerate}
\end{thm}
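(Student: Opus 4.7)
The plan is to essentially recall the construction from \cite{scholze2020berkeley} and cite the established theory, since Theorem \ref{thm-Gr} is fundamentally a recollection of foundational results on Beilinson-Drinfeld Grassmannians in mixed characteristic. First I would recall the functorial definition: for an affinoid perfectoid pair $(R,R^+)$ over $\CO_L$ together with an untilt $R^\sharp$, the value $\Gr_\sG(R,R^+)$ parametrizes pairs $(\CE,\alpha)$, where $\CE$ is a $\sG$-torsor on $\Spec B_\dR^+(R^\sharp)$ and $\alpha$ is a trivialization of $\CE$ over $\Spec B_\dR(R^\sharp)$. With this setup in place, the generic fiber identification in (1) is immediate from the definition: away from the special fiber, $\Spd L$-points are classified by modifications of $G$-torsors on $B_\dR^+$-disks, which is precisely the $B_\dR^+$-affine Grassmannian $\Gr_G$ of \cite{scholze2020berkeley}.

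Next, to establish the geometric properties in (1), I would pick a closed immersion $\sG \hookrightarrow \GL_{N,\CO_L}$ for some $N$, which exists because $\sG$ is a smooth affine group scheme. This reduces the problem to the $\GL_N$ case together with part (2), since a $\sG$-torsor equipped with its trivialization is the same as a $\GL_N$-torsor with trivialization and a compatible $\sG$-structure. For $\sG=\GL_{N}$, the Beilinson-Drinfeld Grassmannian admits an explicit description as the ind-v-sheaf parametrizing $B_\dR^+$-lattices in $B_\dR^N$, exhausted by bounded Schubert-type subsheaves $\Gr_{\GL_N,\leq\mu}$ indexed by dominant cocharacters; each of these is a proper spatial diamond by the results in \cite[Lecture 19--20]{scholze2020berkeley}. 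Ind-properness and ind-representability in spatial diamonds then follow for $\GL_N$, and descend to $\sG$ via part (2).

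For part (2), given a closed immersion $\sG \hookrightarrow \sH$ of smooth affine $\CO_L$-group schemes, I would check that the induced morphism of v-sheaves $\Gr_\sG \to \Gr_\sH$ is a closed immersion by exhibiting it, locally on the target, as a retract of a closed condition. Concretely, for a test $(R,R^+)$ and an $\sH$-torsor $\CE'$ on $\Spec B_\dR^+(R^\sharp)$ with trivialization, the datum of a $\sG$-reduction amounts to a section of the quotient fppf sheaf $\CE'/\sG$. Since $\sH/\sG$ is representable by an affine (indeed quasi-affine) scheme of finite type over $\CO_L$, the existence of such a section compatible with the generic trivialization cuts out a closed sub-v-sheaf, giving the required closed immersion.

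The main obstacle will be verifying the ind-representability in spatial diamonds and the ind-properness with the requisite care in the mixed-characteristic setting; however, both of these facts are already in the literature (see \cite[\S 20]{scholze2020berkeley} and \cite{anschutz2022p}), so the proof really is a matter of citing and unwinding those results rather than establishing anything new.
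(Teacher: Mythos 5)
Your proposal is correct and is essentially the paper's approach: the paper proves this theorem purely by citation to \cite[Propositions 20.3.6, 20.5.4, Theorem 21.2.1]{scholze2020berkeley} and \cite[Theorem 4.9, Lemma 4.10]{anschutz2022p}, and your sketch (generic-fiber identification from the definition, reduction to $\GL_N$ via a closed embedding, and the closed-immersion criterion for $\Gr_\sG\ra\Gr_\sH$) is the standard unwinding of exactly those results. One small correction: in part (2) the fppf quotient $\sH/\sG$ is in general only quasi-affine over $\CO_L$ (affineness can fail integrally; quasi-affineness, which holds for closed immersions of parahoric group schemes and is what the cited lemma requires, is the property you should invoke).
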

\begin{proof}
	See \cite[Proposition 20.3.6, Proposition 20.5.4, Theorem 21.2.1]{scholze2020berkeley}, or \cite[Theorem 4.9, Lemma 4.10]{anschutz2022p}.
\end{proof}

% Let $\cbra{\mu}$ be a geometric conjugacy class of cocharacters of $G$ with reflex field $E=E(\cbra{\mu})$, which is a finite extension of $L$. 
 Recall that the $B_\dR^+$-affine Grassmannian $\Gr_G$ is a union of (open) Schubert diamonds $\Gr_{G,\cbra{\mu}}^\circ$ indexed by geometric conjugacy classes $\cbra{\mu}$ of cocharacters of $G$. Let $\Gr_{G,\cbra{\mu}}$ denote the v-closure of $\Gr_{G,\cbra{\mu}}^\circ$. If $\cbra{\mu}$ is minuscule with reflex field $E$, then $\Gr_{G,\cbra{\mu}}$ is representable by a projective scheme over $E$ (see \cite[Proposition 19.4.2]{scholze2020berkeley}). More precisely, $\Gr_{G,\cbra{\mu}}$ is the associated diamond of the flag variety $\sFl_{G,\cbra{\mu}}\coloneqq  G/P_{\cbra{\mu}}$, see \cite[Proposition 19.4.1]{scholze2020berkeley} for the normalization of the parabolic subgroup $P_{\cbra{\mu}}$. 
 
\begin{defn}
	Let $\Gr_{\sG,\CO_E}$ be the base change of $\Gr_\sG$. The v-sheaf local model $\RM^v_{\sG,\cbra{\mu}}$ is defined to be the v-closure of $\Gr_{G,\cbra{\mu}}$ inside $\Gr_{\sG,\CO_E}$.
\end{defn}

Recall that given a scheme $X$ proper over $\CO_E$, there is a functorially associated v-sheaf $X^\Diamond$ over $\Spd\CO_E$. For details of the definition, we refer to \cite[\S 2.2]{anschutz2022p}. We have the following representability result of the v-sheaf local models.

\begin{thm}[Scholze-Weinstein Conjecture]  \label{thm-vLM}
	Assume $\cbra{\mu}$ is minuscule. Then there exists a unique (up to unique isomorphism) flat, projective and normal $\CO_E$-scheme $\BM_{\sG,\cbra{\mu}}$ with a closed immersion \begin{flalign*}
		      \BM_{\sG,\cbra{\mu}}^\Diamond\hookrightarrow \Gr_{\sG}\otimes_{\CO_L}\CO_E
	\end{flalign*}
	prolonging $\sFl_{G,\cbra{\mu}}^\Diamond\simto \Gr_{G,\cbra{\mu}}\sset\Gr_{G}\otimes_LE$. In particular, $\BM_{\sG,\cbra{\mu}}^\Diamond=\RM^v_{\sG,\cbra{\mu}}$.
\end{thm}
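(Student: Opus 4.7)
The plan is to follow the strategy developed by Anschütz--Gleason--Lourenço--Richarz in \cite{anschutz2022p} together with the tubular neighborhoods machinery of Gleason--Lourenço \cite{gleason2022tubular}. First I would dispose of uniqueness: if $X_1,X_2$ are two flat, projective, normal $\CO_E$-schemes with $X_i^\Diamond\hookrightarrow \Gr_{\sG}\otimes_{\CO_L}\CO_E$ both prolonging $\sFl_{G,\cbra{\mu}}^\Diamond$, then the functor $X\mapsto X^\Diamond$ is fully faithful on seminormal proper $\CO_E$-schemes by the diamondification results in \loccit, so the equality of subfunctors forces a unique isomorphism $X_1\simeq X_2$ extending the identity on the common generic fiber.

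For existence, the key steps in order would be: (i) show that $\RM^v_{\sG,\cbra{\mu}}$ is a proper v-sheaf over $\Spd\CO_E$, which follows from the ind-properness in Theorem \ref{thm-Gr}(1) together with the fact that $\Gr_{G,\cbra{\mu}}$ is already projective over $E$ as the diamond of $\sFl_{G,\cbra{\mu}}$; (ii) equip $\RM^v_{\sG,\cbra{\mu}}$ with the structure of a smelted kimberlite in the sense of Gleason, using the specialization map for the Beilinson--Drinfeld Grassmannian and the analysis of its reduction; (iii) show that the reduction $(\RM^v_{\sG,\cbra{\mu}})^\red$ is representable by a perfect scheme perfectly of finite type over the residue field $k_E$, obtained as a union of affine Schubert varieties in the Witt vector affine flag variety attached to $\sG$; and (iv) invoke the main algebraization theorem of \cite{anschutz2022p} which, under minusculeness of $\cbra{\mu}$, upgrades this formal/perfect data to a flat projective normal $\CO_E$-scheme $\BM_{\sG,\cbra{\mu}}$ whose associated v-sheaf agrees with $\RM^v_{\sG,\cbra{\mu}}$.

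The hard part will be step (iv), specifically the verification that the resulting scheme is \emph{normal} with reduced special fiber. This requires the reducedness of the affine Schubert varieties in the Witt vector affine flag variety in residue characteristic $p$, which in our wildly ramified setting is precisely the delicate input treated in \cite{fakhruddin2022singularities} (and excludes the wildly ramified odd-dimensional unitary case, which is of course the motivation for the rest of this paper). Given the general machinery in the cited works, I would not reprove any of these points here but would present the theorem essentially as a citation, emphasizing only that the representability and normality statements apply to our parahorics $\sG_I$ because minusculeness of the signature cocharacter $\cbra{\mu}$ of weight $(n-1,1)$ is unaffected by the ramification hypotheses on $F/F_0$.
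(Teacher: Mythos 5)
Your proposal takes essentially the same route as the paper: the paper proves this theorem purely by citation to \cite[Theorem 1.1]{anschutz2022p} and \cite[Corollary 1.4]{gleason2022tubular}, and your sketch of the internal strategy of those works (properness of $\RM^v_{\sG,\cbra{\mu}}$, kimberlite/specialization structure, identification of the reduction with perfect Schubert varieties in the Witt vector affine flag variety, then algebraization, with uniqueness via full faithfulness of diamondification on suitable proper $\CO_E$-schemes) is consistent with that citation.

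One correction to your closing paragraph: the representability, flatness, projectivity and normality asserted in this theorem hold \emph{without} excluding the case of wildly ramified odd-dimensional unitary groups at $p=2$ --- indeed this is exactly why the present paper can invoke the theorem for its $\sG_I$ in Theorem \ref{prop-M=Mv}. What is excluded in that case (cf.\ \cite{fakhruddin2022singularities} and the discussion in the introduction) is the further assertion that the scheme local models are Cohen--Macaulay with Frobenius split special fibers, not the normality/reducedness input needed here. As written, your remark that the delicate reducedness step ``excludes the wildly ramified odd-dimensional unitary case'' would, if taken literally, make the theorem inapplicable to the very setting of this paper; the subsequent sentence about minusculeness being unaffected by ramification is not the reason the theorem applies --- it applies because \cite{anschutz2022p} together with \cite{gleason2022tubular} cover all triples $(G,\sG,\cbra{\mu})$ with $\cbra{\mu}$ minuscule.
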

\begin{proof}
	See \cite[Theorem 1.1]{anschutz2022p} and \cite[Corollary 1.4]{gleason2022tubular}.
\end{proof}

Now we return to the situation in \S \ref{subsec-notation}. In particular, $G$ is the unitary similitude group $\GU(V,h)$ over $F_0$ attached to a split hermitian $F/F_0$-vector space $(V,h)$ of dimension $n=2m+1\geq 3$, and there is an $F$-basis $(e_i)_{1\leq i\leq n}$ of $V$ such that $h(e_i,e_j)=\delta_{i,n+1-j}$ for $1\leq i,j\leq n$. Let $\sG$ be the (special) parahoric group scheme corresponding to the index set $I=\cbra{0}$ or $\cbra{m}$. Let $T$ be the maximal torus of $G$ consisting of diagonal matrices with respect to the basis $(e_i)_{1\leq i\leq n}$. Under the isomorphism \begin{flalign*}
	     G_F\simeq \GL_{n,F}\times\BG_{m,F},
\end{flalign*} we can identify $X_*(T)$ with $\BZ^n\times\BZ$. Let $\mu\coloneqq \mu_{n-1,1}\in X_*(T)$ be the (minuscule) cocharacter corresponding to $$(1,0^{(n-1)},1)\in \BZ^n\times\BZ.$$ We write $0^{(n-1)}$ for a list of $n-1$ copies of $0$. As $n-1\neq 1$, the reflex field $E$ of $\cbra{\mu}$ equals $F$ (see \cite[\S 1.5]{pappas2009local})). Let $\RM^\loc$ denote the local model $\RM_I^\loc$ for $I=\cbra{0}$ or $\cbra{m}$ considered in \S 3.3.
\begin{thm} \label{prop-M=Mv}
	The scheme $\RM^\loc$ is isomorphic to $\BM_{\sG,\cbra{\mu}}$ in Theorem \ref{thm-vLM}.
\end{thm}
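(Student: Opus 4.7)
The plan is to apply the uniqueness statement in Theorem \ref{thm-vLM} (Scholze--Weinstein), which characterizes $\BM_{\sG,\{\mu\}}$ as the unique flat, projective, normal $\CO_F$-scheme equipped with a closed immersion of its diamond into $\Gr_{\sG}\otimes_{\CO_{F_0}}\CO_F$ extending the identification $\sFl_{G,\{\mu\}}^{\Diamond}\simto \Gr_{G,\{\mu\}}$ on generic fibers. Theorem \ref{thm-intromain} has already shown that $\RM^\loc$ is flat, projective, and normal over $\CO_F$, and Lemma \ref{lem-generic} identifies its generic fiber with $\BP^{n-1}_F$, which agrees with $\sFl_{G,\{\mu_{n-1,1}\}}$ since $\mu$ is the minuscule signature-$(n-1,1)$ cocharacter. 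So the task reduces to producing the required closed immersion $(\RM^\loc)^{\Diamond}\hookrightarrow \Gr_{\sG}\otimes_{\CO_{F_0}}\CO_F$.

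To build the morphism $\RM^\loc\to \Gr_{\sG,\CO_F}$, I would exploit the identification of $\sG$ with $\ud{\Sim}(\Lambda_m)$ (resp.\ $\ud{\Sim}(\Lambda_0,\phi)$) established in Appendix B. Under this identification, $\sG$-torsors on a base correspond to hermitian quadratic modules of the appropriate type, and $\Gr_\sG$ can be described lattice-theoretically as parametrizing modifications of $\Lambda_I$ preserving the hermitian quadratic structure. A point $\CF\in\RM^\loc(S)$ determines such a modification: the locally direct summand $\CF\subset \Lambda_I\otimes_{\CO_{F_0}}\CO_S$ together with the $\pi$-stability, duality, hyperbolicity, wedge, and (where present) strengthened spin conditions precisely encode the data of a $\sG$-equivariant modification of the trivial $\sG$-torsor after passing to the Beauville--Laszlo-type setup over $B_{\dR}^+$. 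This defines a map of v-sheaves $(\RM^\loc)^\Diamond \to \Gr_{\sG,\CO_F}$.

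The next step is to verify that this map is a closed immersion and that on generic fibers it matches the canonical embedding $\sFl_{G,\{\mu\}}^\Diamond \hookrightarrow \Gr_{G}\otimes_{F_0}F$. Properness of $\RM^\loc$ together with the ind-properness/separatedness of $\Gr_\sG$ from Theorem \ref{thm-Gr} reduces the closed immersion assertion to the monomorphism property, which can be checked on geometric points using the explicit lattice description. On the generic fiber, the morphism $\BP^{n-1}_F \to \Gr_G\otimes_{F_0}F$ is the classical orbit map sending a line to the associated minuscule Schubert cell, landing in $\Gr_{G,\{\mu_{n-1,1}\}}$ by construction. Combined with the flatness, projectivity, and normality of $\RM^\loc$, the uniqueness in Theorem \ref{thm-vLM} then yields the desired isomorphism $\RM^\loc\simeq \BM_{\sG,\{\mu\}}$.

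The main obstacle is the careful verification that the map into $\Gr_{\sG,\CO_F}$ is literally a closed immersion of v-sheaves (rather than just a morphism with the correct generic fiber), since $\Gr_\sG$ is defined in terms of $\sG$-torsors on $\Spec B_{\dR}^+$ while $\RM^\loc$ parametrizes honest lattice data. The translation requires that the conditions \textbf{LM1}--\textbf{LM6} imposed in Definitions \ref{defn-M}, \ref{rpdefn-M}, \ref{mdefn-M}, \ref{rpmdefn-M} be matched with the structural constraints defining $\sG$-torsors under the identifications of Appendix B; in particular, the strengthened spin condition, which is present only for $I=\{0\}$ and $I=\{m\}$ with (R-P), must be shown to be automatic on the flat closure so as not to cut out a proper closed subscheme of $\Gr_{\sG,\CO_F}$. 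Once this translation is in place, the argument is formal.
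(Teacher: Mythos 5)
Your overall skeleton (invoke the uniqueness in Theorem \ref{thm-vLM}, quote flatness, projectivity and normality of $\RM^\loc$ from Theorem \ref{thm-intromain}, identify the generic fiber with $\sFl_{G,\{\mu\}}$ via Lemma \ref{lem-generic}) agrees with the paper, but the crucial step --- actually producing the closed immersion $(\RM^\loc)^\Diamond\hookrightarrow\Gr_{\sG}\otimes_{\CO_{F_0}}\CO_F$ --- is not carried out, and the route you sketch for it does not work with the tools available. You propose to describe $\Gr_\sG$ lattice-theoretically as parametrizing modifications of $\Lambda_I$ preserving the hermitian quadratic structure, using the identifications of Appendix \ref{appB}, and then to match the conditions \textbf{LM1}--\textbf{LM6} with such modifications. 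No such moduli description of $\Gr_\sG$ is established in the paper (or available in the literature) for these wildly ramified unitary groups in residue characteristic $2$; the normal-form results of Appendix \ref{appB} are proved for noetherian $\CO_{F_0}$-algebras in which $\pi_0$ is nilpotent, not for the $B_{\dR}^+$-type rings over which $\Gr_\sG$ parametrizes $\sG$-torsors, so they cannot be used to translate points of $\Gr_\sG$ into hermitian quadratic lattice data. Moreover, the conditions \textbf{LM4}--\textbf{LM6} are auxiliary moduli conditions introduced to cut $\RM^\naive_I$ down toward the flat closure; it is neither claimed nor known that they characterize $\sG$-equivariant modifications, and $\RM_I$ is only shown to agree with $\RM^\loc_I$ topologically. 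So the matching of \textbf{LM1}--\textbf{LM6} with torsor-theoretic constraints is exactly the obstacle you yourself flag at the end, and you offer no way to close it. (A minor slip: the strengthened spin condition occurs in the cases $I=\{0\}$ with (R-U) and $I=\{m\}$ with (R-P), not quite as you state.)

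The paper avoids this issue entirely by a different mechanism. Theorem \ref{thm-smooth} realizes $\sG$ as the schematic closure of $\wt{G}$ in $\GL(\Lambda_I)\simeq\GL_{2n}$, hence gives a closed immersion of parahoric group schemes $\sG\hookrightarrow\GL_{2n}$ prolonging $G\hookrightarrow\GL_{F_0}(V)$; Theorem \ref{thm-Gr}(2) then yields a closed immersion $\RM^v_{\sG,\{\mu\}}\hookrightarrow\RM^v_{\GL_{2n},\{\mu_n\}}\otimes_{\CO_{F_0}}\CO_F=\Gr(n,2n)^\Diamond_{\CO_F}$, so $\RM^v_{\sG,\{\mu\}}$ is identified with the v-closure of $\sFl^\Diamond_{G,\{\mu\}}$ inside $\Gr(n,2n)^\Diamond_{\CO_F}$. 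Since $\RM^\loc$ is by construction the Zariski closure of $\sFl_{G,\{\mu\}}$ inside $\Gr(n,2n)_{\CO_F}$, applying the diamond functor (with the compatibility of flat closure and v-closure) identifies $\RM^{\loc,\Diamond}$ with the same v-closure, whence $\RM^{\loc,\Diamond}=\RM^v_{\sG,\{\mu\}}$, and the uniqueness in Theorem \ref{thm-vLM} concludes. To salvage your approach you would need either this $\GL_{2n}$-embedding argument or a genuinely new Bruhat--Tits/lattice description of $\Gr_\sG$ over $B^+_{\dR}$ in residue characteristic $2$; the latter is a substantial result, not a formality.
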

\begin{proof}
	We have shown that the scheme $\RM^\loc$ is normal, flat and projective over $\CO_F$. By the uniqueness part of Theorem \ref{thm-vLM}, it suffices to show  that \begin{flalign*}
		     \RM_{\sG,\cbra{\mu}}^v =\RM^{\loc,\Diamond}.
	\end{flalign*} 
	
	By our concrete description of $\sG$ in Corollary \ref{app-coroparam} and \ref{app-coropara0}, we have a closed immersion over $\CO_{F_0}$ \begin{flalign}
		    \sG\hookrightarrow \GL(\Lambda)\simeq\GL_{2n}  \label{cl}
	\end{flalign} prolonging the closed immersion $G\hookrightarrow \GL_{F_0}(V)\simeq\GL_{2n,F_0}$, where $\Lambda$ is either $\Lambda_0$ or $\Lambda_m$ depending on what $\sG$ is. Let $T'$ be the maximal torus of $\GL_{2n,F_0}$ consisting of diagonal matrices. Then the map $G\hookrightarrow \GL_{F_0}(V)$ transports $\cbra{\mu_{n-1,1}}$ to the geometric conjugacy class $\cbra{\mu_n}$ of cocharacters of $T'$. Here, $\mu_n$ corresponds to $(1^{(n)},0^{(n)})\in X_*(T')\simeq \BZ^{2n}.$ 
	By Theorem \ref{thm-Gr} (2), the closed immersion \eqref{cl} induces a closed immersion \begin{flalign*}
		    \RM^v_{\sG,\cbra{\mu}} \hookrightarrow \RM^v_{\GL_{2n},\cbra{\mu_n}}\otimes_{\CO_{F_0}}\CO_F =\Gr(n,2n)_{\CO_F}^\Diamond,
	\end{flalign*} and we may identify $\RM^v_{\sG,\cbra{\mu}}$ with the v-closure of $\sFl_{G,\cbra{\mu}}^\Diamond$ inside $\Gr(n,2n)_{\CO_F}^\Diamond$.
	
	By Lemma \ref{lem-generic}, we can identify the generic fiber $\RM^\loc\otimes_{\CO_F}F$ with $\BP^{n-1}_F\simeq \sFl_{G,\cbra{\mu}}$, and there exists a closed immersion $$\sFl_{G,\cbra{\mu}}\hookrightarrow \sFl_{\GL_{2n},\cbra{\mu_n},F}=\Gr(n,2n)_F$$ induced by the embedding $G\hookrightarrow \GL_{F_0}(V)$. By our construction of $\RM^\loc$, the scheme $\RM^\loc$ is the Zariski closure of $\sFl_{G,\cbra{\mu}}$ along $\sFl_{G,\cbra{\mu}}\hookrightarrow \sFl_{\GL_{2n},\cbra{\mu_n},F}\hookrightarrow \Gr(n,2n)_{\CO_F}$.
%	\begin{flalign*}
%		   \sFl_{G,\cbra{\mu}}\hookrightarrow \sFl_{\GL_{2n},\cbra{\mu_n},F}\hookrightarrow \Gr(n,2n)_{\CO_F}.
%	\end{flalign*} 
	Applying the diamond functor, we see that $\RM^{\loc,\Diamond}$ is the v-closure of $\sFl_{G,\cbra{\mu}}^\Diamond$ inside $\Gr(n,2n)_{\CO_F}^\Diamond$. Hence, we have $\RM_{\sG,\cbra{\mu}}^v =\RM^{\loc,\Diamond}$.
%	\begin{flalign*}
%		     \RM_{\sG,\cbra{\mu}}^v =\RM^{\loc,\Diamond}.
%	\end{flalign*}
\end{proof}

\begin{remark}
	The proof of the above proposition also gives another proof of the representability of the v-sheaf local model $\RM^v_{\sG,\cbra{\mu}}$ in our setting.
\end{remark}

\appendix 

\section{Normal forms of hermitian quadratic modules} \label{appB}
Let us keep the notations as in \S \ref{sec-main}. In this appendix, we will show that, under certain conditions, hermitian quadratic modules \etale locally have a normal form up to similitude. This is a variant of \cite[Theorem 3.16]{rapoport1996period} in our setting. Such a result will be important when we relate the local models to Shimura varieties.  

In the following, we let $$\Nilp\coloneqq \Nilp_{\CO_{F_0}}$$ denote the category of noetherian\footnote{If $R$ is noetherian, then a finitely generated $R$-module $M$ is projective if and only if there exists a finite Zariski open cover $\tcbra{\Spec R_i}_{i\in I}$ of $\Spec R$ such that $M_{R_i}$ is free. } $\CO_{F_0}$-algebras such that $\pi_0$ is nilpotent. We set $t\coloneqq \pi+\ol{\pi}$. In particular, $t=0$ if $F/F_0$ is of (R-P) type. For an $\CO_{F_0}$-algebra $R$ and $a\in \CO_{F}$, we will simply use $a$ to denote the element $a\otimes 1$ in $\CO_F\otimes_{\CO_{F_0}}R$, if there is no confusion. For a hermitian quadratic module $(M,q,\sL)$, we will use $f$ to denote the associated symmetric pairing on $M$, as in Definition \ref{defn-hermqm}. 

%\subsection{Conventions}
%Let us keep the notations as in \S \ref{sec-main}. Then for $\Lambda=\Lambda_0$ or $\Lambda_m$ and for $\sL=\varepsilon\inverse \CO_{F_0}$, the triple $(\Lambda,q,\sL)$ carries a structure of hermitian quadratic module over $\CO_{F_0}$ of rank $2m+1$. For an $\CO_{F_0}$-algebra $R$ and $a\in \CO_{F}$, we shall simply write $a$ to denote the element $a\otimes 1$ in $\CO_F\otimes_{\CO_{F_0}}R$, if there is no confusion. We also set $t\coloneqq \pi+\ol{\pi}$. In particular, $t=0$ if $F/F_0$ is of (R-P) type.

\subsection{Hermitian quadratic modules of type $\Lambda_m$}
The results in this subsection are essentially contained in \cite[\S 9]{anschutz2018extending}, with some modifications to the proof. 

\begin{lemma}[{cf. \cite[Lemma 9.6]{anschutz2018extending}}] \label{lem-trick}
	Let $R\in\Nilp$. Let $(M,q,R)$ be an $R$-valued hermitian quadratic module over $R$. Assume there exist $v,w\in M$ such that $f(v,\pi w)=1$ in $R$. Then there exist $v',w'$ in the $R$-submodule spanned by $\cbra{v,w,\pi v,\pi w} $ such that \begin{flalign*}
		     q(v')=q(w')=f(v',w')=0\ \text{and\ }f(v',\pi w')=1.
	\end{flalign*}
\end{lemma}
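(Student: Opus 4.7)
The plan is to normalize $v$ and $w$ by successive substitutions, exploiting the fact that, for $R \in \Nilp$, both $\pi_0$ and $t = \pi + \ol{\pi}$ are nilpotent in $R$: in the (R-P) case $t = 0$, while in (R-U) one has $\pi_0 \mid t$ in $\CO_{F_0}$, so nilpotence of $\pi_0$ in $R$ forces nilpotence of $t$ in $R$. Set $\alpha = q(v)$, $\beta = q(w)$, $\gamma = f(v,w)$. The defining identity $q(\lambda x) = N_{F/F_0}(\lambda) q(x)$, expanded using bilinearity of $f$, gives the two useful consequences $q(\pi x) = \pi_0 q(x)$ and $f(x, \pi x) = t q(x)$ for every $x \in M$.

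\emph{Killing $q(v)$.} The Newton-type substitution $v \mapsto v - \alpha\,\pi w$, combined with $f(v, \pi w) = 1$, yields
\[ q(v - \alpha \pi w) = \alpha - \alpha \cdot 1 + \alpha^2 \pi_0 \beta = \alpha^2 \pi_0 \beta, \]
so $q(v)$ acquires an extra factor of $\pi_0$. Iterating (and, if desired, rescaling $w$ by a unit after each step to restore $f(v, \pi w) = 1$), the nilpotence of $\pi_0$ forces $q(v) = 0$ after finitely many steps, while $f(v, \pi w)$ is preserved as a unit of $R$.

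\emph{Killing $q(w)$.} With $q(v) = 0$ and $f(v, \pi w) = 1$, the substitution $w \mapsto w - \beta\,\ol{\pi}\, v$ works in one shot: the hermitian relation $f(w, \ol{\pi} v) = f(\pi w, v) = 1$ together with $q(\ol{\pi} v) = \pi_0 q(v) = 0$ gives $q(w - \beta \ol{\pi} v) = 0$. Moreover $f(v, \ol{\pi} v) = f(\pi v, v) = t q(v) = 0$, so the values $f(v, w) = \gamma$ and $f(v, \pi w) = 1$ are left unchanged.

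\emph{Killing $f(v, w)$ and final rescaling.} The $\CO_F$-scaling $w \mapsto (1 - \gamma \pi) w$ preserves $q(w) = 0$ (by $q(\lambda w) = N(\lambda) q(w)$) and sends $f(v, w)$ to $\gamma - \gamma \cdot 1 = 0$. A short computation using $\pi^2 = t\pi - \pi_0$ gives
\[ f(v,\, \pi(1 - \gamma \pi) w) = 1 - \gamma(t - \pi_0 \gamma) = 1 - t\gamma + \pi_0 \gamma^2. \]
This is where the nilpotence of $t$ in $R$ is decisive: both $t\gamma$ and $\pi_0 \gamma^2$ are nilpotent, so the right-hand side is a unit of $R$, and a last rescaling of $w$ by its inverse produces $v'$ and $w'$ satisfying all four required identities. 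The main obstacle, absent the nilpotence of $t$, is precisely this last step: without it $1 - t\gamma$ need not be a unit, and killing $f(v, w)$ could destroy the unit property of $f(v, \pi w)$. The divisibility $\pi_0 \mid t$ encoded in the Eisenstein equation $\pi^2 - t\pi + \pi_0 = 0$ is what makes the scheme go through uniformly in both ramification types.
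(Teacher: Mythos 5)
Your argument is correct and follows essentially the same strategy as the paper: successively normalize $q(v)$, $q(w)$ and $f(v,w)$ by substitutions inside the $R$-span of $\{v,w,\pi v,\pi w\}$, exploiting that $\pi_0$ (hence also $t$, since $\pi_0\mid t$) is nilpotent in $R$. The only cosmetic differences are that you kill $q(v)$ by a terminating Newton-type iteration where the paper solves the quadratic $(\pi_0 q(w))r^2+r+q(v)=0$ exactly via a truncated binomial square root, and in the final step you adjust $w$ by $1-\gamma\pi$ followed by a unit rescaling where the paper instead replaces $v$ by $r_1v+r_2\ol{\pi}v$; both variants rest on the same nilpotence facts.
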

\begin{proof}
	For $r\in R$, we have \begin{flalign*}
		 q(v+r\pi w) &=q(v)+rf(v,\pi w)+r^2\pi_0q(w) =(\pi_0q(w))r^2+r+q(v),
	\end{flalign*}
	which can be viewed as a quadratic function of $r$. As $4\pi_0$ is nilpotent on $R$ by assumption, there exists a sufficiently large integer $N$ such that the sum
	\begin{flalign*}
		      1-2\pi_0q(v)q(w) +2\pi_0^2q(v)^2q(w)^2 +\cdots +(-1)^N \binom{1/2}{N}4^N\pi_0^Nq(v)^Nq(w)^N
	\end{flalign*}
	in $R$ is a square root of ${1-4\pi_0q(v)q(w)}$.
	Note that $\binom{1/2}{N}4^N$ lies in $R$ by a direct computation of the $2$-adic valuation. In particular, \begin{flalign*}
		    r_0\coloneqq \frac{-1+\rbra{1-4\pi_0q(v)q(w)}^{1/2}}{2\pi_0q(w)}\in R, 
	\end{flalign*} 
	and it is a solution for the quadratic equation $q(v+r\pi w)=0$. Replacing $v$ by $v+r_0\pi w$, we may assume $q(v)=0$. Similarly, we may assume $q(w)=0$ by replacing $w$ by $w+r\ol{\pi}v$ for suitable $r$ in $R$. 
	
	Set $r_1\coloneqq \rbra{1-f(x,y)f(v,\pi^2w)}\inverse$ and $r_2\coloneqq -r_1f(v,w)$. Note that \begin{flalign*}
		   f(v,\pi^2w) &=f(v,(t\pi-\pi_0)w) =tf(v,\pi w)-\pi_0f(v,w)=t-\pi_0f(v,w)
	\end{flalign*} 
	is nilpotent in $R$, so $r_1$ indeed exists in $R$. 
	Set $v'\coloneqq r_1v+r_2\ol{\pi}v$. 
%	\begin{flalign*}
%		    v'\coloneqq r_1v+r_2\ol{\pi}v.
%	\end{flalign*} 
	Then the straightforward computation implies that \begin{flalign*}
		    f(v',w) &= r_1f(v,w)+r_2f(\ol{\pi}v,w) =r_1f(v,w)+r_2f(v,\pi w) =r_1f(v,w)+r_2=0
	\end{flalign*}
	and \begin{flalign*}
		  f(v',\pi w) &=r_1f(v,\pi w)+r_2f(\ol{\pi}v,\pi w) =r_1+r_2f(v,\pi^2w) =1.
	\end{flalign*}
\end{proof}

\begin{lemma}\label{lem-perfect}
	Let $R$ be an $\CO_{F_0}$-algebra and $M$ be a finite free $\CO_F\otimes_{\CO_{F_0}}R$-module of rank $d\geq 1$. Suppose $b: M\times M\ra R$ is a perfect $R$-bilinear pairing. 
%	in the sense that the following two maps \begin{align*}
%		  M&\ra M^\vee \quad  &M&\ra M^\vee \\ m_1&\mapsto (m_2\mapsto b(m_1,m_2)) &m_2\mapsto &(m_1\mapsto b(m_1,m_2))
%	\end{align*} are $R$-isomorphisms between $M$ and its $R$-dual $M^\vee\coloneqq \Hom_R(M,R)$. 
	Then there exists $v, w\in M$ such that $b(v,\pi w)=1$.
\end{lemma}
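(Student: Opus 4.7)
The plan is very short: I would reduce the existence of $(v, w)$ with $b(v, \pi w) = 1$ to the observation that $\pi e_1$ can be taken as part of an $R$-basis of $M$, where $e_1$ is an element of an $\CO_F \otimes_{\CO_{F_0}} R$-basis of $M$. Perfectness of $b$ then supplies the vector $v$ by a dual basis construction.

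First, since $\pi$ satisfies an Eisenstein polynomial of degree two over $\CO_{F_0}$ (either $\pi^2 - t\pi + \pi_0 = 0$ in the (R-U) case or $\pi^2 + \pi_0 = 0$ in the (R-P) case), $\CO_F$ is free of rank $2$ over $\CO_{F_0}$ with basis $\{1, \pi\}$. Base changing to $R$, the ring $\CO_F \otimes_{\CO_{F_0}} R$ is also free of rank $2$ over $R$ with the same basis. Consequently, if $e_1, \ldots, e_d$ is an $\CO_F \otimes_{\CO_{F_0}} R$-basis of $M$, then $\{e_1, \ldots, e_d, \pi e_1, \ldots, \pi e_d\}$ is an $R$-basis of $M$ (which therefore has rank $2d$ as an $R$-module).

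Next, I would simply take $w = e_1$. Since $\pi w = \pi e_1$ belongs to the above $R$-basis of $M$, the coordinate functional dual to $\pi e_1$ is a linear form $L \in \Hom_R(M, R)$ with $L(\pi e_1) = 1$. By perfectness of $b$, the map $M \to \Hom_R(M, R)$ sending $v$ to $b(v, -)$ is an isomorphism, so there exists a unique $v \in M$ with $b(v, -) = L$. Evaluating at $\pi e_1$ yields $b(v, \pi e_1) = L(\pi e_1) = 1$, which gives the desired pair.

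There is no genuine obstacle here: once the freeness of $\CO_F$ over $\CO_{F_0}$ is exploited to identify $\pi e_1$ as part of an $R$-basis of $M$, the conclusion is an immediate application of the dual basis construction afforded by the perfect pairing.
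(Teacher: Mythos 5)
Your proof is correct and is essentially the same argument as in the paper: both pass to the $R$-basis $\{e_1,\ldots,e_d,\pi e_1,\ldots,\pi e_d\}$ of $M$ and use perfectness of $b$ to realize the coordinate functional dual to $\pi e_1$ as $b(v,-)$ for some $v\in M$, giving $b(v,\pi e_1)=1$.
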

\begin{proof}
	By assumption, we may choose an $R$-basis $\cbra{v_1,\ldots,v_{2d}}$ of $M$ such that $v_{d+i}=\pi v_i$ for $1\leq i\leq d$. This basis yields a dual basis $\cbra{v_1^\vee,\ldots,v_{2d}^\vee}$ of $M^\vee\coloneqq \Hom_R(M,R)$ such that $v_i^\vee(v_j)=b(v_i,v_j)=\delta_{ij}$.
%	\begin{flalign*}
%		    v_i^\vee(v_j)=b(v_i,v_j)=\delta_{ij}.
%	\end{flalign*} 
	Since $b$ is perfect, we can find elements $\cbra{w_1,\ldots,w_{2d}}$ in $M$ such that \begin{flalign*}
          b(w_i,v_j)=v_i^\vee(v_j)=\delta_{ij}
	\end{flalign*} for $1\leq i,j\leq 2d$. Set $v\coloneqq w_{d+1}$ and $w\coloneqq v_1$. Then we have \begin{flalign*}
		    b(v,\pi w)=b(w_{d+1},v_{d+1})=v_{d+1}^\vee(v_d)=1.
	\end{flalign*}
\end{proof}

\begin{lemma} \label{lem-orthogo}
	Let $R$ be an $\CO_{F_0}$-algebra and $M$ be a finite free $\CO_F\otimes_{\CO_{F_0}}R$-module of rank $d\geq 1$. Suppose $b:M\times M\ra R$ is an $R$-bilinear pairing on $M$ such that \begin{flalign}
		   b(\pi m_1,m_2)=b(m_1,\ol{\pi}m_2) \label{bpairing}
	\end{flalign}  for any $m_1$ and $m_2$ in $M$. Let $N$ be a free $(\CO_F\otimes_{\CO_{F_0}}R)$-submodule of $M$ such that $b$ restricts to a perfect pairing on $N$. Denote by $N^\perp\coloneqq \tcbra{m\in M\ |\  b(m,n)=0 \text{\ for any $n\in N$}}$ the (left) orthogonal complement of $N$ with respect to $b$.  
	
	Then $N^\perp$ is a projective $(\CO_F\otimes_{\CO_{F_0}}R)$-module and $M=N\oplus N^\perp$ as $\CO_F\otimes_{\CO_{F_0}}R$-modules.
\end{lemma}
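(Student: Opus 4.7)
The strategy is to construct an $R$-linear projection $p: M \to N$ using the perfectness of $b|_N$, then upgrade it to an $\CO_F \otimes_{\CO_{F_0}} R$-linear map using the adjointness relation \eqref{bpairing}; the decomposition and projectivity will follow formally.

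First, I would define $p$ as follows. For each $m \in M$, the map $\varphi_m : N \to R$, $n \mapsto b(m,n)$, is $R$-linear. Since $b|_N$ is a perfect $R$-bilinear pairing, the canonical map $N \to \Hom_R(N,R)$, $n' \mapsto b(n',-)|_N$, is an isomorphism of $R$-modules. Let $p(m) \in N$ be the unique element with $b(p(m),n) = b(m,n)$ for all $n \in N$. By construction $p$ is $R$-linear, and $p|_N = \mathrm{id}_N$ by the uniqueness.

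Next, I would verify that $p$ is in fact $\CO_F \otimes_{\CO_{F_0}} R$-linear, which reduces to checking $p(\pi m) = \pi \cdot p(m)$. For all $n \in N$, using \eqref{bpairing} and the fact that $N$ is stable under the $\CO_F \otimes_{\CO_{F_0}} R$-action (so $\bar\pi n \in N$):
\[
b(\pi \cdot p(m), n) = b(p(m), \bar\pi n) = b(m, \bar\pi n) = b(\pi m, n).
\]
By the uniqueness defining $p(\pi m)$, we get $p(\pi m) = \pi \cdot p(m)$, as desired. The same computation shows that $N^\perp$ is itself an $\CO_F \otimes_{\CO_{F_0}} R$-submodule of $M$.

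Finally, I would identify $\ker(p) = N^\perp$: if $p(m) = 0$ then $b(m,n) = b(p(m),n) = 0$ for all $n \in N$, and conversely, if $m \in N^\perp$ then $b(p(m),n) = 0$ for all $n$, forcing $p(m) = 0$ by the perfectness (injectivity of $N \to \Hom_R(N,R)$). Writing $m = p(m) + (m - p(m))$ exhibits $M = N + N^\perp$, and the intersection $N \cap N^\perp = 0$ again by perfectness. This yields $M = N \oplus N^\perp$ as $\CO_F \otimes_{\CO_{F_0}} R$-modules. Since $M$ is free (hence projective) over $\CO_F \otimes_{\CO_{F_0}} R$, the direct summand $N^\perp$ is projective. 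There is no real obstacle here; the only substantive point is the adjointness check in the second step, which makes critical use of the hypothesis that $N$ is an $\CO_F \otimes R$-submodule and of the relation \eqref{bpairing}.
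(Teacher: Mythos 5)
Your proof is correct and is essentially the paper's argument in different clothing: the projection $p$ you construct is exactly the splitting of the paper's exact sequence $0\to N^\perp\to M\to \Hom_R(N,R)$ coming from the isomorphism $N\simeq\Hom_R(N,R)$ given by perfectness, and both arguments use the relation \eqref{bpairing} in the same way to upgrade $R$-linearity to $\CO_F\otimes_{\CO_{F_0}}R$-linearity, after which projectivity of $N^\perp$ follows since it is a direct summand of the free module $M$.
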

\begin{proof}
	 By construction, we have an exact sequence of $R$-modules \begin{flalign}
	 	0\ra N^\perp\xrightarrow{\alpha} M\xrightarrow{\beta} \Hom_R(N,R), \label{aexact}
	 \end{flalign} where $\alpha$ denotes the inclusion map and $\beta$ denotes the map $m\mapsto (n\mapsto b(m,n))$ for $m\in M$ and $n\in N$.
	By \eqref{bpairing}, the $R$-submodule $N^\perp$ is also an $\CO_F\otimes_{\CO_{F_0}}R$-submodule. For any $\varphi\in \Hom_R(N,R)$, define $\pi \varphi\in \Hom_R(N,R)$ by setting $(\pi\varphi)(n)\coloneqq \varphi(\ol{\pi}n)$ for $n\in N$. This endows $\Hom_R(N,R)$ with the structure of an $\CO_F\otimes_{\CO_{F_0}}R$-module, and the exact sequence \eqref{aexact} becomes an exact sequence of $\CO_F\otimes_{\CO_{F_0}}R$-modules. Since $b$ is perfect on $N$, the map $\beta$ is surjective with a section $\Hom_R(N,R)\ra N\sset M$. It follows that $M=N\oplus N^\perp$ as $\CO_F\otimes_{\CO_{F_0}}R$-modules and $N^\perp$ is projective. 
\end{proof}

\begin{lemma}[{cf. \cite[Lemma 9.2]{anschutz2018extending}}] \label{lem-affine}
	Let $R$ be an $\CO_{F_0}$-algebra and let $M$ be a free $\CO_F\otimes_{\CO_{F_0}}R$-module of rank $d$. Then the functor \begin{flalign*}
		    HQF(M): (\Sch/R)^\op &\lra \Sets\\ S &\mapsto \cbra{\text{$\CO_S$-valued hermitian quadratic forms on $M\otimes_R\CO_S$} }
	\end{flalign*} is represented by the affine space $\BA_R^{d^2}$ of dimension $d^2$ over $R$. 
\end{lemma}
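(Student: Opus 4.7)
The plan is to exhibit explicit affine coordinates on $HQF(M)$ by exploiting the rigidity forced by the hermitian axioms. Since $\sL$ is Zariski-locally free of rank one, I would first reduce to the case $\sL = R$, so that the claim becomes $R$-local on $\Spec R$.

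The key preliminary step is to establish the identity
\[
f(x,\pi x) \;=\; t\,q(x) \qquad \text{for all } x \in M.
\]
This is obtained by applying $q(ax) = N_{F/F_0}(a)\,q(x)$ to $a = 1 + \pi$, using $N(1+\pi) = 1 + t + \pi_0$, and expanding $q((1+\pi)x) = q(x) + q(\pi x) + f(x,\pi x) = (1+\pi_0)q(x) + f(x,\pi x)$. Without this identity $f(x,\pi x)$ would only be pinned down up to $2$-torsion, which would break the dimension count in residue characteristic two.

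Next, fix an $\CO_F \otimes_{\CO_{F_0}} R$-basis $e_1,\ldots,e_d$ of $M$, and define a natural transformation
\[
\Phi\colon HQF(M)\longrightarrow \BA_R^{d^2}, \qquad q \longmapsto \bigl(q(e_i)\bigr)_{1\le i\le d}\,\oplus\,\bigl(f(e_i,e_j)\bigr)_{i<j}\,\oplus\,\bigl(f(e_i,\pi e_j)\bigr)_{i<j},
\]
which records $d + 2\binom{d}{2} = d^2$ values in $R$. To construct the inverse $\Psi$, given $(\alpha_i,\gamma_{ij},\delta_{ij})$ I would define $q\colon M\to R$ explicitly by
\[
q\Bigl(\sum_i X_i e_i\Bigr) \;=\; \sum_{i=1}^d N(X_i)\,\alpha_i \;+\; \sum_{i<j}\bigl(A_{ij}\gamma_{ij} + B_{ij}\delta_{ij}\bigr),
\]
where $A_{ij},B_{ij}\in R$ are determined by the decomposition $\bar X_i X_j = A_{ij} + B_{ij}\pi$. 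This formula is forced by the polarization identity, $q(X_i e_i) = N(X_i)\alpha_i$, and $f(X_i e_i, X_j e_j) = f(e_i, \bar X_i X_j e_j) = A_{ij}\gamma_{ij} + B_{ij}\delta_{ij}$; consequently every hermitian quadratic form on $M$ coincides with $\Psi(\Phi(q))$, giving $\Psi\circ\Phi = \mathrm{id}$. That $\Phi\circ\Psi = \mathrm{id}$ is immediate by evaluation at basis elements.

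The one substantive step is checking that the $q$ produced by the formula above actually lies in $HQF(M)$. That $q$ is a quadratic form is visible from the formula, since each of $N(X_i)$, $A_{ij}$, $B_{ij}$ is a homogeneous degree-two polynomial in the $R$-coordinates $r_i, s_i$ of $X_i = r_i + s_i\pi$. The two hermitian axioms $q(ax) = N(a)q(x)$ and $f(ax,y) = f(x,\bar a y)$ reduce by $R$-bilinearity and the fact that $\CO_F\otimes R$ is generated over $R$ by $1$ and $\pi$ to the single case $a = \pi$, which can be checked by direct calculation using $\pi\bar\pi = \pi_0$, $\pi+\bar\pi = t$, and the identities $\overline{\pi X_i}\cdot X_j = \bar\pi(\bar X_i X_j)$, $\bar X_i(\pi X_j) = \pi(\bar X_i X_j)$. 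This verification — routine once organized, but the only real computation in the proof — is the main obstacle; everything else is forced by the key identity of the first step.
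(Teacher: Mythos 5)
Your proof is correct and follows essentially the same route as the paper: fix an $\CO_F\otimes_{\CO_{F_0}}R$-basis, show a hermitian quadratic form is determined by the $d^2$ values $q(e_i)$, $f(e_i,e_j)$, $f(e_i,\pi e_j)$ ($i<j$), and invert by an explicit polynomial formula whose hermitian axioms are then checked directly; your identity $f(x,\pi x)=tq(x)$ (from $q((1+\pi)x)=N(1+\pi)q(x)$) is exactly what the paper encodes implicitly by setting $B_{ii}=tA_{ii}$ in the converse construction. The only cosmetic differences are your packaging of the cross terms via $\bar X_iX_j=A_{ij}+B_{ij}\pi$ and the (unnecessary, but harmless) opening reduction to $\sL=R$, since the lemma already concerns $\CO_S$-valued forms.
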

\begin{proof}
	Choose a basis $e_1,\ldots,e_d$ of $M$ over $\CO_F\otimes_{\CO_{F_0}}R$. This is also a basis of $M\otimes_R\CO_S$. By the properties of hermitian quadratic forms, we can see that any hermitian quadratic form $q: M\otimes_R\CO_S\ra \CO_S$ is determined by values $q(e_i)$ for $1\leq i\leq d$ and $f(e_i,e_j)$, $f(e_i,\pi e_j)$ for $1\leq i<j\leq d$. More precisely, for any element $m=\sum_{i=1}^d(a_ie_i+b_i\pi e_i)\in M\otimes_R\CO_S$ for $a_i,b_i\in \CO_S$, we have \begin{flalign}
		    q(m) &= q(\sum_{i=1}^d a_ie_i)+f(\sum_{i=1}^da_ie_i,\sum_{i=1}^db_i\pi e_i)+q(\sum_{i=1}^db_i\pi e_i) \notag  \\ &=\sum_{i=1}^da_i^2q(e_i)+\sum_{1\leq i<j\leq d}a_ia_jf(e_i,e_j) + \sum_{1\leq i,j\leq d}a_ib_jf(e_i,\pi e_j) \notag  \\ &\quad +\sum_{i=1}^d\pi_0b_i^2q(e_i)+\sum_{i\leq i<j\leq d}\pi_0b_ib_jf(e_i,e_j).   \label{qeq}
	\end{flalign}
	Note also that for $1\leq i,j\leq d$, we have \begin{flalign*}
		   f(e_i,\pi e_j) &=f(\pi e_j,e_i) =f(e_j,\ol{\pi}e_i) =f(e_j,(t-\pi)e_i)=tf(e_j,e_i)-f(e_j,\pi e_i).
	\end{flalign*}
	
	Conversely, given $d^2$ elements in $\CO_S$ denoted as $A_{ii}$ for $1\leq i\leq d$ and $A_{ij}$, $B_{ij}$ for $1\leq i<j\leq d$, we can define a hermitian quadratic form on $M\otimes_R\CO_S$ as follows. We first define two $d\times d$ matrices $A$ and $B$ via setting $B_{ii}\coloneqq tA_{ii}$ for $1\leq i\leq d$, $A_{ij}\coloneqq A_{ji}$ and $B_{ij}\coloneqq tA_{ij}-B_{ji}$ for $i>j$. Then we define a map $q$ as in \eqref{qeq}. We can check that $q$ is an $\CO_S$-valued hermitian quadratic form. 
\end{proof}
The proof of Lemma \ref{lem-affine} also implies that the scheme $HQF(M)$ is (non-canonically) isomorphic to $\Spec R[A,B]/I$, where $A, B$ are two $d\times d$ matrices, and $I$ is the ideal generated by \begin{flalign*}
	    A_{ij}-A_{ji}, B_{k\ell}+B_{\ell k}-tA_{k\ell}, B_{ii}-tA_{ii}
\end{flalign*} for $1\leq i,j\leq d$ and $1\leq k<\ell\leq d$.

\begin{defn}
	Let $(M,q,\sL)$ be an $\sL$-valued hermitian quadratic module of rank $d$ over some $\CO_{F_0}$-algebra $R$. Then as an $R$-module, the rank of $M$ is $2d$. We define the \dfn{discriminant} as the morphism \begin{flalign*}
		    \disc(q): \wedge^{2d}_RM\ra \wedge_R^{2d}(M^\vee\otimes_R\sL)\simeq\wedge^{2d}_R(M^\vee)\otimes_R\sL^{2d}
	\end{flalign*} induced by the morphism $M\ra M^\vee\otimes_R\sL$, $m\mapsto f(m,-)$. Here $M^\vee$ denotes the $R$-dual module $\Hom_R(M,R)$. 
\end{defn}
\begin{example}\label{ex-rankonem}
	Assume $d=1$. Let $x\in M$ be a generator of $M$ over $\CO_F\otimes_{\CO_{F_0}}R$. Then with respect to the basis $\cbra{x,\pi x}$, the symmetric pairing $f: M\times M\ra \sL$ associated with $q$ is given by the matrix \begin{flalign*}
		    \begin{pmatrix}
		    	2q(x) &tq(x)\\ tq(x) &2\pi_0q(x)
		    \end{pmatrix}.
	\end{flalign*}
	Using the above basis, the discriminant map can be identified with the determinant of the previous matrix, as an element in $\sL^{2}$. Therefore, \begin{flalign*}
		    \disc(q) =(4\pi_0-t^2)q(x)^2.
	\end{flalign*}
\end{example}
We find that when $d=1$, the discriminant is ``divisible" by $4\pi_0-t^2$. More generally, we have the following lemma.
%In the following, we write $\Nilp\coloneqq \Nilp_{\CO_{F_0}}$ for the category of $\CO_{F_0}$-algebra such that $\pi_0$ is nilpotent.

\begin{lemma}[{cf. \cite[Lemma 9.4]{anschutz2018extending}}] \label{lem-divided}
	Assume $d\geq 1$ is odd.  Then there exists a functorial factorization \begin{flalign*}
		  \xymatrix{
		     \wedge^{2d}_RM\ar[r]^{\disc(q)}\ar[d]_{\disc'(q)} &\wedge^{2d}_RM^\vee\otimes_R\sL^{2d}\\ \wedge^{2d}_RM^\vee\otimes_R\sL^{2d}\otimes_{\CO_{F_0}}(4\pi_0-t^2)\ar[ru]_{j}
		  }
	\end{flalign*}
	Here the map $j$ is induced by the natural inclusion of the ideal $(4\pi_0-t^2)$ in $\CO_{F_0}$. 
\end{lemma}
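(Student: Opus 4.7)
My plan is to reduce to a universal-case computation and then exhibit the required factorization by explicit row and column operations on the Gram matrix.

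First I would invoke Lemma \ref{lem-affine} to reduce to the universal situation: the hermitian quadratic module carried by a free $\CO_F\otimes_{\CO_{F_0}}R$-module of rank $d$ is pulled back from a universal form over a polynomial $\CO_{F_0}$-algebra $R_0$ parametrizing the entries $A_{ij}=f(e_i,e_j)$, $B_{ij}=f(e_i,\pi e_j)$ subject only to $A=A^t$, $B+B^t=tA$ and $B_{ii}=tA_{ii}$. To sidestep subtleties with the divisibility of $2$ in $\CO_{F_0}$, I would first work inside the larger ring $R_{\univ}:=\BZ[\pi_0,t][A_{ij},B_{ij}]$ modulo the same symmetry relations (treating $\pi_0$ and $t$ as independent indeterminates), and then specialize back via the ring map $R_{\univ}\to R_0$.

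Next, choosing the $R$-basis $(e_1,\ldots,e_d,\pi e_1,\ldots,\pi e_d)$ of $M$, the Gram matrix of $f$ takes the block form
$G=\begin{pmatrix} A & B \\ B^t & \pi_0 A\end{pmatrix}$,
and $\disc(q)$ (with respect to the induced bases of the top exterior powers) is $\det G$. I would perform the elementary row operation ``$2\cdot(\text{bottom block})-t\cdot(\text{top block})$'' and the analogous column operation, each contributing a factor $2^d$ to the determinant, to arrive at the identity
\[
4^d\det G \;=\; \det\begin{pmatrix} A & C\\ -C & (4\pi_0-t^2)\,A\end{pmatrix},
\]
where $C:=B-B^t$ is a genuinely antisymmetric $d\times d$ matrix (this uses $B+B^t=tA$ in a crucial way). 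Reducing mod $(4\pi_0-t^2)$, the right-hand side collapses to $\det\begin{pmatrix} A & C\\ -C & 0\end{pmatrix}=\det(C)^2$, and for $d$ odd we have $\det(C)=0$ because an antisymmetric matrix of odd size has zero determinant. Thus $4^d\det G\in(4\pi_0-t^2)R_{\univ}$.

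Finally, to drop the nuisance factor $4^d$, I note that $R_{\univ}/(4\pi_0-t^2)$ is an integral domain (the polynomial $4\pi_0-t^2$ is irreducible in $\BZ[\pi_0,t]$, and then one adjoins polynomial variables), and $4$ is nonzero in this domain since $\BZ\hookrightarrow R_{\univ}/(4\pi_0-t^2)$; hence $4$ is a non-zero-divisor and we conclude $\det G\in(4\pi_0-t^2)R_{\univ}$. Specializing via $R_{\univ}\to R_0$ gives the same divisibility over $R_0$, and then by Lemma \ref{lem-affine} functoriality yields the desired factorization $\disc(q)=j\circ\disc'(q)$ for all $(M,q,\sL)$. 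The main obstacle is precisely the extraneous $4^d$ introduced by the row and column operations over $\CO_{F_0}$, where $2$ is a non-unit and can be a zero-divisor in $\CO_{F_0}/(4\pi_0-t^2)$; this is resolved cleanly only by passing to the flat universal ring $R_{\univ}$ over $\BZ$, where $4$ becomes a non-zero-divisor modulo the prime $(4\pi_0-t^2)$.
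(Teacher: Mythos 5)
Your overall route is genuinely different from the paper's (which localizes the universal ring at $(\pi_0)$, reduces to the case where $\pi_0$ is nilpotent, and then runs an induction on $d$ using Lemma \ref{lem-trick} and the rank-one computation), and the core identity you exhibit — the row/column manipulation giving $4^d\det G=\det\begin{pmatrix}\wt{A}&C\\-C&(4\pi_0-t^2)\wt{A}\end{pmatrix}$ with $C=B-B^t$ alternating, together with the vanishing of the determinant of an odd-size alternating matrix — is correct and attractive. However, as written there is a genuine gap in your universal-ring bookkeeping, and it sits exactly at the characteristic-two subtlety the lemma is about. First, your list of relations is inconsistent: $B+B^t=tA$ together with $B_{ii}=tA_{ii}$ forces $tA_{ii}=0$, which is not the universal hermitian quadratic module. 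Second, under the natural repair in which the Gram entries $A_{ij}=f(e_i,e_j)$ (including the diagonal) and $B_{ij}=f(e_i,\pi e_j)$ are taken as free variables subject only to $A=A^t$ and $B+B^t=tA$, your ring $R_{\univ}$ is \emph{not} ``$\BZ[\pi_0,t]$ with polynomial variables adjoined'': the diagonal relations $2B_{ii}=tA_{ii}$ remain, $R_{\univ}/(4\pi_0-t^2)$ is not a domain, $4$ is a zero-divisor there, and in fact the conclusion fails in that ring. Already for $d=1$, in $S=\BZ[\pi_0,t,a,b]/(2b-ta)$ one has $4(\pi_0a^2-b^2)=(4\pi_0-t^2)a^2$, yet $\pi_0a^2-b^2\notin(4\pi_0-t^2)S$ (reduce mod $2$ and set $t=0$ to get $\pi_0a^2+b^2=0$ in $\BF_2[\pi_0,a,b]$, a contradiction). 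The point is that a hermitian quadratic module is strictly more data than its symmetric form: the diagonal Gram entries are $2q(e_i)$ and $f(e_i,\pi e_i)=tq(e_i)$, with $q(e_i)$ — not $f(e_i,e_i)$ — the free parameter, and the divisibility by the full $4\pi_0-t^2$ (rather than merely $4^d\det G$) genuinely uses this.

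The fix is simple and stays within your strategy: take for $R_{\univ}$ the honest lift of the universal ring of Lemma \ref{lem-affine}, namely the polynomial ring $\BZ[\pi_0,t][q_1,\ldots,q_d,\,A_{ij}\,(i<j),\,B_{ij}\,(i<j)]$ with $\pi_0,t$ indeterminates, and build the Gram matrix with $\wt{A}_{ii}=2q_i$, $\wt{A}_{ij}=A_{ij}$, $B_{ii}=tq_i$, $B_{ji}=tA_{ij}-B_{ij}$. This is a genuine polynomial ring over $\BZ$, so $(4\pi_0-t^2)$ is prime (it is irreducible in $\BZ[\pi_0,t]$), $\BZ$ injects into the quotient, and $4^d$ is a non-zero-divisor modulo $(4\pi_0-t^2)$; your manipulation (which only uses $\wt{A}=\wt{A}^t$ and $B+B^t=t\wt{A}$) then yields $4^d\det G\equiv\det(C)^2=0$ for $d$ odd, hence $\det G\in(4\pi_0-t^2)R_{\univ}$, and specializing $\pi_0,t$ to their values in $\CO_{F_0}$ gives the divisibility in the paper's universal ring, from which the functorial factorization follows as you say. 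With this correction your argument is a valid and arguably more elementary alternative to the paper's proof: it avoids the localization at $(\pi_0)$, the nilpotence trick of Lemma \ref{lem-trick}, and the induction on the rank.
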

\begin{proof}
	It suffices to prove this in the universal case, i.e., $R$ is the ring $$R=\CO_{F_0}[A,B]/I,$$ where $I$ is the ideal generated by \begin{flalign*}
	            A_{ij}-A_{ji}, B_{k\ell}+B_{\ell k}-tA_{k\ell}, B_{ii}-tA_{ii}
           \end{flalign*} 
    for $1\leq i,j\leq d$ and $1\leq k<\ell\leq d$, and $M$ is equipped with the universal quadratic form $q:M\ra R$ given by \begin{flalign*}
    	    q(\sum_{i=1}^d(a_ie_i+b_i\pi e_i)) \coloneqq  \sum_{1\leq i,j\leq d}A_{ij}a_ia_j+ \sum_{1\leq i,j\leq d}B_{ij}a_ib_j +\pi_0\sum_{1\leq i,j\leq d}A_{ij}b_ib_j,
    	  \end{flalign*}
    for some $R$-basis $(e_i,\pi e_i)_{1\leq i\leq d}$ of $M$. Under the chosen basis, the associated symmetric bilinear form $f$ is given by the matrix \begin{flalign}
    	   C\coloneqq \begin{pmatrix}
    	    \wt{A} &B\\ B^t &\pi_0\wt{A}
    \end{pmatrix}\in M_{2d,2d}(R),  \label{symmatrix}
    \end{flalign}  where $\wt{A}_{ii}\coloneqq 2A_{ii}$ for $1\leq i\leq d$, $\wt{A}_{ij}\coloneqq A_{ij}$ for $i\neq j$, and the transpose matrix $B^t$ of $B$ equals $t\wt{A}-B$. We may identify $\disc(q)$ with the determinant of the above matrix $C$. To finish the proof, we need to show that the ideal $(\disc(q))$ is contained in the ideal $(4\pi_0-t^2)$ in $R$. As $(4\pi_0-t^2)$ becomes the unit ideal in $R[{1}/{\pi_0}]$, it suffices to show that the ideal $(\disc(q))$ is contained in $(4\pi_0-t^2)$ in the localization $R_\fm$, where $\fm$ is the ideal $(\pi_0)$. Equivalently, we need to show that $\disc(q)$ is divisible by $4\pi_0-t^2$ in $R_\fm/\fm^k$ for all $k\geq 1$.
    
    We will argue by induction on the rank $d$. If $d=1$, this follows by the computation in Example \ref{ex-rankonem}. Note that in the ring $R_\fm/\fm^k$, the element $B_{ij}=f(e_i,\pi e_j)$ is a unit for $i\neq j$ and $\pi_0$ is nilpotent. In particular, we may assume $f(e_1,\pi e_2)=1$. Then by Lemma \ref{lem-trick}, we may assume $f$ restricting to the submodule $R\pair{e_1,e_2,\pi e_1,\pi e_2}$ is given by the matrix \begin{flalign*}
    	    \begin{pmatrix}
    	    	0 &0 &0 &1\\ 0 &0 &-1 &0\\ 0 &-1 &0 &0\\ 1 &0 &0 &0
    	    \end{pmatrix}.
    \end{flalign*}
    The determinant of the above matrix is one. In particular, $f$ is perfect on $R\pair{e_1,e_2,\pi e_1,\pi e_2}$. Then we can write $M=R\pair{e_1,e_2,\pi e_1,\pi e_2}\oplus M'$, where $M'$ is the orthogonal complement of $R\pair{e_1,e_2,\pi e_1,\pi e_2}$ in $M$ with respect to $f$. The rank of $M'$ over $\CO_F\otimes_{\CO_{F_0}}R$ is $d-2$, which is odd. By induction, $\disc(q|_{M'})$ is divisible by $4\pi_0-t^2$. Hence, $\disc(q)=\disc(q|_{M'})$ is also divisible by $4\pi_0-t^2$. 
\end{proof}

\begin{defn}\label{app-defn1}
	We call the morphism $\disc'(q)$ in Lemma \ref{lem-divided} the \dfn{divided discriminant} of $q$. If $\disc'(q)$ is an isomorphism, then we say $(M,q,\sL)$ is a hermitian quadratic module of type $\Lambda_m$.
\end{defn}

\begin{example}
	[{cf. \cite[Definition 9.7]{anschutz2018extending}}] 
	Let $R$ be an $\CO_{F_0}$-algebra. Define $$M_{std,2}\coloneqq (\CO_F\otimes_{\CO_{F_0}}R)\pair{e_1,e_2}$$ with hermitian quadratic form $q_{std,2}:M_{std,2}\ra R$ determined by $$q_{std,2}(e_1)=q_{std,2}(e_2)=0, f_{std,2}(e_1,e_2)=0, f_{std,2}(e_1,\pi e_2)=1.$$ 
%	For an even integer $n=2m$, we define $$M_{std,2m}\coloneqq M_{std,2}^{\oplus m},$$ which is the $m$-fold orthogonal direct sum of $M_{std,2}$. When $R=\CO_{F_0}$, the module $M_{std,2m}$ carries a structure of a hyperbolic lattice in the sense of Remark \ref{rmk-hyp}. 
	For an odd integer $n=2m+1$, we define $$M_{std,n}\coloneqq M_{std,2}^{\oplus m}\oplus (\CO_F\otimes_{\CO_{F_0}}R) e_n$$ as an orthogonal direct sum and $q_{std,n}(e_n)\coloneqq 1$. Viewing $\disc'(q_{std,n})$ as an element in $R$, then we have $$\disc'(q_{std,n})=1.$$ Hence, $(M_{std,n},q_{std,n},R)$ is a hermitian quadratic module over $R$ of type $\Lambda_m$. 
\end{example}

\begin{example}\label{app-exlambm}
   By direct computation of the determinants of matrices \eqref{S1mRU} and \eqref{S1mRP}, the hermitian quadratic module $(\Lambda_m, q, \varepsilon\inverse\CO_{F_0})$ is of type $\Lambda_m$. 
%    \begin{enumerate}
%    	\item By the direct computation of the determinants of matrices \eqref{S1mRU} and \eqref{S1mRP}, the hermitian quadratic module $(\Lambda_m, q, \varepsilon\inverse\CO_{F_0})$ is of type $\Lambda_m$. 
%    	\item Assume $F/F_0$ is of type (R-P). Define 
%    \end{enumerate}
\end{example}

\begin{lemma}\label{Gtorsorsurjective}
	Let $S$ be a scheme. Let $\sG$ be a smooth group scheme over $S$. Let $X$ be a scheme over $S$ equipped with a $\sG$-action $\rho: \sG\times_SX\ra X$. Assume $\rho$ is simply transitive in the sense that for any $S$-scheme $T$, the set $X(T)$ is either empty or the action of $\sG(T)$ on $X(T)$ is simply transitive. If the structure morphism $X\ra S$ is surjective, then $X$ is an \etale $\sG$-torsor over $S$.
\end{lemma}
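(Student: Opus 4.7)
The plan is to reduce the torsor property to showing that the shear map
\[
\psi\colon \sG\times_S X\longrightarrow X\times_S X,\qquad (g,x)\longmapsto (\rho(g,x),x),
\]
is an isomorphism, and then to trivialize $X$ after an \'etale base change of $S$. First, $\psi$ is an isomorphism by the Yoneda lemma: for every $S$-scheme $T$, the simple transitivity hypothesis says that $\psi(T)$ is a bijection of sets (with both sides empty if $X(T)=\emptyset$), so $\psi$ is an isomorphism of $S$-schemes.

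Second, I would use $\psi$ to propagate smoothness from $\sG\to S$ to $f\colon X\to S$. Via $\psi$, the second projection $\operatorname{pr}_2\colon X\times_S X\to X$ is identified with the projection $\sG\times_S X\to X$, which is smooth as the base change of the smooth morphism $\sG\to S$. Checking fiberwise, each geometric fiber $X_{\bar s}$ becomes isomorphic to $\sG_{\bar s}$ via the translation $g\mapsto g\cdot\bar x$ for any preimage $\bar x\in X_{\bar s}$ (this iso comes from restricting $\psi$ to the slice over $\bar x\in X$, combined with descent along the field extension $\kappa(\bar x)/\kappa(\bar s)$), so all fibers of $f$ are smooth of the same relative dimension as $\sG\to S$. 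To upgrade this to smoothness of $f$, the iso $\psi$ combined with surjectivity of $f$ allows one to invoke the formal smoothness lifting criterion: any lift of a square-zero thickening into $X$ can be extended by transporting an auxiliary local section of $\sG\to S$ (which exists since $\sG$ is smooth) via the simply transitive action, giving the desired flatness and local finite presentation of $f$.

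Third, with $f$ smooth and surjective, sections of $f$ exist \'etale-locally on $S$. Given such a section $\sigma\colon U\to X_U$ over an \'etale cover $U\to S$, the morphism $\sG_U\to X_U$, $g\mapsto \rho(g,\sigma)$, is $\sG_U$-equivariant, and by restricting the base-change of $\psi$ to the slice over $\sigma$ it becomes an isomorphism. Hence $X$ is trivialized by the \'etale cover $U\to S$, which exhibits $X$ as an \'etale $\sG$-torsor. The main obstacle is Step~2, namely the delicate descent argument needed to conclude flatness and local finite presentation of $f$ from those properties for the self-base-change $f\times_S\id_X$; once that is established, Steps~1 and~3 are routine.
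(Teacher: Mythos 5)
Your Steps 1 and 3 coincide with the paper's argument: the shear map $\Phi\colon\sG\times_S X\to X\times_S X$, $(g,x)\mapsto(\rho(g,x),x)$, is an isomorphism by simple transitivity, and once $f\colon X\to S$ is known to be smooth and surjective, sections of $f$ exist \'etale-locally on $S$ (Stacks Project, Tag 055V, exactly as quoted in the paper), and restricting $\Phi$ over such a section trivializes $X$. The genuine gap is precisely the step you yourself flag as the ``main obstacle'': deducing that $f$ is flat, locally of finite presentation and smooth from the smoothness of the self-base-change $p_2\colon X\times_S X\to X$ together with surjectivity of $f$. The mechanism you sketch does not close it. The fiberwise part needs a rational point of a geometric fiber, which requires the fibers to be locally of finite type (not assumed), and in any case fiberwise smoothness gives no flatness. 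The infinitesimal-lifting part is circular: given a square-zero thickening $T_0\hookrightarrow T$ over $S$ and a $T_0$-point of $X$, a section of $\sG$ over $T$ (the identity section always exists, so smoothness of $\sG$ is not the issue) can only be used to translate a $T$-point of $X$ that you already have; producing such a point is exactly what must be proved, since $X(T)$ may be empty.

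Moreover, no argument can fill this step from the stated hypotheses alone. Take $S$ the spectrum of a discrete valuation ring, $\sG$ the trivial group scheme, and $X$ the disjoint union of the generic point and the closed point of $S$, with the trivial action. Then $X\to S$ is a surjective monomorphism, so every $X(T)$ is empty or a singleton and the shear map is an isomorphism, yet $X\to S$ is not flat, hence not smooth, and $X$ is not an \'etale torsor. So ``shear isomorphism plus surjectivity implies smoothness'' needs extra input: for instance that $f$ is flat and locally of finite presentation (then $f$ is an fppf covering, acquires a section after base change along itself via the diagonal and $\Phi$, and smoothness of $p_2$ descends to $f$), or simply that $f$ is smooth --- which is what is actually available where the lemma is used in the paper, since the smoothness of $\CF$ over $R$ is proved there before the lemma is invoked. (The paper's own one-line justification of this step, descending smoothness ``since smoothness is fpqc local on the target,'' is subject to the same caveat, as $X\to S$ is not yet known to be a flat cover.) As it stands, your write-up is an outline with the decisive step missing, and the specific route you propose for that step would fail.
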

\begin{proof}
	As $\rho$ is simply transitive, we have an isomorphism $\Phi: \sG\times_SX\simto X\times_SX$, $(g,x)\mapsto (\rho((g,x)),x)$ by \cite[0499]{stacks-project}. As $\sG\ra S$ is a smooth cover of $S$ and smoothness is an fpqc local property on the target, the isomorphism $\Phi$ implies that $X\ra S$ is smooth. If $X\ra S$ is surjective, then $X\ra S$ is a smooth cover of $S$. Let $s: X\ra \sG\times_SX$ be the morphism induced by the identity section of $\sG$. Then the composite $\Phi\circ s$ gives a section of $X\times_SX\ra X$. By \cite[055V]{stacks-project}, we can find an \etale cover $\tcbra{U_i}_{i\in I}$ of $S$ such that $X\times_SU_i\ra U_i$ has a section for each $i\in I$. Hence, we deduce that $X$ is an \etale $\sG$-torsor over $S$.
\end{proof}

\begin{thm}[{cf. \cite[Theorem 9.10]{anschutz2018extending}}] \label{thmstand}
	Let $(M,q,\sL)$ be a hermitian quadratic module of type $\Lambda_m$ of rank $n=2m+1$ over $R$. Then $(M,q,\sL)$ is \etale locally isomorphic to $(M_{std,n},q_{std,n},R)$ up to similitude. In particular, $(M,q,\sL)$ is \etale locally isomorphic to $(\Lambda_m,q,\varepsilon\inverse\CO_{F_0})\otimes_{\CO_{F_0}}R$ up to similitude. 
\end{thm}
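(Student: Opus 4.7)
The plan is to realize the scheme of similitudes from the standard model to $(M,q,\sL)$ as a pseudo-torsor under the smooth group scheme $\sH_{\cbra{m}} := \ud{\Sim}(\Lambda_m, q, \varepsilon^{-1}\CO_{F_0})$, and to verify triviality pointwise. Trivializing $\sL$ Zariski locally on $\Spec R$, I may assume $\sL = R$, and then form
\[
X \colon T \longmapsto \ud{\Sim}\bigl((M_{std,n})_T,\, (M, q, R)_T\bigr)
\]
on $R$-schemes. By Lemma \ref{lem-affine} the similitude conditions are polynomial, so $X$ is represented by a finite-type affine $R$-scheme carrying a simply transitive post-composition action of $(\sH_{\cbra{m}})_R$, which is smooth over $R$ by Theorem \ref{thmsimm}. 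Applying Lemma \ref{Gtorsorsurjective}, the theorem reduces to verifying that $X \to \Spec R$ is surjective, i.e.\ that $X(k) \neq \emptyset$ for every residue field $k$ of $R$. The second claim of the theorem then follows by transitivity of similitude: by Example \ref{app-exlambm}, $(\Lambda_m, q, \varepsilon^{-1}\CO_{F_0})$ is itself of type $\Lambda_m$, so étale locally similitude-isomorphic to $M_{std,n}$ by the first claim.

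Over a field $k$ which is an $\CO_{F_0}$-algebra, I would argue by induction on $m$. When $m=0$, Example \ref{ex-rankonem} gives $\disc'(q) = q(e)^2$ for any $\CO_F \otimes_{\CO_{F_0}} k$-generator $e$ of $M$; the unit condition forces $q(e) \in k^\times$, and $\varphi(e_1) = e$ paired with $\gamma = $ multiplication by $q(e)$ is a similitude. For the inductive step, suppose we can exhibit $v, w \in M$ with $f(v, \pi w) \in k^\times$. Lemma \ref{lem-trick} (applicable because $\pi_0$ is either zero or a unit in $k$, the former forcing $k \in \Nilp$, the latter allowing a trivial variant of the lemma by direct rescaling) then yields adjusted $v, w$ with $q(v) = q(w) = f(v,w) = 0$ and $f(v, \pi w) = 1$. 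The submodule $N = k\pair{v, \pi v, w, \pi w}$ is similitude-isomorphic to $M_{std,2}$ and carries a perfect restriction of $f$, so Lemma \ref{lem-orthogo} decomposes $M = N \oplus N^\perp$ as $\CO_F \otimes_{\CO_{F_0}} k$-modules. Multiplicativity of the divided discriminant across this orthogonal decomposition (a block-matrix verification building on the proof of Lemma \ref{lem-divided}) forces $N^\perp$ to be again of type $\Lambda_m$ of rank $n - 2$, closing the induction.

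The main obstacle is the production of $v, w \in M$ with $f(v, \pi w) \in k^\times$ at each inductive step. The strategy is to compute, in an $\CO_F \otimes_{\CO_{F_0}} k$-basis of $M$, the matrix $B_{ij} = f(e_i, \pi e_j)$ and relate $\det B$ to the divided discriminant via the block expansion of the Gram matrix $\bigl(\begin{smallmatrix} \wt{A} & B \\ B^t & \pi_0 \wt{A} \end{smallmatrix}\bigr)$ of $f$ used in the proof of Lemma \ref{lem-divided}. The residue characteristic-two case requires separating the situations $\pi_0 \in k^\times$, $\pi_0 = 0$ with $t \in k^\times$, and $\pi_0 = t = 0$: the last (namely the (R-P) case over a field of characteristic two) is the subtlest because $f$ itself has vanishing determinant, so the argument must crucially use the quadratic refinement $q$ beyond what $f$ alone detects, e.g.\ by exhibiting a totally isotropic vector and then using the unit value of $q$ on its complement to extract a non-degenerate pairing.
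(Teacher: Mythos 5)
Your reduction hides the real work. The group that acts simply transitively (by pre-composition) on $X=\ud{\Sim}\bigl((M_{std,n})_T,(M,q,R)_T\bigr)$ is $\ud{\Sim}(M_{std,n})$, not $\sH_{\cbra{m}}=\ud{\Sim}(\Lambda_m)$; post-composition would be an action of $\ud{\Sim}(M)$, which is not $\sH_{\cbra{m}}$ either. Smoothness of neither of these groups is covered by Theorem \ref{thmsimm}, and inside the paper that theorem is itself deduced from the proof of Theorem \ref{thmstand}, so invoking it here is circular: $M_{std,n}$ and $(\Lambda_m,q,\varepsilon\inverse\CO_{F_0})\otimes_{\CO_{F_0}}R$ are only known to be similitude-isomorphic \etale locally as a \emph{consequence} of the statement you are proving (over $\CO_{F_0}$ itself the natural basis of $\Lambda_m$ has $f(v,w)=2/t\neq 0$, and removing this requires solving quadratic equations, i.e.\ an \etale extension). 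Consequently, before Lemma \ref{Gtorsorsurjective} can be applied you must prove smoothness of the relevant similitude group (equivalently, formal smoothness of $X$) over the locus where $\pi_0$ is nilpotent. This is exactly the deformation-theoretic core of the paper's proof: given a square-zero (nilpotent) surjection $S\to\ol S$ in $\Nilp_R$ and a similitude over $\ol S$, one lifts the images of the standard basis, corrects them using the explicit quadratic/linear manipulations of Lemma \ref{lem-trick}, splits off a rank-two summand on which $f$ is perfect via Lemma \ref{lem-orthogo}, and inducts on the rank; over $R[1/\pi_0]$ one separately invokes the self-dual case as in \cite[Appendix to Chapter 3]{rapoport1996period}. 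None of this appears in your proposal, so the torsor property is not established.

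Two further points on the fiberwise half, which is otherwise in the spirit of the paper's surjectivity step. First, surjectivity of $X\to\Spec R$ only requires nonemptiness of \emph{geometric} fibers, and you should work over $\ol k$: over a non-closed residue field of characteristic two the normalization of Lemma \ref{lem-trick} (extraction of roots of the relevant quadratic equations) can fail in $k$ itself, and the paper accordingly reduces to $R=\ol k$. Second, the production of $v,w$ with $f(v,\pi w)\in\ol k^\times$ is exactly the delicate point in the (R-P)-type fiber where $t=\pi_0=0$, and your sketch (``relate $\det B$ to the divided discriminant'', ``use $q$ beyond $f$'') does not close it; the paper argues by contradiction: if $f(v,\pi w)=0$ for all $v,w$, then either some $f(v_i,v_j)\neq 0$ produces a rank-two orthogonal summand with vanishing discriminant, or $M$ splits as an orthogonal sum of rank-one modules, and in both cases one contradicts the hypothesis that $\disc'(q)$ is an isomorphism. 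As it stands, your proposal proves neither the smoothness input nor, in the hardest case, the pointwise nonemptiness, so it does not constitute a proof.
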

\begin{proof}
	Denote $\sG_m\coloneqq \ud{\Sim}(M_{std,n})$. It suffices to show that the sheaf $$\CF\coloneqq \ud{\Sim}((M_{std,n},q_{std,n},R), (M,q,\sL))$$ of similitudes is an \etale $\sG_m$-torsor over $R$. 
	
	Clearly, $\CF$ is represented by an affine scheme of finite type over $R$. We next prove that $\CF$ is smooth over $R$. Over $R[1/\pi_0]$, the quadratic form is determined by the associated symmetric pairing, and both $M_{std}$ and $M$ are self-dual with respect to the symmetric pairing. Then by the arguments in \cite[Appendix to Chapter 3]{rapoport1996period}, we see that $\CF$ is smooth and surjective over $R[1/\pi_0]$. Hence, to show the smoothness of $\CF$ over $R$, it suffices to prove that the morphism $\CF\ra \Spec\CO_F$ is (formally) smooth at points over $\Spec R/\pi_0R$. For any surjection $S\ra \ol{S}$ in $\Nilp_R$ with nilpotent kernel $J$ and a similitude $(\ol{\varphi},\ol{\gamma})\in \CF(\ol{S})$, we need to show that there exists a lift of $(\ol{\varphi},\ol{\gamma})$ to $S$. We argue by induction on the rank $n$. We denote by $e_1,\ldots,e_n$ the standard basis of $M_{std,n}$. We reorder the basis such that $q(e_{m+1})=1$ and $(\CO_F\otimes_{\CO_{F_0}}R)\pair{e_i,e_{n+1-i}}\simeq M_{std,2}$. We claim that there exist elements $v_1,\ldots,v_n$ in $M\otimes_RS$ and a generator $u\in \sL\otimes_RS$ such that $\ol{v_i}=\ol{\varphi}(\ol{e_i})$ in $M\otimes_R\ol{S}$ and \begin{flalign*}
		    \text{$q(v_{m+1})=u$, $q(v_i)=f(v_i,v_j)=0$ and $f(v_i,\pi v_j)=u\delta_{i,n+1-j}$ for $1\leq i<j\leq n$ and $i,j\neq m+1$}.
	\end{flalign*}
	Then the maps $\varphi: e_i\mapsto v_i$ and $\gamma: 1\mapsto u$ define a lift of $(\ol{\varphi},\ol{\gamma})$. Thus, it suffices to prove the claim.
		
	Suppose $n=1$. Set $\ol{v}_1\coloneqq \ol{\varphi}(\ol{e_1})\in M\otimes_R\ol{S}$. Then $\ol{v}_1$ is a generator of $M\otimes_R\ol{S}$. Pick any lift $v_1\in M$ of $\ol{v}_1$. As $\disc'(q)$ is an isomorphism, $q(v_1)$ is a generator of $\sL$. Let $u=q(v_1)$. This proves the claim for $n=1$.	    
	For $n\geq 3$, pick lifts $v_1,\ldots,v_n$ in $M\otimes_RS$ such that $\ol{v_i}=\ol{\varphi}(\ol{e_i})$. Let $f$ be the associated symmetric pairing of $M$. Then $f(v_1,\pi v_n)$ is a generator in $\sL\otimes_RS$, as its reduction in $\sL\otimes_R\ol{S}$ is a generator. Set $u=f(v_1,\pi v_n)$. Using the generator $u$, we may identify $\sL\otimes_RS$ with $S$, and we may assume that $f(v_1,\pi v_2)=1$ in $\sL\otimes_RS \simeq S$. Note that as elements $q(v_1), q(v_2)$ and $f(v_1,v_2)$ reduce to zero in $\ol{S}$ by properties of $\ol{v_1}$ and $\ol{v_2}$, they lie in the kernel $J$. Then the linear transformation in Lemma \ref{lem-trick} does not change the reduction of $v_1$ and $v_2$, and hence, we may assume that  \begin{flalign*}
	    	     q(v_1)=q(v_n)=f(v_1,v_n)=0 \text{\ and\ }f(v_1,\pi v_n)=1.
	    \end{flalign*}
	    Then $f$ is perfect on the $S$-submodule $N$ generated by $v_1,v_n,\pi v_1,\pi v_n$. Let $N^\perp$ be the orthogonal complement of $N$ in $M\otimes_RS$. Then $N^\perp\otimes_R\ol{S}$ is the $\CO_F\otimes_{\CO_{F_0}}\ol{S}$-submodule in $M\otimes_R\ol{S}$ generated by $\ol{v_2},\ldots,\ol{v_{n-1}}$. For $2\leq i\leq n-1$, we can write $v_i=w'+w$, where $w'\in N^\perp$ and $w\in N$. As $\ol{v_i}$ is orthogonal to $\ol{N}$, we have $\ol{w}$ is orthogonal to $\ol{N}$. Since $f$ is perfect on $N$, we obtain $\ol{w}=0$. In particular, we may choose $v_i$ in $N^\perp$ as a lift of $\ol{v_i}$ for $2\leq i\leq n-1$. Now the claim follows by induction on the rank of $M$, and we deduce the (formal) smoothness of $\CF$ over $R$. 
	    
	    Note that the same proof implies that the group scheme $\sG_m$ is smooth over $R$. As the $\sG_m$-action on $\CF$ is simply transitive by construction, by Lemma \ref{Gtorsorsurjective}, it remains to show that $\CF$ is a surjective scheme over $R$. Since we have already shown that $\CF$ is surjective over $R[1/\pi_0]$, it suffices to prove the surjectivity of $\CF$ over $R/\pi_0R$. Then we may assume $R=\ol{k}$ is the algebraic closure of the residue field $k$ of $\CO_{F_0}$ and $\sL=\ol{k}$. We need to show that there exists a similitude isomorphism $(\varphi,\gamma)$ between $(M_{std,n},q_{std,n},\ol{k})$ and $(M,q,\ol{k})$. For the case $n=1$, we can construct a similitude as in the previous paragraph. For $n\geq 3$ odd, we first claim that there exist $v$ and $w$ in $M$ such that $f(v,\pi w)=1$. Otherwise, under a basis of the form $(v_1,\ldots,v_n,\pi v_1,\ldots,\pi v_n)$, the pairing $f$ corresponds to the $2n\times 2n$ matrix \begin{flalign*}
	    	    \begin{pmatrix}
	    	    	\wt{A} &0\\ 0 &0
	    	    \end{pmatrix}
	    \end{flalign*} for some $n\times n$ matrix $\wt{A}$, where $\wt{A}_{ii}=2q(v_i)=0$ for $1\leq i\leq n$ and $\wt{A}_{ij}=f(v_i,v_j)$ for $i\neq j$. Suppose for some indices $i_0\neq j_0$, we have $f(v_{i_0},v_{j_0})\neq 0$. We may assume $f(v_1,v_2)\neq 0$. Then by a suitable linear transformation of the basis $v_1,\ldots,v_n$, we may assume that $\wt{A}$ is of the form \begin{flalign*}
	    	     \begin{pmatrix}
	    	     	\begin{matrix}
	    	     		0 &1\\ 1 &0
	    	     	\end{matrix} &\rvline &\bigzero \\ \hline \bigzero &\rvline &\wt{A}_1
	    	     \end{pmatrix}
	    \end{flalign*}
	    In particular, $M_1\coloneqq (\CO_{F}\otimes_{\CO_{F_0}}\ol{k}) \pair{v_1,v_2}$ and $M_2\coloneqq (\CO_F\otimes_{\CO_{F_0}}\ol{k})\pair{v_3,\ldots,v_n}$ are orthogonal complement of each other. Then \begin{flalign*}
	    	     \disc'(q)= \disc(q|_{M_1})\disc'(q|_{M_2}).
	    \end{flalign*}
	    However,  \begin{flalign*}
	    	   \disc(q|_{M_1})=\det \begin{pmatrix}
	    	   	    0 &1 &0 &0\\ 1 &0 &0 &0\\ 0 &0 &0 &0\\ 0 &0 &0 &0
	    	   \end{pmatrix} =0.
	    \end{flalign*}
	    This contradicts the assumption that $\disc'(q)$ is a unit. Then we see $f(v_i,v_j)=0$ for any $i\neq j$, i.e., $\wt{A}$ is a diagonal matrix. Hence, $M$ is an orthogonal direct sum of rank one hermitian quadratic modules. This also contradicts $\disc'(q)\neq 0$. Then we conclude that there exist $v$ and $w$ in $M$ such that $f(v,\pi w)=1$. Then as in Lemma \ref{lem-trick}, we may assume that $f$ restricting to $(\CO_F\otimes_{\CO_{F_0}}\ol{k})\pair{v, w}$ corresponds to the matrix \begin{flalign*}
    	    \begin{pmatrix}
    	    	0 &0 &0 &1\\ 0 &0 &-1 &0\\ 0 &-1 &0 &0\\ 1 &0 &0 &0
    	    \end{pmatrix}.
    \end{flalign*}
     Hence, $(\CO_F\otimes_{\CO_{F_0}}\ol{k})\pair{v, w}$ is isomorphic to $M_{std,2}$. Its orthogonal complement is a hermitian quadratic module of type $\Lambda_m$ of rank $n-2$. Now we can finish the proof by induction on the rank of $M$.
\end{proof}

\begin{thm}[{cf. \cite[Proposition 9.9]{anschutz2018extending}}] \label{thmsimm}
	The group functor $\ud{\Sim}(\Lambda_m)$ is representable by an affine smooth group scheme over $\CO_{F_0}$ whose generic fiber is $\GU(V,h)$. 
\end{thm}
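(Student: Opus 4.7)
The plan is to establish the three claims---representability by an affine scheme, smoothness, and identification of the generic fiber with $\GU(V,h)$---in that order, leveraging the étale-local triviality provided by Theorem \ref{thmstand}.

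For representability, I will realize $\ud{\Sim}(\Lambda_m)$ as a closed subfunctor of the affine scheme $\GL_{\CO_{F_0}}(\Lambda_m) \times_{\CO_{F_0}} \Gm$, where a pair $(\varphi, \gamma)$ lies in the subfunctor precisely when $\varphi$ is $\CO_F \otimes_{\CO_{F_0}} R$-linear (a collection of linear equations on the matrix entries of $\varphi$) and $q(\varphi(m)) = \gamma \cdot q(m)$ for all $m$. By the explicit presentation of the universal quadratic form in Lemma \ref{lem-affine}, the second condition is equivalent to finitely many polynomial identities in the entries of $\varphi$ and $\gamma$. The ambient scheme is affine and of finite type, so $\ud{\Sim}(\Lambda_m)$ inherits these properties.

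For smoothness, I will apply Theorem \ref{thmstand}: by Example \ref{app-exlambm} the module $(\Lambda_m, q, \varepsilon^{-1}\CO_{F_0})$ is of type $\Lambda_m$, so after passing to an étale cover it becomes similitude-isomorphic to the standard module $(M_{std,n}, q_{std,n}, R)$. Since smoothness is étale local on the target, it suffices to check that $\sG_m \coloneqq \ud{\Sim}(M_{std,n})$ is smooth over $\CO_{F_0}$. The inductive lifting argument already carried out inside the proof of Theorem \ref{thmstand}---producing, given $(\overline{\varphi}, \overline{\gamma})$ over $\overline{S}$, a lift $v_1, \ldots, v_n$ in $M \otimes S$ and generator $u \in \sL \otimes S$ with $q(v_{m+1}) = u$, $q(v_i) = f(v_i, v_j) = 0$, and $f(v_i, \pi v_j) = u\, \delta_{i, n+1-j}$ for the remaining indices---applies verbatim to the functor $\sG_m$ and yields its formal smoothness. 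Combined with affineness and finite type, this gives smoothness.

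For the generic fiber, I will verify that over $F_0$, where $2$ is a unit, the similitude functor coincides with $\GU(V,h)$. The polarization identity recovers the symmetric form $s$ from $q$, and $h$ is in turn recovered from $s$ via the non-degeneracy of the trace form on $F/F_0$: explicitly, $h(x,y)$ is the unique $\alpha \in F$ with $\Tr_{F/F_0}(\mu \alpha) = \varepsilon\, s(x, \mu y)$ for every $\mu \in F$. Since $\varphi$ is by hypothesis $F$-linear, $s(\varphi x, \varphi y) = \gamma\, s(x,y)$ forces $s(\varphi x, \mu \varphi y) = \gamma\, s(x, \mu y)$ for all $\mu \in F$, and hence $h(\varphi x, \varphi y) = \gamma\, h(x,y)$; the converse is immediate by applying $\varepsilon^{-1}\Tr_{F/F_0}$ to the defining identity of $\GU(V,h)$. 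This identifies $\ud{\Sim}(\Lambda_m) \otimes_{\CO_{F_0}} F_0$ with $\GU(V,h)$. The main obstacle is the formal smoothness of $\sG_m$ at the special fiber: over $R[1/\pi_0]$ the symmetric form $s$ is perfect and one could fall back on the classical arguments of \cite[Appendix to Chapter 3]{rapoport1996period}, but modulo $\pi_0$ the form $s$ degenerates in residue characteristic two, so one must exploit the refined quadratic structure---specifically the invertibility of the divided discriminant $\disc'(q)$ built into the definition of type $\Lambda_m$---to extract a hyperbolic plane $(\CO_F \otimes_{\CO_{F_0}} R)\langle v, w \rangle$ at each step of the induction and peel it off using Lemma \ref{lem-orthogo}.
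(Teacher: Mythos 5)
Your proposal is correct and takes essentially the same route as the paper: representability by explicit polynomial conditions, smoothness via the inductive lifting argument inside the proof of Theorem \ref{thmstand} (your \'etale-descent reduction to $\sG_m\coloneqq\ud{\Sim}(M_{std,n})$ is only a repackaging, since that proof already observes that the same lifting argument gives smoothness of the similitude group scheme), and identification of the generic fiber by recovering $s$ from $q$ via polarization and $h$ from $s$ via nondegeneracy of the trace pairing, which is exactly what the paper's chain of equalities uses.
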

\begin{proof}
	By the proof of Theorem \ref{thmstand}, the functor $\ud{\Sim}(\Lambda_m)$ is representable by an affine smooth group scheme of finite type over $\CO_{F_0}$. It remains to prove the assertion for the generic fiber. Following the notations in \S \ref{subsec-hermpara}, we denote by $s$ the symmetric pairing on $\Lambda_m$. For any $F_0$-algebra $R$, we have \begin{flalign*}
		    \ud{\Sim}(\Lambda_m)(R) &=\cbra{(\varphi,\gamma)\ \vline \ \Centerstack[l]{$\varphi$ is an automorphism of the $\CO_F\otimes_{\CO_{F_0}}R$-module $\Lambda_m\otimes_{\CO_{F_0}} R$ \\ $\gamma: \sL\otimes_{\CO_{F_0}}R \simto \sL\otimes_{\CO_{F_0}}R $ \\ $q(\varphi(x))=\gamma(q(x))$ for $x\in \Lambda_m\otimes_{\CO_{F_0}}R=V\otimes_{F_0}R$ } } \\ &=\cbra{\varphi\in \GL_{F\otimes_{F_0}R}(V\otimes_{F_0}R)\ \vline \ 
		    \Centerstack[l]{$\gamma: R\simto R$\\ $s(\varphi(x),\varphi(y))=\gamma\rbra{s(x,y) } $ for $x,y\in V\otimes_{F_0}R$ } } \\ &=\cbra{\varphi\in\GL_{F\otimes_{F_0}R}(V\otimes_{F_0}F)\ \vline \ \Centerstack[l]{$s(\varphi(x),\varphi(y))=c(\varphi)s(x,y)$\\ for $x,y\in V\otimes_{F_0}R$ and some $c(\varphi)\in R\cross$ }  } \\ &= \cbra{\varphi\in\GL_{F\otimes_{F_0}R}(V\otimes_{F_0}F)\ \vline \ \Centerstack[l]{$h(\varphi(x),\varphi(y))=c(\varphi)h(x,y)$\\ for $x,y\in V\otimes_{F_0}R$ and some $c(\varphi)\in R\cross$ }  } \\ &=\GU(V,h)(R).
	\end{flalign*}
	Therefore, the generic fiber of $\ud{\Sim}(\Lambda_m)$ is $\GU(V,h)$. 
\end{proof}

\begin{corollary}\label{app-coroparam}
	The scheme $\ud{\Sim}(\Lambda_m)$ is isomorphic to the parahoric group scheme attached to $\Lambda_m$.
\end{corollary}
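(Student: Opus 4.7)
The plan is to identify $\ud{\Sim}(\Lambda_m)$ with the schematic closure of $\GU(V,h)$ in $\GL(\Lambda_m)$ over $\CO_{F_0}$, and then apply Theorem \ref{thm-smooth} to conclude. Concretely, the forgetful map $(\varphi,\gamma) \mapsto \varphi$ defines a natural morphism $\ud{\Sim}(\Lambda_m) \to \GL(\Lambda_m)$ of $\CO_{F_0}$-group schemes. First I would verify that this morphism is a closed immersion: the similitude factor $\gamma$ is completely determined by $\varphi$ (via $\gamma(q(x)) = q(\varphi(x))$ applied to an element $x \in \Lambda_m$ with $q(x)$ a generator of $\sL$, which exists Zariski locally since $\disc'(q)$ is a unit by Example \ref{app-exlambm}), so the map is a monomorphism cut out by the closed condition that $\varphi$ preserve $q$ up to a scalar.

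Next, Theorem \ref{thmsimm} gives that $\ud{\Sim}(\Lambda_m)$ is smooth, hence flat over $\CO_{F_0}$, and that its generic fiber equals $\GU(V,h)$. A flat affine $\CO_{F_0}$-scheme equipped with a closed immersion into $\GL(\Lambda_m)$ whose generic fiber maps isomorphically onto $\GU(V,h)$ is necessarily the schematic closure of $\GU(V,h)$ inside $\GL(\Lambda_m)$, since the schematic closure of the generic fiber of a flat scheme is the scheme itself.

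Finally, Theorem \ref{thm-smooth} applied to $I = \{m\}$ identifies the schematic closure of $\wt{G} = \GU(V,h)$ in $\GL(\Lambda_m)$ with the parahoric group scheme $\sG_{\{m\}}$ attached to $\Lambda_m$. Combining these identifications yields the desired isomorphism $\ud{\Sim}(\Lambda_m) \simeq \sG_{\{m\}}$ as $\CO_{F_0}$-group schemes.

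The only potentially delicate step is verifying that the forgetful morphism is a closed immersion in families; this reduces, via faithfully flat descent, to checking that $\gamma$ is uniquely determined by $\varphi$ on each geometric fiber, which follows from the non-vanishing of $\disc'(q)$ and the surjectivity of $q$ onto $\sL$ after restriction to a suitable basis vector. Once this is in place, the remainder is a formal consequence of flatness and the definition of schematic closure, so I expect no substantive technical obstacle beyond bookkeeping.
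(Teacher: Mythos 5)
Your argument is correct, but it takes a genuinely different route from the paper. The paper's proof is purely Bruhat--Tits-theoretic: it observes that $\ud{\Sim}(\Lambda_m)(\CO_{\breve F_0})$ is, by construction, the stabilizer of $\Lambda_m$ in $\GU(V,h)(\breve F_0)$, which is a parahoric subgroup by Proposition \ref{prop-para}, and then invokes the smoothness from Theorem \ref{thmsimm} together with the uniqueness statement \cite[1.7.6]{bruhat1984groupes} to identify $\ud{\Sim}(\Lambda_m)$ with the parahoric group scheme. You instead realize $\ud{\Sim}(\Lambda_m)$ as a closed subscheme of $\GL(\Lambda_m)$, use flatness (from smoothness) over the discrete valuation ring $\CO_{F_0}$ to conclude it coincides with the schematic closure of its generic fiber $\GU(V,h)$, and then quote Theorem \ref{thm-smooth} with $I=\{m\}$. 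Both routes ultimately rest on the same Bruhat--Tits inputs (Theorem \ref{thm-smooth} is itself proved via \cite[1.7.6]{bruhat1984groupes}), but yours has the advantage of not re-invoking that uniqueness criterion directly, at the price of the extra verification that the forgetful map $(\varphi,\gamma)\mapsto\varphi$ is a closed immersion, which the paper's proof never needs. On that delicate step your phrasing ``cut out by the closed condition that $\varphi$ preserve $q$ up to a scalar'' glosses over the fact that requiring the scalar to be a unit is a priori an open condition; this is repaired by noting that $q(e_{m+1})=\varepsilon^{-1}$ generates $\sL$ (no appeal to $\disc'(q)$ or Zariski localization is needed for $\Lambda_m$ itself), so the locus in $\GL(\Lambda_m)$ where $\varphi$ is $\CO_F$-linear and $q\circ\varphi=c\,q$ with $c:=q(\varphi(e_{m+1}))\cdot q(e_{m+1})^{-1}$ is closed, and $c$ is automatically a unit because applying the identity to $x=\varphi^{-1}(e_{m+1})$ gives $c\cdot q(\varphi^{-1}(e_{m+1}))=q(e_{m+1})$, a unit; one should also check, as in Lemma \ref{lem-affine}, that equality of the two quadratic forms is detected on a basis together with the polar values, so the condition is given by finitely many polynomial equations. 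With these points made explicit, your proof is complete and compatible with the generic-fiber identification in Theorem \ref{thmsimm}, so the final identification with $\sG_{\{m\}}$ goes through.
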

\begin{proof}
	Let $\breve F_0$ denote the completion of the maximal unramified extension of $F_0$. By construction, we know that $\ud{\Sim}(\Lambda)(\CO_{\breve F_0})$ is the stabilizer of $\Lambda_m$ in $\GU(V,h)(\breve F_0)$, which is a parahoric subgroup by Proposition \ref{prop-para}. As $\ud{\Sim}(\Lambda)$ is smooth over $\CO_{F_0}$ by Theorem \ref{thmsimm}, the corollary follows by \cite[1.7.6]{bruhat1984groupes}.
\end{proof}

\subsection{Hermitian quadratic modules of type $\Lambda_0$}

Let $R$ be an $\CO_{F_0}$-algebra. Recall that in Definition \ref{defn-hermphi}, we have defined the category $\CC_R$ of hermitian quadratic modules with $\phi$.
By a similar proof as in Lemma \ref{lem-affine}, we can show that for a fixed free $\CO_F\otimes_{\CO_{F_0}}R$-module $M$ of rank $d$, the moduli functor of all bilinear forms $\phi$ and quadratic forms $q$ on $M$ satisfying \eqref{eqphiq} in Definition \ref{defn-hermphi} is representable by the affine space of dimension $d^2$ over $R$. 

Let $(M,q,\sL,\phi)\in\CC_R$. Choose a basis $(e_1,\ldots,e_d,\pi e_1,\ldots,\pi e_d)$ of $M$. The pairing $\phi$ is then given by the matrix  \begin{flalign*}
	    \begin{pmatrix}
	    	\wt{A} &\wt{B}\\ t\wt{A}-\wt{B} &\pi_0\wt{A}
	    \end{pmatrix},
\end{flalign*} where $\wt{A}_{ii}=(t/\pi_0) q(e_i)$ and $\wt{B}_{ii}=2q(e_i)$ for $1\leq i\leq d$, $\wt{A}_{ij}=\phi(e_i,e_j)$ and $\wt{B}_{ij}=\phi(e_i,\pi e_j)$ for $1\leq i,j\leq d$ and $i\neq j$, and they satisfy $\wt{A}=-\wt{A}^t+(t/\pi_0)\wt{B}$ and $\wt{B}=\wt{B}^t$. 

\begin{defn} \label{defn-discphi}
	Let $(M,q,\sL,\phi)\in\CC_R$ and the rank of $M$ over $R$ is $2d$. We define the \dfn{discriminant} as the morphism \begin{flalign*}
		    \disc(\phi): \wedge^{2d}_RM\ra \wedge_R^{2d}(M^\vee\otimes_R\sL)\simeq\wedge^{2d}_R(M^\vee)\otimes_R\sL^{2d}
	\end{flalign*} induced by the morphism $M\ra M^\vee\otimes_R\sL$, $m\mapsto \phi(m,-)$. 
\end{defn}

\begin{example}\label{ex-rankone}
	Assume $d=1$. Let $x\in M$ be a generator of $M$ over $\CO_F\otimes_{\CO_{F_0}}R$. Suppose $(M,q,\sL)$ is a hermitian quadratic module. Then we can define a bilinear form $\phi: M\times M\ra \sL$ given by the matrix \begin{flalign*}
		    \begin{pmatrix}
		    	t/\pi_0q(x) &2q(x)\\ (t^2-2\pi_0)/\pi_0 q(x) &tq(x)
		    \end{pmatrix}
	\end{flalign*}
with respect to the basis $\cbra{x,\pi x}$. Equipped with such $\phi$, we have $(M,q,\sL,\phi)\in\CC_R$. Using the basis $\cbra{x,\pi x}$, we may view the discriminant map $\disc(\phi)$ as the determinant of the above matrix. We have $$\disc(\phi)= \frac{4\pi_0-t^2}{\pi_0} q(x)^2.$$
\end{example}

Arguing similarly as in Lemma \ref{lem-divided}, we can show the following result.
\begin{lemma}
	Assume $d\geq 1$ is odd. Then there exists a functorial factorization \begin{flalign*}
		  \xymatrix{
		     \wedge^{2d}_RM\ar[r]^{\disc(\phi)}\ar[d]_{\disc'(\phi)} &\wedge^{2d}_RM^\vee\otimes_R\sL^{2d}\\ \wedge^{2d}_RM^\vee\otimes_R\sL^{2d}\otimes_{\CO_{F_0}}(\frac{4\pi_0-t^2}{\pi_0})\ar[ru]_{j}
		  }
	\end{flalign*}
	Here the map $j$ is induced by the natural inclusion of the ideal $(\frac{4\pi_0-t^2}{\pi_0})$ in $\CO_{F_0}$. 
\end{lemma}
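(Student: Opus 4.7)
The plan is to mirror the proof of Lemma \ref{lem-divided} as closely as possible, adjusting the linear algebra to keep track of $\phi$ in place of the symmetric form $f$. First I would reduce to the universal case: let $R$ be the quotient of the polynomial ring $\CO_{F_0}[\wt A, \wt B]$ by the relations $\wt A = -\wt A^t + (t/\pi_0)\wt B$ and $\wt B = \wt B^t$, and let $M$ carry the tautological structure with matrix $\begin{psmallmatrix}\wt A & \wt B\\ t\wt A - \wt B & \pi_0\wt A\end{psmallmatrix}$ in the basis $(e_1,\dots,e_d,\pi e_1,\dots,\pi e_d)$. It then suffices to show that $\disc(\phi)\in(\frac{4\pi_0-t^2}{\pi_0})$ in $R$. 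Since $\pi^2 - t\pi + \pi_0$ is irreducible over $F_0$, the element $t^2 - 4\pi_0$ is a nonzero element of $F_0$, so $\frac{4\pi_0-t^2}{\pi_0}$ becomes a unit after inverting $\pi_0$; hence the inclusion of ideals need only be checked in the localizations $R_\fm/\fm^k$ with $\fm=(\pi_0)$ and $k\geq 1$.

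Next I induct on the odd rank $d$. The base case $d=1$ is exactly the computation of Example \ref{ex-rankone}, which gives $\disc(\phi) = \frac{4\pi_0-t^2}{\pi_0}\,q(x)^2$. For the inductive step, I work in $R_\fm/\fm^k$ where $\pi_0$ is nilpotent; after possibly enlarging the base (which is harmless for checking divisibility), I may assume some off-diagonal entry $\phi(e_i,\pi e_j)$ of $\wt B$ is a unit. Using a variant of Lemma \ref{lem-trick} adapted to $\phi$, I find $v_1, w_1$ in the $R$-span of $e_i, e_j, \pi e_i, \pi e_j$ such that
\begin{equation*}
q(v_1)=q(w_1)=0,\quad \phi(v_1,w_1)=0,\quad \phi(v_1,\pi w_1)=1.
\end{equation*}
A direct computation using the relations \eqref{eqphiq} then shows that, in the basis $(v_1,w_1,\pi v_1,\pi w_1)$, the pairing $\phi$ restricted to $N_1\coloneqq (\CO_F\otimes_{\CO_{F_0}}R)\pair{v_1,w_1}$ has matrix
\begin{equation*}
\begin{pmatrix}0&0&0&1\\ 0&0&1&0\\ 0&-1&0&0\\ -1&0&0&0\end{pmatrix},
\end{equation*}
whose determinant is $1$; thus $\phi$ is perfect on $N_1$.

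Since $\phi$ satisfies $\phi(\pi x, y)=\phi(x,\ol\pi y)$, the argument of Lemma \ref{lem-orthogo} applies with $b=\phi$ to yield an $\CO_F\otimes_{\CO_{F_0}}R$-linear orthogonal decomposition $M=N_1\oplus N_1^\perp$. The orthogonal complement $N_1^\perp$ is a free $\CO_F\otimes_{\CO_{F_0}}R$-module of odd rank $d-2$, inherits the structure of an object of $\CC_R$, and satisfies $\disc(\phi)=\disc(\phi|_{N_1})\cdot\disc(\phi|_{N_1^\perp})$. By the inductive hypothesis, $\disc(\phi|_{N_1^\perp})$ is divisible by $\frac{4\pi_0-t^2}{\pi_0}$, hence so is $\disc(\phi)$. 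The factorization $\disc'(\phi)$ is functorial because each step (the splitting off of the hyperbolic summand and the base case) is.

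The main obstacle will be establishing the $\phi$-version of Lemma \ref{lem-trick}: one needs to produce $v_1, w_1$ with the three vanishing conditions above while keeping $\phi(v_1,\pi w_1)=1$. The difficulty is that, in contrast to the $f$-version, the natural substitutions $v\mapsto v+r w$, $v\mapsto v+r\pi w$ produce quadratic equations in $r$ whose coefficients involve both $t$ and $\pi_0$, and $t$ need not be nilpotent in $R_\fm/\fm^k$ even though $\pi_0$ is. The key observation is that $\pi_0\mid t$ in type (R-U), so the identities $f(v,w)=\phi(v,\pi w)$ and $q(x+ry)-q(x)-r^2 q(y)=r\phi(x,\pi y)$ together with the relation $\phi(x,x)=(t/\pi_0)q(x)$ let one set up solvable quadratic equations in the nilpotent ring $R_\fm/\fm^k$, where the requisite square roots exist by the same binomial-series argument as in Lemma \ref{lem-trick}.
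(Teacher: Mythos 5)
Your overall strategy is the paper's: reduce to the universal ring, localize at $\fm=(\pi_0)$ and work modulo $\fm^k$ where an off-diagonal entry $\phi(e_i,\pi e_j)$ is a unit, split off a rank-two $\CO_F\otimes_{\CO_{F_0}}R$-summand on which $\phi$ is perfect, and induct down to the $d=1$ computation of Example \ref{ex-rankone}. The one step that is wrong as written is your displayed Gram matrix for $\phi$ on $N_1=(\CO_F\otimes_{\CO_{F_0}}R)\pair{v_1,w_1}$. The form $\phi$ is neither symmetric nor alternating: the relations \eqref{eqphiq} force $\phi(y,x)=-\phi(x,y)+\frac{t}{\pi_0}\phi(x,\pi y)$, $\phi(\pi x,\pi y)=\pi_0\,\phi(x,y)$ and $\phi(\pi y,x)=t\,\phi(x,y)-\phi(x,\pi y)$. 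So even after your normalization $q(v_1)=q(w_1)=\phi(v_1,w_1)=0$, $\phi(v_1,\pi w_1)=1$, the matrix of $\phi$ in the basis $(v_1,w_1,\pi v_1,\pi w_1)$ is
\begin{equation*}
\begin{pmatrix}0&0&0&1\\ \tfrac{t}{\pi_0}&0&1&0\\ 0&-1&0&0\\ \tfrac{t^2-\pi_0}{\pi_0}&0&t&0\end{pmatrix},
\end{equation*}
which coincides with your symplectic-looking matrix only in the (R-P) case $t=0$; in the (R-U) case the entries $\tfrac{t}{\pi_0}$, $\tfrac{t^2-\pi_0}{\pi_0}$, $t$ do not vanish (indeed $\tfrac{t}{\pi_0}$ can be a unit). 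Fortunately the determinant of the correct matrix is still $1$, so the conclusion you actually need — that $\phi$ is perfect on $N_1$, hence $M=N_1\oplus N_1^\perp$ with $N_1^\perp\in\CC_R$ of odd rank $d-2$ and $\disc(\phi)=\disc(\phi|_{N_1})\disc(\phi|_{N_1^\perp})$ — survives, and the induction closes. So this is a repairable computational slip rather than a structural flaw, but as stated the matrix identity is false.

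Two smaller remarks. First, the paper's own proof avoids your extra normalization entirely: it only arranges $\phi(e_1,e_2)=0$ (keeping $\phi(e_1,\pi e_2)=1$, a unit in $R_\fm/\fm^k$) via the substitution $e_1\mapsto r_1e_1+r_2\ol{\pi}e_1$, and then checks by direct computation that the resulting $4\times 4$ matrix of $\phi$ (whose entries still involve $q(e_1),q(e_2)$) is invertible; killing $q(v_1),q(w_1)$ as you propose requires solving quadratics whose discriminants are of the form $1-4q(v)q(w)$, which is fine since $2$ is nilpotent once $\pi_0$ is, but it is unnecessary work. Second, when you split off $N_1$ you should note, as in Lemma \ref{lem-orthogo}, that left $\phi$-orthogonality implies two-sided orthogonality because $N_1^\perp$ is an $\CO_F\otimes_{\CO_{F_0}}R$-submodule and $\phi(x,y)=\phi(\tfrac{\ol{\pi}}{\pi}y,x)$ with $\tfrac{\ol{\pi}}{\pi}\in\CO_F^\times$; this is what makes the Gram matrix block-diagonal and the discriminant multiplicative.
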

\begin{proof}
	As in the proof of Lemma \ref{lem-divided}, we can reduce to show that the determinant, which equals $\disc(\phi)$, of a matrix of the form \begin{flalign*}
	    \begin{pmatrix}
	    	\wt{A} &\wt{B}\\ t\wt{A}-\wt{B} &\pi_0\wt{A}
	    \end{pmatrix}\in M_{2d,2d}(R),
\end{flalign*} is divisible by $(4\pi_0-t^2)/\pi_0$ in $R$, where $\wt{A}_{ii}=(t/\pi_0) q(e_i)$ and $\wt{B}_{ii}=2q(e_i)$ for $1\leq i\leq d$, $\wt{A}_{ij}=\phi(e_i,e_j)$ and $\wt{B}_{ij}=\phi(e_i,\pi e_j)$ for $1\leq i,j\leq d$ and $i\neq j$, and they satisfy $\wt{A}=-\wt{A}^t+(t/\pi_0)\wt{B}$ and $\wt{B}=\wt{B}^t$. 
   
   If $d=1$, then the lemma follows by Example \ref{ex-rankone}. Suppose $d\geq 3$. We may assume $\pi_0$ is nilpotent in $R$ and $B_{12}=\phi(e_1,\pi e_2)=1$ as in the proof of Lemma \ref{lem-divided}. As in Lemma \ref{lem-trick}, replacing $e_1$ by $r_1e_1+r_2\ol{\pi}e_1$ for suitable $r_1$ and $r_2$ in $R$, we may assume further that $\phi(e_1,e_2)=0$. Then restricting to the submodule $\pair{e_1,e_2,\pi e_1,\pi e_2}$, the pairing $\phi$ is given by the matrix \begin{flalign*}
   	       \begin{pmatrix}
   	       	    \frac{t}{\pi_0}q(e_1) &0 &2q(e_1) &1\\ \frac{t}{\pi_0} &\frac{t}{\pi_0}q(e_2) &1 &2q(e_2)\\ \frac{t^2-2\pi_0}{\pi_0}q(e_1) &-1 &tq(e_1) &0 \\ \frac{t^2-\pi_0}{\pi_0} &\frac{t^2-2\pi_0}{\pi_0}q(e_2) &t &tq(e_2)
   	       \end{pmatrix}.
   \end{flalign*}
      By direct computation, the above is an invertible matrix, and hence the pairing $\phi$ is perfect on the module $\pair{e_1,e_2,\pi e_2,\pi e_2}$. Therefore, the orthogonal complement $M'$ of $\pair{e_1,e_2,\pi e_2,\pi e_2}$ in $M$ has rank $n-2$ over $\CO_F\otimes_{\CO_{F_0}}R$, and $M'\in\CC_R$. Then we finish the proof by induction on the rank of $M$.
\end{proof}

\begin{defn}\label{app-defn2}
	Let $R$ be an $\CO_{F_0}$-algebra. We say a hermitian quadratic module $(M,q,\sL,\phi)\in\CC_R$ over $R$ is of type $\Lambda_0$ if $\disc'(\phi)$ is an isomorphism. 
\end{defn}

\begin{example} \label{example-std0}
   Let $R$ be an $\CO_{F_0}$-algebra.
   \begin{enumerate}
   	\item  Suppose $(M,q,R)$ is a hermitian quadratic module of rank one. Let $x\in M$ be a generator and assume $q(x)=1$. We can define a bilinear form $\phi_{std,1}: M\times M\ra R$ as in Example \ref{ex-rankone}. Then $(M,q,\sL,\phi_{std,1})\in\CC_R$. Viewing $\disc'(\phi_{std,1})$ as an element in $R$, we have  $\disc'(\phi_{std,1})=1$. 
    \item  Define $$N_{std,2}\coloneqq (\CO_F\otimes_{\CO_{F_0}}R)\pair{e_1,e_2}$$ with hermitian quadratic form $q_{std,2}: N_{std,2}\ra R$ determined by  $$q_{std,2}(e_1)=q_{std,2}(e_2)=0, \phi_{std,2}(e_1,e_2)=0, \phi_{std,2}(e_1,\pi e_2)=1.$$ 
%	For an even integer $n=2m$, we define $$M_{std,2m}\coloneqq M_{std,2}^{\oplus m},$$ which is the $m$-fold orthogonal direct sum of $M_{std,2}$. When $R=\CO_{F_0}$, the module $M_{std,2m}$ carries a structure of a hyperbolic lattice in the sense of Remark \ref{rmk-hyp}. 
	For an odd integer $n=2m+1$, we define $$N_{std,n}\coloneqq N_{std,2}^{\oplus m}\oplus (\CO_F\otimes_{\CO_{F_0}}R) e_n.$$ Here $(\CO_F\otimes_{\CO_{F_0}}R) e_n$ is a hermitian quadratic module of rank one as in (1), and the direct sum is an orthogonal direct sum with respect to $\phi_{std,n}\coloneqq \phi_{std,2}^{\oplus m}\oplus\phi_{std,1}$. Viewing $\disc'(\phi_{std,n})$ as an element in $R$, we have $$\disc'(\phi_{std,n})=1.$$ Hence, $(N_{std,n},q_{std,n},R,\phi_{std,n})$ is a hermitian quadratic module over $R$ of type $\Lambda_0$. 
   \end{enumerate}
\end{example}

\begin{example}\label{exlambda0}
	Equipped with the following bilinear form \begin{flalign*}
		 \phi(-,-): \Lambda_0\times \Lambda_0 \lra \sL=\varepsilon\inverse\CO_{F_0},\quad   (x,y) \mapsto s(x,\pi\inverse y)=\varepsilon\inverse \Tr_{F/F_0} h(x,\pi\inverse y),
	\end{flalign*}  the hermitian quadratic module $(\Lambda_0, q, \varepsilon\inverse\CO_{F_0},\phi)$ is of type $\Lambda_0$. 
\end{example}

\begin{thm} \label{thmstand0}
	Let $(M,q,\sL,\phi)$ be a hermitian quadratic module of type $\Lambda_0$ of rank $n=2m+1$ over $R$. Then $(M,q,\sL,\phi)$ is \etale locally isomorphic to $(N_{std,n},q_{std,n},R,\phi_{std,n})$ up to similitude. In particular, $(M,q,\sL,\phi)$ is \etale locally isomorphic to $(\Lambda_0,q,\varepsilon\inverse\CO_{F_0},\phi)\otimes_{\CO_{F_0}}R$ up to similitude. 
\end{thm}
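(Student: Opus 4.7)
The plan is to mirror the strategy of Theorem \ref{thmstand} but applied to the sheaf of $\phi$-preserving similitudes. Set
\[
\CF \coloneqq \ud{\Sim}\bigl((N_{std,n}, q_{std,n}, R, \phi_{std,n}),\,(M,q,\sL,\phi)\bigr),
\]
and set $\sG_0 \coloneqq \ud{\Sim}(N_{std,n}, q_{std,n}, R, \phi_{std,n})$. Then $\sG_0$ acts simply transitively on $\CF$, so by Lemma \ref{Gtorsorsurjective} it suffices to prove that $\CF$ is affine of finite type, smooth, and surjective over $\Spec R$. Representability and finite type follow as in Lemma \ref{lem-affine} because the conditions cutting out $\CF$ inside $\GL_{\CO_F\otimes R}(M)\times \ud{\Isom}(\sL,\sL)$ are polynomial in the matrix entries.

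For smoothness the only nontrivial locus is over $\pi_0=0$, and I would argue formal smoothness exactly as in Theorem \ref{thmstand}: given a square-zero (or more generally nilpotent) surjection $S\twoheadrightarrow\overline S$ with kernel $J$ and a similitude $(\overline\varphi,\overline\gamma)$ over $\overline S$, lift the ordered basis $(\overline v_i)=(\overline\varphi(\overline e_i))$ to elements $v_i\in M\otimes_R S$. Here the essential tool is a $\phi$-version of Lemma \ref{lem-trick}: given $v,w\in M$ with $\phi(v,\pi w)=1$, one can modify $v\rightsquigarrow v+r\pi w$, $w\rightsquigarrow w+r'\overline\pi v$, etc., so that after rescaling one obtains $q(v)=q(w)=\phi(v,w)=0$ and $\phi(v,\pi w)=1$; the relations \eqref{eqphiq} make the algebra formally identical to Lemma \ref{lem-trick}, using nilpotence of $\pi_0$ (and hence of $4\pi_0-t^2$ on $S$) to extract square roots. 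Corrections then live in the kernel $J$, so the lifted $v,w$ continue to reduce to the prescribed $\overline v_i$. Because $\phi$ satisfies $\phi(\pi m_1,m_2)=\phi(m_1,\overline\pi m_2)$, Lemma \ref{lem-orthogo} applies to split off the $(\CO_F\otimes S)$-submodule $\langle v,w,\pi v,\pi w\rangle$ as a $\phi$-orthogonal direct summand, and we induct on rank. The base case $n=1$ uses that $\disc'(\phi)\in R^\times$ forces any lift $v_n$ of $\overline\varphi(\overline e_n)$ to have $q(v_n)$ a generator of $\sL\otimes S$; see Example \ref{example-std0}(1).

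Surjectivity reduces to the case $R=\overline k$ an algebraically closed residue field with $\pi_0=0$ and $\sL=\overline k$. The key step, which I expect to be the main obstacle, is to exhibit $v,w\in M$ with $\phi(v,\pi w)=1$. Suppose no such pair exists; then in any $(\CO_F\otimes\overline k)$-basis $(v_1,\ldots,v_n)$ the matrix of $\phi(v_i,\pi v_j)$ is strictly upper triangular in a suitable sense, and a direct computation of $\disc(\phi)$ using the explicit block form preceding Definition \ref{defn-discphi} shows that $\disc(\phi)$ is divisible by a higher power of $(4\pi_0-t^2)/\pi_0$ than allowed — contradicting the hypothesis that $\disc'(\phi)$ is a unit. (Concretely, one adapts the contradiction argument in the last paragraph of the proof of Theorem \ref{thmstand}: if $\phi(v_i,\pi v_j)=0$ for all $i,j$, then $M$ becomes a $\phi$-orthogonal sum of rank-one pieces, whose divided discriminants are expressible via Example \ref{ex-rankone}, and their product cannot be a unit.)

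Once such $v,w$ exist, apply the $\phi$-version of Lemma \ref{lem-trick} to normalize $\phi$ on $\langle v,w,\pi v,\pi w\rangle$ to the standard form of $N_{std,2}$, then use Lemma \ref{lem-orthogo} to split off this rank-$2$ summand. The orthogonal complement is again of type $\Lambda_0$ (by multiplicativity of $\disc'(\phi)$ across $\phi$-orthogonal sums, exactly as in the multiplicativity used in Lemmas \ref{lem-Gorbit} and \ref{lem-Gorbitm}) and has odd rank $n-2$, so induction on the rank terminates at the rank-one standard piece $(\CO_F\otimes\overline k)e_n$. The last sentence of the theorem, comparing with the specific module $(\Lambda_0,q,\varepsilon^{-1}\CO_{F_0},\phi)$, then follows because by Example \ref{exlambda0} it is itself of type $\Lambda_0$, hence \'etale locally similitude-isomorphic to $(N_{std,n},q_{std,n},R,\phi_{std,n})$ by what we just proved.
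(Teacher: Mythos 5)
Your proposal follows essentially the same route as the paper's proof of Theorem \ref{thmstand0}: reduce to showing that the sheaf of $\phi$-preserving similitudes is smooth and surjective over $R$, establish formal smoothness over $\pi_0=0$ by lifting a standard basis using a $\phi$-analogue of Lemma \ref{lem-trick} together with orthogonal splitting and induction on the rank, and prove surjectivity over $\ol{k}$ by producing a pair with $\phi(v,\pi w)=1$ via a divided-discriminant contradiction, splitting off a copy of $N_{std,2}$, and inducting, with the final assertion coming from Example \ref{exlambda0}. This matches the paper's argument, so the proposal is correct and takes essentially the same approach.
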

\begin{proof}
	As in the proof of Theorem \ref{thmstand}, it suffices to show that the representable sheaf $$\CF\coloneqq \ud{\Sim}((N_{std,n},q_{std,n},R,\phi_{std,n}), (M,q,\sL,\phi))$$ of similitudes is surjective over $R$ and smooth at points over $\Spec R/\pi_0R$.
	
	We first check that for any surjection $S\ra \ol{S}$ in $\Nilp_R$ with nilpotent kernel $J$ and a similitude $(\ol{\varphi},\ol{\gamma})\in \CF(\ol{S})$, there exists a lift of $(\ol{\varphi},\ol{\gamma})$ to $S$. We denote by $e_1,\ldots,e_n$ the standard basis of $N_{std,n}$. We reorder the basis such that $q(e_{m+1})=1$ and $(\CO_F\otimes_{\CO_{F_0}}R)\pair{e_i,e_{n+1-i}}\simeq N_{std,2}$. We claim that there exist lifts $v_i\in M\otimes_RS $ of $\ol{v_i}\coloneqq \ol{\varphi}(\ol{e_i})$ for $1\leq i\leq n$ and a generator $u\in \sL\otimes_RS$ such that \begin{flalign*}
		    \text{$q(v_{m+1})=u$, $q(v_i)=\phi(v_i,v_j)=0$ and $\phi(v_i,\pi v_j)=u\delta_{i,n+1-j}$ for $1\leq i<j\leq n$ and $i,j\neq m+1$}.
	\end{flalign*}
	The the maps $\varphi: e_i\mapsto v_i$ and $\gamma: 1\mapsto u$ defines a lift of $(\ol{\varphi},\ol{\gamma})$ and $(\varphi,\gamma)$ preserves $\phi$. Thus it suffices to prove the claim. 
	
	Suppose $n=1$. Pick any lift $v_1$ of $\ol{v_1}$. As $\disc'(\phi)$ is an isomorphism, $q(v_1)$ is a generator of $\sL\otimes_RS$. Set $u=q(v_1)$. This proves the claim for $n=1$.	    
	For $n\geq 3$, pick any lifts $v_1,\ldots,v_n$ in $M\otimes_RS$ of $\ol{v_1},\ldots,\ol{v_n}$. As in the proof of Theorem \ref{thmstand}, we may assume that $\sL\otimes_RS\simeq S$ and $\phi(v_1,\pi v_n)=1$ in $S$. Let $r_0\in R$ be a solution of the quadratic equation $q(v_n)r^2+r+q(v_1)=0$, which exists by arguments in Lemma \ref{lem-trick}. Since $q(v_1)$ and $q(v_n)$ lie in $J$, we have $r_0\in J$. Then $v_1'\coloneqq v_1+r_0v_n$ and $\ol{v_1'}=\ol{v_1}$. So we may find a lift $v_n'$ such that $\phi(v_1',v_n')=1$. Set $v_n''\coloneqq v_n'-q(v_n')v_1'$. Then $q(v_n'')=0$ and $\ol{v_n''}=\ol{v_n}$. Set $$r_1\coloneqq (1-\phi(v_1',v_n'')\phi(v_1',\pi^2v''_n))\inverse \text{\ and\ } r_2\coloneqq -r_1\phi(v_1',v_n'').$$ Since $(\ol{\varphi},\ol{\gamma})$ preserves $\phi$, we have $\phi(\ol{v_1'},\ol{v_n''})=\ol{\gamma}(\phi_{std,n}(e_1,e_n))=0$. Thus, $\phi(v_1',v_n'')$ and $r_2$ are in $J$. Set $v_1''\coloneqq r_1v_1'+r_2\ol{\pi}v_1'$. Then $\ol{v_1''}=\ol{v}$. As in Lemma \ref{lem-trick}, we have $\phi(v_1'',\pi v_n'')=1$ and $\phi(v_1'',v_n'')=0$.  
	    By replacing $v_1$ by $v_1''$ and $v_n$ by $v_n''$, we may assume that 
	     \begin{flalign*}
	    	     q(v_1)=q(v_n)=\phi(v_1,v_n)=0 \text{\ and\ }\phi(v_1,\pi v_n)=1.
	    \end{flalign*}	    
	Then $\phi$ is perfect on the $S$-submodule $N$ generated by $v_1,v_2,\pi v_1,\pi v_2$. Let $N^\perp$ be the orthogonal complement (with respect to $\phi$) of $N$ in $M\otimes_RS$. As in the proof of Theorem \ref{thmstand}, we may assume that lifts $v_i$ for $2\leq i\leq n-1$ lie in $N^\perp$. The claim follows by induction on the rank of $M$, and hence, we deduce the smoothness of $\CF$ over $R$. 
	    
	Next we prove the surjectivity of $\CF$ over $R$. It suffices to prove that $\CF$ has non-empty fibers over $R/\pi_0 R$. Then we may assume $R=\ol{k}$ is the algebraic closure of the residue field of $\CO_{F_0}$ and $\sL=\ol{k}$. We need to show that there exists a similitude isomorphism $(\varphi,\gamma)$ preserving $\phi$ between $(N_{std,n},q_{std,n},\ol{k},\phi_{std,n})$ and $(M,q,\ol{k},\phi)$. Suppose $n=1$. Then $M\otimes_RS=(\CO_F\otimes_{\CO_{F_0}}S)v$ for some $v$. Define 
	\begin{alignat*}{2}
		   \varphi: N_{std}\otimes_RS &\lra M\otimes_RS=(\CO_F\otimes_{\CO_{F_0}}S)v,\quad \gamma: &S\lra \sL\otimes_RS \\ e_1 &\mapsto v,  & 1\mapsto q(v).
	\end{alignat*}
	As $\disc'(\phi)$ is an isomorphism, $q(v)$ is a generator. 
	Since $\phi$ is determined by $q$ in this case by computation in Example \ref{ex-rankone}, the similitude $(\varphi,\gamma)$ preserves $\phi$. For $n\geq 3$ odd, we claim that there exist $v$ and $w$ in $M\otimes_RS$ such that $\phi(v,\pi w)=1$. This can be done using proof by contradiction as in Theorem \ref{thmstand}. Set $v'\coloneqq v+r_0w$, where $r_0\in \ol{k}$ is a solution for the quadratic equation $q(v')=q(w)r^2+r+q(v)$. Then $$\phi(v',\pi w)=\phi(v,\pi w)+r_0\phi(w,\pi w)=1+2r_0q(w)=1.$$ The last equality holds since $\Char\ \ol{k}=2$. Set $w'\coloneqq w-q(w)v'$. Then $q(w')=0$. As in the previous paragraph, we may find suitable $r_1$ and $r_2$ such that $v''\coloneqq r_1v'+r_2\ol{\pi}v'$ satisfies $\phi(v'',\pi w)=1$ and $\phi(v'',w')=0$. Replacing $v$ by $v''$ and $w$ by $w'$, we see that $\phi$ restricting to $(\CO_F\otimes_{\CO_{F_0}}\ol{k})\pair{v, w}$ acts the same as $\phi_{std,2}$. In particular, the subspace $(\CO_F\otimes_{\CO_{F_0}}\ol{k})\pair{v, w}$ is isomorphic to $N_{std,2}$. Its orthogonal complement is a hermitian quadratic module of type $\Lambda_0$ of rank $n-2$. Now we can finish the proof by induction on the rank of $M$.
\end{proof}

\begin{thm} \label{thmsimm0}
	The group functor $\ud{\Sim}((\Lambda_0,\phi))$ of similitudes preserving $\phi$ is representable by an affine smooth group scheme over $\CO_{F_0}$ whose generic fiber is $\GU(V,h)$. 
\end{thm}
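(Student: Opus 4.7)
The plan is to mirror the two-step proof of Theorem \ref{thmsimm}. For \emph{representability}, the functor $\ud{\Sim}((\Lambda_0,\phi))$ is cut out by closed conditions (preservation of $q$ and of $\phi$) from the group scheme $\GL(\Lambda_0)\times_{\CO_{F_0}}\BG_m$, hence is representable by an affine group scheme of finite type over $\CO_{F_0}$.

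For \emph{smoothness}, I would extract it from the proof of Theorem \ref{thmstand0}. Taking both the source and the target hermitian quadratic modules equal to $(\Lambda_0,q,\varepsilon^{-1}\CO_{F_0},\phi)\otimes_{\CO_{F_0}}S$, the inductive lifting argument there produces, given a similitude $(\ol{\varphi},\ol{\gamma})$ over $\ol S$ and a nilpotent-kernel surjection $S\twoheadrightarrow\ol S$, a lift $(\varphi,\gamma)\in\ud{\Sim}((\Lambda_0,\phi))(S)$, which gives formal smoothness at all $\CO_{F_0}/\pi_0$-points. Equivalently, since $(\Lambda_0,q,\phi)$ is of type $\Lambda_0$ by Example \ref{exlambda0}, Theorem \ref{thmstand0} combined with Lemma \ref{Gtorsorsurjective} identifies the bitorsor
\[
\CF\coloneqq\ud{\Sim}\bigl((N_{std,n},q_{std,n},\phi_{std,n}),(\Lambda_0,q,\phi)\bigr)
\]
with an \etale $\ud{\Sim}((\Lambda_0,\phi))$-torsor whose total space is smooth, so smoothness of $\ud{\Sim}((\Lambda_0,\phi))$ follows by \etale descent.

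For the \emph{generic fiber}, let $R$ be an $F_0$-algebra; then $2$, $\pi_0$, and $t^2-4\pi_0$ are units in $R$. Polarization recovers $q$ from the associated symmetric form $s$, and $\phi(x,y)=s(x,\pi^{-1}y)$ recovers $\phi$ from $s$, so $(\varphi,\gamma)$ preserves $(q,\phi)$ if and only if it preserves $s$ up to $\gamma$. Writing $h(x,y)=a+b\pi$ with $a,b\in R$, the identities
\[
s(x,y)=\varepsilon^{-1}(2a+bt),\qquad s(x,\pi y)=\varepsilon^{-1}\bigl(ta+b(t^2-2\pi_0)\bigr)
\]
form an invertible $2\times 2$ linear system over $R$ of determinant $\varepsilon^{-2}(t^2-4\pi_0)\in R^\times$, so $s$ in turn determines $h$. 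Hence preservation of $s$ up to $\gamma$ is equivalent to preservation of $h$ up to $\gamma$, yielding $\ud{\Sim}((\Lambda_0,\phi))(R)=\GU(V,h)(R)$ exactly as in the proof of Theorem \ref{thmsimm}.

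The main obstacle is the smoothness statement: the generic-fiber computation is essentially a restatement of the one for $\ud{\Sim}(\Lambda_m)$, but verifying smoothness requires that the inductive lift produced in Theorem \ref{thmstand0} genuinely preserves the auxiliary pairing $\phi$, not merely the hermitian quadratic module structure. In that proof, however, the constructed basis $(v_1,\ldots,v_n,u)$ is chosen precisely so that $\phi$ takes its normal form $\phi_{std,n}$, so preservation of $\phi$ is built into the induction and the argument transfers without change.
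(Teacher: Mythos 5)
Your proposal is correct and follows essentially the same route as the paper: representability and smoothness are extracted from the proof of Theorem \ref{thmstand0} (the paper is exactly this terse), and the generic fiber is identified with $\GU(V,h)$ by showing that once $2$, $\pi$ and $t^2-4\pi_0$ are invertible the data $q$, $\phi$, $s$, $h$ all determine one another — the paper phrases this as "$\phi$ is automatically preserved since $\phi(x,y)=q(x+\pi^{-1}y)-q(x)-q(\pi^{-1}y)$" and then quotes Theorem \ref{thmsimm}, whereas your explicit $2\times 2$ system just spells out the $s\leftrightarrow h$ step hidden in that proof. One small caution on your "equivalently" variant: Lemma \ref{Gtorsorsurjective} should be applied with the already-smooth standard group $\ud{\Sim}((N_{std,n},q_{std,n},\phi_{std,n}))$ acting on $\CF$ (then $\ud{\Sim}((\Lambda_0,\phi))$ is \'etale locally isomorphic to it, hence smooth), since invoking the lemma directly for the $\ud{\Sim}((\Lambda_0,\phi))$-action presupposes the smoothness you are proving; your first formulation via the lifting argument avoids this entirely.
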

\begin{proof}
    By the proof of Theorem \ref{thmstand0}, the functor $\ud{\Sim}((\Lambda_0,\phi))$ is representable by an affine smooth group scheme over $\CO_{F_0}$. It remains to show the assertion for the generic fiber.
	Let $R$ be an $F$-algebra. For any similitude $(\varphi,\gamma)\in \Sim(\Lambda_0)$ and $x,y\in \Lambda_0\otimes_{\CO_{F_0}}R=V\otimes_{F_0}R$, we have \begin{flalign*}
		    \phi(\varphi(x),\varphi(y)) &=\phi(\varphi(x),\pi (\pi\inverse \varphi(y)))= q(\varphi(x)+\varphi(\pi\inverse y)) -q(\varphi(x))- q(\varphi(\pi\inverse y))\\ &= \gamma(q(x+\pi\inverse y)-q(x)-q(\pi\inverse y) )=\gamma(\phi(x,y)).
	\end{flalign*} Hence, over the generic fiber, any similitude of $\Lambda_0$ preserves $\phi$. Then as in the proof of Theorem \ref{thmsimm}, we see that the generic fiber of $\ud{\Sim}((\Lambda_0,\phi))$ is $\GU(V,h)$. 
\end{proof}

The same argument as in the proof of Corollary \ref{app-coroparam} implies the following.
\begin{corollary}\label{app-coropara0}
	The scheme $\ud{\Sim}((\Lambda_0,\phi))$ is isomorphic to the parahoric group scheme attached to $\Lambda_0$.
\end{corollary}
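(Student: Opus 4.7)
The plan is to mimic the proof of Corollary \ref{app-coroparam} verbatim, substituting the appropriate inputs for the index $I=\{0\}$ case. The two ingredients one needs are (i) smoothness of $\ud{\Sim}((\Lambda_0,\phi))$ over $\CO_{F_0}$ (already provided by Theorem \ref{thmsimm0}) and (ii) identification of the $\CO_{\breve F_0}$-points with the parahoric subgroup $P_{\{0\}}$ of $\GU(V,h)(\breve F_0)$. Granted these two, the canonical characterization of parahoric group schemes by their $\CO_{\breve F_0}$-points among smooth affine models (\cite[1.7.6]{bruhat1984groupes}) produces the required isomorphism.

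The only point that needs checking, then, is that $\ud{\Sim}((\Lambda_0,\phi))(\CO_{\breve F_0})$ coincides with the stabilizer $\mathrm{Stab}_{\GU(V,h)(\breve F_0)}(\Lambda_0\otimes_{\CO_{F_0}}\CO_{\breve F_0})$, which by Proposition \ref{prop-para} is the parahoric $P_{\{0\}}$. One inclusion is immediate: any $(\varphi,\gamma)\in\ud{\Sim}((\Lambda_0,\phi))(\CO_{\breve F_0})$ is in particular a similitude of $(\Lambda_0,q,\sL)\otimes_{\CO_{F_0}}\CO_{\breve F_0}$, and by the generic-fiber computation in Theorem \ref{thmsimm0} it lies in $\GU(V,h)(\breve F_0)$ and preserves $\Lambda_0\otimes\CO_{\breve F_0}$.

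For the reverse inclusion, take $g$ in the stabilizer, with similitude factor $c(g)\in\breve F_0^\times$. Since $P_{\{0\}}$ lies in the kernel of the Kottwitz map (\cite[1.2.3]{pappas2009local}), $c(g)$ is a unit in $\CO_{\breve F_0}$, so $g$ together with $\gamma\coloneqq c(g)$ defines an $\CO_F\otimes_{\CO_{F_0}}\CO_{\breve F_0}$-linear similitude of $(\Lambda_0,q,\sL)\otimes\CO_{\breve F_0}$. Over the generic fiber, the proof of Theorem \ref{thmsimm0} shows that $g$ automatically preserves $\phi$ (with multiplier $\gamma$), and since both $g$ and $\gamma$ are integrally defined while $\phi$ on $\Lambda_0\otimes\CO_{\breve F_0}$ is just the restriction of the $F$-valued pairing on $V$, the identity $\phi(g x, g y)=\gamma\,\phi(x,y)$ holds integrally as well. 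Hence $(g,\gamma)\in\ud{\Sim}((\Lambda_0,\phi))(\CO_{\breve F_0})$.

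No serious obstacle is expected; the only subtle point is verifying that the $\phi$-preservation is automatic on integral points, and this follows at once from the same generic-fiber computation already carried out in Theorem \ref{thmsimm0} combined with the integrality of $g$ and $\gamma$. Once the identification of $\CO_{\breve F_0}$-points is in hand, smoothness plus \cite[1.7.6]{bruhat1984groupes} closes the argument exactly as in Corollary \ref{app-coroparam}.
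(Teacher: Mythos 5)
Your proof is correct and follows essentially the same route as the paper, which simply repeats the argument of Corollary \ref{app-coroparam}: identify $\ud{\Sim}((\Lambda_0,\phi))(\CO_{\breve F_0})$ with the stabilizer of $\Lambda_0$ in $\GU(V,h)(\breve F_0)$ (a parahoric subgroup by Proposition \ref{prop-para}), use the smoothness from Theorem \ref{thmsimm0}, and conclude via \cite[1.7.6]{bruhat1984groupes}. Your additional check that preservation of $\phi$ (and the unit similitude factor) is automatic on integral points merely spells out what the paper leaves implicit in the phrase ``by construction.''
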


% \printbibliography

% \bibliographystyle{alpha}
% \bibliography{myref}

\end{document}